\definecolor{bf}{rgb}{0,0,0.6} 
\definecolor{mygray}{gray}{0.85}
\definecolor{darkWhite}{rgb}{0.94,0.94,0.94}
\newcommand{\espcond}[2]{\mathbb{E}\mathopen{}\left[#1\middle|#2\right]}
\newcommand{\esp}[1]{\mathbb{E}\mathopen{}\left[#1\right]}
\newcommand{\reels}{\mathbb{R}}
\newcommand{\FuncDef}[4]{\ensuremath{\left\{\begin{array}{c} #1\longrightarrow #2\\ #3\mapsto #4\\\end{array}\right.}}
\newcommand{\noise}{\theta}
\numberwithin{equation}{section}
\newtheorem{thm}{Theorem}[section]
\newtheorem{definition}[thm]{Definition}
\newtheorem{prop}[thm]{Proposition}
\newtheorem{lemma}[thm]{Lemma}
\newtheorem{hyp}{Hypothesis}
\newtheorem{corol}[thm]{Corollary}
\theoremstyle{definition}
\newtheorem{remarque}[thm]{Remark}
\newtheorem{exemple}[thm]{Example}
\title{Monotone solutions to mean field games master equation in the $L^2-$monotone setting}
\author{Charles Meynard }
\begin{document}

\begin{abstract}
 This paper is concerned with extending the notion of monotone solution to the mean field game (MFG) master equation to situations in which the coefficients are displacement monotone, instead of the previously introduced notion in the flat monotone regime. To account for this new setting, we work directly on the equation satisfied by the controls of the MFG. Following previous works, we define an appropriate notion of solution under which uniqueness and stability results hold for solutions without any differentiability assumption with respect to probability measures. Thanks to those properties, we show the existence of a monotone solution to displacement monotone mean field games under local regularity assumptions on the coefficients and sufficiently strong monotonicity. Albeit they are not the focus of this article, results presented are also of interest for mean field games of control and general mean field forward backward systems. In order to account for this last setting, we use the notion of $L^2-$monotonicity instead of displacement monotonicity, those two notions being equivalent in the particular case of MFG.

\end{abstract}
\maketitle

\section{Introduction}

\subsection{General introduction}
This paper presents a notion of monotone solution for mean field games (MFG) master equation under displacement monotonicity. It is a natural extension to previous works on the matter \cite{bertucci-monotone-finite,seminaire-charles}, allowing the definition of solutions without any differentiability assumption on the space of probability measures. Those solutions enjoy properties of uniqueness under displacement monotonicity of the coefficients, hence the name. Using results of stability we prove for those solutions we give new existence results to mean field games with coefficients having below Lipschitz regularity in the measure argument. 

MFG were first introduced as a forward-backward system of partial differential equations (PDEs) \cite{LasryLionsMFG,Lions-college} to study games in which a continuum of players interacts together through mean field terms. The master equation was presented in \cite{Lions-college} to study MFG as an alternative 
 to the forward backward system in the presence of common noise. The existence of solutions to the MFG master equation over arbitrary long interval of time is in general a tough problem, which is very much linked to the uniqueness of equilibria to the MFG. While it is known to hold on short horizon of time for sufficiently smooth data \cite{lipschitz-sol}, long time uniqueness usually requires additional monotonicity assumptions \cite{Lions-college}. The existence of long time solutions under flat monotonicity was  obtained in \cite{convergence-problem}. For displacement monotonicity, existence was obtained in \cite{Lions-college} for MFG without idiosyncratic noise through Lions' Hilbertians approach and for game with possibly non-degenerate idiosyncratic noise in a series of paper \cite{disp-monotone-1,disp-monotone-2,globalwellposednessdisplacementmonotone} with probabilistic techniques. Those articles address the problem of uniqueness and existence to the MFG master equation for smooth solutions.
 
 In parallel, a lot of efforts have been made toward the development of weaker notions of solution. Indeed, the existence of a smooth solution usually requires differentiability assumptions with respect to the measure argument on the coefficients (possibly up to the second order for games with common noise), a strong assumption that is not verified in many cases of interest. A non-exhaustive list of developments made in that direction includes \cite{cecchin2022weak} in which a notion of weak solutions was presented for potential mean field games, which is to say mean field games in which the master equation can be integrated into a HJB equation on the space of probability measures. In \cite{lipschitz-sol} a notion of solution was presented for master equation which are merely Lipschitz with respect to the measure argument. For MFG under flat monotonicity specifically, several notions of weak solution were introduced either on the forward backward system \cite{weaksol-dipmono, Cardaliaguet-weak-fb,Porretta-weak} or directly at the level of the master equation \cite{bertucci-monotone,bertucci-monotone-finite,,weaksol-dipmono}. In contrasts, there have been few works concerned with the definition of weak solutions to the MFG master equation under displacement monotonicity. Although, it is concerned with finite state space mean field games, let us mention \cite{bertucci-monotone-finite} in which the master equations studied share the same structure as the one we are interested in the article in the absence of idiosyncratic noise \cite{seminaire-charles,Lions-college}. In the present work, we further develop this notion of solution. In particular, we are able to define a notion of monotone solution even in the presence of idiosyncratic noise. 
 Let us end this section by mentioning a possible application of monotone solutions beyond the scope of this paper: the convergence of numerical methods \cite{bertucci-monotone-numeric}.

\subsection{Setting and motivation of our study}
The main motivation of this study is the solvability of the MFG master equation
\begin{equation}
\label{eq: MFG ME with common noise}
\left\{
\begin{array}{c}
\displaystyle -\partial_t U+H(x,m,\nabla_xU(t,x,m))\\
\displaystyle -(\sigma_x+\beta)\int \text{div}_{y}[D_mU](t,x,m,y)m(dy)-(\sigma_x+\beta)\Delta_x U\\
\displaystyle+\int D_m U(t,x,m,y)\cdot D_p H(y,m,\nabla_x U(t,y,m))m(dy)\\
\displaystyle -2\beta \int \text{div}_{x}[D_m U](t,x,m,y)]m(dy)-\beta \int \text{Tr}\left[D^2_{mm}U(t,x,m,y,y')\right]m(dy)m(dy')=0,\\
\text{ for } (t,x,m) \in (0,T)\times \reels^d \times  \mathcal{P}(\reels^d),\\
U(T,x,m)=U_0(x,m) \text{ for } (x,m) \in \reels^d \times  \mathcal{P}(\reels^d),
\end{array}
\right.
\end{equation}
with non-smooth coefficients, precise regularity assumptions shall be made later on. Here $H$ indicates the Hamiltonian of the MFG, arising from the optimisation problem solved by individual players against the crowd. Although this article is not particularly concerned with the underlying game behind this equation and its general interpretation we remind that this equation can be derived explicitly from a mean field game problem the derivation being done in \cite{derivation-ME,Lions-college}. Let us also mention that terms in $\beta$ are associated to the presence of common noise in the system, whereas terms in $\sigma_x$ correspond to idiosyncratic noise. It was observed in \cite{Lions-college,common-noise-in-MFG} that uniqueness and existence of smooth solutions to \eqref{eq: MFG ME with common noise} under displacement monotonicity can be obtained by studying master equations of the form
\begin{equation}
\label{eq: general ME}
\left\{
\begin{array}{c}
\displaystyle \partial_t W+F(x,m,W)\cdot \nabla_x W-(\sigma_x+\beta) \Delta_x W\\
\displaystyle +\int_{\reels^d} F(y,m,W)\cdot D_m W(t,x,m)(y)m(dy)-\sigma_x\int_{\reels^d} \text{div}_y (D_mW(t,x,m)(y))m(dy)\\
\displaystyle -2\beta \int \text{div}_{x}[D_mW ](t,x,\noise,m,y)]m(dy)-\beta \int \text{Tr}\left[D^2_{mm}W(t,x,\noise,m,y,y')\right]m(dy)m(dy')\\
\displaystyle =G(x,m,W) \text{ in } (0,T)\times\reels^d\times\mathcal{P}_2(\reels^d),\\
\displaystyle W(0,x,m)=W_0(x,m) \text{ for } (x,m)\in\reels^d\times\mathcal{P}_2(\reels^d).
 \end{array}
 \right.
\end{equation}
Whenever $F=D_p H, G=-D_xH,W_0=\nabla_x U_0$, \eqref{eq: general ME} is the equation satisfied by $W=\nabla_xU$, in which case working on $W$ is akin to considering the MFG problem at the level of the controls of players, for a precise argument see \cite{lipschitz-sol}. Throughout this article, links with the solvability of \eqref{eq: MFG ME with common noise} will be highlighted but we take on the approach to focus directly on the master equation \eqref{eq: general ME} without necessarily assuming coefficients are gradients. This allows us to present existence and uniqueness results not only for mean field games but also for mean field forward backward systems \cite{mean-field-FBSDE}
\begin{equation}
\label{eq: general MFFBSDE}
\left\{
\begin{array}{cc}
     dX_t=-F(X_t,U_t,\mathcal{L}(X_t|\mathcal{F}_t))dt+\sqrt{2\sigma_x}dB^x_t+\sqrt{{2\beta}}dB^\beta_t & X|_{t=0}=X_0\\
     dU_t=-G(X_t,U_t,\mathcal{L}(X_t|\mathcal{F}_t))dt+Z_t\cdot d(B^x_t,B^\beta_t) & U_T=W_0(X_T,\mathcal{L}(X_T|\mathcal{F}_T)) \\
     \mathcal{F}_t=\sigma\left((B^\beta_s)_{s\leq t}\right)
\end{array}
\right.
\end{equation}
without any additional cost. Indeed if there exists a smooth solution $W$ to $\eqref{eq: general ME}$, it can be checked easily that the couple $(X_t,W(t,X_t,\mathcal{L}(X_t|\mathcal{F}_t))_{t\in[0,T]}$ is a solution to the forward backward system \eqref{eq: general MFFBSDE}. This is so-called decoupling field approach to backward SDE. Let us remind that in the particular case of mean field games \eqref{eq: general MFFBSDE} can be obtained directly from the control problem of players through Pontryagin's maximum principle.  
\subsection{Main contributions}
In this article we define a concept of monotone solutions to the master equation \eqref{eq: general ME}. We show that under such notion, uniqueness holds for merely continuous solutions in the measure argument. In section \ref{section without idio}, we focus on presenting results of uniqueness and stability for those solutions in the absence of idiosyncratic noise $(\sigma_x=0)$. In particular we explain how to define a notion of monotone solution in the presence of common noise without requiring additional regularity with respect to probability measures. This argument is quite general and extend naturally to previously introduced notion of monotone solutions such as in \cite{bertucci-monotone}. 

We also present estimates on FBSDE of the type \eqref{eq: general MFFBSDE} which appear new, both in their generality and in the low regularity required on the coefficients. We use those estimates to show the existence of monotone solutions for master equations without idiosyncratic noise under various monotonicity assumptions. Let us stress that both the coefficients and the associated solutions may have below Lipschitz regularity. Indeed we show the existence of monotone solutions which are only Hölder, or even just uniformly continuous with respect to the measure argument. This represent an improvement in the study of displacement monotone mean field games, results up until now being concerned with coefficients and solutions having at least Lipschitz regularity in the measure argument. 

For master equations with idiosyncratic noise, there is an added difficulty which makes the obtention of comparison results difficult. We give partial results in this setting: existence still holds and we show how idiosyncratic noise helps to get local regularity estimates on solutions to the master equation. However we do not treat fully the question of uniqueness for solutions with below Lipschitz regularity in this case and only show it in a slightly reduced setting.  
\subsection{Organization of the paper}
In section \ref{section without idio}, we present the notion of solution to the master equation \eqref{eq: general ME} for situations in which there is no idiosyncratic noise $(\sigma_x=0)$. The definition relies on Lions' Hilbertian approach \cite{Lions-college}. We start by showing uniqueness and stability results for master equation without noise $(\beta=0)$. Following an idea from \cite{common-noise-in-MFG} those results are then extended to second order master equations without any additional regularity assumptions. In section \ref{section existence}, we start by reminding the notion of Lipschitz solution \cite{lipschitz-sol} to the master equation \eqref{eq: general ME}. Then, we give several existence results for monotone solutions with or without common noise. Those are based on the stability of monotone solutions as well as estimates stemming from strong monotonicity on the forward-backward sytem \eqref{eq: general MFFBSDE}. Finally in section \eqref{section idiosyncratic noise}, we refine the notion of solution to account for master equations with non-degenerate idiosyncratic noise. The additional presence of terms associated to idiosyncratic noise in the master equation makes the analysis delicate. Our method of proof relies on the introduction of an entropic penalization. The addition of this term makes the analysis more technical, but ensure we obtain sufficient information on second order terms on the space of probability measures at points of minimum. 
\subsection{Notation}
\begin{enumerate}
\item[-] let $k\in \mathbb{N}$, $k>0$, for the canonical product on $\reels^k$ we use the notation
\[x\cdot y=\sum_i x_iy_i,\]
and the following notation for the induced norm
\[|x|=\sqrt{x\cdot x}.\]
    \item[-] Let $\mathcal{P}(\reels^d)$ be the set of (Borel) probability measures on $\reels^d$, for $q\geq 0$, we use the usual notation 
\[\mathcal{P}_q(\reels^d)=\left\{\mu\in \mathcal{P}(\reels^d), \quad \int_{\reels^d} |x|^q \mu(dx)<+\infty\right\},\]
for the set of all probability measures with a finite $q$th moment.
\item[-] For two measures $\mu,\nu\in\mathcal{P}(\reels^d)$ we define $\Gamma(\mu,\nu)$ to be the set of all probability measures $\gamma\in\mathcal{
P}(\reels^{2d})$ satisfying 
\[\gamma(A\times \reels^d)=\mu(A) \quad \gamma(\reels^d\times A)=\nu(A),\]
for all Borel set $A$ on $\reels^d$. 
\item[-]The Wasserstein distance between two measures belonging to $\mathcal{P}_q(\reels^d)$ is defined as
\[\mathcal{W}_q(\mu,\nu)=\left(\underset{\gamma\in \Gamma(\mu,\nu)}{\inf}\int_{\mathbb{\reels}^{2d}} |x-y|^q\gamma(dx,dy)\right)^{\frac{1}{q}}.\]
In what follows $\mathcal{P}_q(\reels^d)$ is always endowed with the associated Wasserstein distance, $(\mathcal{P}_q,\mathcal{W}_q)$ being a complete metric space. 
\item[-] We say that a function $U:\mathcal{P}_q(\reels^d)\to \reels^d$ is Lipschitz if
\[\exists C\geq 0, \quad \forall(\mu,\nu)\in\left(\mathcal{P}_q(\reels^d)\right)^2, \quad |U(\mu)-U(\nu)|\leq C\mathcal{W}_q(\mu,\nu).\]
\item[-] We say that a sequence of function $f_n:\reels^k\times \mathcal{P}_q(\reels^d)\to \reels^k$, converges locally uniformly to a function $f$ if the convergence is uniform on compact sets of $\reels^k\times\mathcal{P}_q(\reels^d)$.
\item[-] For a measure $m\in\mathcal{P}(\reels^{d})$, and $1\leq i\leq d$, we use the notation $\pi_i m$ for the marginal of $m$ over its first $i$ variables and $\pi_{-i}$ for its marginal over the last $i$ variables. Which is to say that for any borel subset $A$ of $\reels^i$
\[\pi_i m(A)=m(A\times \reels^{d-i}) \quad \pi_{-i} m(A)=m(\reels^{d-i}\times A).\]
\item[-] Consider $(\Omega, \mathcal{F},\mathbb{P})$ a probability space,
\begin{itemize}
    \item[-] We define 
\[L^q(\Omega, \reels^d)=\left\{ X: \Omega\to \reels^d, \quad \esp{|X|^q}<+\infty\right\}.\]
\item[-] Whenever a random variable $X:\Omega\to \reels^d$ is distributed along $\mu\in \mathcal{P}(\reels^d)$ we use equivalently the notations $\mathcal{L}(X)=\mu$ or  $X\sim\mu$. 
\item[-] If another probability measure $\mathbb{Q}$ is defined on $(\Omega,\mathcal{F})$, the expectation under $\mathbb{Q}$ of a random variable $X:\Omega\to \reels^d$ is noted $\mathbb{E}^\mathbb{Q}\left[X\right]$.
\end{itemize}
\item[-] $\mathcal{M}_{d\times n}(\reels)$ is the set of all matrices of size $d\times n$ with reals coefficients, with the notation $\mathcal{M}_n(\reels)\equiv \mathcal{M}_{n\times n}(\reels)$.
\item[-] $C_b(\reels^d,\reels^k)$ is the set of all continuous bounded functions from $\reels^d$ to $\reels^k$.
\item[-] A function $\omega:\reels^+\to \reels^+$ is said to be a modulus of continuity if it is continuous, increasing and such that $\omega(0)=0$.
\item[-] We say that a function $f:\reels^d\times \mathcal{P}_2(\reels^d)\to \reels^d$ has linear growth with constant $C>0$, if 
\[\forall (x,\mu)\in \reels^d\times \mathcal{P}_2(\reels^d),\quad |f(x,\mu)|\leq C\left(1+|x|+\sqrt{\int_{\reels^d} |y|^2\mu(dy)}\right)\]
\item[-] For a measure $\mu \in\mathcal{P}_q(\reels^d)$ for some $q\geq 0$, we define 
\[E_q(\mu)=\int_{\reels^d} |y|^q\mu(dy),\]
the function returning its $q-$th moment. In particular for random variables $X\in L^q(\Omega,\reels^d)$ we make the misuse of notation
\[E_q(X)=E_q(\mathcal{L}(X))=\esp{|X|^q}.\]
\item[-] We say that $U:\mathcal{P}(\reels^d)\to \reels$ is derivable at $m$ if there exists a continuous map $\psi:\reels^d\to \reels$ such that
\[\forall \nu\in\mathcal{P}(\reels^d) \quad \underset{\varepsilon\to 0}{\lim} \frac{U(m+\varepsilon(\nu-m))-U(m)}{\varepsilon}=\int_{\reels^d}\psi(x)(\nu-m)(dx).\]
In this case we define the derivative of $U$ at $m$, $\nabla_mU(m)$ to be one such $\psi$ such that 
\[\int_{\reels^d} \nabla_mU(m)(y)m(dy)=0.\]
Whenever $\nabla_mU(m)$ is differentiable, we define the Wasserstein derivative
\[D_m U(m,y)=\nabla_y \nabla_mU(m)(y).\]
Derivatives of higher order on the space of measures can be defined by induction.
\item[-] Finally, for a measure $\mu$ and a measurable function $f$, we use the notation $f_\#\mu$ for the pushforward of $\mu$ by $f$. In particular, for $\noise\in \reels^d$ $(id_{\reels^d}+\noise)_\#\mu$ is defined as the pushforward of $\mu$ by the map $x\mapsto x+\noise$.
\end{enumerate}
\subsection{Preliminaries}
In this article we consider a sufficiently rich probability space $(\Omega,\mathcal{A},\mathbb{P})$, featuring a collection of independent brownian motion. In particular we remind that the space $L^2(\Omega,\reels^d)$ endowed with the inner product 
\[\langle \cdot,\cdot \rangle: (X,Y)\mapsto \esp{X\cdot Y},\]
is a Hilbert space. We use the notation $\mathcal{H}=(L^2(\Omega,\reels^d),\langle \cdot,\cdot\rangle)$ and the induced norm is denoted by
\[\forall X\in \mathcal{H}, \|X\|=\sqrt{\langle X,X\rangle}.\]
\begin{definition}
    For a function $F:\reels^d\times \mathcal{P}_2(\reels^d)\to \reels^d$, we define its lift on $\mathcal{H}$, $\tilde{F}$ to be 
    \[\tilde{F}:\left\{
\begin{array}{l}
     \mathcal{H}\to \mathcal{H},  \\
     X\mapsto F(X,\mathcal{L}(X)).
\end{array}
\right.
\]
\end{definition}
 Throughout this article we also make use of this useful variation of Lemma 2.3 of \cite{disp-monotone-1}
\begin{prop}
\label{prop: expectation to pointwise}
    Let $f:\FuncDef{\reels^{2d}}{\reels}{(x,y)}{f(x,y)}$ be a continuous function,  such that 
    \[\forall (x,y)\in \reels^{2d}, \quad f(x,y)=f(y,x), \quad f(x,x)=0.\]
Suppose that for some $\mu\in\mathcal{P}_2(\reels^d)$ with full support on $\reels^d$ the following holds 
\[\forall (X,Y)\in \mathcal{H}^2, X\sim \mu,Y\sim \mu, \quad \esp{f(X,Y)}\geq 0,\]
then the inequality is satisfied pointwise 
\[\forall (x,y)\in \reels^d,\quad f(x,y)\geq 0.\]
\end{prop}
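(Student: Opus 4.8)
The plan is to argue by contradiction and exhibit an explicit coupling of $\mu$ with itself along which the expectation is strictly negative.

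Suppose $f(x_0,y_0)<0$ for some $(x_0,y_0)\in\reels^{2d}$. Since $f(x,x)=0$ we must have $x_0\neq y_0$, and by continuity of $f$ there are $r>0$ and $\delta>0$ with $f<-\delta$ on the product of balls $A\times B$, where $A:=B(x_0,r)$ and $B:=B(y_0,r)$; shrinking $r$ so that $r<|x_0-y_0|/2$ we also arrange $A\cap B=\emptyset$. Because $\mu$ has full support, $c:=\min(\mu(A),\mu(B))\in(0,1/2]$. Write $\mu_A:=\mu(A)^{-1}\,\mu|_A$ and $\mu_B:=\mu(B)^{-1}\,\mu|_B$ for the normalised restrictions of $\mu$; disjointness of $A$ and $B$ together with $c\le\mu(A),\mu(B)$ give $c\mu_A+c\mu_B\le\mu$, so that $\mu_0:=\mu-c\mu_A-c\mu_B$ is a nonnegative measure of total mass $1-2c$.

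Next I would introduce, with $\Delta:\reels^d\to\reels^{2d}$ the diagonal map $\Delta(x)=(x,x)$, the measure on $\reels^{2d}$
\[
\gamma:=c\,(\mu_A\otimes\mu_B)+c\,(\mu_B\otimes\mu_A)+\Delta_\#\mu_0 ,
\]
which is a probability measure whose two marginals both equal $\mu$: each of the two product terms contributes $c\mu_A+c\mu_B$ to either marginal, and $\Delta_\#\mu_0$ contributes $\mu_0$. On the sufficiently rich space $(\Omega,\mathcal A,\proba)$ one can realise a pair $(X,Y)\in\mathcal H^2$ with joint law $\gamma$; then $X\sim\mu$, $Y\sim\mu$, and since the product part of $\gamma$ is carried by the bounded set $\bar A\times\bar B$ while $f$ vanishes on the diagonal, the integral $\esp{f(X,Y)}=\int f\,d\gamma$ is well defined and finite. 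Using $f(x,x)=0$ the diagonal term of $\gamma$ contributes nothing, and using $f(x,y)=f(y,x)$ the two product terms contribute equally, so
\[
\esp{f(X,Y)}=2c\int_{\reels^{2d}}f\,d(\mu_A\otimes\mu_B)\le -2c\,\delta<0 ,
\]
since $f<-\delta$ throughout $A\times B$. This contradicts the hypothesis $\esp{f(X,Y)}\ge0$, so no such $(x_0,y_0)$ exists and $f\ge0$ on $\reels^{2d}$.

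The one point requiring care is the construction of a coupling whose marginals are \emph{exactly} $\mu$ yet which still puts positive mass on the region $A\times B$ where $f$ is negative: the identity $f(x,x)=0$ is precisely what allows the diagonal part $\Delta_\#\mu_0$ to absorb the remaining mass at no cost, and the full-support hypothesis is exactly what guarantees $c>0$, i.e. that the bad region can be charged at all. The remaining verifications (the marginals of $\gamma$, the realisation of $(X,Y)$ on $\Omega$, and the finiteness of the integral) are routine.
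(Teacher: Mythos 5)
Your proof is correct, and it is essentially the standard argument: the paper itself does not write out a proof of this proposition but defers to Lemma~2.3 of the cited displacement-monotonicity reference, whose proof rests on exactly the coupling you build (a small symmetric product mass on $A\times B$ and $B\times A$, with the rest of $\mu$ sent to the diagonal where $f$ vanishes). All the hypotheses are used where they must be — full support to get $c>0$, symmetry to combine the two product terms, and $f(x,x)=0$ to make the diagonal part free — so nothing further is needed.
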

In particular this direct corollary
\begin{corol}
\label{from lip hilbert to lip x}
Let $F:\reels^d\times \mathcal{P}_2(\reels^d)\to \reels^d$ be a continuous function. Suppose that there exists a constant $C_F$ such that
\[ \forall (X,Y)\in \mathcal{H}^2, \quad \|\tilde{F}(X)-\tilde{F}(Y)\|\leq C_F\|X-Y\|.\]
Then for the same constant $C_F$
\[\forall (\mu,x,y)\in \mathcal{P}_2(\reels^d)\times \left(\reels^d\right)^2, \quad |F(x,\mu)-F(y,\mu)|\leq C_F |x-y|.\]
\end{corol}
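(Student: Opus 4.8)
The plan is to obtain the pointwise Lipschitz estimate from Proposition \ref{prop: expectation to pointwise}, applied for each fixed measure to a well-chosen scalar function, and then to remove the full-support restriction by a density argument.

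First, fix $\mu\in\mathcal{P}_2(\reels^d)$ with full support on $\reels^d$ and set
\[
f(x,y):=C_F^2\,|x-y|^2-|F(x,\mu)-F(y,\mu)|^2 .
\]
Since $x\mapsto F(x,\mu)$ is continuous, $f$ is continuous on $\reels^{2d}$; it is clearly symmetric and satisfies $f(x,x)=0$. The key observation is that if $X,Y\in\mathcal{H}$ \emph{both} have law $\mu$, then $\mathcal{L}(X)=\mathcal{L}(Y)=\mu$, so that $\tilde F(X)=F(X,\mu)$ and $\tilde F(Y)=F(Y,\mu)$ as elements of $\mathcal{H}$. Squaring the hypothesis $\|\tilde F(X)-\tilde F(Y)\|\leq C_F\|X-Y\|$ and writing the norms as expectations then gives exactly $\esp{f(X,Y)}\geq 0$ for all such $X,Y$. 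As $\mu$ has full support, Proposition \ref{prop: expectation to pointwise} yields $f(x,y)\geq 0$ for every $(x,y)\in\reels^{2d}$, i.e. $|F(x,\mu)-F(y,\mu)|\leq C_F|x-y|$.

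To treat an arbitrary $\mu\in\mathcal{P}_2(\reels^d)$, I would approximate it in $\mathcal{W}_2$ by measures $\mu_n$ with full support (for instance $\mu_n=(1-\tfrac1n)\mu+\tfrac1n\gamma_n$ with suitable Gaussians $\gamma_n$, or convolutions of $\mu$ with shrinking Gaussians). For fixed $x,y$ the estimate $|F(x,\mu_n)-F(y,\mu_n)|\leq C_F|x-y|$ holds for every $n$ by the previous paragraph, and passing to the limit using the continuity of $F$ in its measure argument gives the claim for $\mu$.

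The only genuinely delicate point is the observation in the second paragraph: recognizing that when $X$ and $Y$ carry the common law $\mu$, the Hilbertian Lipschitz bound collapses precisely to the expectation inequality required as the hypothesis of Proposition \ref{prop: expectation to pointwise}. Everything else — continuity, symmetry and vanishing on the diagonal of $f$, and the density argument — is routine; one should merely make sure the underlying space carries two random variables with law $\mu$, which is ensured by the richness assumption on $(\Omega,\mathcal{A},\mathbb{P})$.
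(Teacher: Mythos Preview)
Your proof is correct and follows exactly the same approach as the paper: apply Proposition~\ref{prop: expectation to pointwise} to $f(x,y)=C_F^2|x-y|^2-|F(x,\mu)-F(y,\mu)|^2$ for $\mu$ with full support, then pass to general $\mu$ by density and continuity of $F$ in the measure argument.
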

\begin{proof}
It suffices to apply the above proposition to 
\[f:(x,y)\mapsto C_F^2|x-y|^2-|F(x,\mu)-F(y,\mu)|^2,\]
for fixed $\mu$ with full support in $\reels^d$. For measure that do not have full support, the result then follows by density and the continuity of $F$ in $\mathcal{P}_2(\reels^d)$ as in \cite{disp-monotone-1}.
\end{proof}

\section{The Hilbertian approach to master equations}
\label{section without idio}
 \subsection{Master equation without common noise}
In this setting, we can use Lions' Hilbertians approach of lifting the equation on the Hilbert space $\mathcal{H}$. We first treat the situation of a deterministic evolution $(\beta=\sigma_x=0)$. The master equation we consider is hence 

\begin{equation}
\label{eq: ME without noise}
\left\{
\begin{array}{c}
\displaystyle \partial_t W+F(x,m,W)\cdot \nabla_x W +\int_{\reels^d} F(y,m,W)\cdot D_m W(t,x,m)(y)m(dy)\\
\displaystyle =G(x,m,W) \text{ in } (0,T)\times\reels^d\times\mathcal{P}_2(\reels^d),\\
\displaystyle W(0,x,m)=W_0(x,m) \text{ for } (x,m)\in\reels^d\times\mathcal{P}_2(\reels^d).
 \end{array}
 \right.
\end{equation}
We now introduce the notion of monotonicity we use in this article, it will be key in showing the uniqueness of solution to \eqref{eq: ME without noise}, and more generally to \eqref{eq: general ME} and is at the core of our definition of solution as is classic in monotone solutions \cite{bertucci-monotone,bertucci-monotone-finite}. 
\begin{definition}
A function $W:\reels^d\times\mathcal{P}_2(\reels^d)\to \reels^d$ is $L^2-$monotone if 
\[\forall (X,Y)\in\mathcal{H}^2\quad \esp{(W(X,\mathcal{L}(X))-W(Y,\mathcal{L}(Y)))\cdot (X-Y)}\geq 0.\]
\end{definition}
Let $\tilde{W}$ be the lift of $W$ on $\mathcal{H}$, this definition is equivalent to asking for the monotonicity of $\tilde{W}$ in the classical sense on $\mathcal{H}$, hence the terminology $L^2-$monotonicity. This notion of monotonicity is by no mean new in the study of FBSDE for exemple see \cite{G-monotonicity} or \cite{probabilistic-mfg} for mean field type FBSDE specifically. Let us also recall the strong link between this notion of monotonicity on the Hilbert space of square integrable random variables and displacement monotonicity 
\begin{definition}
    A function $U:\reels^d\times \mathcal{P}_2(\reels^d)\to \reels$ is displacement monotone if and only if its gradient in the space variable $\nabla_x U$ is well defined and $L^2-$monotone. 
\end{definition}
In the particular case of MFG, \eqref{eq: general ME}, corresponds to the gradient of \eqref{eq: MFG ME with common noise}, in which case the introduction of displacement monotonicity is natural as seen in \cite{disp-monotone-1,disp-monotone-2}. 
We now express an Hypothesis that will be in force throughout this whole work and ensure uniqueness of solutions to the master equation \eqref{eq: general ME}. 
\begin{hyp}
\label{hyp: weak monotonicity}
The initial condition $W_0$ is $L^2-$monotone and the coefficients $(G,F)$ are jointly $L^2-$monotone in $(x,w)$ i.e.
\begin{gather*}
\forall (X,Y,U,V)\in \mathcal{H}^4, \\
    \left\{\begin{array}{l}
         \esp{\left(W_0(X,\mathcal{L}(X)-W_0(Y,\mathcal{L}(Y))\right)\cdot(X-Y)}\geq 0,  \\
    \esp{F(X,\mathcal{L}(X),U)-F(Y,\mathcal{L}(Y),V)\cdot(U-V)+(G(X,\mathcal{L}(X),U)-G(Y,\mathcal{L}(Y),V))\cdot(X-Y)}\geq 0,
    \end{array}\right.
\end{gather*}
\end{hyp}
\begin{remarque}
    Let us remark at this point that all results presented in this article extend wihout any difficulty to situation in which the coefficients $(F,G)$ depends on the law of controls (from the interpretation of mean field games). Indeed from the viewpoint of the Hilbertian Lifting it does not matter if the depencency of $F$ on the solution $W$ is of the form 
    \[(t,X)\mapsto F(X,\mathcal{L}(X),W(t,X,\mathcal{L}(X)),\mathcal{L}(X,W(t,X,\mathcal{L}(X)))),\]
    as this can be writen of as a dependency on the random variable $W(t,X,\mathcal{L}(X))$. In this case the appropriate monotonicity assumption on $(G,F)$ becomes 
    \[\esp{F(X,U,\mathcal{L}(X,U))-F(Y,V,\mathcal{L}(Y,V))\cdot(U-V)+(G(X,U,\mathcal{L}(X,U))-G(Y,V,\mathcal{L}(Y,V)))\cdot(X-Y)}\geq 0,\]
    under which uniqueness of solution can still be obtained. This is also true for existence results presented in this article (even outside of the Hilbertian regime) as our estimates on the systems come mainly from considerations on $L^2-$monotonicity. We leave such adaptations to the interested reader, and consider instead a slightly simpler setting to lighten notations.
\end{remarque}
We shall work in the class of solutions with at most linear growth at infinity, which is why we make the following growth assumption on coefficients
\begin{hyp}
    \label{hyp: linear growth in F}
 $(F,G,W_0)$ are continuous and such that there exists a constant $C$ such that for any $(x,\mu,u)\in \reels^d\times \mathcal{P}_2(\reels^d)\times \reels^d$ 
    \begin{gather*}
    |W_0(x,\mu)|+|F(x,\mu,u)|+|G(x,\mu,u)|\leq C\left(1+|x|+|u|+\sqrt{E_2(\mu)}\right).
    \end{gather*}
\end{hyp}
\begin{remarque}
In terms of the forward backward system \eqref{eq: general MFFBSDE} the growth assumption of $F$ in $(x,\mu)$ is very natural, it guaranties that starting from a random variable $X\in L^2(\Omega, \reels^d)$ its flow by $F$ stay in this space. This appears to be very important for the Hilbertian approach which relies on lifting the equation in this space, and it does not seem such assumption can be weakened easily. 
\end{remarque}
We now recall the Hilbertian approach to master equation introduced by Lions in \cite{Lions-college}.
\begin{lemma}
    Suppose $W$ is a smooth solution to \eqref{eq: ME without noise}, then its lift $\tilde{W}$ on $\mathcal{H}$ is a solution to 
    \[\partial_t \tilde{W}+\tilde{F}(X,\tilde{W}(t,X))\cdot \nabla_X\tilde{W}=\tilde{G}(X,\tilde{W}(t,X)) \text{ in } (0,T)\times \mathcal{H},\]
    where $ \tilde{F}(X,\tilde{W}(t,X))\cdot \nabla_X\tilde{W}$ indicates the Gateaux derivative of $\tilde{W}$ in $\mathcal{H}$ in the direction $\tilde{F}(X,\tilde{W}(t,X))$.
\end{lemma}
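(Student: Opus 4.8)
The plan is to verify the identity by the chain rule on the Hilbert space, interpreting each term of the finite-dimensional master equation \eqref{eq: ME without noise} as its Hilbertian counterpart evaluated along a random variable $X\sim m$. The starting point is the well-known fact (going back to Lions) that if $W(\cdot,\cdot):\reels^d\times\mathcal{P}_2(\reels^d)\to\reels^d$ is smooth (with sufficiently nice derivatives $\nabla_xW$ and $D_mW$), then its lift $\tilde W(t,X)=W(t,X,\mathcal{L}(X))$ is Gateaux differentiable on $\mathcal{H}$ and, for any direction $\Phi\in\mathcal{H}$, one has the identity $\esp{\nabla_X\tilde W(t,X)\cdot\Phi}$, or more precisely the vector-valued version, where the Gateaux derivative decomposes as
\[
\nabla_X\tilde W(t,X)[\Phi]=\nabla_xW(t,X,\mathcal{L}(X))\,\Phi+\esp{D_mW(t,X,\mathcal{L}(X))(\tilde X)\,\tilde\Phi}\,,
\]
with $(\tilde X,\tilde\Phi)$ an independent copy of $(X,\Phi)$; here the integral in $\tilde X$ is precisely the term $\int_{\reels^d}F(y,m,\cdot)\cdot D_mW(t,x,m)(y)m(dy)$ once we specialize $\Phi=\tilde F(X,\tilde W(t,X))$.

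First I would recall or state the Gateaux-differentiability formula for lifts, citing \cite{Lions-college} (or the standard reference on the flat/Wasserstein derivative dictionary) so that the decomposition above is available. Next I would fix $(t,x,m)$, pick $X\in\mathcal{H}$ with $\mathcal{L}(X)=m$ and $X(\omega_0)=x$ at some atom — or, more cleanly, simply evaluate the lifted equation at $X$ and read off the pointwise equation by the converse correspondence — and substitute the direction $\Phi=\tilde F(X,\tilde W(t,X))=F(X,\mathcal{L}(X),W(t,X,\mathcal{L}(X)))$ into the Gateaux derivative. The term $\partial_t\tilde W(t,X)$ is just the lift of $\partial_tW(t,x,m)$; the directional term $\tilde F(X,\tilde W(t,X))\cdot\nabla_X\tilde W(t,X)$ expands, via the displayed decomposition, into the lift of $F(x,m,W)\cdot\nabla_xW(t,x,m)$ plus the lift of $\int F(y,m,W)\cdot D_mW(t,x,m)(y)m(dy)$; and $\tilde G(X,\tilde W(t,X))$ is the lift of $G(x,m,W)$. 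Matching terms then gives exactly the lifted equation claimed in the statement, with the Gateaux-derivative interpretation made explicit.

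The main obstacle is purely a matter of regularity bookkeeping: to justify that the lift $\tilde W$ is genuinely Gateaux differentiable (not merely that the formal derivative formula holds) one needs enough smoothness and growth control on $W$, $\nabla_xW$, $D_mW$ — uniform continuity on compacts, linear growth so that $\tilde F(X,\tilde W(t,X))\in\mathcal{H}$ whenever $X\in\mathcal{H}$ (which is where Hypothesis \ref{hyp: linear growth in F} enters), and integrability of $D_mW(t,X,\mathcal{L}(X))(\tilde X)\tilde\Phi$. Since the statement already says "suppose $W$ is a smooth solution," I would simply declare that "smooth" is understood to include the regularity and growth needed for these manipulations (this is the standard convention in the Hilbertian approach), and carry out the substitution. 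A secondary, milder point is the independent-copy construction: one must work on a probability space rich enough to support an independent copy of any given pair $(X,\Phi)$, which is guaranteed by the standing assumption on $(\Omega,\mathcal{A},\mathbb{P})$ made in the Preliminaries. With those conventions in place the proof is a one-line application of the chain rule, so I would keep it short and emphasize only the term-by-term dictionary between \eqref{eq: ME without noise} and its lift.
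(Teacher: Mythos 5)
Your argument is correct and is the standard justification. Note that the paper itself offers no proof of this lemma: it is stated as a recalled fact from Lions' Hilbertian approach, so there is nothing to compare against beyond the implicit chain-rule computation you spell out. Your decomposition of the Gateaux derivative of the lift, $\nabla_X\tilde W(t,X)[\Phi]=\nabla_xW(t,X,\mathcal{L}(X))\,\Phi+\tilde{\mathbb{E}}\bigl[D_mW(t,X,\mathcal{L}(X))(\tilde X)\,\tilde\Phi\bigr]$ with an independent copy, specialized to $\Phi=\tilde F(X,\tilde W(t,X))$, does produce exactly the nonlocal term $\int F(y,m,W)\cdot D_mW(t,x,m)(y)\,m(dy)$, and your caveats about what ``smooth'' must encompass (Gateaux differentiability of the lift, linear growth so that $\tilde F(X,\tilde W(t,X))\in\mathcal{H}$, richness of the probability space) are exactly the right ones.
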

One of the key feature of this lift is that it allows to extend results of existence and uniqueness developped on the corresponding finite dimensional non linear transport equation 
\[\partial_t W+F(x,W)\cdot \nabla_x W=G(x,W), \]
to \eqref{eq: ME without noise}.
In particular the following uniqueness lemma, proved by a propagation of monotonicity argument.   
\begin{lemma}
\label{lemma: smooth uniqueness Hilbert approach}
    Under Hypotheses \ref{hyp: weak monotonicity} and \ref{hyp: linear growth in F} there exists at most one smooth solution with linear growth to \eqref{eq: ME without noise}. If there exists such a solution then it is $L^2-$monotone
\end{lemma}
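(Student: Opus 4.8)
The plan is to work entirely on the Hilbertian lift. By the preceding lemma, a smooth solution $W$ with linear growth gives rise to a solution $\tilde{W}$ of the lifted transport equation $\partial_t \tilde{W}+\tilde{F}(X,\tilde{W}(t,X))\cdot\nabla_X\tilde{W}=\tilde{G}(X,\tilde{W}(t,X))$ on $(0,T)\times\mathcal{H}$, and the linear growth of $W$ together with Hypothesis \ref{hyp: linear growth in F} ensures that the characteristic flow stays in $\mathcal{H}$. So it suffices to prove: (i) along characteristics, $L^2$-monotonicity of $\tilde{W}(t,\cdot)$ is propagated forward from $t=0$; and (ii) two solutions sharing the same $L^2$-monotone initial condition coincide. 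First I would fix two initial points $X_0,Y_0\in\mathcal{H}$ and run the characteristic ODEs $\dot X_t = \tilde{F}(X_t,\tilde{W}(t,X_t))$, $\dot Y_t=\tilde{F}(Y_t,\tilde{W}(t,Y_t))$, so that $U_t:=\tilde{W}(t,X_t)$ and $V_t:=\tilde{W}(t,Y_t)$ satisfy $\dot U_t=\tilde{G}(X_t,U_t)$, $\dot V_t=\tilde{G}(Y_t,V_t)$ (this is exactly the forward-backward reading of the transport equation). Then I would compute
\[
\frac{d}{dt}\esp{(U_t-V_t)\cdot(X_t-Y_t)}=\esp{(\dot U_t-\dot V_t)\cdot(X_t-Y_t)}+\esp{(U_t-V_t)\cdot(\dot X_t-\dot Y_t)},
\]
and substitute the ODEs to get
\[
\frac{d}{dt}\esp{(U_t-V_t)\cdot(X_t-Y_t)}=\esp{\big(\tilde{G}(X_t,U_t)-\tilde{G}(Y_t,V_t)\big)\cdot(X_t-Y_t)}+\esp{(U_t-V_t)\cdot\big(\tilde{F}(X_t,U_t)-\tilde{F}(Y_t,V_t)\big)}.
\]
By the joint $L^2$-monotonicity of $(F,G)$ in $(x,w)$ from Hypothesis \ref{hyp: weak monotonicity}, the right-hand side is nonnegative, so $t\mapsto\esp{(U_t-V_t)\cdot(X_t-Y_t)}$ is nondecreasing. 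Since $W_0=\tilde{W}(0,\cdot)$ is $L^2$-monotone, the quantity is nonnegative at $t=0$, hence for all $t$; evaluating at time $t$ (and using that $X_0\mapsto X_t$ is onto $\mathcal{H}$ for smooth solutions with linear growth, or simply recording the inequality along the flow) gives $L^2$-monotonicity of $\tilde{W}(t,\cdot)$, which is the second assertion.

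For uniqueness, let $W^1,W^2$ be two smooth solutions with linear growth and the same initial data, with lifts $\tilde{W}^1,\tilde{W}^2$. Fix $X_0\in\mathcal{H}$ and let $X_t$ be the characteristic of $\tilde{W}^1$ through $X_0$, with $U_t=\tilde{W}^1(t,X_t)$; similarly let $Y_t$ be the characteristic of $\tilde{W}^2$ through the \emph{same} $X_0$, with $V_t=\tilde{W}^2(t,Y_t)$. The idea is to estimate $\phi(t):=\esp{(U_t-V_t)\cdot(X_t-Y_t)}$. Differentiating as above and using the ODEs for $(X,U)$ and $(Y,V)$, the monotonicity of $(F,G)$ again controls the "diagonal" part of the derivative; the remaining cross terms are of the form $\esp{(\tilde{F}(X_t,U_t)-\tilde{F}(Y_t,V_t))\cdot(U_t-V_t)}$ and $\esp{(\tilde{G}(X_t,U_t)-\tilde{G}(Y_t,V_t))\cdot(X_t-Y_t)}$, whose sum is $\ge 0$, so in fact $\phi$ is nondecreasing with $\phi(0)=\esp{(\tilde{W}^1(0,X_0)-\tilde{W}^2(0,X_0))\cdot 0}=0$; combined with the $L^2$-monotonicity of $\tilde{W}^2(t,\cdot)$ just established (which gives $\esp{(\tilde{W}^2(t,X_t)-\tilde{W}^2(t,Y_t))\cdot(X_t-Y_t)}\ge 0$, i.e. a lower bound going the "wrong" way), one is led to compare $\phi(t)$ with $\esp{(U_t-\tilde{W}^2(t,X_t))\cdot(X_t-Y_t)}$ and to run a Grönwall argument on $\|X_t-Y_t\|^2$ and $\|U_t-V_t\|^2$. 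Concretely I would add to the monotonicity inequality the Lipschitz-in-nothing bookkeeping: write $\frac{d}{dt}\|X_t-Y_t\|^2 = 2\esp{(X_t-Y_t)\cdot(\tilde F(X_t,U_t)-\tilde F(Y_t,V_t))}$ and use the monotone combination to absorb the bad sign, closing the loop to get $X_t\equiv Y_t$ and $U_t\equiv V_t$, hence $\tilde{W}^1(t,X_0)=\tilde{W}^2(t,X_0)$ for all $X_0$; specializing $X_0$ to a variable with law $m$ whose value at a point is prescribed recovers $W^1=W^2$ pointwise via Proposition \ref{prop: expectation to pointwise}-type arguments (or simply because every $(x,m)$ is realized by some $X_0\in\mathcal H$).

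The main obstacle is the uniqueness step: the monotonicity of $(F,G)$ alone only makes $\phi$ monotone, which by itself does not force $\phi\equiv 0$ — one needs to couple it with the $L^2$-monotonicity of the solution at later times and with an energy estimate on the characteristics. The delicate point is that without any additional Lipschitz or one-sided Lipschitz bound on $(F,G)$ one cannot directly Grönwall $\|X_t-Y_t\|^2$; the resolution, as in finite-dimensional first-order scalar-conservation-law theory and in \cite{bertucci-monotone-finite}, is to exploit that for smooth solutions the flow map $X_0\mapsto X_t$ is a bijection of $\mathcal{H}$ preserving the structure, so that testing $L^2$-monotonicity of $\tilde W^i(t,\cdot)$ against the pair of characteristics started from the same $X_0$ but following the two different fields yields the two opposite inequalities $\phi(t)\ge 0$ and $\phi(t)\le 0$ after the appropriate re-parametrization — pinning $\phi\equiv 0$ and then, by strict convexity of $\|\cdot\|^2$, forcing $X_t=Y_t$. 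I would carry out this bijection/reparametrization argument carefully, as it is where the "propagation of monotonicity" genuinely does the work; everything else is the routine differentiation of inner products along ODEs in a Hilbert space.
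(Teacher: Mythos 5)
Your propagation-of-monotonicity step is essentially the paper's argument unfolded along characteristics (the paper phrases it as a maximum principle for the doubled-variables function $Z(t,X,Y)=\langle\tilde W^1(t,X)-\tilde W^2(t,Y),X-Y\rangle$, citing an abstract maximum-principle lemma, but the two presentations are morally the same for a linear transport equation). However, the uniqueness step as you set it up has a genuine gap. Starting both characteristics from the \emph{same} $X_0$ gives $\phi(0)=0$ and $\dot\phi\geq 0$, hence only $\phi(t)\geq 0$, which of course does not pin $\phi$ to zero; you correctly identify this, but neither of your proposed repairs works: a Gr\"onwall estimate on $\|X_t-Y_t\|^2$ is unavailable without a Lipschitz-type bound on $(F,G)$ (Hypothesis \ref{hyp: linear growth in F} only gives linear growth), and the ``two opposite inequalities after re-parametrization'' idea is not made precise and I do not see what would produce the upper bound $\phi(t)\leq 0$.

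The missing move is simpler than anything you propose: do \emph{not} set $X_0=Y_0$. Run $X$ along the $\tilde F^1$-flow from an arbitrary $X_0$ and $Y$ along the $\tilde F^2$-flow from an arbitrary $Y_0$. Then $L^2$-monotonicity of $W_0$ gives $\phi(0)\geq 0$ (not $=0$), and joint monotonicity of $(F,G)$ gives $\dot\phi\geq 0$, so $\phi(t)\geq 0$. Because the flows of a smooth vector field on $\mathcal H$ are bijections, every pair $(\bar X,\bar Y)$ at time $t$ is hit, so $\langle\tilde W^1(t,\bar X)-\tilde W^2(t,\bar Y),\bar X-\bar Y\rangle\geq 0$ for \emph{all} $\bar X,\bar Y$. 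Now linearize: take $\bar X=\bar Y+\varepsilon U$, divide by $\varepsilon$, and send $\varepsilon\to 0^\pm$ to get $\langle\tilde W^1(t,\bar Y)-\tilde W^2(t,\bar Y),U\rangle=0$ for all $U\in\mathcal H$, hence $\tilde W^1(t,\cdot)\equiv\tilde W^2(t,\cdot)$. This is exactly the paper's argument; the abstract maximum principle is simply a clean way to encapsulate both the invertibility-of-flow step and the supersolution inequality. Finally, your parenthetical remark ``every $(x,m)$ is realized by some $X_0\in\mathcal H$'' does not yield pointwise equality (an $L^2$-level identity only says $W^1(t,\cdot,\mathcal L(Y))=W^2(t,\cdot,\mathcal L(Y))$ a.e.~with respect to $\mathcal L(Y)$); the correct conclusion, as the paper does it, is to pick $Y$ with full support, use continuity in $x$ to upgrade a.e.~equality to everywhere, and then use density to handle measures without full support.
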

\begin{proof} Consider two solutions $W^1,W^2$. We define 
    \[(0,T)\times \mathcal{H}^2\ni (t,X,Y)\mapsto Z(t,X,Y)=\langle \tilde{W}^1(t,X)-\tilde{W}^2(t,Y),X-Y\rangle,\]
    where we use the lifting $\tilde{W}^i(t,X)=W^i(t,X,\mathcal{L}(X))$. 
    Since $W^1$ and $W^2$ are smooth, $Z$ is a solution to 
    \begin{equation}
    \label{eq: Z smooth}
    \partial_t Z+F^1(t,X)\cdot\nabla_X Z+F^2(t,Y)\cdot \nabla_Y Z=\langle G^1(t,X)-G^2(t,Y),X-Y\rangle+\langle F^1(t,X)-F^2(t,Y),W^1(t,X)-W^2(t,Y)\rangle,
    \end{equation}
    with the notation $F^i(t,X)=F(t,X,\mathcal{L}(X),W^i(t,X))$ and similarly for $G$. From Hypothesis \ref{hyp: weak monotonicity}, we deduce that $Z$ is such that 
    \[
\left\{
\begin{array}{c}
     Z|_{t=0}\geq 0, \\
     \partial_t Z+F^1(t,X)\cdot\nabla_X Z+F^2(t,Y)\cdot \nabla_Y Z\geq 0 \quad \forall (t,X,Y)\in (0,T)\times \mathcal{H}^2.
\end{array}
\right.\]
Since $Z$ also has at most quadratic growth by assumptions, by the maximum principle \cite{common-noise-in-MFG} Lemma 4.3, we conclude to 
\[\forall t<T, \quad (X,Y) \quad \langle \tilde{W}^1(t,Y)-\tilde{W}^2(t,Y),X-Y\rangle\geq 0.\]
Taking this inequality for $X=Y+\varepsilon U$ for some $U\in \mathcal{H}$ leads to  
\[\forall (U,Y)\in\mathcal{H}^2\quad  \varepsilon\langle \tilde{W}^1(t,Y+\varepsilon U)-\tilde{W}^2(t,Y),U\rangle_{L^2}\geq 0.\]
Dividing by $\varepsilon$ and taking the limit as it tends to 0 implies 
\[\forall Y\in\mathcal{H} \quad W^1(t,Y,\noise,\mathcal{L}(Y))=W^2(t,Y,\noise,\mathcal{L}(Y))\quad  a.s.\]
Consider now a random variable $Y$ with positive density on $\reels^d$, the above equality implies that $x\mapsto W^1(t,x,\noise,\mathcal{L}(Y))$ and $x\mapsto W^2(t,x,\noise,\mathcal{L}(Y))$ are equal almost everywhere. Because they are both continuous functions by assumptions,
\[\forall x\in\reels^d \quad W^1(t,x,\noise,\mathcal{L}(Y))=W^2(t,x,\noise,\mathcal{L}(Y)).\]
For measure that do no have full support on $\reels^d$, we conclude by density. As a consequence, there is indeed at most one smooth solution $W$. The fact that it is $L^2-$monotone is observed simply by taking $Z$ for $W^1=W^2=W$.
\end{proof}
Let us insist at this point that for smooth solutions such lemma is not necessary to obtain uniqueness, see for example \cite{lipschitz-sol}. Rather, we want to highlight the proof itself. It relies on the study of an auxiliary function, the non negativity of which implies uniqueness. The argument to show that the auxiliary function 
\begin{equation}
\label{auxiliary function smooth}
(t,X,Y)\mapsto \langle \tilde{W}_1(t,X)-\tilde{W}_2(t,Y),X-Y\rangle
\end{equation}
stays non-negative over time under monotonicity relies on a maximum principle argument. However it is well known that such argument can be generalised beyond smooth functions through the notion of viscosity solutions \cite{crandall1992users}, even in Hilbert spaces \cite{Lions-visc-inf}. This is the key idea behind the notion of monotone solution: defining a satisfying, non smooth, notion of solution to \eqref{eq: ME without noise} in such fashion that for two solutions the map \eqref{auxiliary function smooth} is a viscosity supersolution to \eqref{eq: Z smooth}. For more on the notion of monotone solution we refer to the papers introducing the notion \cite{bertucci-monotone-finite,bertucci-monotone}. Before presenting a precise definition for the notion of monotone solution we use in this section, we need to introduce some notations. Namely, a function $\psi:[0,T)\times \mathcal{H}\to \reels$ is said to belong to the space of test functions $H^\mathcal{H}_{test}$ if it is of the form 
\[\psi(t,X)=\varphi(t)+\langle h,X\rangle-\alpha e^{\lambda t}E_4(X),\]
where
\begin{enumerate}
    \item[-] $\varphi$ is uniformly bounded in $C^1([0,\tau],\reels)$ for any $\tau<T$,
    \item[-] $h\in \mathcal{H}$,
    \item[-] $\alpha,\lambda  >0$,
\end{enumerate}
\begin{definition}
    Let $f,g:[0,T]\times \mathcal{H}\to \mathcal{H}$ be continuous functions and further assume that there exists a constant $C$ such that 
    \[\forall (t,X)\in [0,T]\times \mathcal{H},\quad |f(t,X)|\leq C(1+|X|+\|X\|)\]
    we say that a continuous function $w:[0,T]\times \mathcal{H}\to \mathcal{H}$ is a viscosity supersolution to 
    \[\partial_t w+f(t,X)\cdot \nabla_Xw\geq g(t,X),\]
    if at every point of minimum $(t^*,X^*)$ in $(0,T)\times \mathcal{H}\cap L^4(\Omega,\reels^d)$ of $w-\psi$ for $\psi\in H_{test}^\mathcal{H}$ the following holds 
    \[\partial_t \psi(t^*,X^*)+f(t^*,X^*)\cdot \nabla_X \psi(t^*,X^*)\geq g(t^*,X^*).\]
\end{definition}
\begin{remarque}
    The growth hypothesis on $f$ ensures that at points $X^*\in L^4(\Omega,\reels^d)$ of minimum the gateaux derivative of $X\mapsto E_4(X)$ in the direction $f(t^*,X^*)$ is well defined. Indeed it ensures $f(t^*,X^*)\cdot X^*|X^*|^2\in L^1(\Omega,\reels^d)$. More generally it may seems weird that we introduce a set of test functions which are not even continuous on $\mathcal{H}$ ($X\mapsto E_4(X)$, being only lower semi-continuous). To prove uniqueness of solutions such a technicality is in fact not necessary. However restraining the set of test functions in such fashion will be very useful to show the stability of solutions. As for the fact, we can restrict ourselves to linear perturbations beyond this term, it shall be justified later on. 
\end{remarque}
Now that a proper definition of the concept of viscosity supersolution as well as a set of test function have been introduced, we may proceed with the definition of a monotone solution
\begin{definition}
    A continuous function with linear growth $W:[0,T]\times \reels^d\times \mathcal{P}_2(\reels^d)\to \reels^d$ is said to be a monotone solution to \eqref{eq: ME without noise} if for any $(Y,V)\in\mathcal{H}^2$ the function $Z$ defined by 
    \[(t,X)\mapsto Z(t,X)=\langle W(t,X,\mathcal{L}(X))-V,X-Y\rangle,\]
    is a viscosity supersolution of 
    \[\partial_t Z+F(X,W(t,X))\cdot \nabla_X Z \geq \langle F(X,\mathcal{L}(X),W(t,X,\mathcal{L}(X))), W(t,X,\mathcal{L}(X))-V\rangle+\langle G(X,\mathcal{L}(X),W(t,X,\mathcal{L}(X))),X-Y\rangle\]
\end{definition}
We now show that uniqueness still holds in this class of solutions, by an adaption of Lemma \ref{lemma: smooth uniqueness Hilbert approach} to monotone solutions using classic tools from the theory of viscosity solutions.
\begin{thm}
\label{thm: uniqueness hilbert}
Under Hypotheses \ref{hyp: weak monotonicity} and \ref{hyp: linear growth in F} there is at most one monotone solution to \eqref{eq: ME without noise}. Moreover, if there exists such a solution it is $L^2-$monotone.
\end{thm}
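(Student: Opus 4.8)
The plan is to transcribe the proof of Lemma~\ref{lemma: smooth uniqueness Hilbert approach} into the language of viscosity solutions, replacing the classical maximum principle step by its Hilbertian viscosity counterpart on the doubled space $\mathcal{H}^2\cong L^2(\Omega,\reels^{2d})$. Let $W^1,W^2$ be two monotone solutions, write $\tilde W^i(t,X)=W^i(t,X,\mathcal{L}(X))$, and set on $(0,T)\times\mathcal{H}^2$
\[
Z(t,X,Y)=\langle \tilde W^1(t,X)-\tilde W^2(t,Y),X-Y\rangle .
\]
First I would record the decomposition $Z(t,X,Y)=Z_1(t,X)+Z_2(t,Y)$, where $Z_1(t,X)=\langle \tilde W^1(t,X),X-Y\rangle$ is exactly the function appearing in the definition of monotone solution for $W^1$ with data $(Y,0)$, and $Z_2(t,Y)=\langle \tilde W^2(t,Y),Y-X\rangle$ is the corresponding function for $W^2$ with data $(X,0)$. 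By definition each is a viscosity supersolution of its transport inequality; combining the two inequalities at a common point (the extra terms linear in $X$ and in $Y$ produced when differentiating $Z$ along $(\tilde F^1,\tilde F^2)$ reassemble exactly as in the smooth identity \eqref{eq: Z smooth}) shows that $Z$ is a viscosity supersolution, in the doubled Hilbert space, of
\[
\partial_t Z+\tilde F^1(t,X)\cdot\nabla_X Z+\tilde F^2(t,Y)\cdot\nabla_Y Z\ \geq\ \langle \tilde G^1(t,X)-\tilde G^2(t,Y),X-Y\rangle+\langle \tilde F^1(t,X)-\tilde F^2(t,Y),\tilde W^1(t,X)-\tilde W^2(t,Y)\rangle ,
\]
with $\tilde F^i(t,X)=F(X,\mathcal{L}(X),W^i(t,X,\mathcal{L}(X)))$ and similarly for $G$. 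Applying the joint $L^2$-monotonicity of $(F,G)$ from Hypothesis~\ref{hyp: weak monotonicity} with $U=W^1(t,X,\mathcal{L}(X))$, $V=W^2(t,Y,\mathcal{L}(Y))$, the right-hand side is non-negative, so $Z$ is a viscosity supersolution of $\partial_t Z+\tilde F^1\cdot\nabla_X Z+\tilde F^2\cdot\nabla_Y Z\geq 0$, and $Z(0,\cdot,\cdot)\geq 0$ by $L^2$-monotonicity of $W_0$.

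Next I would invoke the Hilbertian maximum principle used in Lemma~\ref{lemma: smooth uniqueness Hilbert approach} (\cite{common-noise-in-MFG}, Lemma 4.3), now in its viscosity form on $(0,T)\times\mathcal{H}^2$, to conclude $Z\geq 0$. Since the lifts $\tilde F^i$ have linear growth (Hypothesis~\ref{hyp: linear growth in F}) and $Z$ has at most quadratic growth in $\mathcal{H}^2$ (by linear growth of $W^1,W^2$), the coercive penalization $-\alpha e^{\lambda t}(E_4(X)+E_4(Y))$ built into $H^{\mathcal{H}}_{test}$ beats the quadratic growth of $Z$, so after an arbitrarily small linear perturbation $\langle h,(X,Y)\rangle$ the relevant infimum is attained (Ekeland/Stegall variational principle); evaluating the supersolution property at such a minimizing point and combining with $Z|_{t=0}\geq 0$ propagates non-negativity forward in time via a Grönwall argument. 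Once $Z\geq 0$ on $[0,T)\times\mathcal{H}^2$, I take $X=Y+\varepsilon U$ for $U\in\mathcal{H}$, $\varepsilon>0$, divide by $\varepsilon$ and let $\varepsilon\to 0$ to get $\langle \tilde W^1(t,Y)-\tilde W^2(t,Y),U\rangle\geq 0$ for all $U$, hence $\tilde W^1(t,Y)=\tilde W^2(t,Y)$ almost surely; choosing $Y$ with positive density on $\reels^d$ and using continuity of the $W^i$ in $x$, then density in $\mathcal{P}_2(\reels^d)$ for measures without full support, gives $W^1\equiv W^2$. The $L^2$-monotonicity of a solution is then the special case $W^1=W^2=W$: the same computation gives $\langle \tilde W(t,X)-\tilde W(t,Y),X-Y\rangle\geq 0$ for all $t,X,Y$, which is precisely $L^2$-monotonicity of $W(t,\cdot,\cdot)$.

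The main obstacle is the rigorous handling of the viscosity maximum principle on $\mathcal{H}^2$ with the restricted test-function class $H^{\mathcal{H}}_{test}$. Two points require care: (i) that a test function for $Z$ on the doubled space restricts, at a candidate minimum point $(t^*,X^*,Y^*)$, to admissible test functions for the two one-variable monotone-solution inequalities — the delicate bookkeeping being the cross terms $\langle \tilde W^2(t,Y),X\rangle$ and $\langle \tilde W^1(t,X),Y\rangle$, where one must be mindful of time-regularity since $W^1,W^2$ need not be $C^1$ in $t$; and (ii) the attainment of the penalized infima in infinite dimensions, which is exactly what the $E_4$-term and the linear perturbations in the definition of $H^{\mathcal{H}}_{test}$ are designed to provide. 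Everything else is a routine transcription of the propagation-of-monotonicity argument of Lemma~\ref{lemma: smooth uniqueness Hilbert approach}.
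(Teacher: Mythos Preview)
Your identification of obstacle (i) is correct, and it is fatal to the single-time approach as written. When you freeze $Y=Y^*$ and try to apply the monotone-solution property of $W^1$ at $(t^*,X^*)$, the test function you must touch from below is
\[
(t,X)\longmapsto \psi(t,X,Y^*)+\langle \tilde W^2(t,Y^*),X-Y^*\rangle,
\]
and the second term is neither of the form $\langle h,X\rangle$ with $h\in\mathcal{H}$ fixed (the coefficient depends on $t$) nor $C^1$ in $t$ (since $W^2$ is only continuous). Hence it does not belong to $H^{\mathcal{H}}_{test}$, and the definition of monotone solution cannot be invoked. Setting $V=0$ does not help: the cross term $\langle\tilde W^2(t,Y^*),X\rangle$ reappears as part of the test function regardless of how you split $Z$.

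The paper resolves this by doubling the \emph{time} variable as well: one works with $Z(t,s,X,Y)=\langle \tilde W^1(t,X)-\tilde W^2(s,Y),X-Y\rangle$ and penalises by $\frac{1}{2\gamma^3}|t-s|^2$. At a minimum $(s_1^*,s_2^*,X^*,Y^*)$, freezing $(s_2,Y)=(s_2^*,Y^*)$ now makes $\tilde W^2(s_2^*,Y^*)$ a genuine constant, so the monotone-solution property of $W^1$ applies with the fixed pair $(Y,V)=(Y^*,\tilde W^2(s_2^*,Y^*))$ and an admissible test function; symmetrically for $W^2$. One then sums the two inequalities, uses the joint monotonicity of $(F,G)$ to get a contradiction when both $s_1^*,s_2^*>0$, and handles the boundary case $s_i^*=0$ via the bound $|s_1^*-s_2^*|\leq C\sqrt{\gamma}$ coming from the penalty. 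This time-doubling is the standard viscosity device for comparing two merely continuous solutions, and it is precisely the missing ingredient in your sketch; the rest of your outline (Stegall's lemma for attainment, the $E_4$ penalisation with $\lambda$ large to absorb the drift, and the final argument $X=Y+\varepsilon U$) matches the paper.
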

\begin{proof}
    Suppose there exist two monotone solutions $W^1$ and $W^2$. We define 
    \[Z(t,s,X,Y)=\langle \tilde{W}^1(t,X)-\tilde{W}^2(s,Y),X-Y\rangle.\]
Following Lemma \ref{lemma: smooth uniqueness Hilbert approach}, it is sufficient to show that 
\[\forall (t,X,Y)\in [0,T]\times \mathcal{H}^2 \quad Z(t,t,X,Y)\geq 0,\]
to conclude that $W^1\equiv W^2$. 
Let us assume by contradiction that there exists a triple $(t,X,Y)$ and $\delta>0$ such that 
\[Z(t,t,X,Y)<-\delta.\]
We first remark that by assumption on $W^1,W^2$, $Z$ has at most quadratic growth (in the $L^2-$norm), as such, for any $\alpha,\gamma,\lambda>0$, the function 
\[Z_{\alpha,\lambda}:(s_1,s_2,X,Y)\mapsto Z(s_1,s_2,X,Y)+\alpha \left(e^{\lambda s_1}(1+E_4(X))+\frac{1}{T-s_1}+e^{\lambda s_2}(1+E_4(Y))+\frac{1}{T-s_2}\right)+\frac{1}{2\gamma^3}|s_1-s_2|^2,\]
is bounded from below and lower semi-continuous in $\mathcal{H}$. By Stegall's Lemma \cite{Stegall1978}, for any $\varepsilon>0$ there exists $h_X,h_Y\in \mathcal{H}$ such that $\|h_X\|,\|h_Y\|\leq \varepsilon$ and 
\[Z_{\alpha,\lambda,\varepsilon}: (s_1,s_2,X,Y)\mapsto Z_{\alpha,\lambda}(s_1,s_2,X,Y)+\langle h_X,X\rangle+\langle h_Y,Y\rangle,\]
reaches a point of strict minimum  $(s_1^*,s_2^*,X^*,Y^*)\in [0,T]^2\times \mathcal{H}^2$. For $\alpha,\lambda$ sufficiently small, this point of minimum is such that 
\begin{equation}
\label{eq: uniqueness monotone solution on hilbert space Z negative}
Z_{\alpha,\lambda,\varepsilon}(s_1^*,s_2^*,X^*,Y^*)\leq -\frac{\delta}{4},
\end{equation}
implying that $(s_1^*,s_2^*)\neq (0,0)$ by assumption on $W_0$.
Indeed by density of $L^4(\Omega,\reels^d)$ in $\mathcal{H}$, there exists a sequence $(X_n)_{n\in\mathbb{N}}\subset L^4(\Omega,\reels^d)$ (take for example $X_n=X\mathds{1}_{|X|\leq n}$) such that 
\[\underset{n\to \infty}{\lim} \|X_n-X\|=0.\]
By the continuity of $Z$, there exists $n_0$ such that for $n\geq n_0$
\[Z_{\alpha,\lambda}(t,t,X_n,Y_n)\leq -\frac{\delta}{2}+C\alpha(1+E_4(X_n)+E_4(Y_n)).\]
Taking $\alpha \leq \frac{\delta}{4C(1+E_4(X_{n_0})+E_4(Y_{n_0}))}$ yields \eqref{eq: uniqueness monotone solution on hilbert space Z negative}. 
 Let us assume that both $s_1^*$ and $s_2^*$ are different from 0. Applying the definition of monotone solution to $W^1$ we get the following inequality
\begin{align*}
&\frac{-\alpha}{(T-s_1^*)^2}+\frac{1}{\gamma^3}(s_1^*-s_2^*)-\alpha\lambda e^{\lambda s_1^*}(1+E_4(X^*))-\langle F^1(s_1^*,X^*),h_X+3\alpha (X^*)^2X^*\rangle\\
&\geq \langle F^1(s_1^*,X^*),W^1(s_1^*,X^*)-W^2(s_2^*,Y^*)\rangle+\langle G^1(s_1^*,X^*),X^*-Y^*\rangle,\\
\end{align*} 
and a symmetric one for $W^2$. Choosing $1\geq \alpha \geq \gamma,\varepsilon$, there exists a constant $\lambda_1$ depending only on the linear growth of $F$ and $W^1$ such that for any $\lambda\geq \lambda_1$
\[-\alpha\lambda e^{\lambda s_1^*}(1+E_4(x^*))-\langle F^1(s_1^*,X^*),h_X+3\alpha (X^*)^2X^*\rangle\leq  0.\]
Letting $\lambda\geq \max\left(\lambda_1,\lambda_2\right)$, where $\lambda_2$ is defined similarly, the following inequalities hold
\begin{gather*}
    \frac{-\alpha}{(T-s_1^*)^2}-\frac{1}{\gamma^3}(s_1^*-s_2^*)\geq \langle \tilde{F}^1(s_1^*,X^*),\tilde{W}^1(s_1^*,X^*)-\tilde{W}^2(s_2^*,Y^*)\rangle+\langle \tilde{G}^1(s_1^*,X^*),X^*-Y^*\rangle,\\
    \frac{-\alpha}{(T-s_2^*)^2}-\frac{1}{\gamma^3}(s_2^*-s_1^*)\geq \langle \tilde{F}^2(s_2^*,X^*),\tilde{W}^2(s_2^*,Y^*)-\tilde{W}^1(s_1^*,X^*)\rangle+\langle \tilde{G}^2(s_2^*,Y^*),Y^*-X^*\rangle,\\
\end{gather*}
with the notation $\tilde{F}^1(s,X)=\tilde{F}(X,\tilde{W}^1(s,X))$, and so on. Summing them and using the joint monotonicity of $(F,G)$ we get a contradiction. This means that for $\alpha$ sufficiently small either $s_1^*=0$ or $s_2^*=0$. We may assume without loss of generality that $s_1^*=0$. Using the quadratic growth of $Z$ combined with the inequality
\[Z_{\alpha,\lambda,\varepsilon}(0,s_2^*,X^*,Y^*)\leq Z_{\alpha,\lambda,\varepsilon}(0,0,0,0)=C\alpha,\]
we deduce that 
\begin{equation}
\label{eq: ineq uniqueness hilbert s1=0}
\alpha\left( E_4(X^*)+E_4(y^*)\right)+\frac{1}{2\gamma^3}|s_2^*|^2\leq C\left(\alpha+\|X^*\|^2+\|Y^*\|^2\right),
\end{equation}
for a constant $C$ depending only on $\lambda,T$ and the linear growth of $W^1,W^2$. Since for every $X\in L^4(\Omega,\reels^d)$ $\|X\|^2\leq \sqrt{E_4(X)}$, we get the following bounds 
\[\alpha E_4(X^*),\alpha E_4(Y^*)\leq C,\]
for possibly another constant with the same dependencies. Finally, reusing \eqref{eq: ineq uniqueness hilbert s1=0} combined with this newfound bound gives
\[s^*_2\leq C'\sqrt{\gamma}. \]
For $\alpha$ sufficiently small we get a contradiction with \eqref{eq: uniqueness monotone solution on hilbert space Z negative} by letting $\gamma$ tend to $0$ thanks to the monotonicity of the initial condition and the continuity of $Z$.
\end{proof}
\begin{remarque}
Let us remark that this notion of solution can be extended very easily to the class of semi-monotone initial conditions instead of purely monotone. Let us assume $W_0$ is only semi-monotone which is to say there exists a constant $C>0$ such that
\[\forall (X,Y)\in \mathcal{H}^2, \quad \langle \tilde{W}_0(X)-\tilde{W}_0(Y),X-Y\rangle \geq -C\|X-Y\|^2.\]
If there exists a $c>0$ such that the triple \[(W_0',G',F'):(t,X,W)\mapsto (\tilde{W}_0(X)+cX,\tilde{G}(X,W)+c\tilde{F}(X,W),\tilde{F}(X,W)),\]
satisfies the assumption \eqref{hyp: weak monotonicity}, then uniqueness of a continuous semi-monotone solution is obtained by showing $W+cX$ is a monotone solution associated to this data. If there exists two such solutions $(W^1,W^2)$, then following this proof of uniqueness they must satisfy
\[\forall (t,X,Y)\in[0,T]\times \mathcal{H}^2, \quad \langle \tilde{W}^1(t,X)-\tilde{W}^2(t,Y),X-Y\rangle \geq -c\|X-Y\|^2,\]
which is only possible if $W^1\equiv W^2$. This idea appears to be similar to the one developed in \cite{mezaros-canonical} and seems quite natural.
\end{remarque}
It has now been established that the notion of monotone solution is sufficiently strong to conserve uniqueness under natural assumptions. We now turn to the question of existence of such solutions. To show the existence of monotone solutions we will proceed by first regularizing the coefficients to get existence of smoother solutions (which is easier) and then use the following stability result to conclude to the existence of a monotone solution under appropriate monotonicity assumptions.
  \begin{thm}
  \label{stability without noise}
Suppose that there exists $(F_n,G_n)_{n\in\mathbb{N}}$, a sequence of functions satisfying Hypothesis \ref{hyp: linear growth in F}, uniformly in $n\in\mathbb{N}$ and converging locally uniformly to a couple $(F,G)$. Suppose there exists an associated sequence of monotone solutions $(W_n)_{n\in\mathbb{N}}$ such that 
\[\exists C>0, \quad \forall n\in \mathbb{N}, \forall (t,x,\mu)\in [0,T]\times\reels^d\times \mathcal{P}_2(\reels^d), \quad |W_n(t,x,\mu)|\leq C\left(1+|x|+\sqrt{E_2(\mu)}\right).\]
If $(W_n)_{n\in\mathbb{N}}$ converges locally uniformly to a function $W$, then $W$ is a monotone solution associated to the data $(F,G)$.
\end{thm}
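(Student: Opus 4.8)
The plan is to unfold the definition of monotone solution and then run the classical stability argument for viscosity solutions, doing the infinite-dimensional bookkeeping with Stegall's variational principle exactly as in the proof of Theorem~\ref{thm: uniqueness hilbert}. Note first that, being a locally uniform limit of the $W_n$, the function $W$ is continuous and inherits the linear growth constant $C$, so it lies in the correct class. Fix $(Y,V)\in\mathcal{H}^2$ and set $Z(t,X)=\langle W(t,X,\mathcal{L}(X))-V,X-Y\rangle$ and, for each $n$, $Z_n(t,X)=\langle W_n(t,X,\mathcal{L}(X))-V,X-Y\rangle$. By hypothesis each $Z_n$ is a viscosity supersolution of the $(F_n,G_n)$-version of the inequality in the definition of monotone solution, and $Z_n\to Z$ in the sense that $Z_n(t_n,X_n)\to Z(t^*,X^*)$ whenever $t_n\to t^*$ and $X_n\to X^*$ in $\mathcal{H}$ (this uses that $X_n\to X^*$ in $\mathcal{H}$ forces $\mathcal{L}(X_n)\to\mathcal{L}(X^*)$ in $\mathcal{P}_2$, a subsequence along which $X_n\to X^*$ a.s., the locally uniform convergence $W_n\to W$, and uniform integrability coming from the common linear growth). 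We must show that for every $\psi\in H^{\mathcal{H}}_{test}$ and every point of minimum $(t^*,X^*)\in(0,T)\times(\mathcal{H}\cap L^4(\Omega,\reels^d))$ of $Z-\psi$ one has
\[
\partial_t\psi(t^*,X^*)+\tilde{F}(X^*,\tilde{W}(t^*,X^*))\cdot\nabla_X\psi(t^*,X^*)\ \geq\ \langle \tilde{F}(X^*,\tilde{W}(t^*,X^*)),\tilde{W}(t^*,X^*)-V\rangle+\langle \tilde{G}(X^*,\tilde{W}(t^*,X^*)),X^*-Y\rangle .
\]

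Second, I would produce perturbed minima for $Z_n-\psi$. After the customary reduction --- adding $-|t-t^*|^2$ to the $\varphi$-component of $\psi$ and a small coercive quartic correction, all of which keep $\psi$ inside $H^{\mathcal{H}}_{test}$ (this is where the paper's observation that it suffices to perturb by linear functionals, together with the shape of the $E_4$-term, is used) --- one may assume $(t^*,X^*)$ is a strict minimum. The map $Z_n-\psi$ is lower semicontinuous; because the $W_n$ share a linear growth constant, $Z_n$ has quadratic growth in $\|\cdot\|$ uniformly in $n$, so the $-\alpha e^{\lambda t}E_4$ term in $-\psi$ (and $E_4(X)\geq\|X\|^4$) makes $Z_n-\psi$ coercive and bounded below. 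Stegall's variational principle \cite{Stegall1978} then yields $h_n\in\mathcal{H}$ with $\|h_n\|\to 0$ such that $Z_n-\psi-\langle h_n,\cdot\rangle$ attains a strong minimum at some $(t_n,X_n)$; crucially $\psi+\langle h_n,\cdot\rangle\in H^{\mathcal{H}}_{test}$, the coercivity estimate gives $X_n\in L^4$ with $E_4(X_n)$ bounded uniformly in $n$, and comparing the value at $(t_n,X_n)$ with the value at $(t^*,X^*)$ and invoking the strictness of the minimum together with the locally uniform convergences forces $(t_n,X_n)\to(t^*,X^*)$, so in particular $t_n\in(0,T)$ and $X_n\to X^*$ in $\mathcal{H}$ for $n$ large.

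Third, I would conclude by passing to the limit. Applying the supersolution property of $Z_n$ at its minimum point $(t_n,X_n)$ with the test function $\psi+\langle h_n,\cdot\rangle$ gives
\[
\partial_t\psi(t_n,X_n)+\tilde{F}_n(X_n,\tilde{W}_n(t_n,X_n))\cdot\bigl(\nabla_X\psi(t_n,X_n)+h_n\bigr)\ \geq\ \langle \tilde{F}_n(X_n,\tilde{W}_n(t_n,X_n)),\tilde{W}_n(t_n,X_n)-V\rangle+\langle \tilde{G}_n(X_n,\tilde{W}_n(t_n,X_n)),X_n-Y\rangle .
\]
Letting $n\to\infty$ one uses $t_n\to t^*$, $X_n\to X^*$ in $\mathcal{H}$ and $h_n\to 0$; the locally uniform convergences $W_n\to W$, $F_n\to F$, $G_n\to G$ (so $\tilde{F}_n(X_n,\tilde{W}_n(t_n,X_n))\to\tilde{F}(X^*,\tilde{W}(t^*,X^*))$ in $\mathcal{H}$, and likewise for $\tilde{G}_n$); and the uniform $E_4$-bound on $(X_n)$, which lets $\partial_t\psi$ and the Gateaux derivative $\tilde{F}_n(X_n,\tilde{W}_n(t_n,X_n))\cdot\nabla_X\psi(t_n,X_n)$ pass to the limit (the linear growth of the fields guarantees the integrability of the relevant products, cf. the remark following the definition of viscosity supersolution). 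This yields the desired inequality at $(t^*,X^*)$, so $Z$ is a viscosity supersolution of the limiting equation; since $(Y,V)$ was arbitrary, $W$ is a monotone solution associated to $(F,G)$.

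The main obstacle is the infinite-dimensional optimization underlying the whole scheme: $Z_n-\psi$ need not attain its infimum, and a minimizing sequence need not be precompact in any directly usable way, since $Z_n$ depends nonlinearly on both $X$ and $\mathcal{L}(X)$ and is not weakly lower semicontinuous. This is precisely what forces the use of Stegall's principle and the carefully restricted test-function class $H^{\mathcal{H}}_{test}$ (linear perturbations, under which the class is stable, plus the coercive $E_4$-penalization), and it is the one step that genuinely departs from the finite-dimensional stability proof. A secondary, more routine difficulty is carrying out the reduction to a \emph{strict} minimum while remaining inside that restricted class, and upgrading the convergence of $(X_n)$ (via the uniform $E_4$-bound and a subsequence argument) just enough to pass to the limit in the terms involving $E_4$ and the linearly growing coefficients.
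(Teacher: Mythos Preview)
Your overall scheme matches the paper's: reduce to a strict minimum, use Stegall's lemma to produce minima $(t_n,X_n)$ of $Z_n-\psi-\langle h_n,\cdot\rangle$, show $(t_n,X_n)\to(t^*,X^*)$ in $(0,T)\times\mathcal{H}$, apply the viscosity property of $W_n$, and pass to the limit. The argument for the right-hand side (and for $\|\tilde W_n(t_n,X_n)-\tilde W(t^*,X^*)\|\to 0$) goes through essentially as you sketch, using the uniform $L^4$ bound to get relative compactness of $(\mathcal{L}(X_n))$ in $\mathcal{P}_2$ and then a cutoff on compacts.

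There is, however, a genuine gap on the left-hand side. You write that the uniform $E_4$-bound ``lets $\partial_t\psi$ and the Gateaux derivative $\tilde F_n(X_n,\tilde W_n(t_n,X_n))\cdot\nabla_X\psi(t_n,X_n)$ pass to the limit''. This is not true as stated: $E_4$ is only lower semicontinuous for the $L^2$-topology, and $X_n\to X^*$ in $L^2$ together with $\sup_n E_4(X_n)<\infty$ does \emph{not} imply $E_4(X_n)\to E_4(X^*)$, nor that $\int F_n\cdot x|x|^2\,d\mu_n$ converges. The paper does not prove convergence of these terms. Instead it groups them as
\[
-\alpha e^{\lambda t_n}\int_{\reels^d}\Bigl(\lambda|x|^4+F_n(x,\mu_n,W_n(t_n,x,\mu_n))\cdot x|x|^2\Bigr)\mu_n(dx),
\]
observes that for $\lambda$ large (depending only on the common linear-growth constant) the integrand is bounded below uniformly in $n$, and then applies a Fatou-type lemma for weakly converging measures to obtain only a $\limsup$ inequality. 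That is enough for the supersolution conclusion, but it means the argument works only for test functions with $\lambda\geq\lambda_0$; the paper closes by remarking that this restricted class still suffices for uniqueness. Your write-up should make this Fatou step explicit and acknowledge the restriction on $\lambda$.

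A minor point: your ``small coercive quartic correction, all of which keep $\psi$ inside $H^{\mathcal H}_{test}$'' is not right as stated---there is no room in $H^{\mathcal H}_{test}$ for a localizing bump in $X$. The paper instead adds $-|t-t^*|^3-\|X-X^*\|^3$, which is \emph{not} in $H^{\mathcal H}_{test}$ but whose derivatives vanish at $(t^*,X^*)$, so the final inequality for $\psi$ is unaffected.
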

\begin{proof}
    Fix $(V,Y)\in \mathcal{H}^2$ and let us assume that 
    \[Z: (t,X)\mapsto \langle \tilde{W}(t,X)-V,X-Y\rangle,\]
    is such that $Z-\varphi$ reaches a minimum at $(t^*,X^*)$ in $(0,T)\times \mathcal{H}$ . Considering 
    \[\tilde{\varphi}:(t,X)\mapsto \varphi(t,X)-|t-t^*|^3-\|X-X^*\|^3\]
    We may always assume that $(t^*,X^*)$ is a strict minimum of $Z-\varphi$ and that
\begin{equation}
    \label{ineq: strict minimum hilbert}
    \forall (t,X)\in [0,T]\times \mathcal{H}\quad (Z-\varphi)(t,X)-(Z-\varphi)(t^*,X^*)\geq  |t-t^*|^3+\|X-X^*\|^3.
\end{equation}
Moreover by the growth assumption on the family $(W_n)_{n\in\mathbb{N}}$, and the form of our test functions, there exists a constant $C$ such that uniformly in $n$
\begin{equation}
\label{ineq: growth at infinity stability hilbert}
\forall (t,X)\in [0,T]\times L^2(\Omega, \reels^d) \quad Z_n(t,X)-\varphi(t,X)\geq -C+\frac{\alpha}{2}E_4(X),
\end{equation}
for some $\alpha>0$.
    Since $Z_n-\varphi$ is bounded from below and lower semi-continuous, by Stegall's Lemma \cite{Stegall1978}, we can always find a perturbation $h_n$ such that $\|h_n\|\leq \frac{1}{n}$ and 
    \[(t,X)\mapsto (Z_{n}-\varphi)(t,X)+\langle h_n,X\rangle,\]
    reaches a minimum $(t_n,X_n)$ on $[0,T]\times \mathcal{H}$. By \eqref{ineq: growth at infinity stability hilbert}, $(X_n)_{n\in \mathbb{N}}$ is bounded independently of $n$ in $L^4(\Omega,\reels^d)$, consequently for any $n\in \mathbb{N}$ 
    \[Z_n(t_n,X_n)-\varphi(t_n,X_n)\leq Z_n(t^*,X^*)-\varphi(t^*,X^*)+\frac{C}{n}.\]

    Taking both the $\liminf$ and $\limsup$ in this inequality yields     
    \[\underset{n\to \infty}{\lim} \left( Z_n(t_n,X_n)-\varphi(t_n,X_n)\right) =Z(t^*,X^*)-\varphi(t^*,X^*).\]
    \[Z(t_n,X_n)-\varphi(t_n,X_n)-Z(t^*,X^*)-\varphi(t^*,X^*)\leq \|Z(t_n,X_n)-Z_n(t_n,X_n)\|+\|Z_n(t_n,X_n)-\varphi(t_n,X_n)-(Z(t^*,X^*)-\varphi(t^*,X^*))\|,\]
    we deduce using \eqref{ineq: strict minimum hilbert} that 
    \[\underset{n\to\infty}{\lim} |t_n-t^*|^2+\|X_n-X^*\|^2\leq \underset{n\to \infty}{\lim}\|Z(t_n,X_n)-Z_n(t_n,X_n)\|.\]
    It now remains to show the convergence of the righthand term.  Let us remark at this point that letting $\mathcal{L}(X_n)=\mu_n$, the sequence $(\mu_n)_{n\in\mathbb{N}}$ is uniformly bounded in $\mathcal{P}_4(\reels^d)$. By an application of Hölder's inequality, this means this family has uniformly integrable second moments. Thus we can find a sequence of compact sets $(\mathcal{K}_\varepsilon)_{\varepsilon>0}\subset \reels^d$ such that 
    \[\forall n\in\mathbb{N},\quad \int_{\reels^d\textbackslash \mathcal{K}\varepsilon}(1+|x|^2)\mu_n(dx)\leq \varepsilon. \]
    In particular, by Prokhorov theorem and the uniform integrability of second moments, this family is relatively compact. 
    As a consequence 
\begin{align*}
\|\tilde{W}(t_n,X_n)-\tilde{W}_n(t_n,X_n)\|^2&=\int_{\reels^d} |W(t,x,\mu_n)-W_n(t,x,\mu_n)|^2\mu_n(dx)\\
&\leq C\varepsilon+\int_{K_\varepsilon} |W(t_n,x,\mu_n)-W_n(t_n,x,\mu_n)|^2\mu_n(dx)
\end{align*}
By the uniform convergence of $(W_n)_{n\in\mathbb{N}}$ on compact sets of $[0,T]\times \reels^d\times \mathcal{P}_2(\reels^d)$, we deduce that 
\[\forall \varepsilon>0, \quad \underset{n\to \infty}{\lim }\int_{K_\varepsilon} |W(t_n,x,\mu_n)-W_n(t_n,x,\mu_n)|^2\mu_n(dx)=0.\]
By letting first $n$ to infinity and then $\varepsilon$ to 0 we conclude that 
\[\underset{n\to \infty}{\lim}\|\tilde{W}(t_n,X_n)-\tilde{W}_n(t_n,X_n)\|^2=0.\]
This is sufficient to deduce the same holds for $\|Z(t_n,X_n)-Z_n(t_n,X_n)\|$ and as a consequence 
\[t_n\underset{n\to \infty}{\longrightarrow} t^*, \quad X_n\overset{L^2}{\underset{n\to \infty}{\longrightarrow}} X^*.\]
    In particular, for $n$ sufficiently big $t_n \neq 0$, applying the viscosity property to $W_n$ yields
    \begin{equation}
    \label{eq: W_n monotone solution hilbert}
    \partial_t \varphi(t_n,X_n)+\tilde{F}(t_n,X_n)\cdot D_X \varphi(t_n,X_n)\geq \langle \tilde{F}_n(t_n,X_n),\tilde{W}_n(t_n,X_n)-V\rangle+\langle \tilde{G}(t_n,X_n),X_n-Y\rangle-\frac{C}{n},
    \end{equation}
    with the abuse of notation $\tilde{F}_n(t,X)=\tilde{F}_n(t,X,\tilde{W}_n(t,X)),\tilde{G}_n(t,X)=\tilde{G}_n(t,X,\tilde{W}_n(t,X))$.
    The convergence of the righthand side as $n$ tends to infinity follows from the local uniform convergence of $(F,G,W)$ and the argument we just provided above. For the lefthand side we have to be more careful, indeed $\varphi$ is not continuous on $\mathcal{H}$. We remind that it is of the form 
    \begin{equation}
    \label{eq: test function}
    \varphi(t,X)=\phi(t,X)-\alpha e^{\lambda t}E_4(X),\end{equation}
    where $\phi$ is smooth and $\alpha >0$. For terms depending on $\phi$ there is no particular difficulty, so the main problem here is the convergence of 
    \[-\alpha e^{\lambda t_n} \int_{\reels^d} \left(\lambda |x|^4+F_n(x,\mu_n,W_n(t_n,x,\mu_n)\cdot x|x|^2\right)\mu_n(dx).\]
    By the uniform linear growth of the family $(W_n,F_n)_{n\in\mathbb{N}}$ and the boundedness of $(X_n)_{n\in\mathbb{N}}$ in $L^4(\Omega,\reels^d)$, there exists $\lambda_0,C>0$ such that 
    \[\forall \lambda >\lambda_0, \forall n\in\mathbb{N},\forall x\in \reels^d,\quad \lambda |x|^4+ F_n(x,\mu_n,W_n(t_n,x,\mu_n)\cdot x|x|^2\geq -C\]
   For such $\lambda$ a variation on Fatou's lemma, see \cite{fatou} Theorem 2.8, yields 
   \[\limsup_n -\alpha e^{\lambda t_n} \int_{\reels^d} \left(\lambda |x|^4+F_n(x,\mu_n,W_n(t_n,x,\mu_n)\cdot x|x|^2\right)\mu_n(dx)\leq -\alpha e^{\lambda t^*} \int_{\reels^d} \left(\lambda |x|^4+F(x,\mu^*,W(t^*,x,\mu)\cdot x|x|^2\right)\mu^*(dx).\]
   Which in turns implies 
   \[\partial_t \varphi(t^*,X^*)+F(t^*,X^*)\cdot D_X \varphi(t^*,X^*)\geq \limsup_n \left(\partial_t \varphi(t_n,X_n)+F(t_n,X_n)\cdot D_X \varphi(t_n,X_n)\right),\]
   and then
    \[\partial_t \varphi(t^*,X^*)+F(t^*,X^*)\cdot D_X \varphi(t^*,X^*)\geq \langle F(t^*,X^*),W(t^*,X^*)-V\rangle+\langle G(t^*,X^*),X^*-Y\rangle, \]
    by taking the $\limsup$ in \eqref{eq: W_n monotone solution hilbert}.
    At this point let us remark that convergence only hold for test functions $\varphi$ satisfying \eqref{eq: test function} for $\lambda\geq \lambda_0>0$, with $\lambda_0$ a constant independent of $n$, as a consequence in general $W$ is only a monotone solution on a subset of $H_{test}^\mathcal{H}$. This is not particularly concerning as said subset is sufficiently big to retain uniqueness. Indeed, observe that in the proof of Theorem \ref{thm: uniqueness hilbert}, we only use test functions with a $\lambda$ such that 
    \[\lambda (1+|x|^4)+F(x,\mu,W(t,x,\mu))\cdot x|x|^2\geq c\left(|x|^4-\int_{\reels^d} |y|^4\mu(dy)\right),\]
    so uniqueness of monotone solutions is not affected by such change. 
\end{proof}
\begin{remarque}
To avoid the need of such technicalities on the set of test functions, it is possible to include the necessity of taking sufficiently big $\lambda$ directly in the definition of monotone solution. indeed by defining $H_{test}^\mathcal{H}(\lambda_0)$ to be the set of functions $\varphi$ such that 
\begin{enumerate}
    \item[-]$\varphi:(t,X)\mapsto \phi(t,X)+\alpha e^{\lambda t} E_4(X)\in H_{test}^\mathcal{H},$
    \item[-] $\lambda\geq \lambda_0$.
\end{enumerate}
By our assumption on the growth of the family $(F_n,G_n,W_n)_{n\in\mathbb{N}}$, we can always restrict ourself to $H_{test}^\mathcal{H}(\lambda^*)$ for a $\lambda^*$ independent of $n\in\mathbb{N}$. In this case we say that $W$ is a monotone solution to \eqref{eq: ME without noise}, if it is a monotone solution for test functions in $H_{test}^\mathcal{H}(\lambda^*)$, where 
\[\lambda^*=\sup_{[0,T]\times\reels^d\times \mathcal{P}_2(\reels^d)} \frac{|F(x,\mu,W(t,x,\mu))|}{1+|x|+\sqrt{E_2(\mu)}}.\]

\end{remarque}

\subsection{Master equation with common noise}
\subsubsection{Master equation with a common noise process}
Instead of focus directly on master equation with additive common noise $\beta>0$, we first consider the case of the following master equation
\begin{equation}
\label{eq: ME with noise process}
\left\{
\begin{array}{c}
\displaystyle \partial_t W+F(x,\theta,m,W)\cdot \nabla_x W +\int_{\reels^d} F(y,\theta,m,W)\cdot D_m W(t,x,\theta,m)(y)m(dy)\\
\displaystyle +b(\theta)\cdot \nabla_\theta W-\frac{1}{2}\text{Tr}\left(\Sigma(\theta)\Sigma^T(\theta)D^2_p W\right)=G(x,\theta,m,W) \text{ in } (0,T)\times\reels^d\times\reels^m\times \mathcal{P}_2(\reels^d),\\
\displaystyle W(0,x,\theta,m)=W_0(x,\theta,m) \text{ for } (x,\theta,m)\in\reels^d\times\reels^m\times \mathcal{P}_2(\reels^d).
 \end{array}
 \right.
\end{equation}
Such master equation arises whenever the common noise affect directly the coefficients of the master equation through an additional variable $\theta$ instead of directly bumping measures (as it is the case for additive common noise). See \cite{noise-add-variable,common-noise-in-MFG} for more on such master equations. In terms of a forward backward system, \eqref{eq: ME with noise process} corresponds to 
\begin{equation}
\left\{
\begin{array}{cc}
     dX_t=-F(X_t,\theta_t,U_t,\mathcal{L}(X_t|\mathcal{F}^\theta_t))dt+\sqrt{2\sigma_x}dB^x_t& X|_{t=0}=X_0\\
     dU_t=-G(X_t,\theta_t,U_t,\mathcal{L}(X_t|\mathcal{F}_t))dt+Z_t\cdot d(B^x_t,B^\theta_t) & U_T=W_0(X_T,\theta_T,\mathcal{L}(X_T|\mathcal{F}_T)) \\
     d\theta_t=-b(\theta_t)dt+\Sigma(\theta_t)\cdot dB^\theta_t&\theta_0=\theta\\
     \mathcal{F}^\theta_t=\sigma\left((\theta_s)_{s\leq t}\right)
\end{array}
\right.
\end{equation}
While those master equations comes from interesting problems of modelisation, in the context of this article we mainly study them as a step toward the treatment of common noise in the master equation \eqref{eq: general ME}. Throughout this section we make the following assumption 
\begin{hyp}
\label{hyp: b lip noise process}
$b:\reels^m\to \reels^m$ and $\Sigma: \reels^m\to \mathcal{M}_m(\reels)$ are Lipschitz.
\end{hyp}
we also use the notation \[\forall \theta\in \reels^m \quad \Gamma(\theta)=\frac{1}{2}\Sigma(\theta)\Sigma^T(\theta).\]

We start with some reminder on finite dimensional viscosity solutions of second order \cite{crandall1992users}. For a function $u:\reels^m\to \reels$, we define the subjet of $u$ at $\theta_0\in \reels^m$, $J^-u(\theta_0)$ by
\[J^-u(\theta_0)=\left\{(p,A)\in \reels^m\times M_m(\reels), \quad u(\theta)\geq u(\theta_0)+p\cdot(\theta-\theta_0)+(\theta-\theta_0)\cdot A\cdot (\theta-\theta_0)+o\left(|\theta-\theta_0|^2\right) \right\}.\]
We are now going to use the subjet to define a notion of viscosity supersolution. To avoid the technicality of introducing subjets in infinite dimension, we take the rather unusual approach of treating finite dimensional terms coming from common noise using the semijets definition of viscosity solution, while we use test functions for variables living on a Hilbert space. 
Consider the following linear infinite dimensional PDE 
\begin{equation}
\label{general PDE in H with cn}
\partial_t u+f(t,X,\theta)\cdot \nabla_X u+b(\theta)\cdot \nabla_\theta u-\text{Tr}\left(\Gamma(\theta) D^2_\theta u\right)=g(t,X,\theta)
\end{equation}
\begin{definition}
    Suppose that $f$ is such that
    \[\forall \theta\in \reels^m, \exists C_\theta \quad |f(t,X,\theta)|\leq C(1+|X|+\|X\|).\]
    We say that a continuous function $u:[0,T]\times\mathcal{H}\times \reels^m$ is a viscosity supersolution of \eqref{general PDE in H with cn} if for every $(\varphi,\theta) \in H_{test}^\mathcal{H}\times \reels^n$, at a point of minimum $(t^*,X^*)\in (0,T)\times \mathcal{H}\cap L^4(\Omega,\reels^d)$ of $(t,X)\mapsto u(t,X,\theta)-\varphi(t,X)$, the following holds
    \[\forall (p,A)\in J^-u(t^*,X^*,\theta), \quad \partial_t \varphi+f(t^*,X^*,\theta)\cdot \nabla_X \varphi+b(\theta)\cdot p -\text{Tr}\left(\Gamma(\theta) A\right)\geq g(t,X,\theta),\]
    where $J^-u(t^*,X^*,\theta)$ indicates the subjet of 
    \[\eta\mapsto U(t^*,X^*,\eta)\]
    at $\theta$.
\end{definition}

\begin{definition}
\label{def: monotone solution with common noise}
    A continuous function $W:[0,T]\times \reels^d\times \reels^m\times \mathcal{P}_2(\reels^d)\to \reels^d$ with linear growth is said to be a monotone solution to \eqref{eq: ME without noise} if for any $(Y,V)\in \mathcal{H}^2$ the function $Z$ defined by 
    \[(t,X,p)\mapsto Z(t,X,p)=\langle W(t,X,p,\mathcal{L}(X))-V,X-Y\rangle,\]
    is a viscosity supersolution of 
    \begin{gather*}\partial_t Z+F(X,p,\mathcal{L}(X),W(t,X,p,\mathcal{L}(X)))\cdot \nabla_X Z+b(p)\cdot \nabla_p Z-\sigma_p \Delta_p Z \\
    \geq \langle F(X,p,\mathcal{L}(X),W(t,X,p,\mathcal{L}(X))), W(t,X,p,\mathcal{L}(X))-V\rangle+\langle G(X,p,\mathcal{L}(X),W(t,X,p,\mathcal{L}(X))),X-Y\rangle
    \end{gather*}
\end{definition}
\begin{thm}
    Under hypotheses \ref{hyp: linear growth in F} and \ref{hyp: weak monotonicity}, there exists at most one continuous monotone solution to \eqref{eq: ME with noise process}
\end{thm}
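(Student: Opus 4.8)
The plan is to follow the proof of Theorem~\ref{thm: uniqueness hilbert}, the only genuinely new ingredient being the finite dimensional common noise variable, which I would handle by doubling it and invoking the maximum principle for semijets of \cite{crandall1992users}. Given two continuous monotone solutions $W^1,W^2$ with linear growth, write $\tilde W^i(s,X,\theta)=W^i(s,X,\theta,\mathcal L(X))$ and set
\[
Z(s_1,s_2,X,Y,\theta,\eta)=\langle \tilde W^1(s_1,X,\theta)-\tilde W^2(s_2,Y,\eta),\,X-Y\rangle .
\]
As in the noiseless case it suffices to prove $Z(t,t,X,Y,\theta,\theta)\ge 0$ for all $(t,X,Y,\theta)$: taking $X=Y+\varepsilon U$, dividing by $\varepsilon$ and letting $\varepsilon\to 0$ then yields $W^1(t,Y,\theta,\mathcal L(Y))=W^2(t,Y,\theta,\mathcal L(Y))$ almost surely for every $Y\in\mathcal H$ and $\theta\in\reels^m$, whence $W^1\equiv W^2$ by choosing $Y$ with positive density, using continuity in the space variable, and then a density argument for measures without full support.

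One argues by contradiction, assuming $Z(t,t,X,Y,\theta,\theta)<-\delta$ somewhere. Since $Z$ has at most quadratic growth in the $L^2$-norms of $X$ and $Y$ and is bounded in $(\theta,\eta)$, adding to it the penalization
\[
\alpha\Big(e^{\lambda s_1}(1+E_4(X))+\tfrac{1}{T-s_1}+e^{\lambda s_2}(1+E_4(Y))+\tfrac{1}{T-s_2}\Big)+\tfrac{1}{2\gamma^3}|s_1-s_2|^2+\tfrac{1}{2\nu}|\theta-\eta|^2+\kappa(|\theta|^2+|\eta|^2)
\]
produces a continuous functional that is bounded below and coercive on $[0,T]^2\times\mathcal H^2\times\reels^{2m}$, the $\kappa$-term being needed for coercivity in $(\theta,\eta)$. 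Stegall's lemma \cite{Stegall1978} then supplies a linear perturbation $\langle h_X,X\rangle+\langle h_Y,Y\rangle$ of arbitrarily small norm for which the perturbed functional attains a point of strict minimum $(s_1^*,s_2^*,X^*,Y^*,\theta^*,\eta^*)$ with $X^*,Y^*\in L^4(\Omega,\reels^d)$. Exactly as in the proof of Theorem~\ref{thm: uniqueness hilbert}, for $\alpha$ small the minimum value stays below $-\delta/4$, which forces $(s_1^*,s_2^*)\ne(0,0)$ by the monotonicity of $W_0$; the standard penalization estimates give $\nu^{-1}|\theta^*-\eta^*|^2\to0$ and keep $(\theta^*,\eta^*)$ in a fixed compact set once $\kappa$ is let to $0$ last.

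Assume first $s_1^*,s_2^*>0$. Freezing $(s_2^*,Y^*,\eta^*)$ leaves a bona fide $H^{\mathcal H}_{test}$ test function of $(s_1,X)$ against which $(s_1,X,\theta)\mapsto Z$ is minimised at $(s_1^*,X^*,\theta^*)$, and symmetrically for $W^2$; Ishii's lemma \cite{crandall1992users} applied in the doubled variables $(\theta,\eta)$ then furnishes, for the subjet elements entering the two supersolution inequalities, symmetric matrices $A,B$ satisfying the classical second-order bound. Testing that bound against the positive semidefinite matrix built from $\Sigma(\theta^*)$ and $\Sigma(\eta^*)$ and using the Lipschitz continuity of $\Sigma$ (Hypothesis~\ref{hyp: b lip noise process}) yields $-\mathrm{Tr}(\Gamma(\theta^*)A)-\mathrm{Tr}(\Gamma(\eta^*)B)\le C\nu^{-1}|\theta^*-\eta^*|^2+C\kappa$, while the drift gradients from the penalization contribute $\nu^{-1}(b(\theta^*)-b(\eta^*))\cdot(\theta^*-\eta^*)\le L_b\,\nu^{-1}|\theta^*-\eta^*|^2$. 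Applying Definition~\ref{def: monotone solution with common noise} to $W^1$ with $(V,Y)=(\tilde W^2(s_2^*,Y^*,\eta^*),Y^*)$ and to $W^2$ with $(V,Y)=(\tilde W^1(s_1^*,X^*,\theta^*),X^*)$ and summing the two inequalities, the $E_4$ and linear test-function terms are absorbed for $\lambda$ large by the linear growth of $F$ (Hypothesis~\ref{hyp: linear growth in F}) as in Theorem~\ref{thm: uniqueness hilbert}, the time cross-terms cancel, and the left-hand side is bounded above by $-\alpha(T-s_1^*)^{-2}-\alpha(T-s_2^*)^{-2}+C(\nu^{-1}|\theta^*-\eta^*|^2+\kappa)$. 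The summed right-hand side equals $\langle \tilde F^1-\tilde F^2,\tilde W^1-\tilde W^2\rangle+\langle \tilde G^1-\tilde G^2,X^*-Y^*\rangle$ (with $\tilde F^i,\tilde G^i$ the coefficients evaluated along $W^i$, as in Theorem~\ref{thm: uniqueness hilbert}) up to the mismatch caused by $\tilde F^1,\tilde G^1$ being evaluated at $\theta^*$ and $\tilde F^2,\tilde G^2$ at $\eta^*$; this mismatch is $o(1)$ by continuity of $F,G$, the convergence $|\theta^*-\eta^*|\to0$ and uniform integrability of the quantities at play (in the spirit of the proof of Theorem~\ref{stability without noise}), and what remains is nonnegative by the joint $L^2$-monotonicity of $(F,G)$ in Hypothesis~\ref{hyp: weak monotonicity}. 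Sending $\nu,\gamma\to0$ and then $\kappa\to0$ therefore contradicts the negativity of the left-hand side. The boundary case $s_1^*=0$ or $s_2^*=0$ is dispatched with the moment estimates of Theorem~\ref{thm: uniqueness hilbert}, again contradicting the bound on the minimum value through the monotonicity of $W_0$. Hence $Z(t,t,X,Y,\theta,\theta)\ge0$, and uniqueness follows.

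I expect the main obstacle to lie in the hybrid nature of the argument — Ishii's finite dimensional lemma for the $\theta$-variables combined with test functions from the restricted class $H^{\mathcal H}_{test}$ for the Hilbert-space variables — so that one must verify carefully that freezing the doubled variables leaves a genuine element of $H^{\mathcal H}_{test}$, that the $\theta$-penalization decouples from it, and that the second-order trace terms are controlled uniformly as $\nu\to0$ (where the Lipschitz continuity of $b$ and $\Sigma$, Hypothesis~\ref{hyp: b lip noise process}, is used). A secondary but genuinely new point is the control of the $\theta$-versus-$\eta$ mismatch of the coefficients on the right-hand side: the monotonicity structure is blind to the common noise variable, so one must close that gap by continuity and the fact that $\theta^*$ and $\eta^*$ merge in the limit.
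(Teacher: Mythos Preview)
Your proposal is correct and follows essentially the same route as the paper: doubling all variables, penalizing, invoking Stegall's lemma for the Hilbert components, applying Ishii's lemma (Theorem~3.2 of \cite{crandall1992users}) to the doubled finite-dimensional noise variable, controlling the resulting trace terms through the Lipschitz continuity of $\Sigma$, and closing the $\theta$--$\eta$ mismatch on the right-hand side by continuity and uniform integrability. The only cosmetic differences are in the penalization choices: the paper uses a single parameter $\gamma$ for both the time and $\theta$ doubling (with an $e^{\lambda(t+s)}$ weight on the latter), and couples the $\theta$-coercivity to the Hilbert one via $\alpha e^{\lambda t}|\theta|^3$ rather than an independent $\kappa|\theta|^2$ term; the cubic exponent is safer since $Z$ can grow quadratically in $\theta$, so your quadratic $\kappa$-term sits at the borderline and you may want to upgrade it.
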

\begin{proof}
The idea is still the same as in Theorem \ref{thm: uniqueness hilbert}, the variable $\theta$ being treated by classical viscosity arguments. Consider two solutions $W^1,W^2$ and let
\[Z(t,s,X,Y,\theta_1,\theta_2)=\langle \tilde{W}^1(t,X,\theta_1)-\tilde{W}^2(s,Y,\theta_2),X-Y\rangle.\]
Let us assume by contradiction that there exists $(t,X,Y,\theta)\in [0,T]\times \mathcal{H}^2\times \reels^m$ and $\delta>0$ such that
\[Z(t,t,X,Y,\theta,\theta)\leq -\delta.\]
For $\alpha,\lambda,\gamma>0$ we define  
\[
\left\{
\begin{array}{l}
\psi_{\alpha,\lambda}:(t,s,X,Y,\theta_1,\theta_2)\mapsto \alpha \left(e^{\lambda t}(1+E_4(X)+|\theta_1|^3)+\frac{1}{T-t}+e^{\lambda s}(1+E_4(Y)+|\theta_2|^3)+\frac{1}{T-s}\right),\\
\psi_{\gamma,\lambda}:(t,s,\theta_1,\theta_2)\mapsto \frac{1}{2\gamma^3}\left(|t-s|^2+e^{\lambda(t+s)}|\theta_1-\theta_2|^2\right),\\
\psi_{\alpha,\lambda,\gamma}:(t,s,X,Y,\theta_1,\theta_2)\mapsto\psi_{\alpha,\lambda}(t,s,X,Y,\theta_1,\theta_2)+\psi_{\gamma,\lambda}(t,s,\theta_1,\theta_2).
\end{array}
\right.
\]
Thanks to our growth assumption on the growth of $W^1,W^2$, the function $Z+\psi_{\alpha,\lambda,\gamma}$ is bounded from below and lower semi-continuous. Hence, there exists a perturbation $(h_X,h_Y)$ such that $\|h_X\,\|h_Y\|\leq \alpha$ and 
\[Z_{\alpha,\lambda,\gamma}:(t,s,X,Y,\theta_1,\theta_2)\mapsto Z(t,s,X,Y,p,q)+\psi_{\alpha,\lambda,\gamma}(t,s,X,Y,\theta_1,\theta_2)+\langle h_X,X\rangle+\langle h_Y,Y\rangle, \]
reaches a minimum at $(t^\gamma,s^\gamma,X^\gamma,Y^\gamma,\theta_1^\gamma,\theta_2^\gamma)$. 
\paragraph{\textit{Step 1: properties of the minimum point  }}
\quad

For $\alpha$ sufficiently small 
\begin{equation}
\label{ineq: point of minimum for Hilbert space with cn}
Z_{\alpha,\lambda,\gamma}(t^\gamma,s^\gamma,X^\gamma,Y^\gamma,\theta_1^\gamma,\theta_2^\gamma)\leq -\frac{\delta}{2}.
\end{equation}
This implies that there exists a constant depending on the linear growth of $W^1$ and $W^2$ only, such that 
\begin{equation}
\label{ineq: bounds alpha hilbert}
\alpha (E_4(X^\gamma)+E_4(Y^\gamma)+|\theta^\gamma_1|^3+|\theta_2^\gamma|^3)\leq C(1+\|X^\gamma\|^2_{L^2}+\|Y^\gamma\|^2_{L^2}+|\theta^\gamma_1|^2+|\theta_2^\gamma|^2).
\end{equation}
As a consequence 
\begin{equation}
\label{bound L4 Ztheta}
\alpha \left(\sqrt{E_4(X^\gamma)}+\sqrt{E_4(Y^\gamma)}+ |\theta^\gamma_1|+ |\theta_2^\gamma|\right)\leq C',
\end{equation}
for another constant $C'$ independant of $\alpha$.
This bound yields for $\gamma \leq \alpha\leq 1$
\begin{equation}
\label{ineq: estimate on difference gamma hilbert}
|t^\gamma-s^\gamma|,|\theta_1^\gamma-\theta_2^\gamma|\leq C'\sqrt{\gamma},
\end{equation}
for $C'$ depending on $C$ only.

Letting 
\begin{gather*}Z_1: (\theta_1)\mapsto \langle \tilde{W}^1(t^\gamma,X^\gamma,\theta_1),X^\gamma-Y^\gamma\rangle,\\
Z_2: (\theta_2)\mapsto \langle \tilde{W}^2(t^\gamma,X^\gamma,\theta_2),Y^\gamma-X^\gamma\rangle,
\end{gather*}
First let us remark that 
\[(t,s,X,Y,\theta_1,\theta_2)\mapsto (D_{\theta_1}\psi_{\alpha,\lambda,\gamma},D_{\theta_2}\psi_{\alpha,\lambda,\gamma})(t,s,X,Y,\theta_1,\theta_2)\]
do not depend on $(X,Y)$. Thus we may forsake the dependency on those arguments for $D_{\theta_1}\psi_{\alpha,\lambda,\gamma},D_{\theta_2}\psi_{\alpha,\lambda,\gamma},D^2_{(\theta_1,\theta_2)}\psi_{\alpha,\lambda,\gamma}$ in what follows. 
Since $(\theta_1^\gamma,\theta_2^\gamma)$ is a minimum of \[(\theta_1,\theta_2)\mapsto Z_1(\theta_1)+Z_2(\theta_2)+\psi_{\alpha,\lambda,\gamma}(t^\gamma,s^\gamma,X^\gamma,Y^\gamma,\theta_1,\theta_2),\] we may apply Theorem 3.2 of \cite{crandall1992users} at this point to deduce that for any $\eta>0$ there exists $(A,B)\in\left(\mathcal{M}_m(\reels)\right)^2$ such that $(-D_{\theta_1}\psi_{\alpha,\lambda,\gamma}(t^\gamma,s^\gamma,\theta_1^\gamma,\theta_2^\gamma),A)\in J^-Z_1(\theta_1^\gamma)$, $(-D_{\theta_2}\psi_{\alpha,\lambda,\gamma}(t^\gamma,s^\gamma,\theta_1^\gamma,\theta_2^\gamma),B)\in J^-Z_2(\theta_2^\gamma)$ and 
\begin{equation}
\label{ineq: crandall lions hilbert}
\left(
\begin{array}{cc}
    A & 0_{\mathcal{M}_m(\reels)} \\
    0_{\mathcal{M}_m(\reels)} & B
\end{array}
\right)
\geq 
(M+\eta M^2),
\end{equation}
with $M=-D^2_{(\theta_1,\theta_2)}\psi_{\alpha,\lambda,\gamma}(t^\gamma,s^\gamma,\theta_1^\gamma,\theta_2^\gamma)$. Taking $\eta=\gamma^3e^{-2\lambda T}$, we deduce by multiplying \eqref{ineq: crandall lions hilbert} by
\[
\left(
\begin{array}{cc}
    \Gamma(\theta_1^\gamma) & \Sigma^T (\theta_1^\gamma)\Sigma(\theta_2^\gamma)  \\
     \Sigma(\theta_2^\gamma)^T \Sigma(\theta_1^\gamma)& \Gamma(\theta_2^\gamma)
\end{array}\right),
\]
that there exists such a couple $(A,B)$ satisfying 
\begin{equation}
\label{ineq tr(A+B) crandall lions hilbert}
-\text{Tr}\left(\Gamma(\theta_1^\gamma)A\right)-\text{Tr}\left(\Gamma(\theta^\gamma_2)B\right)\leq C\left(\alpha e^{\lambda t^\gamma}(1+|\theta^\gamma_1|^3)+\alpha e^{\lambda s^\gamma}(1+|\theta^\gamma_2|^3)+\frac{e^{\lambda (t^\gamma+s^\gamma)}}{\gamma^3}|\theta_1^\gamma-\theta_2^\gamma|^2\right),
\end{equation}
where $C$ is a constant depending on $m$ and $\|\Sigma\|_{Lip}$ only.
\paragraph{\textit{Step 2: Using the definition of monotone solution}}
\quad

The inequality \eqref{ineq: point of minimum for Hilbert space with cn} implies that $(s^\gamma,t^\gamma)\neq (0,0)$. Let us assume that they are both different from $0$. Applying the definition of a monotone solution to both $W^1$ and $W^2$ yields
\begin{gather*}
-\left(\partial_t+\tilde{F}_1(t^\gamma,X^\gamma,\theta_1^\gamma)\cdot \nabla_X+b(\theta_1^\gamma)\cdot \nabla_{\theta_1}\right)\psi_{\alpha,\lambda,\gamma}-\text{Tr}(\Gamma(\theta_1^\gamma) A)\geq \langle \tilde{F}_1(t^\gamma,X^\gamma,\theta_1^\gamma),W_1^\gamma-W_2^\gamma\rangle+\langle \tilde{G}_1(t^\gamma,X^\gamma,\theta_1^\gamma),X^\gamma-Y^\gamma\rangle\\
-\left(\partial_t+\tilde{F}_2(s^\gamma,Y^\gamma,\theta_2^\gamma)\cdot \nabla_Y+b(\theta_2^\gamma)\cdot \nabla_{\theta_2} \right)\psi_{\alpha,\lambda,\gamma}-\text{Tr}(\Gamma(\theta_2^\gamma) B)\geq \langle \tilde{F}_2(s^\gamma,Y^\gamma,\theta_2^\gamma),W_2^\gamma-W_1^\gamma\rangle+\langle \tilde{G}_2(s^\gamma,Y^\gamma,\theta_2^\gamma),Y^\gamma-X^\gamma\rangle
\end{gather*}
where we used the notation $F^i:(t,x,\theta,\mu)\mapsto F(x,\theta,\mu,W^i(t,x,\theta,\mu)), W_1^\gamma=W^1(t^\gamma,X^\gamma,\theta_1^\gamma,\mathcal{L}(X^\gamma))$ and so on. In light of \eqref{ineq tr(A+B) crandall lions hilbert}, there exists a $\lambda^*$ depending on the linear growth $W^1,W^2,F$, $\|\Sigma\|_{Lip},\|b\|_{Lip}$ and $m$ only such that for any $\lambda\geq \lambda^*$ we get by summing those two inequalities
\begin{align*}-\frac{\alpha}{(T-s^\gamma)^2}-\frac{\alpha}{(T-t^\gamma)^2}&\geq \langle \tilde{F}_1(t^\gamma,X^\gamma,\theta_1^\gamma)-\tilde{F}_2(s^\gamma,Y^\gamma,\theta_2^\gamma),W_1^\gamma-W_2^\gamma\rangle+\langle \tilde{G}_1(t^\gamma,X^\gamma,\theta_1^\gamma)-\tilde{G}_2(s^\gamma,Y^\gamma,\theta_2^\gamma),X^\gamma-Y^\gamma\rangle, \\
&\geq \langle F(Y^\gamma,\theta^\gamma_1,\mathcal{L}(Y^\gamma),W^2(s^\gamma,Y^\gamma,\theta_2^\gamma,\mathcal{L}(Y^\gamma))-F(Y^\gamma,\theta^\gamma_2,\mathcal{L}(Y^\gamma),W^2(s^\gamma,Y^\gamma,\theta_2^\gamma,\mathcal{L}(Y^\gamma)),W_1^\gamma-W_2^\gamma\rangle\\
&+\langle G(Y^\gamma,\theta^\gamma_1,\mathcal{L}(Y^\gamma),W^2(s^\gamma,Y^\gamma,\theta_2^\gamma,\mathcal{L}(Y^\gamma))-G(Y^\gamma,\theta^\gamma_2,\mathcal{L}(Y^\gamma),W^2(s^\gamma,Y^\gamma,\theta_2^\gamma,\mathcal{L}(Y^\gamma)),X^\gamma-Y^\gamma\rangle,
\end{align*}
where the last line is obtained by the joint monotonicity of $(F,G)$. Since $W_1^\gamma,W_2^\gamma,X^\gamma,Y^\gamma$ are bounded independently of $\gamma$ from \eqref{ineq: bounds alpha hilbert}, to get a contradiction, it is sufficient to show that 
\begin{equation}
\label{F gamma converge}
\underset{\gamma\to 0}{\lim} \underbrace{\|F(Y^\gamma,\theta^\gamma_1,\mathcal{L}(Y^\gamma),W^2(s^\gamma,Y^\gamma,\theta_2^\gamma,\mathcal{L}(Y^\gamma))-F(Y^\gamma,\theta^\gamma_2,\mathcal{L}(Y^\gamma),W^2(s^\gamma,Y^\gamma,\theta_2^\gamma,\mathcal{L}(Y^\gamma))\|}_{I_\gamma}=0,
\end{equation}
and that a similar consideration holds for the term in $G$. We show this is true for $F$ only, the convergence of the other term following from a similar argument. Since we work for fixed $\alpha$, the family $(Y^\gamma)_{\gamma>0}$ is uniformly bounded in $L^4(\Omega,\reels^d)$ by \eqref{bound L4 Ztheta}. Following an argument similar to the one presented in the proof of Theorem \ref{stability without noise} this means that the sequence of associated measures $(\mu_\gamma)_{\gamma>0}$ defined by $\mu_\gamma=\mathcal{L}(Y^\gamma)$, is relatively compact in $\mathcal{P}_2(\reels^d)$, and that there exists a sequence of compact $(\mathcal{K}_\varepsilon)_{\varepsilon>0}$, such that
\[\forall \varepsilon>0, \gamma>0, \quad \int_{\reels^d\textbackslash \mathcal{K}_\varepsilon}(1+|x|^2)\mu_\gamma(dx)\leq \varepsilon.\]
By the growth of $F,W^2$
\begin{equation}
\label{eq: I gamma}
I_\gamma^2\leq C(1+|\theta^\gamma_1|^2+|\theta^\gamma_2|^2)\varepsilon+\int_{\mathcal{K}_\varepsilon} |F(y,\theta_1^\gamma,\mu_\gamma,W^2(s^\gamma,y,\theta^\gamma_2,\mu_\gamma))-F(y,\theta_2^\gamma,\mu_\gamma,W^2(s^\gamma,y,\theta^\gamma_2,\mu_\gamma))|^2\mu_\gamma(dy).
\end{equation}
By the Heine–Cantor theorem we deduce that for fixed $\varepsilon$,
\[\underset{\gamma\to 0}{\lim}\int_{\mathcal{K}_\varepsilon} |F(y,\theta_1^\gamma,\mu_\gamma,W^2(s^\gamma,y,\theta^\gamma_2,\mu_\gamma))-F(y,\theta_2^\gamma,\mu_\gamma,W^2(s^\gamma,y,\theta^\gamma_2,\mu_\gamma))|^2\mu_\gamma(dy)=0.\]
Letting $\gamma$ tends to 0 and then $\varepsilon$ tends to 0 in \eqref{eq: I gamma} yields \eqref{F gamma converge}, leading to a contradiction for $\gamma$ sufficiently small.  
In contrast, if either $t^\gamma=0$ or $s^\gamma=0$ we also get a contradiction with \eqref{ineq: point of minimum for Hilbert space with cn} for $\gamma$ sufficiently small
\end{proof}
\begin{remarque}
    Let us mention that in the proof $(x,\theta,\mu,w)\mapsto (F,G)(x,\theta,\mu,w)$ need only be continuous in $\theta$ uniformly on compact set of $\reels^d\times \mathcal{P}_2(\reels^d)\times \reels^d$ (which is true if it is continuous). This is because the doubling of variable in $(X,Y)$ comes from considerations from monotonicity and not from technical viscosity. Obviously this proof also shows that the concept of monotone solution extend easily should $F,G$ be function of time and not depend on it only through $W$. Let us also remark that the linear growth of $(F,G,W)$ in $\theta$ can be easily relaxed to polynomial growth in this variable. 
    \end{remarque}
\begin{thm}
\label{stability with common noise}
 Suppose that there exists $(F_n,G_n,b_n,\Sigma_n)_{n\in\mathbb{N}}$, a sequence of function satisfying Hypotheses \ref{hyp: weak monotonicity}, \ref{hyp: linear growth in F} and \ref{hyp: b lip noise process}, uniformly in $n\in\mathbb{N}$ and converging locally uniformly to a couple $(F,G,b,\Sigma)$. Suppose there exists an associated sequence of monotone solution $(W_n)_{n\in\mathbb{N}}$ with linear growth uniformly in $n$. If $(W_n)_{n\in\mathbb{N}}$ converges locally uniformly to a function $W$, then $W$ is a monotone solution associated to the data $(F,G,b,\Gamma)$.
\end{thm}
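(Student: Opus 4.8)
The plan is to combine the Hilbertian stability argument of Theorem~\ref{stability without noise} with the finite dimensional viscosity technology already used in the uniqueness proof for \eqref{eq: ME with noise process}: the pair $(t,X)$ is treated through the test functions of $H^{\mathcal{H}}_{test}$ and Stegall's lemma, while the noise variable $\theta$ is treated through subjets. Fix $(V,Y)\in\mathcal{H}^2$, set $Z(t,X,\theta)=\langle\tilde W(t,X,\theta)-V,X-Y\rangle$ and define $Z_n$ from $W_n$ likewise; the task is to verify that $Z$ satisfies the supersolution property of Definition~\ref{def: monotone solution with common noise}. So fix $\varphi\in H^{\mathcal{H}}_{test}$ and $\theta_0\in\reels^m$, let $(t^*,X^*)$ be a minimum of $(t,X)\mapsto Z(t,X,\theta_0)-\varphi(t,X)$ --- which, after the usual reduction (adding cubic bumps $|t-t^*|^3+\|X-X^*\|^3$) we may take strict --- and fix $(p,A)\in J^-Z(t^*,X^*,\theta_0)$.

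First I would realise the subjet $(p,A)$ by a smooth touching function: pick $\chi\in C^2(\reels^m)$ with $\nabla\chi(\theta_0)=p$, $D^2\chi(\theta_0)=A$, such that $\eta\mapsto Z(t^*,X^*,\eta)-\chi(\eta)$ has a strict local minimum at $\theta_0$ (this costs only adding to $\chi$ a correction that is $o(|\eta-\theta_0|^2)$, hence changes neither $p$ nor $A$). Then I would run the argument of Theorem~\ref{stability without noise} on the enlarged functional $(t,X,\theta)\mapsto Z_n(t,X,\theta)-\varphi(t,X)-\chi(\theta)+\beta|\theta-\theta_0|^4$ on $[0,T]\times\mathcal{H}\times\reels^m$, where the confining term $\beta|\theta-\theta_0|^4$ vanishes to second order at $\theta_0$ (so it does not perturb the recovered jet). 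For $\lambda$ larger than a threshold $\lambda^*$ independent of $n$ --- now also absorbing the contributions of $b_n$ and $\Sigma_n$ --- the uniform linear growth of $(F_n,G_n,b_n,\Sigma_n,W_n)$ gives a coercivity bound of the form $Z_n-\varphi-\chi+\beta|\theta-\theta_0|^4\ge -C+\frac{1}{2}\alpha E_4(X)+c\beta|\theta-\theta_0|^4$, so the functional is bounded below and lower semicontinuous, and Stegall's lemma~\cite{Stegall1978} produces perturbations $(h_n,h_n^\theta)\to0$ for which a minimum $(t_n,X_n,\theta_n)$ is attained. \emph{Granting} that $(t_n,X_n,\theta_n)\to(t^*,X^*,\theta_0)$ (the delicate point, see below), in particular $t_n\neq0$ for $n$ large, and at such a point $(t_n,X_n)$ is a minimum of $(t,X)\mapsto Z_n(t,X,\theta_n)-(\varphi-\langle h_n,\cdot\rangle)(t,X)$ with $\varphi-\langle h_n,\cdot\rangle\in H^{\mathcal{H}}_{test}$, while the $C^2$ function $\chi-\beta|\cdot-\theta_0|^4-\langle h_n^\theta,\cdot\rangle$ touches $Z_n(t_n,X_n,\cdot)$ from below at $\theta_n$, so $(p_n,A_n):=(\nabla\chi(\theta_n)-4\beta|\theta_n-\theta_0|^2(\theta_n-\theta_0)-h_n^\theta,\ D^2\chi(\theta_n)-\mathrm{Hess}(\beta|\cdot-\theta_0|^4)(\theta_n))\in J^-Z_n(t_n,X_n,\theta_n)$, which converges to $(p,A)$ automatically since $\chi\in C^2$ and $\theta_n\to\theta_0$. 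Applying the supersolution property of $W_n$ at $(t_n,X_n,\theta_n)$ with this test data and letting $n\to\infty$ then yields the desired inequality for $W$ at $(t^*,X^*,\theta_0)$ with $(p,A)$: the right-hand side and the smooth part of the left-hand side converge by relative compactness of $(\mathcal{L}(X_n))$ in $\mathcal{P}_4(\reels^d)$ together with the local uniform convergences $W_n\to W$, $(F_n,G_n,b_n,\Sigma_n)\to(F,G,b,\Sigma)$; the term $-\alpha e^{\lambda t_n}\int(\lambda|x|^4+F_n\cdot x|x|^2)\,d\mathcal{L}(X_n)$ is controlled by the Fatou-type lemma~\cite{fatou} exactly as in Theorem~\ref{stability without noise} (this forces $\lambda\ge\lambda^*$, which, as noted after Theorem~\ref{thm: uniqueness hilbert}, does not affect uniqueness); and $b_n(\theta_n)\cdot p_n-\text{Tr}(\Gamma_n(\theta_n)A_n)\to b(\theta_0)\cdot p-\text{Tr}(\Gamma(\theta_0)A)$ by the Lipschitz convergence of $b_n,\Sigma_n$.

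The hard part will be the localisation $(t_n,X_n,\theta_n)\to(t^*,X^*,\theta_0)$. In the no-noise case this followed from the strictness of the minimum of $Z-\varphi$; here $(t^*,X^*,\theta_0)$ is only a minimum of the two slices $(t,X)\mapsto Z(\cdot,\cdot,\theta_0)-\varphi$ and $\eta\mapsto Z(t^*,X^*,\cdot)-\chi$, and need not be a joint minimum of $Z-\varphi-\chi+\beta|\theta-\theta_0|^4$: the mixed behaviour of $Z$ in $(X,\theta)$ is not controlled, and because $Z$ is merely continuous this cannot be repaired by a Hessian or a quadratic penalisation argument (a quadratic penalty in $\theta$ would, moreover, change the recovered subjet by a term incompatible with the sign of $\Gamma$). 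The resolution I would pursue is to carry the $\theta$-variable through a doubling $(\theta_1,\theta_2)$ together with the Crandall--Ishii lemma~\cite{crandall1992users}, exactly as in the uniqueness proof for \eqref{eq: ME with noise process} (which is precisely what produces a compatible subjet element and the matrix inequality needed to absorb the $\text{Tr}(\Gamma\,\cdot)$ term), interlaced with the Hilbertian Stegall perturbation in $(t,X)$; once this is set up, the localisation of the doubled minimum at $(t^*,X^*,\theta_0,\theta_0)$ and the convergence of the $\theta$-increments to zero proceed as in that proof. The remaining bookkeeping --- the threshold $\lambda^*$ now absorbing the $b_n,\Sigma_n$ terms, the treatment of the boundary cases $t_n=0$, and the fact that only test functions with $\lambda\ge\lambda^*$ are obtained in the limit (harmless for uniqueness) --- is routine given Theorem~\ref{stability without noise} and the uniqueness argument for \eqref{eq: ME with noise process}, but heavy.
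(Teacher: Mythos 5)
Your starting point is the source of the trouble. You fix a jet element $(p,A)\in J^-Z(t^*,X^*,\theta_0)$ of the \emph{limit} function at a point that is only a minimum of the $(t,X)$-slice, realise it by a smooth $\chi$, and then need $(t^*,X^*,\theta_0)$ to be a joint minimum of $Z-\varphi-\chi$ in order to run the Stegall/approximation argument. As you yourself observe, such a joint minimum need not exist, because nothing controls the mixed behaviour of $Z$ in $(X,\theta)$. The fix you propose --- doubling the $\theta$-variable and invoking Crandall--Ishii --- does not repair this: doubling is a comparison device between two functions at two points, and here one of the two sides would have to be $Z$ itself, for which you have no supersolution property to test against (that is precisely what the theorem is asking you to prove). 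So the proposal, as written, has a genuine gap, and the proposed resolution is the wrong tool.

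The paper's proof avoids the problem by never starting from a jet of the limit. It verifies the equivalent formulation of the supersolution property in which one tests against joint smooth functions of $(t,X,\theta)$: take $(t^*,X^*,\theta^*)$ a minimum of $(t,X,\theta)\mapsto Z(t,X,\theta)-\varphi(t,X,\theta)$, add the perturbation $\theta\mapsto -|\theta-\theta^*|^3$ to make the minimum strict in $\theta$ (no Stegall perturbation is needed in the $\theta$-variable since $\reels^m$ is locally compact), and run the argument of Theorem \ref{stability without noise} with the extra finite-dimensional variable carried along. At the approximate minima $(t_n,X_n,\theta_n)$ one has $(\nabla_\theta\varphi,D^2_\theta\varphi)(t_n,X_n,\theta_n)\in J^-Z_n(t_n,X_n,\theta_n)$ automatically, the supersolution property of $W_n$ applies, and the passage to the limit is exactly as in the noiseless case (the restriction to $\lambda\geq\lambda^*$ and the Fatou-type argument are as you describe). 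Only at the very end does one convert back to the jet formulation of Definition \ref{def: monotone solution with common noise}, via the standard equivalence between semijets and smooth touching functions (\cite{flemingsoner} V, Lemma 4.1). Your second and third paragraphs already contain all the convergence ingredients; reorganise the proof so that the smooth function of $\theta$ is part of the test data being minimised jointly from the outset, rather than an a posteriori realisation of a prescribed jet of $Z$, and the localisation difficulty disappears.
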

\begin{proof}
The proof is very similar to Theorem \ref{stability without noise}, considering $(t^*,X^*,\theta^*)$ a point of minimum of 
\[(t,X,\theta)\mapsto \langle W(t,X,\theta,\mathcal{L}(X))-V,X-Y\rangle-\varphi(t,X,\theta),\]
we may always add a perturbation  \[\theta\mapsto -|\theta-\theta^*|^3,\]
to the test function $\varphi$ we consider, and there is no problem in creating points of minimum in $\reels^n$ since it is locally compact. Following the proof of Theorem \ref{stability without noise}, there exists a sequence $(t_n,X_n,\theta_n)_{n\in\mathbb{N}}$ minimum of $Z_n-\varphi$ with 
\[Z_n(t,X,\theta)=\langle W_n(t,X,\theta)-V,X-Y\rangle,\]
such that $(t_n,X_n,\theta_n)\longrightarrow (t^*,X^*,\theta^*)$. Since $(\nabla_\theta \varphi,D^2_\theta \varphi)(t_n,X_n,\theta_n)\in J^- Z_n(t_n,X_n,\theta_n)$ as $\theta_n$ is a point of minimum of $\theta \mapsto Z_n(t_n,X_n,\theta)-\varphi(t_n,X_n,\theta)$, we deduce using the viscosity property that 
\begin{gather*}
\left(\partial_t+\tilde{F}_n(t_n,X_n,\theta_n)\cdot \nabla_X+b_n(\theta_n)\cdot\nabla_\theta-\text{Tr}\left(\Gamma_n(\theta_n) D^2_\theta\right)\right) \varphi(t_n,X_n,\theta_n)\\
\geq \langle \tilde{F}_n(t_n,X_n,\theta_n),\tilde{W}_n(t_n,X_n,\theta_n)-V\rangle+\langle \tilde{G}_n(t_n,X_n,\theta_n),X_n-Y\rangle.
\end{gather*}
By the local uniform convergence of $(F_n,G_n,b_n,\Gamma_n,W_n)_{n\in \mathbb{N}}$ and up to a restriction on the set of test functions to $H_{test}^\mathcal{H}(\lambda^*)$ for $\lambda^*$ sufficiently big, we show as in Theorem \eqref{stability without noise} that
\begin{gather*}
\left(\partial_t+\tilde{F}(t^*,X^*,\theta^*)\cdot \nabla_X+b(\theta^*)\cdot\nabla_\theta-\text{Tr}\left(\Gamma(\theta^*) D^2_\theta\right)\right) \varphi(t^*,X^*,\theta^*)\\
\geq \langle \tilde{F}(t^*,X^*,\theta^*),\tilde{W}(t^*,X^*,\theta^*)-V\rangle+\langle \tilde{G}(t^*,X^*,\theta^*),X^*-Y\rangle.
\end{gather*}
Since this is true for any function $\varphi$ such that $(Z-\varphi)$ reaches a point of minimum at $(t^*,X^*,\theta^*)$, by the equivalence between the semijet definition and the definition through test functions of viscosity solutions (see \cite{flemingsoner} $V$, Lemma 4.1), $W$ is indeed a monotone solution in the sense of Definition \ref{def: monotone solution with common noise}.
\end{proof}
Although we do not make such development in this article, let us mention that the notion of monotone solution can be extended to situation in which the coefficients $b$ associated to the common noise process also depends on the measure argument 
\[b:\left\{
\begin{array}{l}
     \reels^n\times \mathcal{P}_2(\reels^d)\to \reels^n  \\
     (\theta,\mu)\mapsto b(\theta,\mu)
\end{array}
\right.\]
In this case appropriate monotonicity conditions taking this dependency in account are needed to show uniqueness of solutions, see \cite{common-noise-in-MFG} for more.
\subsubsection{Master equation with additive common noise}
Now that we have treated monotone solutions for the master equation \eqref{eq: ME with noise process}. We show how this notion extends to the situation situation of additive common noise
\begin{equation}
\label{eq: ME common noise change variable}
\left\{
\begin{array}{c}
\displaystyle \partial_t W+F(x,m,W)\cdot \nabla_x W+\int_{\reels^d} F(y,m,W)\cdot D_m W(t,x,m)(y)m(dy)-\beta \Delta_x W\\
\displaystyle -2\beta \int \text{div}_{x}[D_mW ](t,x,\noise,m,y)]m(dy)-\beta \int \text{Tr}\left[D^2_{mm}W(t,x,m,y,y')\right]m(dy)m(dy')\\
\displaystyle =G(x,m,W) \text{ in } (0,T)\times\reels^d\times\mathcal{P}_2(\reels^d),\\
\displaystyle W(0,x,m)=W_0(x,m) \text{ for } (x,m)\in\reels^d\times\mathcal{P}_2(\reels^d).
 \end{array}
 \right.
\end{equation}
This is based on the introduction of a new variable allowing to rewrite second order terms on the measure argument as as classic derivative with respect to a finite dimensionnal variable acting on the space of measure through the pushforward \cite{convergence-problem,common-noise-in-MFG}.
\begin{lemma}
\label{lemma: change variable smooth}
Let $W:[0,T]\times \reels^d\times \mathcal{P}_2(\reels^d)$ be a smooth solution to \eqref{eq: ME common noise change variable}, then the function 
\[\mathbf{W}(t,x,\theta,m)\mapsto W(t,x+\theta,(Id_{\reels^d}+\theta)_\#m),\]
is a smooth solution to 
\begin{equation}
\label{eq: ME common noise changed variable}
\begin{array}{c}
\displaystyle \partial_t \mathbf+\mathbf{F}(x,\theta,m,W)\cdot \nabla_x \mathbf{W}+\int_{\reels^d} \mathbf{F}(y,\theta,m,W)\cdot D_m\mathbf{W}(t,x,\theta,m)(y)m(dy)-\beta \Delta_\theta \mathbf{W}\\
\displaystyle =\mathbf{G}(x,\theta,m,\tilde{W}) \text{ in } (0,T)\times\reels^d\times\reels^d\times\mathcal{P}_2(\reels^d),\\
 \end{array}
 \end{equation}
 with $(\mathbf{F},\mathbf{G}):(x,\theta,m,w)\mapsto (F,G)(x+\theta,(Id_{\reels^d}+\theta)_\#m,w)$
\end{lemma}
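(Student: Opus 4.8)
The plan is to prove the lemma by direct verification: transport each term of \eqref{eq: ME common noise change variable} through the change of variables $\mathbf{W}(t,x,\theta,m)=W(t,x+\theta,(Id_{\reels^d}+\theta)_\#m)$ and check that they reassemble into \eqref{eq: ME common noise changed variable}. Throughout, I fix $(t,x,\theta,m)$, write $\Phi_\theta:=Id_{\reels^d}+\theta$, $y:=x+\theta$, $\mu:=(\Phi_\theta)_\#m$, and use repeatedly the transfer identity $\int\phi(z)\,\mu(dz)=\int\phi(w+\theta)\,m(dw)$; the last arguments of $\mathbf{F},\mathbf{G},F,G$ are filled in by the relevant value of $\mathbf{W}$ resp. $W$, exactly as in \eqref{eq: ME common noise changed variable}.

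First I would dispose of the first-order terms. Since the measure slot $(\Phi_\theta)_\#m$ of $\mathbf{W}$ carries no $x$-dependence, $\partial_t\mathbf{W}(t,x,\theta,m)=(\partial_t W)(t,y,\mu)$ and $\nabla_x\mathbf{W}(t,x,\theta,m)=(\nabla_x W)(t,y,\mu)$; the chain rule for push-forwards (using $D\Phi_\theta=Id$) gives $D_m\mathbf{W}(t,x,\theta,m)(w)=D_m W(t,y,\mu)(w+\theta)$. Consequently, by the very definition of $\mathbf{F},\mathbf{G}$ and the transfer identity,
\[
\mathbf{F}(x,\theta,m,\cdot)\cdot\nabla_x\mathbf{W}=F(y,\mu,\cdot)\cdot\nabla_x W(t,y,\mu),\qquad \mathbf{G}(x,\theta,m,\cdot)=G(y,\mu,\cdot),
\]
and $\int\mathbf{F}(w,\theta,m,\cdot)\cdot D_m\mathbf{W}(t,x,\theta,m)(w)\,m(dw)=\int F(z,\mu,\cdot)\cdot D_m W(t,y,\mu)(z)\,\mu(dz)$. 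Hence, modulo the second-order common-noise terms, \eqref{eq: ME common noise changed variable} at $(t,x,\theta,m)$ is exactly \eqref{eq: ME common noise change variable} at $(t,y,\mu)$, and everything reduces to identifying $\Delta_\theta\mathbf{W}(t,x,\theta,m)$ with the second-order common-noise operator of \eqref{eq: ME common noise change variable} evaluated at $(t,y,\mu)$.

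This is the core computation. The variable $\theta$ enters $\mathbf{W}$ through the spatial slot $x+\theta$ and through $\mu=(\Phi_\theta)_\#m$, and the curve $\varepsilon\mapsto(\Phi_{\theta+\varepsilon e_k})_\#m=(Id_{\reels^d}+\varepsilon e_k)_\#\mu$ has constant velocity $e_k$, so the differentiation-along-flows formula on $\mathcal{P}_2(\reels^d)$ gives, componentwise,
\[
\partial_{\theta_k}\mathbf{W}(t,x,\theta,m)=\partial_{x_k}W(t,y,\mu)+\int\big[D_m W(t,y,\mu)(z)\big]_k\,\mu(dz).
\]
Differentiating once more in $\theta_k$, one must track $\theta$ in \emph{three} places inside $\int[D_m W(t,y,\mu)(z)]_k\,\mu(dz)$: the spatial slot of $D_m W$, its measure slot, and — after rewriting the integral against $m$ via the transfer identity — the integration point $z=w+\theta$. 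Using that $\nabla_x$ commutes with $D_m$, and recognising the measure-slot contribution as a second-order measure derivative, one finds after summing over $k$ that, at $(t,y,\mu)$,
\begin{equation*}
\Delta_\theta\mathbf{W}=\Delta_x W+2\int\text{div}_x\!\big[D_m W(t,y,\mu)(z)\big]\,\mu(dz)+\int\!\!\int\text{Tr}\!\big[D^2_{mm}W(t,y,\mu)(z,z')\big]\,\mu(dz)\mu(dz')+\int\text{div}_z\!\big[D_m W(t,y,\mu)(z)\big]\,\mu(dz),
\end{equation*}
which is precisely the second-order common-noise operator of the master equation \eqref{eq: ME common noise change variable}, up to renaming the integration variable. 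The interchanges of $\nabla_\theta$, $\Delta_\theta$ with the integrals are justified by the smoothness and polynomial growth of $W$ and its derivatives. Combining this with the preceding paragraph shows that \eqref{eq: ME common noise changed variable} holds at $(t,x,\theta,m)$ because \eqref{eq: ME common noise change variable} holds at $(t,x+\theta,(\Phi_\theta)_\#m)$; the identity $\mathbf{W}(0,x,\theta,m)=W_0(x+\theta,(\Phi_\theta)_\#m)$ is immediate, and $\mathbf{W}$ is smooth as a composition of smooth maps.

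The step I expect to be the genuine obstacle is the evaluation of $\Delta_\theta\mathbf{W}$: once the integrals are unfolded, $\theta$ appears through three distinct channels, so one has to apply the chain rule on $\mathcal{P}_2(\reels^d)$ twice — including differentiating an integral whose integrating measure itself depends on $\theta$ — and then correctly match the resulting mixed $x$/$z$/measure derivatives with the objects $D_m W$ and $D^2_{mm}W$ of the master equation. Everything else is routine bookkeeping under the standing regularity and growth hypotheses.
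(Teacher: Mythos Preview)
Your approach is exactly the paper's: transport each term through the change of variables and then identify $\Delta_\theta\mathbf{W}$ with the second-order common-noise block. The paper's proof is the two-line version of what you wrote out.

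There is, however, a real discrepancy worth flagging. Your expansion of $\Delta_\theta\mathbf{W}$ carries four terms, including $\int\text{div}_z\big[D_m W(t,y,\mu)(z)\big]\,\mu(dz)$; the identity stated in the paper's proof, and the equation \eqref{eq: ME common noise change variable} itself, carry only three---the $\text{div}_z$ term is absent. Your computation is the correct one: already for $W(m)=\int\phi\,dm$ one has $\mathbf{W}(\theta,m)=\int\phi(w+\theta)\,m(dw)$, hence $\Delta_\theta\mathbf{W}=\int\Delta\phi\,d\mu=\int\text{div}_z[D_mW]\,d\mu$, while $D^2_{mm}W=0$, so the $\text{div}_z$ contribution cannot be hidden in any of the other three terms. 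The omission is a slip in the paper: \eqref{eq: ME common noise change variable} (and upstream \eqref{eq: general ME}) is missing a $-\beta\int\text{div}_y[D_mW]\,m(dy)$ term; compare with \eqref{eq: MFG ME with common noise}, where the analogous $\text{div}_y[D_mU]$ term correctly carries the coefficient $\sigma_x+\beta$ rather than $\sigma_x$ alone. So your argument is correct for the equation that \emph{should} have been written; the sentence ``which is precisely the second-order common-noise operator of \eqref{eq: ME common noise change variable}'' is where the mismatch with the printed equation surfaces, but the fault lies with the equation, not with your chain-rule computation.
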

\begin{proof}
Since W is a smooth function, the result is obtained by differentiating to get the equation satisfied by $\mathbf{W}$ and then noticing that 
\begin{gather*}
\Delta_\noise \mathbf{W}(t,x,\theta,m)\\
=\Delta_x \mathbf{W}(t,x,\theta,m)+\int_{\reels^d}\text{Tr}\left[D^2_{mm}\mathbf{W}\right](t,x,\theta,m,y,y')m(dy)m(dy')+2\int_{\reels^d}\text{div}_x\left[D_m\mathbf{W}\right](t,x,\theta,m,y)m(dy).
\end{gather*}
\end{proof}
The reason to introduce such change of variable is purely technical. Comparison of viscosity in finite dimension relies on a variation of Theorem 3.2 in \cite{crandall1992users}. Remarkably, this argument extend to separable Hilbert space \cite{second-order-viscosity-hilbert}, as a consequence we expect we could use it directly as terms in $\beta$ in \eqref{eq: ME common noise change variable} can be written of as second order terms in $\mathcal{H}$ \cite{Lions-college}. However an equivalent to this result is not, to the extent of our knowledge, available for functions of measures in general. In later sections (namely in the presence of idiosyncratic noise), the Hilbertian approach fails and we will work directly on master equations on the space of measures. Lemma \ref{lemma: change variable smooth}, allows us to bypass altogether the need to discuss about comparison with second order infinite dimensionnal terms with an argument that extends nicely outside the Hilbertian framework, by introducing the following definition for monotone solutions to \eqref{eq: ME common noise change variable}.
\begin{definition}
A continuous function $W$ is a monotone solution to \eqref{eq: ME common noise change variable} if $\mathbf{W}(t,x,\theta,m)\mapsto W(t,x+\theta,(Id_{\reels^d}+\theta)_\#m)$ is a monotone solution to \eqref{eq: ME common noise changed variable} in the sense of Definition \ref{def: monotone solution with common noise}
\end{definition}
\begin{corol}
Results of stability and uniqueness presented for \eqref{eq: ME with noise process} still holds 
\end{corol}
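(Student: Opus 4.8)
The plan is to recognize the transformed equation \eqref{eq: ME common noise changed variable} as a particular instance of the master equation \eqref{eq: ME with noise process}: here the finite-dimensional noise variable is $\theta\in\reels^d$, the drift vanishes ($b\equiv 0$), the diffusion coefficient is the constant matrix $\Sigma\equiv\sqrt{2\beta}\,I_d$ (so that $\Gamma\equiv\beta I_d$ and $-\beta\Delta_\theta$ is exactly the second order operator appearing in \eqref{eq: ME with noise process}), and the coefficients are $(\mathbf{F},\mathbf{G},\mathbf{W}_0)(x,\theta,m,w)=(F,G,W_0)\bigl(x+\theta,(Id_{\reels^d}+\theta)_\#m,w\bigr)$. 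Since $b$ and $\Sigma$ are constant, Hypothesis \ref{hyp: b lip noise process} holds trivially, so the content of the corollary reduces to two claims: first, that $(\mathbf{F},\mathbf{G},\mathbf{W}_0)$ inherits Hypotheses \ref{hyp: weak monotonicity} and \ref{hyp: linear growth in F} from $(F,G,W_0)$; second, that one can transport the uniqueness theorem and the stability Theorem \ref{stability with common noise} for \eqref{eq: ME with noise process} back through the change of variables $W\mapsto\mathbf{W}$, which is injective since setting $\theta=0$ recovers $W(t,x,m)=\mathbf{W}(t,x,0,m)$.

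The first claim rests on the elementary fact that, for each fixed $\theta$, the shift $x\mapsto x+\theta$ preserves differences ($(x+\theta)-(y+\theta)=x-y$) and $m\mapsto(Id_{\reels^d}+\theta)_\#m$ is a $\mathcal{W}_2$-isometry of $\mathcal{P}_2(\reels^d)$. Consequently, fixing $\theta$ and writing $X'=X+\theta$, $Y'=Y+\theta$ (so $X'\sim(Id_{\reels^d}+\theta)_\#\mathcal{L}(X)$, and similarly for $Y'$), the joint $L^2$-monotonicity inequality in $(x,w)$ for $(\mathbf{F},\mathbf{G})$ and the $L^2$-monotonicity inequality for $\mathbf{W}_0$ become, verbatim, the corresponding inequalities for $(F,G)$ and $W_0$ evaluated at $(X',Y',U,V)$; this gives Hypothesis \ref{hyp: weak monotonicity}. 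For the growth bound I would use $|x+\theta|\le|x|+|\theta|$ and $E_2\bigl((Id_{\reels^d}+\theta)_\#m\bigr)\le 2E_2(m)+2|\theta|^2$ to obtain a linear growth estimate for $(\mathbf{F},\mathbf{G},\mathbf{W}_0)$ of the form $C'\bigl(1+|x|+|\theta|+|w|+\sqrt{E_2(m)}\bigr)$, the extra linear (hence polynomial) dependence in the finite-dimensional variable $\theta$ being admissible by the remark following the uniqueness theorem for \eqref{eq: ME with noise process}; continuity of $\mathbf{F},\mathbf{G},\mathbf{W}_0$ follows from continuity of $F,G,W_0$ together with $\mathcal{W}_2\bigl((Id_{\reels^d}+\theta)_\#m,(Id_{\reels^d}+\theta')_\#m'\bigr)\le\mathcal{W}_2(m,m')+|\theta-\theta'|$.

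Granting the first claim, uniqueness follows at once: two monotone solutions $W^1,W^2$ of \eqref{eq: ME common noise change variable} with the same data yield, by definition, monotone solutions $\mathbf{W}^1,\mathbf{W}^2$ of \eqref{eq: ME common noise changed variable} for the same coefficients $(\mathbf{F},\mathbf{G},\mathbf{W}_0,0,\beta I_d)$, which now satisfy the hypotheses of the uniqueness theorem for \eqref{eq: ME with noise process}; hence $\mathbf{W}^1\equiv\mathbf{W}^2$, and taking $\theta=0$ gives $W^1\equiv W^2$. For stability, given $(F_n,G_n)\to(F,G)$ locally uniformly with uniform linear growth and associated monotone solutions $W_n\to W$ locally uniformly, I would transport everything through the change of variables: by the first claim the data $(\mathbf{F}_n,\mathbf{G}_n,0,\sqrt{2\beta}\,I_d)$ satisfy Hypotheses \ref{hyp: weak monotonicity}, \ref{hyp: linear growth in F} and \ref{hyp: b lip noise process} uniformly in $n$; and since the continuous map $(x,\theta,m)\mapsto\bigl(x+\theta,(Id_{\reels^d}+\theta)_\#m\bigr)$ sends compact subsets of $[0,T]\times\reels^d\times\reels^d\times\mathcal{P}_2(\reels^d)$ into compact subsets of $[0,T]\times\reels^d\times\mathcal{P}_2(\reels^d)$, the local uniform convergences $(F_n,G_n)\to(F,G)$ and $W_n\to W$ pass to $(\mathbf{F}_n,\mathbf{G}_n)\to(\mathbf{F},\mathbf{G})$ and $\mathbf{W}_n\to\mathbf{W}$. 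Theorem \ref{stability with common noise} then yields that $\mathbf{W}$ is a monotone solution of \eqref{eq: ME common noise changed variable}, i.e. $W$ is a monotone solution of \eqref{eq: ME common noise change variable}.

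The only genuinely non-routine part is the bookkeeping in the first claim: one must make sure that the $\theta$-variable manufactured by the change of variables does not break the standing hypotheses of the theorems for \eqref{eq: ME with noise process} --- concretely, that linear growth of the coefficients in $\theta$ is tolerated, that $\mathbf{W}_0$ remains $L^2$-monotone for each frozen $\theta$, and that the change of variables maps compacts to compacts so that local uniform convergence survives the transformation --- all of which follow from translations being $\mathcal{W}_2$-isometries of $\mathcal{P}_2(\reels^d)$. Beyond Lemma \ref{lemma: change variable smooth} and this isometry property, no new ingredient is required.
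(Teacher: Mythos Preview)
Your proposal is correct and follows essentially the same approach as the paper: recognize \eqref{eq: ME common noise changed variable} as an instance of \eqref{eq: ME with noise process} with $b\equiv 0$, $\Gamma\equiv\beta I_d$, verify that $(\mathbf{F},\mathbf{G},\mathbf{W}_0)$ inherit the needed monotonicity, growth and continuity from $(F,G,W_0)$ via the shift $x\mapsto x+\theta$, $m\mapsto(Id_{\reels^d}+\theta)_\#m$, and then invoke the uniqueness and stability theorems already proved for \eqref{eq: ME with noise process}. The paper's own proof is a two-line sketch of exactly this; your version simply spells out the bookkeeping (isometry of the shift in $\mathcal{W}_2$, preservation of compacts, recovery of $W$ from $\mathbf{W}$ at $\theta=0$) that the paper leaves implicit.
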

\begin{proof}
It is sufficient to see that $(\mathbf{W}_0,\mathbf{F},\mathbf{G})$ as defined in Lemma \ref{lemma: change variable smooth} inherit the regularity and growth of $(W_0,F,G)$ in the new variable $\theta$. As such the results of uniqueness and stability can be applied without difficulty with 
\[\Gamma\equiv\beta I_d, b\equiv 0.\]
\end{proof}
\begin{remarque}
Obviously it is possible to consider a general matrix $\Gamma=\Sigma\Sigma^T$ instead, for correlated common noise. Or even master equations featuring both additive common noise and a common noise process correlated with each others, see \cite{common-noise-in-MFG} remark 5.5. 
\end{remarque}
\begin{remarque}
    Let us also remark at this point that this treatment of common noise can be applied to notions of monotone solutions defined in different setting. In particular for monotone solutions to flat monotone mean field games in \cite{bertucci-monotone}, this trick allows to relax the definition of monotone solution with common noise by not requiring that solutions have a first order derivative with respect to the measure argument. 
\end{remarque}
\section{Existence of monotone solutions}
\label{section existence}
We start this section by giving some results on solutions to the following master equation. At this point, we focus on presenting Lipschitz solutions and give general results which are also valid in the presence of idiosyncratic noise, hence the presence of terms in $\sigma_x$ compared to \eqref{eq: ME without noise}. 
\begin{equation}
\label{eq: general ME with noise process}
\left\{
\begin{array}{c}
\displaystyle \partial_t W+F(x,\theta,m,W)\cdot \nabla_x W-\sigma_x \Delta_x W+b(\theta)\cdot\nabla_\theta W-\sigma_\theta \Delta_\theta W\\
\displaystyle +\int_{\reels^d} F(y,m,W)\cdot D_m W(t,x,m)(y)m(dy)-\sigma_x\int_{\reels^d} \text{div}_y (D_mW(t,x,m)(y))m(dy)\\
\displaystyle =G(x,\theta,m,W) \text{ in } (0,T)\times\reels^d\times\reels^n\times\mathcal{P}_2(\reels^d),\\
\displaystyle W(0,x,\theta,m)=W_0(x,\theta,m) \text{ for } (x,\theta,m)\in\reels^d\times \reels^n\times\mathcal{P}_2(\reels^d).
 \end{array}
 \right.
\end{equation}
In this first paragraph we present the tools we use to show the existence of monotone solutions. We start by recalling the notion of Lipschitz solutions to the master equation introduced in \cite{lipschitz-sol}. We show some properties of those solutions, in particular that they are monotone solutions. The key idea is to  combine this result with a lemma we prove on Lipschitz regularization of $L^2-$monotone functions. To show the existence of a monotone solution we will then show the existence of a Lipschitz solution for a sequence of regularized problem, and conclude thanks to the stability of monotone solutions under appropriate monotonicity assumptions. 
\subsection{Existence of Lipschitz solutions}
In this presentation of the notion of Lipschitz solution we make slight changes to suit our needs. Since the construction is very much based on idea presented in \cite{lipschitz-sol}, we carry out the proofs concisely in this exposition and refer readers interested in this notion to \cite{lipschitz-sol,cauchy-lipschitz-bertucci}, in which a more in depth analysis is carried out. The definition of Lipschitz solutions is based on a fixed point definition by integrating through the characteristics of the master equation. Consider the following linear PDE
\begin{equation}
\label{eq: linear lip sol}
\left\{
\begin{array}{c}
\displaystyle\partial_t V+A(t,x,\noise,m)\cdot \nabla_x V-\sigma_x \Delta_x V+b(\noise)\cdot\nabla_\noise V-\frac{1}{2}\text{Tr}\left(\Sigma\Sigma^T(\theta)D^2_\noise V\right)\\
\displaystyle +\int_{\reels^d} A(t,y,\noise,m)\cdot D_m V(t,x,\noise,m)(y)m(dy)-\sigma_x \int_{\reels^d} \text{div}_y (D_mV(t,x,\noise,m)(y))m(dy)\\
\displaystyle =E(t,x,\noise,m) \text{ in } (0,T)\times\reels^d\times\reels^n\times\mathcal{P}_q(\reels^d),\\
\displaystyle V(0,x,\noise,m)=V_0(x,\noise,m) \text{ for } (x,\noise,m)\in\reels^d\times\reels^n\times\mathcal{P}_q(\reels^d),
 \end{array}
 \right.
\end{equation}
for 3 vector fields $(A,E,V_0):(0,T)\times\reels^d\times\reels^n\times\mathcal{P}_q(\reels^d)\longrightarrow (\reels^d)^3\times\reels^n$. 
A solution of this linear system is given by integrating along the characteristics for all $t<T$
\begin{equation}
\label{eq: Feynman-kac measure}
\begin{array}{c}
\displaystyle V(t,x,\noise,\mu)=\espcond{V_0(X_t,\noise_t,m_t)+\int_0^t E(t-s,X_s,\noise_s,m_s)ds}{X_0=x,\noise_0=\noise,m_0=\mu},\\
\displaystyle dX_s=-A(t-s,X_s,\noise_s,m_s)ds+\sqrt{2\sigma_x}dB_s,\\
\displaystyle d\noise_s=-b(\noise_s)ds+\Sigma(\theta)\cdot dB^\noise_s,\\
dm_s=\left(-\text{div}\left(A(t-s,x,\noise_s,m_s)m_s\right)+\sigma_x \Delta_x m_s\right)ds,
\end{array}
\end{equation}
for $(B_s,B^\noise_s)_{s\geq 0}$ a $d+n$ dimensional Brownian motion. We claim that whenever $X\mapsto A(s,X,\noise,\mathcal{L}(X))$ is Lipschitz in $\mathcal{H}$ uniformly in $(s,\theta)\in (0,T)\times\reels^n$ and $b$ is Lipschitz, this system of a coupled SDE and SPDE $(X_s,\noise_s,m_s)_{s\in[0,t]}$ is well-defined (see \cite{probabilistic-mfg}). This relies on classic results on Lipschitz SDE and Corolary \ref{from lip hilbert to lip x}.
If $(\tilde{V}_0,\tilde{E})$ is also Lipschitz on $\mathcal{H}$, then so will be the solution $V$. Consider now the functional $\psi$ that to a Lipschitz $(A,E,V_0)$ associate this function V, i.e.
\[\psi(T,A,B,E,V_0)=\FuncDef{(0,T)\times\reels^d\times \reels^n\times\mathcal{P}_q(\reels^d)}{\reels^d,}{(t,x,\noise,\mu)}{V(t,x,\noise,\mu),}\]
where $V$ is given by \eqref{eq: Feynman-kac measure}. The definition of a Lipschitz solution to \eqref{eq: general ME with noise process} is given by a fixed point of this operator. 
\begin{definition}
\label{def: lipschitz sol W2}
Let $T>0$, $W:[0,T)\times \reels^d\times\reels^n\times\mathcal{P}_q(\reels^d)\to \reels^d$ is said to be a Lipschitz solution of \eqref{eq: general ME with noise process} if :
\begin{itemize}
    \item[-] $(x,\noise,\mu)\mapsto W(t,x,\noise,\mu)$ continuous on $\reels^d\times \reels^n\times \mathcal{P}_2(\reels^d)$ uniformly in $t\in[0,\alpha]$ for all $\alpha$ in $[0,T)$.
     \item[-] $(X,\theta)\mapsto W(t,X,\noise,\mathcal{L}(X))$ is Lipschitz on $\mathcal{H}\times \reels^n$ uniformly in $t\in[0,\alpha]$ for all $\alpha$ in $[0,T)$.
    \item[-] for all $t<T$: \[W=\psi(t,F(\cdot,W),G(\cdot,W),W_0).\]
\end{itemize}
\end{definition}
The main difference here, compared to the original paper, is in the Lipschitz norm we consider. Namely, the author considered function $(t,x,\mu)\mapsto W(t,x,\mu)$ Lipschitz in $\reels^d\times \mathcal{P}_2(\reels^d)$ locally in time, we show their result still hold if we consider instead continuous function $W$ such their lift on $\mathcal{H}$ $(t,X)\mapsto W(t,X,\mathcal{L}(X))$ is Lipschitz in $\mathcal{H}$.

Following what we just saw on \eqref{eq: general ME with noise process}, such a definition makes sense provided coefficients have  at least a Lipschitz lift on $\mathcal{H}$. We shall work under the following assumptions
\begin{hyp}
\label{hyp: Lipschitz Wq}
There exists a constant $C>0$ and a modulus of continuity $\omega(\cdot)$ such that
    \begin{enumerate}
    \item[-] Let $\tilde{F}:(X,U,\theta)\mapsto F(X,\theta,\mathcal{L}(X),U)$ be the lift of $F$ on $\mathcal{H}\times \reels^n\times \mathcal{H}$, defining similarly $\tilde{W}_0,\tilde{G}$, the following holds
    \[\|\tilde{F}(\cdot)\|_{Lip(\mathcal{H}^2\times \reels^n)},\|\tilde{G}(\cdot)\|_{Lip(\mathcal{H}^2\times \reels^n)},\|\tilde{W}_0(\cdot)\|_{Lip(\mathcal{H}\times \reels^n)}\leq C\]
        \item[-]$\|b\|_{Lip},\|\Sigma\|_{Lip}\leq C$
        \item[-] Letting $\bar{v}=(W_0,F,G)$, $\forall(x,p,\theta_1,\theta_2,\mu,\nu)\in \left(\reels^{d}\right)^2\times \reels^n\times (\mathcal{P}_2(\reels^d))^2$
        \[|\bar{v}(x,\theta_1,\mu,p)-\bar{v}(x,\theta_2,\nu,p)|\leq \omega\left(\mathcal{W}_2(\mu,\nu)+|\theta_1-\theta_2|\right)\]
    \end{enumerate}
\end{hyp}
\begin{remarque}
The modulus of continuity in $\mathcal{P}_2(\reels^d)$ can easily be relaxed to a local form, for example 
 \[|\bar{v}(x,\theta,\mu,p)-\bar{v}(x,\theta,\nu,p)|\leq C\left(1+|x|+|\theta|+|p|+\sqrt{\int_{\reels^d} |y|^2\mu(dy)}+\sqrt{\int_{\reels^d} |y|^2\nu(dy)}\right)\omega\left(\mathcal{W}_2(\mu,\nu)\right),\]
 to be consistent with the growth conditions imposed on $F,G,W_0$.
\end{remarque}
\begin{thm}
\label{lip sol b(p)}
       Under Hypothesis \ref{hyp: Lipschitz Wq} the following hold:
    \begin{enumerate}
        \item[-] There is always an existence time  $T>0$ such that there is a unique solution of \eqref{eq: general ME with noise process} in the sense of Definition \ref{def: lipschitz sol W2} on $[0,T)$.
        \item[-] There exist $T_c>0$ and a maximal solution $W$ defined on $[0,T_c[$ such that for all Lipschitz solutions $V$ defined on $[0,T)$: $T\leq T_c$ and $W|_{[0,T)}\equiv V$.
        \item[-] If $T_c<\infty$ then $\underset{t\to T_c}{\lim} \|\tilde{W}(t,\cdot)\|_{Lip(\mathcal{H}\times \reels^n)}=+\infty$.
    \end{enumerate}
\end{thm}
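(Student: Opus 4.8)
The plan is to prove all three assertions by a Banach fixed point argument for the operator $\psi$ followed by a continuation/blow-up argument, in the spirit of \cite{lipschitz-sol,cauchy-lipschitz-bertucci}; the only structural change is that the Lipschitz norm is now measured on the lift to $\mathcal{H}\times\reels^n$ rather than on $\reels^d\times\mathcal{P}_2(\reels^d)$, and Corollary \ref{from lip hilbert to lip x} together with classical well-posedness of Lipschitz SDEs (in the conditional McKean--Vlasov form arising here, see \cite{probabilistic-mfg}) guarantees that all the estimates below are the usual ones. Concretely, I would fix $K\ge C$, and for $\tau>0$ introduce the set $\mathcal{E}_{K,\tau}$ of maps $W$ on $[0,\tau]$ satisfying the first two bullets of Definition \ref{def: lipschitz sol W2} with uniform-in-time Lipschitz constant of $\tilde W$ on $\mathcal{H}\times\reels^n$ at most $K$ and linear growth (Hypothesis \ref{hyp: linear growth in F}) with a constant $C_K$, endowed with the distance $d_\tau$ of uniform convergence on bounded subsets of $[0,\tau]\times\mathcal{H}\times\reels^n$, which makes it a complete metric space.

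\textbf{Self-map and contraction.} For $W\in\mathcal{E}_{K,\tau}$ the coefficient $X\mapsto F(X,\theta,\mathcal{L}(X),W(s,X,\theta,\mathcal{L}(X)))$ is Lipschitz on $\mathcal{H}$ uniformly in $(s,\theta)$ with constant $\lesssim C(1+K)$, and $b,\Sigma$ are Lipschitz, so the characteristic system \eqref{eq: Feynman-kac measure} is well posed; a Grönwall estimate gives, uniformly for $t\le\tau$, a bound of the form $\|X_t-x\|\le C_K\,\tau e^{C_K\tau}(1+\|x\|+|\theta|+\sqrt{E_2(\mu)})$ together with the analogous Lipschitz-in-initial-condition estimate. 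Feeding this into \eqref{eq: Feynman-kac measure} with $(A,E,V_0)=(F(\cdot,W),G(\cdot,W),W_0)$ and estimating increments (to avoid differentiating) one checks that $\psi(\cdot,F(\cdot,W),G(\cdot,W),W_0)$ has a lift that is Lipschitz on $\mathcal{H}\times\reels^n$ with constant $\le C(1+\tau\,\mathrm{poly}(K)e^{C\tau K})$ and linear growth with a constant of the same type; hence, choosing $K$ large relative to $C$ and then $\tau=\tau(K)$ small, $\psi$ maps $\mathcal{E}_{K,\tau}$ into itself. For $W^1,W^2\in\mathcal{E}_{K,\tau}$ started from the same data, stability of Lipschitz SDEs yields $\sup_{s\le\tau}\|X^1_s-X^2_s\|\le C\tau e^{C\tau K}\,d_\tau(W^1,W^2)$, and inserting this plus the Lipschitz continuity of $W_0,F,G$ into \eqref{eq: Feynman-kac measure} gives $d_\tau(\psi(W^1),\psi(W^2))\le C\tau e^{C\tau K}d_\tau(W^1,W^2)$. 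Shrinking $\tau$ once more, $\psi$ is a contraction and Banach's theorem produces a unique fixed point on $[0,\tau)$, which is (i); uniqueness among all Lipschitz solutions (not only those lying in some $\mathcal{E}_{K,\tau}$) follows by applying the same Grönwall estimate directly to the difference of two given solutions on a short subinterval and then propagating.

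\textbf{Maximal solution and blow-up.} Let $T_c$ be the supremum of existence times; by the uniqueness above all solutions coincide on overlaps and glue to a maximal solution $W$ on $[0,T_c)$, which is (ii). For (iii), suppose $T_c<\infty$ and $\liminf_{t\to T_c}\|\tilde W(t,\cdot)\|_{Lip(\mathcal{H}\times\reels^n)}=:K_0<\infty$; an ODE estimate along a single characteristic shows that the linear-growth constant of $W(t,\cdot)$ and the value $|W(t,0,0,\delta_0)|$ also stay bounded as $t\to T_c$, so there is $t_0<T_c$ for which $W(t_0,\cdot)$ is an admissible initial datum with all relevant constants controlled by $K_0$. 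Restarting the fixed point construction from $t_0$ with a $K$ depending only on $K_0$ produces a Lipschitz solution on $[t_0,t_0+\tau(K_0))$, which by uniqueness extends $W$; since $\tau(K_0)$ is independent of $t_0$, choosing $t_0$ with $T_c-t_0<\tau(K_0)$ contradicts maximality, so $\|\tilde W(t,\cdot)\|_{Lip}\to\infty$ as $t\uparrow T_c$.

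\textbf{Main obstacle.} The delicate point is not the abstract fixed point scheme but keeping the non-local measure term $\int F(y,\cdot)\cdot D_mW\,m(dy)$ of \eqref{eq: general ME with noise process} under control: the characteristics carry along the conditional law $m_s$ through a Fokker--Planck SPDE, and it is precisely this term that forces one to track $m_s$. The resolution is the Hilbertian lift, viewing $(X_s,\theta_s)$ in $\mathcal{H}\times\reels^n$ with $m_s=\mathcal{L}(X_s\mid\mathcal{F}^\theta_s)$, which turns the coupled SDE--SPDE into an ordinary Lipschitz SDE in $\mathcal{H}\times\reels^n$, after which everything reduces to bookkeeping of how the constants depend on $K$ and $\tau$; a secondary subtlety is that the contraction constant must be made to depend only on the data (through $K$) and not on the particular solutions, so that the continuation step yields a uniform-in-$t_0$ lower bound on the existence time.
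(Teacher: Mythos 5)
Your overall architecture (fixed point for $\psi$ on a ball of Lipschitz-on-the-lift functions, gluing to a maximal solution, continuation to get the blow-up alternative) is the same as the paper's, and the Grönwall bookkeeping you sketch for the self-map, the contraction in a lift-based sup norm, and the restart argument for (iii) all match. However, there is a genuine gap in how you handle the first bullet of Definition \ref{def: lipschitz sol W2}. A Lipschitz bound on the lift $\tilde W$ over $\mathcal{H}\times\reels^n$ controls $W(t,x,\theta,\mu)$ only for $x$ in the support of $\mu$ (this is exactly the content of Corollary \ref{from lip hilbert to lip x} and is stressed in a remark after Hypothesis \ref{hyp: regularity: weak strong monotonicity in G}); it gives no information on the continuity of $\mu\mapsto W(t,x,\theta,\mu)$ off the support. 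Consequently: (a) your space $\mathcal{E}_{K,\tau}$ is not obviously complete for $d_\tau$, since a $d_\tau$-Cauchy sequence converges only on lifts and the limit need not satisfy the continuity bullet; (b) you never verify that $\phi(W)$ is again continuous on $\reels^d\times\reels^n\times\mathcal{P}_2(\reels^d)$; and (c) your blow-up criterion implicitly assumes that boundedness of $\|\tilde W(t,\cdot)\|_{Lip}$ alone yields an admissible restart datum, which again requires off-support continuity.

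The paper closes this gap by building the uniform modulus into the working set: $E_M$ consists of $W$ with $\sup_t\|\tilde W(t,\cdot)\|_{Lip}\leq M$ \emph{and} $|W(\cdot,\theta_1,\mu)-W(\cdot,\theta_2,\nu)|\leq M\omega(M\mathcal{W}_2(\mu,\nu)+M|\theta_1-\theta_2|)$, where $\omega$ is the modulus granted by the third item of Hypothesis \ref{hyp: Lipschitz Wq}. The propagation of this modulus through the characteristics is the estimate \eqref{eq: prove lip sol exist}: one compares the Fokker--Planck flows started from $\mu$ and $\nu$ to get $\mathcal{W}_2(m_s^{\mu,p},m_s^{\nu,q})\leq C(\mathcal{W}_2(\mu,\nu)+|p-q|)$, and then feeds this into the representation formula to obtain $|\phi(W)(t,x,p,\mu)-\phi(W)(t,x,q,\nu)|\leq C\omega(C|p-q|+C\mathcal{W}_2(\mu,\nu))$, with constants depending only on the data and on $\|\tilde W\|_{Lip}$. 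This last dependence is also what rescues the blow-up criterion: as long as the Lipschitz norm of the lift stays bounded, the off-support modulus stays controlled, so the solution remains an admissible datum for the restart. You should add this modulus condition to the definition of $\mathcal{E}_{K,\tau}$ and carry out the corresponding estimate; with that amendment your argument coincides with the paper's.
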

\begin{proof}
    The proof follows from argument mainly introduced in the original paper \cite{lipschitz-sol}, the main difference is the space in which the fixed point is carried out 
    Consider $E_M$ to be the set of all functions $W:[0,T)\times\reels^d\times \reels^n\times \mathcal{P}_2(\reels^d)\to \reels^d$ such that 
\[E_M=\left\{W: \sup_{t\in[0,T]}\|\tilde{W}(t,\cdot)\|_{Lip}\leq M,\underset{[0,T)\times\reels^d}{\sup} |W(\cdot,\theta_1,\mu)-W(\cdot,\theta_2,\nu)|\leq M\omega\left( M\mathcal{W}_2(\mu,\nu)+M|\theta_1-\theta_2|\right)\right\}.\]
We are going to show that for a well chosen couple $(T,M)$, for any $W\in E_M$,  
    \[\phi(W)=\psi(T,F(\cdot,W),G(\cdot,W),W_0),\]
    belongs to $E_M$. For now we first remark that 
\begin{equation}
\label{eq: representation formula in L2}
\phi(W)(t,X,p,\mathcal{L}(X))=\espcond{W_0(X_t,\noise^p_t,m_t)+\int_0^t G(X_s,\noise^o_s,m_s,W(t-s,X_s,\theta^p_s,m_s)ds}{X_0=X},
\end{equation}
with 
\begin{equation}
\label{eq: dynamic (x,theta)}
\left\{
\begin{array}{l}
dX_s=-F(X_s,\theta_s,\mathcal{L}(X_s),W(t-s,X_s,\theta_s,m_s)ds+\sqrt{2\sigma_x}dB_s,\\
d\theta^p_s=-b(\theta^p_s)ds+\Sigma(\theta)\cdot dB^\theta_s \quad \theta_0=p
\end{array}
\right.
\end{equation}
and 
\[m_s=\mathcal{L}(X_s|(\theta_u)_{u\leq s}).\]
We let $(Y_s,\theta_s^q)_{s\in[0,t]}$ be the solution to \eqref{eq: dynamic (x,theta)} for the same Brownian motion but with an initial condition $(Y_0,q)$. Denoting $\mathcal{F}^\theta_s=\sigma ((B^\theta_u)_{u\leq s})$ and using the fact $\tilde{F}$ and $\tilde{W}$ are Lipschitz in $\mathcal{H}\times \reels^n$, we get
\[\espcond{|X_s-Y_s|^2}{\mathcal{F}^\theta_s}+|\theta^p_s-\theta^q_s|^2\leq \|X_0-Y_0\|^2+|p-q|^2+\|\tilde{F}\|_{lip}(1+\|\tilde{W}\|_{lip})\int_0^s\left( \espcond{|X_v-Y_v|^2}{\mathcal{F}^\theta_s}+ |\theta_u^p-\theta_u^q|^2\right)dv \quad a.s. \]
By Gronwall's Lemma there exists a constant $C$ depending on $(t,\tilde{F},\tilde{W})$ such that 
\begin{equation}
\label{gronwall estimate measure}
\forall s\leq t \quad \espcond{|X_s-Y_s|^2}{\mathcal{F}_s^\theta}+|\theta_s^p-\theta_s^q|^2\leq C\|X_0-Y_0\|+|p-q|^2 \quad a.s.\end{equation}
In light of \eqref{eq: representation formula in L2} we deduce similarly that 
\begin{align*}
\|\phi(W)(t,X,p,\mathcal{L}(X)-\phi(W)(t,Y,q,\mathcal{L}(Y)\|^2\leq C'\left(\|X-Y\|^2+|p-q|^2\right),
\end{align*}
for a constant $C'$ depending on the Lipschitz norm of $(\tilde{F},\tilde{G},\tilde{W}_0,\tilde{W},b)$ in $\mathcal{H}\times \reels^n$ and t .In particular this implies that $\psi(W)$ is Lipschitz in $\reels^d$ from Corollary \ref{from lip hilbert to lip x}. We now show that it is continuous in $\reels^n\times \mathcal{P}_2(\reels^d)$. To that end, we turn to an estimate with respect to initial condition on the system
\[
\left\{
\begin{array}{l}
\displaystyle dX^{x,\mu,p}_s=-F(X^{x,\mu,p}_s,\noise^{p}_s,m^{\mu,p}_s,W(t-s,X^{x,\mu,p}_s,\noise^{p}_s,m^{\mu,p}_s))ds+\sqrt{2\sigma_x}dB_s\quad X_0=x,\\
\displaystyle d\noise^{x,\mu,p}_s=-b(\noise^{p}_s)ds+\Sigma(\theta)\cdot dB^\noise_s \quad \theta_0=p,\\
dm^{\mu,p}_s=\left(-\text{div}\left(F(x,\noise^{p}_s,m^{\mu,p}_s,W(t-s,x,\noise^{p}_s,m^{\mu,p}_s))m^{\mu,p}_s\right)+\sigma_x \Delta_x m^{x,\mu,p}_s\right)ds \quad m_0=\mu,
\end{array}\right.
\]
Taking the infimum over all couplings, the estimate \eqref{gronwall estimate measure} yields
\begin{equation}
\label{eq: prove lip sol exist}
\mathcal{W}_2(m_s^{\mu,p},m_s^{\nu,q})\leq C\left(\mathcal{W}_2(\mu,\nu)+|p-q|\right).
\end{equation}
Since $\tilde{F},\tilde{G}$ are Lipschitz in $\mathcal{H}$, $(F,G)$ are also Lipschitz in $x$, as a consequence an application of gronwall's lemma yields 
\[\forall s\leq t \quad \|X_s^{x,\mu,p}-X_s^{x,\nu,q}\|\leq C\omega\left(C|p-q|+C\mathcal{W}_2(\mu,\nu)\right),\]
for another constant also denominated $C$.
Finally using the representation formula for $\psi(W)$ we deduce that
\[|\phi(W)(t,x,p,\mu)-\phi(W)(t,x,q,\nu)|\leq C\omega\left(C|p-q|+C\mathcal{W}_2(\mu,\nu)\right),\]
for a constant C depending on the Lischitz norm fo $(\tilde{G},\tilde{F},\tilde{W}_0,b)$ and $t$. Since all of those constants are given by Gronwall type estimate, we know that for $t=0$ they depend only on $W_0$ and grow exponentially with time. In particular, we deduce that for $M$ sufficiently big and $T$ sufficiently small $\psi$ sends $E_M$ into itself.
We now remark that for this $E_M$ the growth of the function in $\phi(E_M)$ is controlled in such way that
\[\forall t<T,\exists C>0 \quad \forall W\in\phi(E_M), \quad \forall (s,x,\theta,\mu)\in [0,t]\times\reels^d\times\reels^n\times \mathcal{P}_2(\reels^d), \quad |W(t,x,\theta,\mu)|\leq C\left(1+|\theta|+|x|+\sqrt{\int_{\reels^d} |y|^2\mu(dy)}\right).\]
This means $\phi(E_M)$ endowed with the norm $\|\cdot\|_M$ defined by
\[\|W\|_M=\underset{[0,T)\times \mathcal{H}\times \reels^n}{\sup}\frac{\|W(t,X,\theta,\mathcal{L}(X)\|}{1+|\theta|+\|X\|},\]
is a complete metric space. For existence and uniqueness, the rest of the proof is then done exactly as in \cite{lipschitz-sol} $\phi$ being a contraction for the norm 
\[\|\cdot\|_{\mathcal{H},\infty}: W\mapsto \underset{[0,T)\times \mathcal{H}\times \reels^n}{\sup}\|W(t,X,\theta,\mathcal{L}(X))\|,\]
for $T$ sufficiently small. As for the blowup condition it remains to show that so long as the Lipschitz norm of its lift does not blow up a Lipschitz solution $W$ is indeed continuous on $\reels^d\times \reels^n\times\mathcal{P}_2(\reels^d)$. For that we first remark that the constant in \eqref{eq: prove lip sol exist} depends only on the data and the Lipschitz norm of $\tilde{W}$. Consequently, so long as $\sup_{s\in[0,t]}\|\tilde{W}(t,\cdot)\|_{Lip}<+\infty$ we can find constants such that 
\[|W(t,x,\theta_1,\mu)-W(t,x,\theta_2,\nu)|\leq C\omega(C\mathcal{W}_2(\mu,\nu),\]
by Gronwall's Lemma. 
\end{proof}
\begin{remarque}
For the case of purely Hilbertian master equation, which is to say master equation that do not result from a lifting on $\mathcal{H}$ but rather are posed from the start on a Hilbert space this notion of solution is equivalent to the one presented in \cite{lipschitz-sol} Section 2. In some sense, we are just showing here that this notion of solution is also valid for master equation with idiosyncratic noise. 
\end{remarque}

\subsection{Link with monotone solutions}
We now show that under reasonable growth assumptions, Lipschitz solutions are monotone solutions. For master equations on finite state space for which a notion of solution was defined in \cite{bertucci-monotone-finite}, this result was proven in \cite{Bertucci-peccot}. This can be seen as a natural extension of this result to master equation in infinite dimension and with noise.
\begin{lemma}
\label{lemma: lip sol are monotone without idio}
    under Hypothesis \ref{hyp: Lipschitz Wq}, letting $W$ be the Lipschitz solution to \eqref{eq: ME with noise process} defined on $[0,T_c)$ for some $T_C>0$, we obtain that $W$ is also a monotone solution. 
\end{lemma}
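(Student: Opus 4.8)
The point is that a Lipschitz solution, while not differentiable in $(x,m)$, is \emph{classical along the characteristics}: by the fixed--point characterisation of Definition \ref{def: lipschitz sol W2} and the flow property of the characteristic system \eqref{eq: dynamic (x,theta)}, applying Theorem \ref{lip sol b(p)} on subintervals shows exactly as for linear Feynman--Kac representations that, with $\tilde W(t,X,\theta)=W(t,X,\theta,\mathcal L(X))$, one has for $h\le t$
\[\tilde W(t,X,p)=\mathbb E\big[\tilde W(t-h,X_h,\theta_h)+\textstyle\int_0^h \tilde G(X_s,\theta_s,\tilde W(t-s,X_s,\theta_s))\,ds\,\big|\,\sigma(X_0)\big],\]
along the flow $(X_s,\theta_s,m_s)_{s\ge0}$ out of $(X,p,\mathcal L(X))$; equivalently $s\mapsto W(t-s,X_s,\theta_s,m_s)+\int_0^s G(X_u,\theta_u,m_u,W(t-u,X_u,\theta_u,m_u))\,du$ is a martingale in the filtration generated by $\sigma(X_0)$ and $(B^\theta_u)_{u\le s}$. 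I will use this to verify the supersolution inequality of Definition \ref{def: monotone solution with common noise} at a touching point.

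\textbf{Main steps.} Fix $(Y,V)\in\mathcal H^2$, $\varphi\in H_{test}^\mathcal H$, a deterministic $p\in\reels^n$, a minimiser $(t^*,X^*)\in(0,T)\times(\mathcal H\cap L^4(\Omega,\reels^d))$ of $(t,X)\mapsto Z(t,X,p)-\varphi(t,X)$, and $(q,A)\in J^-Z(t^*,X^*,\cdot)(p)$; realise $B^\theta$ independently of $\sigma(X^*,Y,V)$. Run the characteristic flow out of $(X^*,p,\mathcal L(X^*))$ and set $g(h)=\mathbb E[(W(t^*-h,X_h,\theta_h,m_h)-V)\cdot(X_h-Y)]$. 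First, inserting the martingale decomposition $W(t^*-h,X_h,\theta_h,m_h)=\tilde W(t^*,X^*,p)-\int_0^h G\,ds+N_h$ (with $\mathbb E[N_h\mid\sigma(X^*,Y)]=0$) and $X_h=X^*-\int_0^hF\,ds$ into $g(h)$: the cross terms with $N_h$ vanish or are $o(h)$ by Cauchy--Schwarz and the continuity of $W$ ($\|N_h\|\to0$), and letting $h\to0$ gives
\[\lim_{h\to0}\frac{Z(t^*,X^*,p)-g(h)}{h}=\big\langle\tilde F(X^*,p,\tilde W(t^*,X^*,p)),\tilde W(t^*,X^*,p)-V\big\rangle+\big\langle\tilde G(X^*,p,\tilde W(t^*,X^*,p)),X^*-Y\big\rangle,\]
which is the right--hand side of the inequality in Definition \ref{def: monotone solution with common noise}. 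Second, conditioning on $B^\theta$ one has $g(h)=\mathbb E^{B^\theta}[Z(t^*-h,X_h,\theta_h)]$ (given $\mathcal F^\theta_h$ the variable $X_h$ has law $m_h$); using the minimality $Z(t^*-h,X_h,p)\ge Z(t^*,X^*,p)+\varphi(t^*-h,X_h)-\varphi(t^*,X^*)$ and a $C^2$ function $\rho$ realising the subjet ($Z(t^*,X^*,\theta)\ge Z(t^*,X^*,p)+\rho(\theta)-\rho(p)$ near $p$, $\nabla\rho(p)=q$, $\tfrac12\nabla^2\rho(p)\le A$), then taking $\mathbb E^{B^\theta}$ and invoking Dynkin's formula for the $\theta$--dynamics, one gets
\[g(h)\ \ge\ Z(t^*,X^*,p)-\Big(\partial_t\varphi+\tilde F(X^*,p,\tilde W)\cdot\nabla_X\varphi+b(p)\cdot q-\text{Tr}\big(\Gamma(p)A\big)\Big)(t^*,X^*)\,h+o(h),\]
where the Gâteaux derivative of $\varphi$ in the direction $-\tilde F(X^*,p,\tilde W)$ is well defined because $X^*\in L^4$ and $F$ has linear growth, and $E_4(X_h)\to E_4(X^*)$. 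Comparing the two asymptotics yields the supersolution inequality; since $Z$ is continuous ($W$ being continuous), $W$ is a monotone solution of \eqref{eq: ME with noise process}. The case of additive common noise \eqref{eq: ME common noise change variable} then follows by composing with the change of variables of Lemma \ref{lemma: change variable smooth}.

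\textbf{Main obstacle.} The delicate point is the second displayed inequality, namely extracting the finite--dimensional second--order term $\text{Tr}(\Gamma(p)A)$ with an $o(h)$ error: one has second--order information on $Z$ in $\theta$ only at the single point $(t^*,X^*)$, whereas $g(h)$ sees $Z(t^*-h,X_h,\cdot)$, so the naïve Lipschitz--in--$\theta$ bound on the mixed increment is $O(\sqrt h)$, not $o(h)$. I would close this by combining the joint modulus of continuity of $\tilde W$ in $(t,X,\theta,m)$ furnished by Hypothesis \ref{hyp: Lipschitz Wq} and Theorem \ref{lip sol b(p)} with the concentration of $\theta_h$ around $p$ ($\mathbb E|\theta_h-p|^2=O(h)$, $\mathbb E|\theta_h-p|^4=O(h^2)$), so that the first--order fluctuations of $\theta_h$ average precisely to the drift $-b(p)\cdot q\,h$ while the $O(\sqrt h)$--sized remainders are absorbed; alternatively, one may first replace $\Sigma\Sigma^T$ by $\Sigma\Sigma^T+\varepsilon\,\mathrm{Id}$, observe that the resulting Lipschitz solution is $C^2$ in $\theta$ by parabolic regularity (so that it is a monotone solution by the argument above with classical $\theta$--derivatives), and pass to the limit $\varepsilon\to0$ using the stability of Lipschitz solutions and Theorem \ref{stability with common noise}. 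In the model case without common noise \eqref{eq: ME without noise} this difficulty disappears and the computation along characteristics above is a short one--line verification.
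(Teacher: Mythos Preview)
Your overall strategy—use the dynamic programming principle satisfied by a Lipschitz solution and differentiate the expectation of $Z$ along the characteristic flow—is exactly the paper's strategy. The derivation of the martingale property and of the right-hand side limit is correct.

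The genuine gap is precisely the ``main obstacle'' you flag. You attempt to work directly with an element $(q,A)$ of the subjet of $\theta\mapsto Z(t^*,X^*,\theta)$, and then have to compare $Z(t^*-h,X_h,\theta_h)$ with $Z(t^*-h,X_h,p)$, where the subjet provides second-order information only at $(t^*,X^*)$. Neither of your proposed fixes closes this cleanly: the continuity moduli of $\tilde W$ in $(t,X)$ do not upgrade the subjet inequality from $(t^*,X^*)$ to $(t^*-h,X_h)$ at order $o(h)$ (you would need control on second $\theta$-differences of $Z$, which you do not have), and the $\varepsilon$-regularisation of $\Sigma$ plus stability is a substantial detour that also needs justification that the regularised Lipschitz solution is $C^2$ in $\theta$.

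The paper avoids this obstacle by a different ordering of the argument. It does \emph{not} introduce the subjet first; instead it takes a smooth test function $\varphi$ of all three variables $(t,X,\theta)$ and a \emph{joint} point of minimum of $Z-\varphi$ on $[0,T)\times\mathcal H\times\reels^n$. The DPP then yields
\[
\varphi(t,X,\theta)\ \ge\ \mathbb E\big[\varphi(s,X_{t-s},\theta_{t-s})\big]+\int_0^{t-s}\!\big(\langle\tilde G,X-Y\rangle+\langle\tilde F,\tilde W-V\rangle\big)\,du,
\]
and since $\varphi$ is smooth in $\theta$, a direct application of Itô's lemma to $\varphi$ produces the drift $b(\theta)\cdot\nabla_\theta\varphi-\mathrm{Tr}(\Gamma(\theta)D^2_\theta\varphi)$ with the correct sign and no mixed $(X,\theta)$ remainder to control. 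Only at the very end does the paper invoke the equivalence between the test-function and subjet formulations of viscosity supersolutions (Fleming--Soner, V, Lemma 4.1) to recover the statement of Definition \ref{def: monotone solution with common noise}. The one technicality that remains is that $X\mapsto E_4(X)$ in the test function is not Fréchet differentiable on $\mathcal H$; the paper handles this by observing that the characteristic flow stays in a bounded set of $L^4(\Omega,\reels^d)$ under linear growth, and on that sublevel set $E_4$ is smooth, so Itô's lemma still applies.

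In short: replace ``fix $p$, use the subjet of $Z(t^*,X^*,\cdot)$, and fight the mixed increment'' by ``take $\varphi$ smooth in $\theta$ too, use a joint minimum, apply Itô's lemma to $\varphi$, then convert to subjets at the end''.
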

\begin{proof}
 By the uniqueness of Lipschitz solution and the flow property for Lipschitz SDE, fixing $(t,X,\theta)\in[0,T_c)\times \mathcal{H}\times \reels^n, s\leq t$, the following dynamic programming principle is satisfied by the lift $\tilde{W}$ of $W$
\begin{equation}
\label{eq: lip sol hilbert DPP}
\begin{array}{c}
\displaystyle \tilde{W}(t,X,\noise)=\espcond{\tilde{W}(s,X_{t-s},\noise_{t-s})+\int_0^{t-s}\tilde{G}(X_u,\noise_u,\tilde{W}(t-u,X_u,\theta_u))du}{X_0=X,\theta_0=\theta},\\
\displaystyle dX_u=-\tilde{F}(X_u,\noise_u,\tilde{W}(t-u,X_u,\theta_u))du\\
\displaystyle d\noise_u=-b(\noise_u)ds+\Sigma(\theta)\cdot dB^\noise_u,
\end{array}
\end{equation}
Let $(V,Y)\in\mathcal{H}^2$, it follows naturally that for any $s\leq t$
\[(\tilde{W}(t,X,\noise)-V)\cdot(X-Y)=\espcond{(\tilde{W}(s,X_{t-s},\noise_{t-s})-V)\cdot(X-Y)+\int_0^{t-s}\tilde{G}(X_u,\noise_u,\tilde{W}(t-u,X_u,\theta_u))du}{X_0=X,\theta_0=\theta,V,Y}.\]
By taking the expection in this equation and noticing that
\[X=X_{t-s}+\int_0^{t-s}\tilde{F}(X_u,\theta_u,\tilde{W}(t-u,X_u,\theta_u))du,\]
we can show that $\forall s\leq t$
\[Z(t,X,\theta)=\esp{Z(s,X_{t-s},\theta_{t-s})}+\int_0^{t-s}\left(\langle \tilde{G}(t-u,X_u,\theta_u),X-Y\rangle+\langle \tilde{F}(t-u,X_u,\theta_u),\tilde{W}(s,X_{t-s},\theta_{t-s})-V\rangle\right) du,\]
where $Z(t,X,\theta)=\langle \tilde{W}(t,X,\theta)-V,X-Y\rangle$. Suppose for some function $\varphi: [0,T)\times \mathcal{H}\times \reels^n \to \reels$,
\[(Z-\varphi)(t,X,\theta)=\min_{[0,T)\times\mathcal{H}\times \reels^n}(Z-\varphi)=0.\]
Using the dynamic programming principle satisfied by $Z$, it follows that $\forall s\leq t$
\[\varphi(t,X,\theta)\geq\esp{\varphi(s,X_{t-s},\theta_{t-s})}+\int_0^{t-s}\left(\langle \tilde{G}(t-u,X_u,\theta_u),X-Y\rangle+\langle \tilde{F}(t-u,X_u,\theta_u),\tilde{W}(s,X_{t-s},\theta_{t-s})-V\rangle\right) du.\]
At this point, if $\varphi$ is sufficiently smooth, that is if it admits frechet derivatives that are bounded on bounded sets of $\mathcal{H}\times\reels^n$, an infinite dimensionnal version of Ito's lemma, see \cite{Gawarecki2011} theorem 2.10 for exemple, let us conclude to 
\begin{gather}
\label{ineq: varphi sursol proof lip}
\left(\partial_t+\tilde{F}(t,X,\theta)\cdot \nabla_X+b(\theta)\cdot\nabla_\theta-\text{Tr}\left(\Gamma(\theta) D^2_\theta\right)\right) \varphi(t,X,\theta)\\
\geq \langle \tilde{F}(t,X,\theta),\tilde{W}(t,X,\theta)-V\rangle+\langle \tilde{G}(t,X,\theta),X-Y\rangle.
\end{gather}
by multiplying the equation by $\frac{1}{t-s}$ and taking the limit as $s$ tends to $t$. The slight technicality here is that our set of test functions admits function which are not frechet differentiable in $\mathcal{H}$, namely $X\mapsto \esp{X^4}$. Let us show that this inequality is still satisfied by $\varphi$ so long as the point of minimum lies in $L^4(\Omega,\reels^d)$. First we claim that in that case $(X_s)_{s\in[0,t]}$ is uniformly bounded in $L^4(\Omega,\reels^d)$. Indeed the growth assumption \ref{hyp: linear growth in F} ensures that $F$ also has linear growth in $L^4(\Omega,\reels^d)$, the claim then following from Gronwall lemma. Indeed, letting
\[M=\sup_{s\in[0,t]} \esp{X_s^4},\]
we define \[H_4=\{ X\in \mathcal{H}, \esp{X^4}\leq M\}.\]
$H_4$ is a closed subset of a Hilbert space and so it follows $\mathcal{H}_4=(H_4,\langle \cdot,\cdot \rangle)$ is also a Hilbert space. On this space the function
\[E_4:X\mapsto \esp{X^4},\]
is frechet differentiable (and in fact smooth). Applying Ito's lemma in this space we also get that $\varphi$ satisfies \eqref{ineq: varphi sursol proof lip}, in particular this show this is true for any $\varphi \in H_{test}$. Since this result is valid for any $[0,T)\times \mathcal{H}\times \reels^n$, this ends to show $W$ is a monotone solution to \eqref{eq: ME with noise process} by \cite{flemingsoner} $V$, Lemma 4.1.
\end{proof}
\begin{remarque}
This proof does not rely on monotonicity assumptions on $W_0,F,G$, 
\end{remarque}
\begin{remarque}
For this proof to work we only need a growth condition on $F$ of the form 
\[\tilde{F}(X,U,\theta)\leq C_\theta(1+|X|+|U|+\|X\|+\|U\|),\]
allowing an adaptation of this lemma to equation posed directly on a Hilbert space. 
\end{remarque}
In particular for the master equation \eqref{eq: ME common noise change variable} we recall the following result on Lipschitz solutions which is a consequence of  \cite{common-noise-in-MFG} Lemma 5.3.
\begin{lemma}
\label{lip sol chang variable}
Under Hypothesis \ref{hyp: Lipschitz Wq}, letting $V$ be the Lipschitz solution to \eqref{eq: ME common noise change variable} defined on $[0,T_c)$ for some $T_C>0$, there exists a function $W;[0,T_c)\times \reels^d\times \mathcal{P}_2(\reels^d)\to \reels^d$ such that 
\[\forall (t,x,\theta,\mu)\in[0,T_c)\times\reels^d\times \reels^n\times \mathcal{P}_2(\reels^d), \quad V(t,x,\theta,\mu)=W(t,x+\theta,(Id_{\reels^d}+\theta)_\# \mu).\]
\end{lemma}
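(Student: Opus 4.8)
The plan is to exploit a translation symmetry built into the changed variable equation \eqref{eq: ME common noise changed variable} together with the uniqueness of Lipschitz solutions of Theorem \ref{lip sol b(p)}; this is the Lipschitz-solution analogue of Lemma \ref{lemma: change variable smooth}, and amounts to \cite{common-noise-in-MFG} Lemma 5.3. Throughout, $V$ denotes the Lipschitz solution of the $\theta$-augmented equation \eqref{eq: ME common noise changed variable}. For $a\in\reels^d$ I would introduce the diagonal translation $\tau_a(x,\theta,m)=(x-a,\theta+a,(Id_{\reels^d}-a)_\# m)$. The first step is to record that $\tau_a$ leaves the problem invariant: since $(x-a)+(\theta+a)=x+\theta$ and $(Id_{\reels^d}+\theta+a)_\#(Id_{\reels^d}-a)_\# m=(Id_{\reels^d}+\theta)_\# m$, and since $(\mathbf{F},\mathbf{G},\mathbf{W}_0)$ factor through $(x+\theta,(Id_{\reels^d}+\theta)_\# m)$, one has $\mathbf{F}(\tau_a(x,\theta,m),w)=\mathbf{F}(x,\theta,m,w)$ and likewise for $\mathbf{G}$ and $\mathbf{W}_0$; moreover the differential operator in \eqref{eq: ME common noise changed variable} ($\partial_t$, the transport term $\mathbf{F}\cdot\nabla_x$, the integral term against $D_m$ and $m(dy)$, and $\beta\Delta_\theta$) is $\tau_a$-equivariant, which is precisely the computation behind Lemma \ref{lemma: change variable smooth}. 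I would also note that $\tau_a$ maps the class $E_M$ from the proof of Theorem \ref{lip sol b(p)} into itself, because $X\mapsto X-a$ is an isometry of $\mathcal{H}$, $\theta\mapsto\theta+a$ an isometry of $\reels^n$, and pushforward by a translation is a $\mathcal{W}_2$-isometry.

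The second step is to show that the Feynman--Kac fixed-point operator $\phi$ of that proof (here with $\sigma_x=0$, $b\equiv 0$, $\Sigma\Sigma^T=2\beta I$) preserves the closed, non-empty subset $\mathcal{I}\subset E_M$ of functions $W$ satisfying $W\circ\tau_a=W$ for all $a\in\reels^d$ (note $\mathbf{W}_0\in\mathcal{I}$). For $W\in\mathcal{I}$ I would start the characteristic system \eqref{eq: Feynman-kac measure} defining $\phi(W)$ from $\tau_a(x,\theta,\mu)$ with the same driving Brownian motion: because $b\equiv 0$ the $\theta$-component is merely shifted, $\theta_s^{\tau_a}=\theta_s+a$; and because $\mathbf{F}(\cdot,W)$ and $\mathbf{G}(\cdot,W)$ are themselves $\tau_a$-invariant as functions of $(t,x,\theta,m)$ (combining the invariance of $\mathbf{F},\mathbf{G}$ with $W\circ\tau_a=W$), uniqueness for the conditional McKean--Vlasov characteristic SDE and for its associated transport equation forces $X_s^{\tau_a}=X_s-a$ and $m_s^{\tau_a}=(Id_{\reels^d}-a)_\# m_s$. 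Substituting these into the representation formula and using once more the $\tau_a$-invariance of $\mathbf{W}_0$ and $\mathbf{G}$ gives $\phi(W)(t,\tau_a(x,\theta,\mu))=\phi(W)(t,x,\theta,\mu)$, i.e. $\phi(\mathcal{I})\subseteq\mathcal{I}$.

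The third and final step is immediate: since $\phi$ is a contraction and $\mathcal{I}$ is closed in the complete metric space on which the fixed point is carried out, the unique fixed point $V$ lies in $\mathcal{I}$, hence $V\circ\tau_a=V$ for every $a\in\reels^d$. Taking $a=-\theta$ yields $V(t,x,\theta,\mu)=V(t,x+\theta,0,(Id_{\reels^d}+\theta)_\#\mu)$, which is the claim with $W(t,y,\nu):=V(t,y,0,\nu)$; this $W$ inherits from $V$ the continuity in $(y,\nu)$ locally uniformly in $t$ and the Lipschitz lift on $\mathcal{H}$. I expect the only genuinely delicate point to be the measure bookkeeping in the second step — establishing $m_s^{\tau_a}=(Id_{\reels^d}-a)_\# m_s$ and the correct transformation of the $D_m$-integral term — but this is the same computation already performed for smooth solutions in Lemma \ref{lemma: change variable smooth}, so no essentially new difficulty arises; alternatively one may simply invoke \cite{common-noise-in-MFG} Lemma 5.3.
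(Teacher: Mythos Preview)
Your proposal is correct and is precisely the argument the paper defers to: the paper does not give its own proof here but states the lemma ``is a consequence of \cite{common-noise-in-MFG} Lemma 5.3'', which is exactly the translation-invariance-plus-uniqueness reasoning you spell out (and which you yourself identify at the end).

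One small point worth tightening: your contraction step, as written, only pins $V$ inside $\mathcal{I}$ on the short time interval where $\phi$ is a contraction, whereas the statement concerns the maximal interval $[0,T_c)$. You can close this either by iterating the fixed-point argument (the time-$t_1$ trace of a function in $\mathcal{I}$ is again $\tau_a$-invariant, so it serves as initial datum for the next step) or, more cleanly, by bypassing the contraction altogether: once you have shown that $V\circ\tau_a$ satisfies the same fixed-point identity with the same data and inherits the regularity of Definition~\ref{def: lipschitz sol W2}, uniqueness in Theorem~\ref{lip sol b(p)} gives $V\circ\tau_a=V$ on all of $[0,T_c)$ directly. Either route is routine; your core argument stands.
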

\subsection{Lipschitz solutions and FBSDE}
We now present a result on the link between Lispchitz solutions and forward backward systems. To be more precise, whenever they exists Lipschitz solutions are the decoupling field to the mean field forward backward system associated to \eqref{eq: general ME with noise process}. 
\begin{lemma}
\label{lemma: lip sol FBSDE}
    Under Hypothesis \ref{hyp: Lipschitz Wq}, let $W$ be the Lipschitz solution to \eqref{eq: general ME with noise process} defined on $[0,T_c)$ for some $T_c>0$ and consider for $t<T_c$
\[
\left\{
\begin{array}{l}
     dX_s=-F(X_s,\theta_s,\mathcal{L}(X_s|\mathcal{F}^\theta_s),W(t-s,X_s,\theta_s,\mathcal{L}(X_s|\mathcal{F}^\theta_s)))ds+\sqrt{2\sigma_x}dB_s, \quad X_0=X,  \\
     d\theta_s=-b(\theta_s)ds+\Sigma(\theta_s)\cdot dB^\theta_s, \quad \theta_0=\theta,\\
     \mathcal{F}^\theta_s=\sigma( (B^\theta_u)_{u\leq s}).
\end{array}
\right.
\]
Let $\mathcal{F}_s=\sigma((B_u,B^\theta_u)_{u\leq s}),$ then there exists a square integrable $\mathcal{F}-$adapted process $(Z_s)_{s\in[0,t]}$, such that for the process
\[\forall s\leq t \quad W_s=W(t-s,X_s,\theta_s,\mathcal{L}(X_s|\mathcal{F}^\theta_s)),\]
the following holds
\[
\left\{
\begin{array}{l}
     \displaystyle X_s=X-\int_0^sF(X_u,\theta_u,\mathcal{L}(X_u|\mathcal{F}^\theta_u),W_u)du+\sqrt{2\sigma_x}B_s,\\
     \displaystyle \theta_s=\theta-\int_0^s b(\theta_u)du+\int_0^s\Sigma(\theta_u)\cdot dB^\theta_u,\\
     \displaystyle W_s=W_0(X_t,\theta_t,\mathcal{L}(X_t|\mathcal{F}^\theta_t))+\int_s^t G(X_u,\theta_u,\mathcal{L}(X_u|\mathcal{F}^\theta_u),W_u)du-\int_t^T Z_s\cdot d(B_s,B^\theta_s).
\end{array}
\right.
\]
\end{lemma}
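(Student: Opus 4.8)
The plan is to construct the forward process first, then identify $W$ evaluated along it as the backward component by means of a dynamic programming identity followed by a martingale representation argument. Well-posedness of the forward system comes directly from the hypotheses: under Hypothesis \ref{hyp: Lipschitz Wq} the map $(X,\theta)\mapsto F(X,\theta,\mathcal{L}(X),W(t-s,X,\theta,\mathcal{L}(X)))$ is Lipschitz on $\mathcal{H}\times\reels^n$ uniformly in $s$, being the composition of the Lipschitz lift of $F$ with the Lipschitz-in-$\mathcal{H}$ function $W$, and $b,\Sigma$ are Lipschitz; hence the coupled SDE for $(X_s,\theta_s)$ together with the conditional Fokker--Planck equation for $m_s=\mathcal{L}(X_s\mid\mathcal{F}^\theta_s)$ admits a unique strong solution on $[0,t]$, which is $\mathcal{F}$-adapted. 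The linear growth Hypothesis \ref{hyp: linear growth in F} gives $\sup_{s\le t}\E{|X_s|^2+|\theta_s|^2}<\infty$, and then, $W$ having linear growth, the process $W_s:=W(t-s,X_s,\theta_s,m_s)$ is square integrable.

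Next I would establish the dynamic programming principle: for $r\le s\le t$,
\[W_r=\espcond{W_s+\int_r^s G(X_u,\theta_u,m_u,W_u)\,du}{\mathcal{F}_r}.\]
This is the flow property of Lipschitz solutions, i.e. the analogue of \eqref{eq: lip sol hilbert DPP} from the previous lemma but now incorporating the idiosyncratic term $\sigma_x\Delta_x$ and the conditional law. It follows from the fixed-point characterization $W=\psi(t,F(\cdot,W),G(\cdot,W),W_0)$ together with uniqueness of Lipschitz solutions: restricting $W$ to time arguments $\le t-r$ exhibits it as the Lipschitz solution started from $W(\cdot)$ at the appropriate time, and specializing the representation formula \eqref{eq: Feynman-kac measure} along the characteristics $(X_u,\theta_u,m_u)$ driven by $F(\cdot,W)$ yields the displayed identity after taking conditional expectations. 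The point requiring care is the consistency of the McKean--Vlasov interaction: one must check that $m_u=\mathcal{L}(X_u\mid\mathcal{F}^\theta_u)$ coincides with the measure flow appearing in \eqref{eq: Feynman-kac measure}, which follows from uniqueness of solutions to the conditional Fokker--Planck equation under the Lipschitz bound supplied by Corollary \ref{from lip hilbert to lip x}.

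Granting the DPP, $M_s:=W_s+\int_0^s G(X_u,\theta_u,m_u,W_u)\,du$ is a square-integrable $\mathcal{F}$-martingale. Since $\mathcal{F}$ is the filtration generated by the Brownian motions $(B,B^\theta)$ (augmented by the initial datum, which is independent of the noise, so that $(B,B^\theta)$ remains a Brownian motion and the representation property persists), the martingale representation theorem provides a square-integrable $\mathcal{F}$-adapted process $(Z_s)_{s\le t}$ with $M_s=M_0+\int_0^s Z_u\cdot d(B_u,B^\theta_u)$. Writing $M_t=M_s+\int_s^t Z_u\cdot d(B_u,B^\theta_u)$ and substituting $M_t=W_0(X_t,\theta_t,m_t)+\int_0^t G\,du$, where we used $W_t=W(0,X_t,\theta_t,m_t)=W_0(X_t,\theta_t,m_t)$ from Definition \ref{def: lipschitz sol W2}, one rearranges to the stated backward equation for $W_s$; the forward equations for $X_s$ and $\theta_s$ are just the integrated form of the defining SDE.

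The main obstacle is the rigorous justification of the dynamic programming / flow identity in the second step: passing from the fixed-point definition of the Lipschitz solution to a genuine conditional DPP along the forward McKean--Vlasov flow with idiosyncratic noise, and in particular reconciling the conditional law $\mathcal{L}(X_s\mid\mathcal{F}^\theta_s)$ with the measure flow of \eqref{eq: Feynman-kac measure}. Everything downstream --- the moment bounds, the martingale representation, and the algebraic rearrangement --- is routine.
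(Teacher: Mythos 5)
Your proposal is correct and takes essentially the same route as the paper: define $M_s = W_s + \int_0^s G\,du$, show it is a square-integrable $\mathcal{F}$-martingale via the flow property of Lipschitz solutions, apply the martingale representation theorem, and rearrange. You are slightly more explicit than the paper about two points it glosses over — the consistency of the conditional law $\mathcal{L}(X_u\mid\mathcal{F}^\theta_u)$ with the measure flow in the representation formula, and the need to augment the Brownian filtration by the (independent) initial datum so that the representation property still applies when $X_0$ is random — both of which are worth flagging, but neither constitutes a divergence in approach.
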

\begin{proof}
Since $x\mapsto W(s,x,\theta,\mu)$, $(X,\theta)\mapsto W(s,X,\theta,\mathcal{L}(X))$ are Lipschitz respectively in $C(\reels^d,\reels^d)$ and $C(\mathcal{H}\times \reels^n,\mathcal{H})$, uniformly in the other variables, it follows naturally that there exists a constant $C$ such that
\[\forall (s,x,\theta,X)\in [0,t]\times \reels^d\times \reels^n\times \mathcal{H}, \quad |W(s,x,\theta,\mathcal{L}(X))|\leq C(1+|x|+|\theta|+\|X\|).\]
By Gronwall type estimates, $(X_s,\theta_s,W_s)_{s\in[0,t]}$ have uniformly bounded second moments. Following this statement the process $(M_s)_{s\in [0,t]}$ defined by 
\[M_s=W_s+\int_0^s G(X_u,\theta_u,\mathcal{L}(X_u|\mathcal{F}^\theta_u),W_u)du,\]
is square integrable. Let $\mathcal{F}_s=\sigma (B_u,B^\theta_u)_{u\leq s})$, by the flow property and the uniqueness of Lipchitz solutions for any $s\leq t$ 
\[M_s=\espcond{M_t}{\mathcal{F}_s}.\]
Since $(M_s)$ is a square integrable $\mathcal{F}-$martingale, by the martingale representation theorem there exists a square integrable $\mathcal{F}-$adapted process $(Z_s)_{s\in [0,t]}$ such that 
\[\forall s\leq t, \quad M_s=\int_0^s Z_u\cdot d(B_u,B^\theta_u).\]
The conclusion then follows from the definition of $(M_s)_{s\in[0,t]}$.
\end{proof}
We now present a Lemma on the Lipschitz regularization of $L^2-$monotone function, which is the main reason we use Lipschitz solutions in this article. This statement is a variation on the Hille-Yosida regularization of monotone operators
\begin{lemma}
\label{lemma: regul monotone function}
    Let $F:\reels^d\times \mathcal{P}_2(\reels^d)\to\reels^d$ be a $L^2-$monotone continuous function. Then there exists a sequence of function $(F_\varepsilon)_{\varepsilon>0}$ satisfying the following properties:
\begin{enumerate}
\item[-] for all $\varepsilon>0$, $F_\varepsilon$ is a continuous $L^2-monotone$ function,
    \item[-] for all $\varepsilon>0$, $\tilde{F}_\varepsilon$ is Lipschitz on $\mathcal{H}$ with constant $\frac{1}{\varepsilon}$,
    \item[-] $(F_\varepsilon)_{\varepsilon>0}$ converges locally uniformly to $F$ in $\reels^d\times \mathcal{P}_2(\reels^d)$.
\end{enumerate}
If furthermore on each compact set $\mathcal{O}\subset \reels^d\times\mathcal{P}_2(\reels^d)$, there exists a modulus $\omega_\mathcal{O}(\cdot)$ such that 
\[\forall (x,\mu),(y,\nu)\in \mathcal{O}, \quad |F(x,\mu)-F(y,\nu)|\leq \omega_\mathcal{O}\left(|x-y|+\mathcal{W}_2(\mu,\nu)\right),\]
then there also exists a modulus $\bar{\omega}_\mathcal{O}(\cdot)$ such that uniformly in $\varepsilon$
\[\forall (x,\mu),(y,\nu)\in \mathcal{O}, \quad |F_\varepsilon(x,\mu)-F_\varepsilon(y,\nu)|\leq \bar{\omega}_\mathcal{O}\left(|x-y|+\mathcal{W}_2(\mu,\nu)\right).\]
\end{lemma}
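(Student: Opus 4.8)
The plan is to take $(F_\varepsilon)_{\varepsilon>0}$ to be the Yosida regularization of the lifted operator $\tilde F$ on $\mathcal H$, and then to verify that this Hilbertian construction descends to a function on $\reels^d\times\mathcal P_2(\reels^d)$. First I would note that, applying Proposition \ref{prop: expectation to pointwise} to $f(x,y)=(F(x,\mu)-F(y,\mu))\cdot(x-y)$ for $\mu$ with full support (and then using density and the continuity of $F$), the function $F(\cdot,\mu)$ is monotone on $\reels^d$ for every fixed $\mu$; hence $\tilde F$ is an everywhere defined, continuous, monotone — therefore maximal monotone and locally bounded — operator on $\mathcal H$. The classical theory of maximal monotone operators then furnishes the resolvent $J_\varepsilon=(I+\varepsilon\tilde F)^{-1}$, which is $1$-Lipschitz on $\mathcal H$, and the Yosida approximation $\tilde F_\varepsilon=\tilde F\circ J_\varepsilon=\tfrac1\varepsilon(I-J_\varepsilon)$, which is monotone, $\tfrac1\varepsilon$-Lipschitz on $\mathcal H$, satisfies $\|\tilde F_\varepsilon(X)\|\le\|\tilde F(X)\|$, and converges pointwise on $\mathcal H$ to $\tilde F$ (since $\|J_\varepsilon X-X\|=\varepsilon\|\tilde F_\varepsilon X\|\le\varepsilon\|\tilde F X\|\to0$ and $\tilde F$ is continuous).

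The crux — and the step I expect to demand the most care — is to show that $\tilde F_\varepsilon$ is again the lift of a function $F_\varepsilon:\reels^d\times\mathcal P_2(\reels^d)\to\reels^d$. I would proceed by freezing the law. Given $X\sim\mu$, let $Y=J_\varepsilon(X)$, so $Y+\varepsilon F(Y,\mathcal L(Y))=X$; writing $\nu=\mathcal L(Y)$, the map $y\mapsto y+\varepsilon F(y,\nu)$ is strongly monotone and continuous on $\reels^d$, hence a homeomorphism with $1$-Lipschitz inverse $J^\nu_\varepsilon:=(I+\varepsilon F(\cdot,\nu))^{-1}$, so $Y(\omega)=J^\nu_\varepsilon(X(\omega))$ a.s. and $\nu=(J^\nu_\varepsilon)_\#\mu$. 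This last fixed-point relation depends only on $\mu$, and the uniqueness of the Hilbert-space resolvent forces its solution to be unique (any solution $\nu'$ yields, via $Y'=J^{\nu'}_\varepsilon\circ X$, an element $Y'$ with $Y'+\varepsilon\tilde F(Y')=X$, hence $Y'=Y$ and $\nu'=\nu$); I denote it $\nu_\varepsilon(\mu)$. Consequently $J_\varepsilon(X)=J^{\nu_\varepsilon(\mathcal L(X))}_\varepsilon(X)$ pointwise, so $J_\varepsilon$ and $\tilde F_\varepsilon$ are lifts, and I set $g_\varepsilon(x,\mu)=J^{\nu_\varepsilon(\mu)}_\varepsilon(x)$ and $F_\varepsilon(x,\mu)=\tfrac1\varepsilon(x-g_\varepsilon(x,\mu))=F(g_\varepsilon(x,\mu),\nu_\varepsilon(\mu))$.

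It then remains to transfer the three properties. Monotonicity of $F_\varepsilon$ and the $\tfrac1\varepsilon$-Lipschitz bound for $\tilde F_\varepsilon$ are exactly the Hilbertian statements above. For the continuity of $F_\varepsilon$ in $(x,\mu)$ I would use that $\mu\mapsto\nu_\varepsilon(\mu)$ is $1$-Lipschitz for $\mathcal W_2$ (take the infimum over couplings in $\|J_\varepsilon X-J_\varepsilon X'\|\le\|X-X'\|$) together with the continuous dependence of the finite-dimensional resolvents $J^\nu_\varepsilon$ on $\nu$ coming from the continuity of $F$; alternatively one may invoke a $\mathcal W_2$-realizing coupling and the $\mathcal H$-continuity of $\tilde F_\varepsilon$, as in the proof of Theorem \ref{stability without noise}. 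For the locally uniform convergence $F_\varepsilon\to F$, I would write $|F_\varepsilon(x,\mu)-F(x,\mu)|=|F(g_\varepsilon(x,\mu),\nu_\varepsilon(\mu))-F(x,\mu)|$, note $|g_\varepsilon(x,\mu)-x|\le\varepsilon|F_\varepsilon(x,\mu)|\le\varepsilon|F(x,\nu_\varepsilon(\mu))|$ and $\mathcal W_2(\nu_\varepsilon(\mu),\mu)\le\varepsilon\|\tilde F X_\mu\|$, observe that $\{X:\mathcal L(X)\in\mathcal Q\}$ is bounded in $\mathcal H$ for any $\mathcal W_2$-compact $\mathcal Q$ so that $\|\tilde F X_\mu\|$ is bounded on $\mathcal Q$ by local boundedness of $\tilde F$, and conclude from the uniform continuity of $F$ on a slightly enlarged compact set. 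The only genuine technical point here, since $\mathcal P_2(\reels^d)$ is not locally compact, is to check that $\{\nu_\varepsilon(\mu):\mu\in\mathcal Q,\ \varepsilon\le1\}$ is relatively $\mathcal W_2$-compact, which follows from the compactness of $\mathcal Q$, the $1$-Lipschitz continuity of $\nu_\varepsilon$, and the uniform estimate $\mathcal W_2(\nu_\varepsilon(\mu),\mu)\le\varepsilon M_{\mathcal Q}$.

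Finally, the modulus statement is obtained in the same vein: from $|F_\varepsilon(x,\mu)-F_\varepsilon(y,\nu)|=|F(g_\varepsilon(x,\mu),\nu_\varepsilon(\mu))-F(g_\varepsilon(y,\nu),\nu_\varepsilon(\nu))|$ one bounds the argument by $|x-y|+\mathcal W_2(\mu,\nu)$ plus a term controlled by the $\mu$-modulus $\omega_{\mathcal O}$ of $F$ — a resolvent of a continuous monotone operator inheriting the modulus of the operator it regularizes, uniformly in $\varepsilon$ — and then composes with $\omega_{\tilde{\mathcal O}}$ associated to a slightly enlarged compact set $\tilde{\mathcal O}$.
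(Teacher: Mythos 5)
Your proposal is correct and follows essentially the same route as the paper: both construct the Yosida (Hille--Yosida) approximation $\tilde F\circ(I+\varepsilon\tilde F)^{-1}$ on $\mathcal H$, descend it to $\reels^d\times\mathcal P_2(\reels^d)$ by exploiting the pointwise monotonicity of $F(\cdot,\mu)$ to split the resolvent into a law map (your $\nu_\varepsilon$, the paper's $h_\varepsilon$) and a frozen-law finite-dimensional resolvent (your $J^\nu_\varepsilon$, the paper's $G_\varepsilon^{-1}(\cdot,\nu)$), and then obtain the Lipschitz bound, the convergence estimates $|x-g_\varepsilon(x,\mu)|\le\varepsilon|F_\varepsilon(x,\mu)|$, $\mathcal W_2(\mu,h_\varepsilon(\mu))\le\varepsilon\|\tilde F(\tilde G_\varepsilon^{-1}X)\|$, and the modulus transfer by composition on slightly enlarged compact sets. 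The only cosmetic difference is that you phrase well-posedness of the law map as uniqueness of a fixed point $\nu=(J^\nu_\varepsilon)_\#\mu$ inherited from the Hilbertian resolvent, where the paper argues directly that $\mathcal L(\tilde G_\varepsilon(X))$ determines $\mathcal L(X)$; these are equivalent.
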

\begin{proof}
    Let $G_\varepsilon:(x,\mu)\mapsto x+\varepsilon F(x,\mu)$. We first observe that $\tilde{G}_\varepsilon$ is strongly $L^2-$monotone uniformly in $\varepsilon$, i.e.
    \begin{equation}
    \label{strong monotonicity yoshida}
    \forall \varepsilon\geq 0, \forall (X,Y)\in \mathcal{H}^2 \quad \langle \tilde{G}_\varepsilon(X)-\tilde{G}_\varepsilon(Y),X-Y\rangle \geq \|X-Y\|^2.
    \end{equation}
    This implies that for any $\varepsilon\geq 0$, $\tilde{G}_\varepsilon$ has a 1 Lipschitz inverse in $\mathcal{H}$. We now introduce the regularization of $\tilde{F}$ in $\mathcal{H}$
    \[\tilde{F}_\varepsilon(X)=F\circ\tilde{G}_\varepsilon^{-1}(X).\]
    We first show that for any $\varepsilon>0$ this procedure indeed regularize $\tilde{F}$ as $\tilde{F}_\varepsilon$ is $\frac{1}{\varepsilon}$ Lipschitz in $\mathcal{H}$. For any $(X,Y)\in \mathcal{H}^2$ 
\begin{align*}
    \langle \tilde{F}_\varepsilon(X)-\tilde{F}_\varepsilon(Y),X-Y\rangle&=\langle \tilde{F}_\varepsilon(X)-\tilde{F}_\varepsilon(Y),\tilde{G}_\varepsilon \circ\tilde{G}_\varepsilon^{-1} (X)-\tilde{G}_\varepsilon\circ \tilde{G}_\varepsilon^{-1}(Y)\rangle\\
    &=\langle \tilde{F}_\varepsilon(X)-\tilde{F}_\varepsilon(Y),\tilde{G}_\varepsilon^{-1}(X)-\tilde{G}_\varepsilon^{-1}(Y)+\varepsilon(\tilde{F}_\varepsilon(X)-\tilde{F}_\varepsilon(Y))\rangle\\
    &\geq \varepsilon \| \tilde{F}_\varepsilon(X)-\tilde{F}_\varepsilon(Y)\|^2 \text{ by the monotonicity of $F$},
\end{align*}
the Lipschitz bound $\|\tilde{F}\|_{Lip(L^2)}\leq \frac{1}{\varepsilon}$, then follows naturally from Cauchy-Schwartz inequality. In particular this also shows $\tilde{F}_\varepsilon$ is monotone for any $\varepsilon>0$. We now show that $\tilde{F}_\varepsilon$ can be obtained as the lift on $\mathcal{H}$ of a continuous function $F_\varepsilon: \reels^d\times \mathcal{P}_2(\reels^d)\to \reels^d$. 
    We first remark that the inverse $\tilde{G}_\varepsilon^{-1}$ satisfies the following property. We show the following equivalent statement 
    \begin{equation}
    \label{equiv tilde G law}
    \mathcal{L}(\tilde{G}_\varepsilon(X))=\mathcal{L}(\tilde{G}_\varepsilon(Y))\implies \mathcal{L}(X)=\mathcal{L}(Y).\end{equation}
    By the strong monotonicity of $\tilde{G}_\varepsilon$ \eqref{strong monotonicity yoshida}, 
    \begin{equation}
    \label{G epsilon bijection measure}
    \forall (X,Y)\in \mathcal{H}^2,\quad X\sim \mu, Y\sim \nu, \quad \|\tilde{G}_\varepsilon(X)-\tilde{G}_\varepsilon(Y)\|\geq \mathcal{W}_2(\mu,\nu).\end{equation}
    We now claim the following: for any $Z\in \mathcal{H}$ with $\mathcal{L}(Z)=\mathcal{L}(\tilde{G}_\varepsilon(X))$, there exists a random variable $X_Z\in\mathcal{H}$ such that $Z=\tilde{G}_\varepsilon(X_Z)$.
    Since $F$ is continuous, by Lemma \ref{prop: expectation to pointwise} $x\mapsto F(x,\mu)$ is monotone for any $\mu\in \mathcal{P}_2(\reels^d)$. This implies for fixed $\mu\in \mathcal{P}_2(\reels^d)$ $x\mapsto G_\varepsilon(x,\mu)$ has a $1-$Lipschitz inverse we denote $G_\varepsilon^{-1}(\cdot,\mu)$. By definition letting $X_Z=\mathcal{L}(G_\varepsilon^{-1}(Z,\mathcal{L}(X)))$
   \[\mathcal{L}(X_Z)=\mathcal{L}(X).\]
   In particular 
   \begin{align*}
       Z=G_\varepsilon(X_Z,\mathcal{L}(X))=G_\varepsilon(X_Z,\mathcal{L}(X_Z))=\tilde{G}_\varepsilon(X_Z).
   \end{align*}
   As a consequence taking the infimum over all couplings in \eqref{G epsilon bijection measure} yields 
   \[\forall (X,Y)\in \mathcal{H}^2, \quad \mathcal{W}_2(\mathcal{L}(\tilde{G}_\varepsilon(X)),\mathcal{L}(\tilde{G}_\varepsilon(Y)))\geq \mathcal{W}_2(\mathcal{L}(X),\mathcal{L}(Y)).
    \]
    This allows the definition of a mapping $h_\varepsilon:\mathcal{P}_2(\reels^d)\to \mathcal{P}_2(\reels^d)$ defined by 
    \[h_\varepsilon(\mu)=\mathcal{L}(\tilde{G}_\varepsilon^{-1}(X)) \text{ for any } X\in \mathcal{H} \text{ with } \mathcal{L}(X)=\mu.\]
    In fact $h_\varepsilon$ is Lipschitz in $\mathcal{W}_2$ with constant 1. Using this mapping, $\tilde{G}_\varepsilon^{-1}$ can be expressed as the lift of a function $g_\varepsilon$ on $\reels^d\times \mathcal{P}_2(\reels^d)$ with
    \[g_{\varepsilon}(x,\mu)= G^{-1}_\varepsilon(x,h_\varepsilon(\mu)).\]
We now turn to an estimate on the regularity of $g_\varepsilon$ in $\mathcal{P}_2(\reels^d)$. Let $(x,\mu,\nu)\in \reels^d\times\mathcal{P}_2(\reels^{2d})$, 
\begin{align*}
    g_\varepsilon(x,\mu)-g_\varepsilon(x,\nu)&=\bar{x}-g_\varepsilon (G_\varepsilon(\bar{x},h_\varepsilon(\mu)),\nu) \text{ with } \bar{x}= g_\varepsilon(x,\mu)\\
    &=g_\varepsilon(G_\varepsilon(\bar{x},h_\varepsilon(\nu)),\nu)-g_\varepsilon (G_\varepsilon(\bar{x},h_\varepsilon(\mu)),\nu)\\
    &\leq |G_\varepsilon(\bar{x},h_\varepsilon(\mu))-G_\varepsilon(\bar{x},h_\varepsilon(\nu))|\\
    &\leq \varepsilon|F(\bar{x},h_\varepsilon(\mu))-F(\bar{x},h_\varepsilon(\nu))|.\\
\end{align*}
By the continuity of $F$, 
\[\lim_{\nu\to \mu} |g_\varepsilon(x,\mu)-g_\varepsilon(x,\nu)|=0.\]
We now notice that $\tilde{F}_\varepsilon$ is the lift on $\mathcal{H}$ of 
\[F_\varepsilon:(x,\mu)\mapsto F(g_\varepsilon(x,\mu),h_\varepsilon(\mu)).\]
As a composition of continuous functions, it is continuous. 
It remains to show that our approximation $F_\varepsilon$ converges locally uniformly to $F$ in $\reels^d\times \mathcal{P}_2(\reels^d)$. It suffices to observe that for any $\mu\in \mathcal{P}_2(\reels^d)$
\begin{align*}
    \mathcal{W}_2(\mu,h_\varepsilon(\mu))&\leq \|X-\tilde{G}^{-1}_\varepsilon(X)\| \text{ for any } X \text{ with law } \mu\\
    &\leq \varepsilon \|  \tilde{F}(\tilde{G}_\varepsilon^{-1}(X))\|,
\end{align*}
and that similarly for any $(x,\mu)\in \reels^d\times\mathcal{P}_2(\reels^d)$
\[|x-g_\varepsilon(x,\mu)|\leq \varepsilon |F_\varepsilon(x,\mu)|.\]
Those are sufficient to establish the uniform convergence of $F_\varepsilon$ to $F$ on compacts sets of $\reels^d\times \mathcal{P}_2(\reels^d)$. 

Let us finally consider a compact set $\mathcal{O}_m$ of $\mathcal{P}_2(\reels^d)$ and $\mathcal{O}_x$ of $\reels^d$. by the continuity of $g_\varepsilon,h_\varepsilon$, the image $g_\varepsilon(\mathcal{O}_x,\mathcal{O}_m), h_\varepsilon(\mathcal{O}_m)$ are compact. As a consequence there exists a compact set $\mathcal{K}\subset\reels^d\times \mathcal{P}_2(\reels^d)$ such that $g_\varepsilon(\mathcal{O}_x,\mathcal{O}_m)\times h_\varepsilon(\mathcal{O}_m)\subset \mathcal{K}$. Let us assume $F$ admits a continuity modulus on each compact set, we deduce that for each $(x,\mu),(y,\nu)\in \mathcal{O}_x\times \mathcal{O}_m$
\begin{align*}
|F_\varepsilon(x,\mu)-F_\varepsilon(y,\nu)|&\leq \omega_{\mathcal{K}}\left(|g_\varepsilon(x,\mu)-g_\varepsilon(y,\nu)|+\mathcal{W}_2(h_\varepsilon(\mu),h\varepsilon(\nu))\right)\\
&\leq \omega_{\mathcal{K}}\left(|x-y|+\varepsilon\omega_\mathcal{K}(\mathcal{W}_2(\mu,\nu))+\mathcal{W}_2(\mu,\nu)\right),
\end{align*}
by previous estimates on the continuity of $g_\varepsilon,h_\varepsilon$. Since for any compact set $\mathcal{O}$ of $\reels^d\times \mathcal{P}_2(\reels^d)$ we can find a compact set $\mathcal{O}_x$ of $\reels^d$ and $\mathcal{O}_m$ of $\mathcal{P}_2(\reels^d)$ such that $\mathcal{O}\subset \mathcal{O}_x\times \mathcal{O}_m$, this ends the proof of the last claim. 
\end{proof}
\begin{remarque}
Many adaptions can be made to this result to consider different situations, for example if $F$ has a global continuity modulus then so does $F_\varepsilon$ uniformly in $\varepsilon>0$. If the modulus of $F$ is explicit then this result can be further refined for exemple if $F$ is known to be $\gamma-$Holder on $\reels^d\times\mathcal{P}_2(\reels^d)$ then there exists a constant $C>0$ independent of $\varepsilon$ such that 
\[\forall (x,y,\mu,\nu)\in \reels^{2d}\times\mathcal{P}_2(\reels^{2d}) \quad |F_\varepsilon(x,\mu)-F_\varepsilon(y,\nu)|\leq C\left(|x-y|^\gamma+\mathcal{W}_2(\mu,\nu)^\gamma+\varepsilon \mathcal{W}_2(\mu,\nu)^{\gamma^2}\right)\]
A notable extension of this result also includes the class of semi monotone function in $\mathcal{H}$, indeed if there exists a constant $C$ such that 
\[\forall (X,Y)\in \mathcal{H}^2 \quad \langle \tilde{F}(X)-\tilde{F}(Y),X-Y\rangle \geq -C \|X-Y\|^2,\]
then Lemma \ref{lemma: regul monotone function} can be applied to $(x,\mu)\mapsto F(x,\mu)+Cx$.
\end{remarque}
\begin{corol}
\label{corol: linear growth regul}
Let $F:\reels^d\times \mathcal{P}_2(\reels^d)\to \reels^d$ be a continuous $L^2-$monotone function. Assume furthermore that there exists a constant $C_F$ such that 
\[\forall (x,\mu)\in \reels^d\times \mathcal{P}_2(\reels^d),\quad |F(x,\mu)|\leq C_F\left(1+|x|+\sqrt{\int_{\reels^d}|y|^2\mu(dy)}\right),\]
Let $(F_\varepsilon)_{\varepsilon>0}$ be the regularization of $F$ as introduced in Lemma \ref{lemma: regul monotone function}. For $\varepsilon<\frac{1}{C_F}$ the following holds 
\[\forall (x,\mu)\in\reels^d\times \mathcal{P}_2(\reels^d), \quad |F_\varepsilon(x,\mu)|\leq \frac{1+C_F}{1-C_F\varepsilon}\left(1+|x|+\sqrt{\int_{\reels^d}|y|^2\mu(dy)}\right).\]
\end{corol}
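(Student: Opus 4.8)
The plan is to substitute the explicit description of $F_\varepsilon$ produced in the proof of Lemma~\ref{lemma: regul monotone function} into the linear growth bound on $F$ and then solve the resulting self-referential inequality. Recall from that proof that $F_\varepsilon(x,\mu)=F\big(g_\varepsilon(x,\mu),h_\varepsilon(\mu)\big)$, that $h_\varepsilon(\mu)=\mathcal{L}\big(\tilde{G}_\varepsilon^{-1}(X)\big)$ for any $X\sim\mu$ (equivalently $\tilde{F}_\varepsilon=\tilde{F}\circ\tilde{G}_\varepsilon^{-1}$ with $\tilde{G}_\varepsilon(Y)=Y+\varepsilon\tilde{F}(Y)$), and that $g_\varepsilon(x,\mu)=G_\varepsilon^{-1}\big(x,h_\varepsilon(\mu)\big)$, so that
\[x=g_\varepsilon(x,\mu)+\varepsilon F\big(g_\varepsilon(x,\mu),h_\varepsilon(\mu)\big)=g_\varepsilon(x,\mu)+\varepsilon F_\varepsilon(x,\mu).\]
In particular $|g_\varepsilon(x,\mu)|\leq |x|+\varepsilon|F_\varepsilon(x,\mu)|$, which is the first of the elementary identities I would use.

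The genuinely substantive step is to control the second moment of $h_\varepsilon(\mu)$ in terms of that of $\mu$. Fix $X\sim\mu$ and set $Y=\tilde{G}_\varepsilon^{-1}(X)\sim h_\varepsilon(\mu)$, so that $X=Y+\varepsilon\tilde{F}(Y)$. Pairing this identity with $Y$ in $\mathcal{H}$ gives $\langle X,Y\rangle=\|Y\|^2+\varepsilon\langle\tilde{F}(Y),Y\rangle$; using the $L^2$-monotonicity of $F$ in the form $\langle\tilde{F}(Y)-\tilde{F}(0),Y\rangle\geq 0$ (a special case of \eqref{strong monotonicity yoshida}) together with $\|\tilde{F}(0)\|=|F(0,\delta_0)|\leq C_F$, which is the linear growth bound of Hypothesis~\ref{hyp: linear growth in F} at $(0,\delta_0)$, one gets $\|Y\|^2\leq\langle X,Y\rangle+\varepsilon C_F\|Y\|\leq\|X\|\|Y\|+\varepsilon C_F\|Y\|$, hence $\|Y\|\leq\|X\|+\varepsilon C_F$, i.e.
\[\sqrt{E_2(h_\varepsilon(\mu))}\leq\sqrt{E_2(\mu)}+\varepsilon C_F.\]

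Then I would plug the two previous estimates into the linear growth inequality for $F$ evaluated at $\big(g_\varepsilon(x,\mu),h_\varepsilon(\mu)\big)$:
\[|F_\varepsilon(x,\mu)|\leq C_F\big(1+|g_\varepsilon(x,\mu)|+\sqrt{E_2(h_\varepsilon(\mu))}\big)\leq C_F\big(1+|x|+\varepsilon|F_\varepsilon(x,\mu)|+\sqrt{E_2(\mu)}+\varepsilon C_F\big).\]
Since $C_F\varepsilon<1$, the term $C_F\varepsilon|F_\varepsilon(x,\mu)|$ can be moved to the left-hand side; dividing by $1-C_F\varepsilon$ and absorbing the leftover additive constant via $1+|x|+\sqrt{E_2(\mu)}\geq 1$ then gives a bound of the announced form $\frac{1+C_F}{1-C_F\varepsilon}\big(1+|x|+\sqrt{E_2(\mu)}\big)$, the precise constant being a matter of collecting terms.

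The only delicate point is the second-moment estimate: because $h_\varepsilon(\mu)$ is defined only implicitly through the measure-dependent resolvent, a direct bound on $E_2(h_\varepsilon(\mu))$ would be circular, and it is exactly the $L^2$-monotonicity of $F$ — together with the fact that the linear growth constant controls $|F(0,\delta_0)|$ — that breaks the circularity. Everything else is routine algebra with the defining relations of $g_\varepsilon$ and $h_\varepsilon$.
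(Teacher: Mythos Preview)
Your argument is correct. The overall skeleton matches the paper: write $F_\varepsilon(x,\mu)=F(g_\varepsilon(x,\mu),h_\varepsilon(\mu))$, bound $|g_\varepsilon(x,\mu)|$ via the defining relation $x=g_\varepsilon(x,\mu)+\varepsilon F_\varepsilon(x,\mu)$, bound $\sqrt{E_2(h_\varepsilon(\mu))}$ in terms of $\sqrt{E_2(\mu)}$, feed both into the linear growth of $F$, and absorb the self-referential term using $C_F\varepsilon<1$.

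The one genuine difference with the paper is in the second-moment estimate for $h_\varepsilon(\mu)$. The paper proceeds by the triangle inequality $\|\tilde{G}_\varepsilon^{-1}(X)\|\leq\|X\|+\varepsilon\|\tilde{F}(\tilde{G}_\varepsilon^{-1}(X))\|$ and then invokes the linear growth of $\tilde{F}$ a first time to get a self-referential inequality already at the level of $\|Y\|$, which is then solved. You instead pair $X=Y+\varepsilon\tilde{F}(Y)$ with $Y$ and use the $L^2$-monotonicity $\langle\tilde{F}(Y)-\tilde{F}(0),Y\rangle\geq 0$ together with $|F(0,\delta_0)|\leq C_F$ to obtain the cleaner, non-self-referential bound $\|Y\|\leq\|X\|+\varepsilon C_F$. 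This is a legitimate and in fact sharper alternative: the monotonicity hypothesis, which the paper does not exploit at this particular step, lets you bypass one application of the growth bound and avoid one division by $1-C_F\varepsilon$. The final numerical constant you obtain is $C_F(1+\varepsilon C_F)/(1-C_F\varepsilon)$, which does not literally coincide with the stated $(1+C_F)/(1-C_F\varepsilon)$, but as you note this is only a matter of collecting terms, and the paper's own derivation of the precise constant is equally informal.
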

\begin{proof}
    Reusing the same notations as in the above proof, there exists functions $g_\varepsilon:\reels^d\times \mathcal{P}_2(\reels^d)\to \reels^d, h_\varepsilon:\mathcal{P}_2(\reels^d)\to \mathcal{P}_2(\reels^d)$, such that  
    \[\forall (x,\mu) \in \reels^d\times \mathcal{P}_2(\reels^d), \quad F_\varepsilon(x,\mu)=F(g_\varepsilon(x,\mu),h_\varepsilon(\mu)).\]
    Consequently to get growth estimates on the family $(F_\varepsilon)_{\varepsilon>0}$, it is sufficient to estimate the growth of those two functions. 
    
    by definition of $\tilde{G}_\varepsilon^{-1}$ the following holds 
    \[\forall X\in \mathcal{H}, \quad X=\tilde{G}^{-1}_\varepsilon(X)+\varepsilon \tilde{F}(\tilde{G}^{-1}_\varepsilon(X)).\]
    This implies 
    \[\|\tilde{G}^{-1}_\varepsilon(X)\|\leq \|X\|+\varepsilon\|\tilde{F}(\tilde{G}^{-1}_\varepsilon(X)\|.\]
    Using the growth of $\tilde{F}$, by definition of $h_\varepsilon(\mu)$ we get 
    \[(1-C_F\varepsilon)\sqrt{\int_{\reels^d}|y|^2 h_\varepsilon(\mu)(dy)}\leq C_F+(1+C_F)\sqrt{\int_{\reels^d} |y|^2\mu(dy)}.\]
    as for $(x,\mu)\mapsto g_\varepsilon(x,\mu)$ we already know it is 1-Lipschitz in $x$ uniformly in $\mu$, and that 
    \[\forall (x,\mu)\in \reels^d\times \mathcal{P}_2(\reels^d) \quad |x-g_\varepsilon(x,\mu)|\leq \varepsilon |F(g_\varepsilon(x,\mu),h_\varepsilon(\mu))|\]
\end{proof}
\subsection{A key estimate   on mean field type FBSDE}
In this section we are interested in getting estimates on the following mean field type FBSDE
\begin{equation}
\label{eq: FBSDE}
    \left\{
    \begin{array}{l}
         \displaystyle X_t=X_0-\int_0^t \tilde{F}(X_s,W_s)ds+\sigma_x B_t,  \\
         \displaystyle W_t=\tilde{W}_0(X_T)+\int_t^T \tilde{G}(X_s,W_s)ds-\int_t^T Z_sdB_s.
    \end{array}
    \right.
\end{equation}
Following Lemma \ref{lemma: lip sol FBSDE}, this will translates into results we can use for Lipschitz solutions. In fact since those estimates do not rely on the fact coefficients are Lipschitz, but rather on monotonicity conditions, they, at least formally, still hold for monotone solutions and will be key for our existences results. 

We now introduce different monotonicity conditions under which we are able to get the existence of monotone solutions. In particular we will see (as is well known) the stronger the monotonicity of the coefficent the less regularity we can ask on them.
\begin{hyp}
\label{hyp: weak strong monotonicity in G}
    \begin{gather}\nonumber\exists \alpha>0,L\geq 0,a_0>0, \quad \forall (X,Y,U,V)\in [0,T]\times \left(\mathcal{H}\right)^4\\
\nonumber\langle \tilde{W}_0(X)-\tilde{W}_0(Y),X-Y\rangle \geq a_0 \|\tilde{W}_0(X)-\tilde{W}_0(Y)\|^2,\\
    \nonumber \langle \tilde{G}(X,U)-\tilde{G}(Y,V),X-Y\rangle+\langle \tilde{F}(X,U)-\tilde{F}(Y,V),U-V\rangle\geq 0,\\
    \label{ineq: weak strong monotonicity in G}
    \langle \tilde{G}(X,U)-\tilde{G}(Y,V),X-Y\rangle+\langle \tilde{F}(X,U)-\tilde{F}(Y,V),U-V\rangle \geq \alpha \|X-Y\|^2-L \|U-V\|^2.
    \end{gather}
\end{hyp}
We refer to this kind of assumption on the monotonicity of $(F,G)$ as weak-strong monotonicity in constrast with strong monotonicity which would requires this result to hold for $L=0$. The second inequality is exactly asking that the couple $(F,G)$ be $L^2-$monotone. As for \eqref{ineq: weak strong monotonicity in G}, it is akin to asking that the monotonicity of the system should not be too degenerate in $X$ even though the monotonicity of the system as a whole can be degenerate (hence the term weak-strong). For exemple in finite dimension the matrix 
\[M=\left(\begin{array}{cc}
    1 & 1 \\
    1 & 1
\end{array}\right),\]
is only degenerate monotone but 
\[\forall \bar{x}=\left(\begin{array}{c}
     x  \\
     u 
\end{array}\right)\in \reels^2, \quad \bar{x}^T M\bar{x}\geq x^2-u^2.
\]
More generally the following holds 
\begin{exemple}
Suppose the couple $(F,G)$ is jointly $L^2-$monotone, i.e 
\[\forall (X,Y,U,V)\in \mathcal{H}^4, \langle \tilde{G}(X,U)-\tilde{G}(Y,V),X-Y\rangle+\tilde{F}(X,U)-\tilde{F}(Y,V),U-V\rangle\geq 0.\]
Further assume that $X\mapsto \tilde{G}(X,U)$ is $\alpha-$strongly monotone uniformly in $U$ that is to say
\[\forall (X,Y,U) \in \mathcal{H}^3,\quad \langle \tilde{G}(X,U)-\tilde{G}(Y,U),X-Y\rangle \geq \alpha \|X-Y\|^2.\]
If $U\mapsto \tilde{G}(Y,U)$, $X\mapsto \tilde{F}(X,V)$ are Lipschitz uniformly in $(Y,V)$ then for any $\tilde{\alpha}<\alpha$, there exists a corresponding $L$ such that \eqref{hyp: weak strong monotonicity in G} is satisfied by the couple $(\tilde{G},\tilde{F})$ for the constants $(\tilde{\alpha},L)$.
\end{exemple}

We start with a growth estimate with respect to initial conditions, as well as time continuity of the forward process. 
\begin{lemma}
\label{montone fbsde have bounded growth}
    Suppose that for some initial datum $X_0\in \mathcal{H}$, there exists a strong solution $(X_t,W_t,Z_t)_{t\in[0,T]}\subset \mathcal{H}^3$ to \eqref{eq: FBSDE}. Under Hypotheses \ref{hyp: weak strong monotonicity in G} and \ref{hyp: linear growth in F}, There exists a constant depending only on $a_0,\alpha,L,T$ and the growth of $(F,G,W_0)$ such that 
    \[\forall s\leq t\leq T,\quad  \|X_t\|,\|W_t\|, \frac{\|X_t-X_s\|}{\sqrt{t-s}}\leq C(1+\|X_0\|).\]
\end{lemma}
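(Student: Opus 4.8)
The plan is to establish $L^2$-bounds on the forward and backward processes by combining Itô energy identities with the growth and monotonicity hypotheses, and then to read off the time continuity of $X$ directly from the forward equation. Write $\phi(t)=\|X_t\|^2$ and $\psi(t)=\|W_t\|^2$. Applying Itô's formula to $|X_t|^2$ and to $|W_t|^2$ and taking expectations gives $\phi'(t)=-2\esp{X_t\cdot\tilde F(X_t,W_t)}+\sigma_x^2 d$ and $\psi(t)=\|\tilde W_0(X_T)\|^2+2\int_t^T\esp{W_s\cdot\tilde G(X_s,W_s)}\,ds-\int_t^T\esp{|Z_s|^2}\,ds$. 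Using the linear growth of $F$, $G$, $W_0$ from Hypothesis \ref{hyp: linear growth in F} together with the Cauchy--Schwarz inequality in $\mathcal H$, these yield $\phi'(t)\le C(1+\phi(t)+\psi(t))$ and, since $\|\tilde W_0(X_T)\|^2\le C(1+\phi(T))$, also $\psi(t)\le C(1+\phi(T))+C\int_t^T(\phi(s)+\psi(s))\,ds$ for $t\le T$. On their own these two inequalities do not close, because of the forward--backward coupling: the bound on $\phi$ involves $\psi$, and the bound on $\psi$ involves $\phi$ at the terminal time.

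To break this coupling I would bring in the monotonicity. Set $\bar X_t=X_t-\sigma_x B_t$, which has finite variation, so Itô's formula applied to $\bar X_t\cdot W_t$ carries no stochastic cross-variation term and gives, after taking expectations,
\[\esp{\bar X_T\cdot W_T}=\esp{X_0\cdot W_0}-\int_0^T\esp{\tilde F(X_t,W_t)\cdot W_t+X_t\cdot\tilde G(X_t,W_t)}\,dt+\sigma_x\int_0^T\esp{B_t\cdot\tilde G(X_t,W_t)}\,dt.\]
I would then apply Hypothesis \ref{hyp: weak strong monotonicity in G} with $(Y,V)=(0,0)$ to bound the integrand $\esp{\tilde F\cdot W+X\cdot\tilde G}$ from below by $\alpha\phi(t)-L\psi(t)-C(1+\sqrt{\phi(t)}+\sqrt{\psi(t)})$, and the $a_0$-monotonicity of $\tilde W_0$ together with its linear growth to bound $\esp{\bar X_T\cdot W_T}=\esp{X_T\cdot\tilde W_0(X_T)}-\sigma_x\esp{B_T\cdot\tilde W_0(X_T)}$ from below by $\tfrac{a_0}{4}\psi(T)-C(1+\phi(T))$; the residual terms involving $B_t$ are controlled using $\|B_t\|\le\sqrt{dT}$. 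Together with the forward and backward energy balances of the previous paragraph, this produces a closed family of integral inequalities for $\phi$ and $\psi$ on $[0,T]$, and Gronwall's lemma then gives $\sup_{t\le T}(\phi(t)+\psi(t))\le C(1+\|X_0\|^2)$ with $C$ depending only on $a_0,\alpha,L,T$ and the growth constants, in the spirit of the FBSDE estimates of \cite{G-monotonicity,probabilistic-mfg}. This delivers the bounds on $\|X_t\|$ and $\|W_t\|$.

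The time-continuity estimate then requires no further use of monotonicity: from $X_t-X_s=-\int_s^t\tilde F(X_u,W_u)\,du+\sigma_x(B_t-B_s)$ and the linear growth of $F$ one gets $\|X_t-X_s\|\le C(t-s)\sup_{u\le T}(1+\|X_u\|+\|W_u\|)+\sigma_x\sqrt{d(t-s)}$, and inserting the bound just obtained together with $t-s\le T$ yields $\|X_t-X_s\|\le C\sqrt{t-s}\,(1+\|X_0\|)$, which is the last claim.

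The main obstacle I expect is the coupling estimate: one has to arrange that the dissipation $-\alpha\|X_t\|^2$ coming from the weak--strong monotonicity and the terminal regularity $a_0>0$ together dominate the degenerate term $L\|W_t\|^2$ \emph{uniformly} on $[0,T]$, so that the system of integral inequalities genuinely closes for an arbitrary horizon rather than only a short one. Once the right combination is identified, the rest is careful bookkeeping of the constants and of how the cross-term identity is fed into the forward and backward balances.
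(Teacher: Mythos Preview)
Your outline is essentially the paper's proof: the shifted process $\bar X_t=X_t-\sigma_x B_t$ (the paper introduces $Y_t=\sigma_x B_t$ and works with $U_t=W_t\cdot(X_t-Y_t)$), the forward and backward energy identities, and the monotonicity hypotheses are exactly the ingredients used. The one cosmetic difference is that the paper applies the joint monotonicity with the moving reference $(Y_t,0)$ rather than $(0,0)$; this keeps the residual quantities $\tilde F(Y_t,0),\tilde G(Y_t,0),\tilde W_0(Y_T)$ bounded by constants in $\mathcal H$ and spares you the extra $\int\esp{B_t\cdot\tilde G}$ term, but changes nothing structurally.

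The obstacle you single out is real, and the paper resolves it by a two-pass absorption rather than by balancing $\alpha$ against $L$ directly. First, the weak--strong inequality gives
\[
\alpha\int_t^T\|X_s-Y_s\|^2\,ds\le L\int_t^T\|W_s\|^2\,ds+\esp{U_t}+R_t,
\]
with $R_t$ collecting terms linear in $\|X_s-Y_s\|$ and $\|W_s\|$. This is substituted into the backward energy identity for $\|W_t\|^2$, so that $\int_t^T\|X_s\|^2\,ds$ is replaced by $\int_t^T\|W_s\|^2\,ds$ plus controllable terms; a backward Gronwall then yields $\|W_t\|^2\le C(1+\esp{U_0}+R_0)$ uniformly in $t$. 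Evaluating at $t=0$ and absorbing $\esp{U_0}\le\|X_0\|\,\|W_0\|$ by Young's inequality handles the cross-term, and integrating the resulting bound on $[0,T]$ absorbs the $\|W_s\|$-dependence of $R_0$. Second, with $\|W_t\|$ controlled, forward Gronwall on $\|X_t\|^2$ leaves only the $\|X_s-Y_s\|$-residuals, and one more evaluation-plus-Young pass closes those. No smallness of $T$ enters: the degenerate term $L\|W\|^2$ is never played off against $\alpha\|X\|^2$ head-on, but routed through the backward Gronwall on $W$ first.
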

\begin{proof}
    Let $(Y_t)_{t\in[0,T]}$ be the stochastic process defined by
    \[Y_t=\sigma_x B_t.\]
    Letting 
    \[U_T=W_T\cdot(X_T-Y_T),\]
    Using the assumption \ref{hyp: weak strong monotonicity in G} on $W_0$, it is obvious that
    \[ a_0\|W_T-W_0(Y_T)\|^2\leq \esp{U_T+|W(Y_T)\cdot(X_T-Y_T)|}. \]
    Applying Ito's Lemma to $(U_t)_{t\in[0,T]}$, and using the monotonicity of $(F,G)$ it follows that 
    \[\forall (t,s)\in[0,T]^2,s<t \quad \esp{U_t}\leq \esp{U_s}+\underbrace{\int_s^t (\|\tilde{G}(Y_u,0)\cdot(X_u-Y_u)\|+\|\tilde{F}(Y_u,0)\cdot W_u\|)du}_{C^t_s}.\]
    In parallel, the weak strong monotonicity of $(F,G)$ implies 
    \[\forall t\leq T \quad \alpha \int_t^T \|X_s-Y_s\|^2ds\leq L\int_t^T \|W_s\|^2ds+\esp{U_t}+\|W_0(Y_T)\cdot(X_T-Y_T)\|+C_t^T.\]
    At this point, we remark that there exists a constant depending on $T$ and the growth of $W_0,F,G$ such that 
    \[\forall t\leq T, \quad \|Y_t\|,\|W_0(Y_t)\|,\|\tilde{F}(Y_t,0)\|,\|\tilde{G}(Y_t,0)\|\leq C.\]
    Applying Ito's Lemma to $t\mapsto |W_t|^2$ and taking the expectation, we deduce by the growth of $G$, that
    \[\forall t\leq T, \quad \|W_t\|^2\leq \|W_T\|^2+C\left(1+\int_t^T(\|X_s\|^2+\|W_s\|^2)ds\right).\]
    Owning to previous bounds 
    \[\forall t\leq T ,\quad \|W_t\|^2\leq C\left(\int_t^T\|W_s\|^2ds+1+C_0^T+\esp{U_0}+\|W_0(Y_T)\cdot(X_T-Y_T)\|\right).\]
    By a backward application of Gronwall's lemma, it follows that for another constant depending on $\alpha,L,a_0,T$ and the growth of $F,G,W_0$ only
    \[\forall t\leq T, \quad \|W_t\|^2\leq C\left(1+C_0^T+\esp{U_0}+\|W_0(Y_T)\cdot(X_T-Y_T)\|\right).\]
    
    In particular evaluating at $t=0$ and integrating from $0$ to $T$, Young's inequality allows us to deduce by worsening the constant that 
    \[\forall t\leq T, \quad \|W_t\|^2\leq 2C^2\left(1+\|X_0\|^2+\|W_0(Y_T)\cdot(X_T-Y_T)\|+\int_0^T\|\tilde{G}(Y_u,0)\cdot(X_u-Y_u)\|du\right).\]
    We now turn to an estimate on $\|X_t\|^2$. Thanks to the above bounds, and using the growth of $F$, an application of Gronwall's lemma yields
    \[\forall t\leq T, \quad \|X_t\|^2\leq C'\left(1+\|X_0\|^2+\|W_0(Y_T)\cdot(X_T-Y_T)\|+\int_0^T\|\tilde{G}(Y_u,0)\cdot(X_u-Y_u)\|du\right).\]
    Evaluating for $t=T$, and integrating on $[0,T]$ allows us once again to get rid of terms depending on $(X_t)_{t\in[0,T]}$ by Young's inequality, proving the claim of growth for the couple $(X_t,W_t)_{t\in[0,T]}$. The time continuity of $(X_t)_{t\in[0,T]}$ then follows naturally from 
    \[\forall s\leq t\leq T,\quad \|B_t-B_s\|=\sqrt{t-s}.\]
\end{proof}
we now state our regularity assumptions on the coefficients, to that end we introduce the following notation 
\[\forall (X,Y)\in \mathcal{H}^2, \quad \|X-Y\|_\gamma=\max\left(\|X-Y\|,\|X-Y\|^\gamma\right)\]
\begin{hyp}
\label{hyp: regularity: weak strong monotonicity in G}
there exists  $\gamma \in (0,1]$ such that
    \begin{enumerate}
    \item[-] There exists a constant $C$ such that $\forall (X,Y,U,V)\in \mathcal{H}^4$
    \begin{enumerate}
        \item[-] $\langle \tilde{G}(X,U)-\tilde{G}(Y,V),U-V\rangle \leq C\left(\|X-Y\|^2+\|U-V\|^2\right)$,
        \item[-] $\langle \tilde{F}(X,U)-\tilde{F}(Y,V),X-Y\rangle \geq -C\left(\|X-Y\|^2+\|U-V\|^{2}_\gamma\right)$,
         \end{enumerate}
    \item[-] There exists a continuity modulus $\omega(\cdot)$ such that $\forall (x,y,\mu,\nu)\in (\reels^d)^2\times \left(\mathcal{P}_2(\reels^d)\right)^2$
     \[|F(x,u,\mu)-F(x,u,\nu)|+|G(x,u,\mu)-G(x,u,\nu)|+|W_0(x,\mu)-W_0(x,\nu)|\leq \omega(\mathcal{W}_2(\mu,\nu))\]
    \end{enumerate}
\end{hyp}
\begin{remarque}
    this strong monotonicity assumption on $W_0$ implies that it's lift on $\mathcal{H}$ is a Lipschitz function. Let us also remark that this does not seems to implies that $(x,\mu)\mapsto W_0(x,\mu)$ is Lipschitz on $\reels^d\times \mathcal{P}_2(\reels^d)$. Indeed this does not give any information on the behaviour of $\mu \mapsto W_0(x,\mu)$ for $x$ outside the support of $\mu$. Similar considerations holds for $G$ and $F$, hence the additional regularity assumption with respect to the measure argument. 
\end{remarque}
\begin{remarque}
We are essentially asking that $(X,U)\mapsto \tilde{G}(X,U)$ be half-Lipschitz in $U$ and Lipschitz in $X$, and that $(X,U)\mapsto \tilde{F}(X,U)$ be semi-monotone in $X$ and $\gamma-$holder in $U$. The fact that one sided Lipschitz bounds are sufficient in mean field games has already been observed in \cite{Lions-one-sided-lipschitz}. Moreover when $(F,G)=(\nabla_p H,-\nabla_x H)$ the semi-monotonicity of $F$ is very much related to the fact that $G$ is half Lipschitz since it holds that 
\[\partial_p G(x,\mu,p)=-\partial_x F(x,\mu,p).\]
\end{remarque}
We give a first a priori estimate on the forward backward system \eqref{eq: FBSDE}.
\begin{lemma}
\label{estimate W strong in G}
Under Hypotheses \ref{hyp: weak strong monotonicity in G},\ref{hyp: regularity: weak strong monotonicity in G} and \ref{hyp: linear growth in F}, there exists a constant $C$ such that if there exists a  strong solution $(X,W^x,Z^x)_{t\in[0,T]}$ in $\mathcal{H}^3$ to \eqref{eq: FBSDE} (resp. $(Y,W^y,Z^y)_{t\in[0,T]}$) with initial datum $X_0\in \mathcal{H}$ (resp. $Y_0$),
\[ \forall t\leq T, \quad \|W_t^x-W_t^y\|\leq C\|X_0-Y_0\|\]
\end{lemma}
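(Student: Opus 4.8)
The plan is to run the classical cross--product a priori estimate for monotone forward--backward systems, combined with a standard backward energy estimate, and to close it by Grönwall --- iterating over short time windows to reach a general horizon.

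First I would set $\delta X_t=X_t-Y_t$, $\delta W_t=W^x_t-W^y_t$ and $\delta Z_t=Z^x_t-Z^y_t$. Subtracting the two copies of \eqref{eq: FBSDE}, the Brownian increments cancel, so $\delta X$ is absolutely continuous with $\tfrac{d}{dt}\delta X_t=-\big(\tilde F(X_t,W^x_t)-\tilde F(Y_t,W^y_t)\big)$, while $\delta W_t=\big(\tilde W_0(X_T)-\tilde W_0(Y_T)\big)+\int_t^T\big(\tilde G(X_s,W^x_s)-\tilde G(Y_s,W^y_s)\big)ds-\int_t^T\delta Z_s\,dB_s$; in particular $\delta W_T=\tilde W_0(X_T)-\tilde W_0(Y_T)$. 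By Lemma~\ref{montone fbsde have bounded growth} and Hypothesis~\ref{hyp: linear growth in F}, the quantities $\|X_t\|,\|Y_t\|,\|W^x_t\|,\|W^y_t\|$ stay bounded by a constant depending only on $\|X_0\|+\|Y_0\|$ and the data, which I will keep in reserve to control the sub-linear term coming from the $\gamma$-Hölder dependence of $\tilde F$ on its second argument.

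The central computation is to apply Itô's formula to $t\mapsto\delta X_t\cdot\delta W_t$ and take expectations: since $\delta X$ has no martingale part the cross-variation vanishes, so $\tfrac{d}{dt}\langle\delta X_t,\delta W_t\rangle=-\big(\langle\tilde G(X_t,W^x_t)-\tilde G(Y_t,W^y_t),\delta X_t\rangle+\langle\tilde F(X_t,W^x_t)-\tilde F(Y_t,W^y_t),\delta W_t\rangle\big)$. By \eqref{ineq: weak strong monotonicity in G} the bracket is $\ge\alpha\|\delta X_t\|^2-L\|\delta W_t\|^2$, and by the plain monotonicity in Hypothesis~\ref{hyp: weak strong monotonicity in G} it is also $\ge0$, so $t\mapsto\langle\delta X_t,\delta W_t\rangle$ is non-increasing; combined with $\langle\tilde W_0(X_T)-\tilde W_0(Y_T),\delta X_T\rangle\ge a_0\|\delta W_T\|^2\ge0$ this propagates $\langle\delta X_t,\delta W_t\rangle\ge0$ backward from $T$. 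Integrating over $[0,T]$ and using $\langle\delta X_0,\delta W_0\rangle\le\|\delta X_0\|\,\|\delta W_0\|$ gives
\[
a_0\|\delta W_T\|^2+\alpha\int_0^T\|\delta X_s\|^2ds\ \le\ \|\delta X_0\|\,\|\delta W_0\|+L\int_0^T\|\delta W_s\|^2ds.
\]
In parallel, Itô applied to $\|\delta W_t\|^2$ on the backward equation, after taking expectations and discarding the non-negative $\int\|\delta Z\|^2$ contribution, yields $\|\delta W_t\|^2\le\|\delta W_T\|^2+2\int_t^T\langle\delta W_s,\tilde G(X_s,W^x_s)-\tilde G(Y_s,W^y_s)\rangle ds$; bounding the integrand by $C(\|\delta X_s\|^2+\|\delta W_s\|^2)$ via Hypothesis~\ref{hyp: regularity: weak strong monotonicity in G} and running Grönwall backward gives $\sup_{t\le T}\|\delta W_t\|^2\le C_T\big(\|\delta W_T\|^2+\int_0^T\|\delta X_s\|^2ds\big)$ for a constant $C_T$ depending on $T$ and the data.

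Combining the last two bounds --- with $N:=\sup_{t\le T}\|\delta W_t\|^2$, using $\|\delta W_0\|\le\sqrt N$, $\int_0^T\|\delta W_s\|^2ds\le TN$ and Young's inequality on $\|\delta X_0\|\sqrt N$ --- produces an inequality $N\le K\|\delta X_0\|^2+KLTN$ with $K=K(T,\text{data})$, which closes into $N\le C\|\delta X_0\|^2$ as soon as $KLT<1$; that holds on a short enough window. For a general $T$ I would partition $[0,T]$ into finitely many windows on which this absorption works, and run the argument backward window by window: on the rightmost window the terminal monotonicity is the $a_0$ term above, and on each earlier window the lower bound $\langle\delta X_{T_j},\delta W_{T_j}\rangle\ge0$ is precisely the propagated monotonicity, while the linear-in-$\|\delta X\|$ estimate coming out of the window on its right feeds in as the terminal input; the pointwise control of $\|\delta X_{T_j}\|$ by $\|\delta X_0\|$ needed to chain the windows comes from Grönwall on $\tfrac{d}{dt}\|\delta X_t\|^2$, with the $\|\cdot\|_\gamma$ term there absorbed using the uniform a priori bound of Lemma~\ref{montone fbsde have bounded growth}. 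The hard part will be this closure: the forward equation only gives clean control of $\int_0^T\|\delta X_s\|^2ds$ (its pointwise estimate carrying the sub-linear $\|\cdot\|_\gamma$ correction), whereas the backward Grönwall constant is exponential in the window length, so it is the interplay of these two --- together with the monotonicity of $\langle\delta X_t,\delta W_t\rangle$ and the a priori bounds --- that has to be bookkept carefully.
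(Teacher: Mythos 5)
Your setup is exactly the paper's: the cross quantity $\langle \delta X_t,\delta W_t\rangle$, its monotone decrease and positivity, the bound $\alpha\int\|\delta X\|^2\le L\int\|\delta W\|^2+\esp{U_0}$, and the backward energy estimate on $\|\delta W_t\|^2$. The divergence is in how you close the estimate, and there the argument has a genuine gap. First, the smallness condition $KLT<1$ and the window iteration are not needed: instead of treating $L\int_0^T\|\delta W_s\|^2ds$ as a fixed quantity $LTN$ to be absorbed, substitute the bound $\int_t^T\|\delta X_s\|^2ds\le \frac{L}{\alpha}\int_t^T\|\delta W_s\|^2ds+\frac1\alpha\esp{U_0}$ into the backward differential inequality \emph{before} integrating, so that the $\int\|\delta W\|^2$ contribution sits in the Grönwall kernel and is handled by the exponential for arbitrary $T$. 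This yields $\sup_t\|\delta W_t\|^2\le Ce^{CT}\esp{U_0}$ on the whole interval. The paper then closes by \emph{not} applying Young's inequality to $\esp{U_0}\le\|\delta X_0\|\,\|\delta W_0\|$: evaluating the Grönwall bound at $t=0$ gives the self-improving inequality $\|\delta W_0\|^2\le C\|\delta W_0\|\,\|\delta X_0\|$, hence $\|\delta W_0\|\le C\|\delta X_0\|$, and plugging back finishes the proof. Your premature Young step is what destroys this mechanism and forces the absorption/iteration.

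Second, and more seriously, the iteration you propose does not close under the stated hypotheses. Chaining the windows requires a pointwise forward estimate $\|\delta X_{T_j}\|\le C\|\delta X_0\|$, \emph{linear} in $\|\delta X_0\|$. But $\tilde F$ is only $\gamma$-H\"older in its second argument, so the forward Grönwall gives $\frac{d}{dt}\|\delta X_t\|^2\le C(\|\delta X_t\|^2+\|\delta W_t\|_\gamma^2)$, and $\|\delta W_t\|_\gamma^2$ contains the sublinear piece $\|\delta W_t\|^{2\gamma}$; the a priori bound of Lemma~\ref{montone fbsde have bounded growth} only controls this by an $O(1)$ constant, not by $O(\|\delta X_0\|^2)$. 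The best available pointwise forward stability is $\|\delta X_t\|\le C\|X_0-Y_0\|_\gamma$ (Lemma~\ref{estimate x strong in G}), which is itself \emph{deduced from} the present lemma, so invoking it here would be circular and would in any case only yield $\|\delta W_t\|\le C\|X_0-Y_0\|_\gamma$, strictly weaker than the claimed Lipschitz bound. The paper's closure avoids any pointwise control of $\|\delta X_t\|$ altogether — only $\int_t^T\|\delta X_s\|^2ds$ is ever needed, and that is supplied directly by the weak–strong monotonicity.
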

\begin{proof}
We start by a classic estimates stemming from monotonicity. Applying Ito's Lemma to 
    \[U_T=(X_T-Y_T)\cdot(W_T^x-W_T^y),\] yields 
    \[\forall t\in [0,T], \quad U_T=U_t-\int_t^T \left((F^x_s-F^y_s)\cdot (W^x_s-W^y_s)+(G^x_s-G^y_s)\cdot(X_s-Y_s)\right)ds,\]
    where we have used the notation $F^x_s=\tilde{F}(X_s,W^x_s),G^x_s=\tilde{G}(X_s,W^x_s)$ and so on. 
    By Hypothesis \eqref{hyp: weak strong monotonicity in G}, there exists $a_0>0$, such that 
    \[\esp{U_T}\geq a_0\esp{|W^x_T-W^y_T|^2}.\]
The $L^2-$monotonicity of the couple $(F,G)$ implies that 
\begin{equation}
\label{ineq: ut geq uT geq terminal X}
\forall t\in[0,T] \quad \esp{U_t}\geq \esp{U_T}\geq a_0\esp{|W^x_T-W^y_T|^2}.
\end{equation}
Using the monotonicity of $W_0$ and Hypothesis \ref{hyp: weak strong monotonicity in G} we deduce that 
\begin{equation}
\label{estimate X by W G strong weak monotone}
\forall t\in [0,T] \quad \alpha \int_t^T \|X_s-Y_s\|^2ds\leq L \int_t^T \|W^x_s-W^y_s\|^2ds+ \esp{U_t}.
\end{equation}
Keeping this estimate in mind, we apply Ito's lemma to 
$t\mapsto |W_t^x-W^y_t|^2$ to deduce that 
\begin{align*}
\esp{|W_T^x-W^y_T|^2}&=\esp{|W_t^x-W^y_t|^2-2\int_t^T (G^x_s-G^y_s)\cdot (W^x_s-W^y_s)ds+\int_t^T |Z^x_s-Z^y_s|^2ds}, \\
&\geq \esp{|W_t^x-W^y_t|^2-2\int_t^T (G^x_s-G^y_s)\cdot (W^x_s-W^y_s)ds}.
\end{align*}
Using Hypothesis \ref{hyp: weak strong monotonicity in G} on the regularity of $\tilde{G}$, combined with \eqref{estimate X by W G strong weak monotone} yields that there exists a constant $C$ such that 
\[\forall t\leq T \quad  \|W^x_t-W^y_t\|^2\leq C\esp{U_0}+\int_t^T \|W_s^x-W_s^y\|^2ds,\]
where we have used \eqref{ineq: ut geq uT geq terminal X} to estimate $\|W^x_T-W^y_T\|$.
By Gronwall's lemma we finally get 
\[\forall t\leq T  \quad \|W^x_t-W^y_t\|^2\leq C \esp{U_0} e^{C(T-t)}.\]
Since $U_0=(W_0^x-W_0^y)\cdot (X_0-Y_0)$, evaluating this expression at $t=0$ yields the existence of a constant $C'$ such that 
\[\forall t\leq T \quad  \|W^x_t-W^y_t\|^2 \leq C'\esp{\|X_0-Y_0\|^2}e^{C'(T-t)}\]
\end{proof}
\begin{remarque}
    Let us mention that at this point, no regularity on $F$ was used beyond the one implied by the joint monotonicity of $(F,G)$ and the weak strong monotonicity Hypothesis \ref{hyp: weak strong monotonicity in G} to get this a priori estimate. Similarly since we have only used monotonicity and regularity conditions on $\mathcal{H}$, it does not matter whether the coefficients are of the form
    \[(\tilde{F},\tilde{G})(X,W)=(F,G)(X,W,\mathcal{L}(X,W)),\]
    justifying that our results are indeed valid for mean field games of controls. 
\end{remarque}

\begin{remarque}
We insist again on the fact that so long as $(x,\mu)\mapsto W(t,x,\mu)$ is continuous, a Lipschitz estimate in $L^2$ also implies a Lipschitz estimate on $x\mapsto W(t,x,\mu)$ with the same constant. In fact continuity of the lift of $W$ in $L^2$ combined with continuity of $W$ in the space variable $x$ are sufficient to concluded that if there exists two monotone solutions $W^1,W^2$ then 
\[\forall (t,x) \in[0,T]\times \text{Supp}(\mu) \quad W^1(t,x,\mu)=W^2(t,x,\mu).\]
As a consequence by changing slightly the definition of monotone solution, this estimate may be sufficient to get the existence of a unique solution, in which case the assumptions on the regularity of $F$ can be greatly reduced following the previous remark. However in what follow we are interested in solutions $x\mapsto W(t,x,\mu)$ well-defined even outside the support of $\mu$, even if for problems coming from MFG this additional regularity may not be necessary. 
\end{remarque}

We now give a stability estimate on the equation satisfied by the forward process, since whenever there exists a decoupling field $W$, $\mu_t=\mathcal{L}(X_t)$ is a weak solution (in the sense of distribution) to 
\[\partial_t \mu_t-\text{div}\left(F(x,\mu_t,W(T-t,x,\mu_t))\mu_t\right)+\sigma_x\Delta_x \mu_t,\]
this will also translates into a continuity estimates with respect to the measure argument for solutions of the master equation.  
\begin{lemma}
\label{estimate x strong in G}
Under Hypotheses \ref{hyp: weak strong monotonicity in G},\ref{hyp: regularity: weak strong monotonicity in G} and \ref{hyp: linear growth in F},there exists a constant $C$ such that if there exists a  strong solution $(X,W^x,Z^x)_{t\in[0,T]}$ in $\mathcal{H}^3$ to \eqref{eq: FBSDE} (resp. $(Y,W^y,Z^y)_{t\in[0,T]}$) with initial datum $X_0\in \mathcal{H}$ (resp. $Y_0$),
\[\forall t\leq T\quad  \|X_t-Y_t\|\leq C\|X_0-Y_0\|_\gamma. \]
\end{lemma}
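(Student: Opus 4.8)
The plan is to run a Gronwall argument directly on $t\mapsto\|X_t-Y_t\|^2$, exploiting that the two forward equations in \eqref{eq: FBSDE} are driven by the same Brownian motion, and feeding it with the estimate on the backward components already obtained in Lemma \ref{estimate W strong in G}.

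First I would subtract the two forward equations: the stochastic integrals cancel and
\[X_t-Y_t=X_0-Y_0-\int_0^t\left(\tilde{F}(X_s,W^x_s)-\tilde{F}(Y_s,W^y_s)\right)ds,\]
so $t\mapsto X_t-Y_t$ has absolutely continuous paths. Applying Ito's formula to $|X_t-Y_t|^2$ — which has no martingale part — and taking expectations (legitimate since, by Lemma \ref{montone fbsde have bounded growth} and the linear growth Hypothesis \ref{hyp: linear growth in F}, all processes involved are uniformly bounded in $\mathcal{H}$ over $[0,T]$) yields
\[\|X_t-Y_t\|^2=\|X_0-Y_0\|^2-2\int_0^t\langle\tilde{F}(X_s,W^x_s)-\tilde{F}(Y_s,W^y_s),X_s-Y_s\rangle\,ds.\]
The semi-monotonicity of $\tilde{F}$ in its first argument from Hypothesis \ref{hyp: regularity: weak strong monotonicity in G} bounds the integrand below by $-C(\|X_s-Y_s\|^2+\|W^x_s-W^y_s\|_\gamma^2)$, hence
\[\|X_t-Y_t\|^2\le\|X_0-Y_0\|^2+2C\int_0^t\left(\|X_s-Y_s\|^2+\|W^x_s-W^y_s\|_\gamma^2\right)ds.\]

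Next I would invoke Lemma \ref{estimate W strong in G}, namely $\|W^x_s-W^y_s\|\le C\|X_0-Y_0\|$ for every $s\le T$. Since $\gamma\in(0,1]$, this gives $\|W^x_s-W^y_s\|_\gamma\le\max(C,C^\gamma)\,\|X_0-Y_0\|_\gamma$ uniformly in $s$, so that, using also $\|X_0-Y_0\|^2\le\|X_0-Y_0\|_\gamma^2$,
\[\|X_t-Y_t\|^2\le C'\|X_0-Y_0\|_\gamma^2+2C\int_0^t\|X_s-Y_s\|^2\,ds,\]
and Gronwall's lemma closes the estimate, yielding $\|X_t-Y_t\|\le C\|X_0-Y_0\|_\gamma$ on $[0,T]$ after relabelling constants.

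The computation is essentially routine once Lemma \ref{estimate W strong in G} is in hand; the only point calling for a little care is the passage from the Lipschitz-in-$L^2$ bound on $W^x-W^y$ to a bound in the $\|\cdot\|_\gamma$ scale. This is precisely the step that absorbs the merely $\gamma$-Hölder (rather than Lipschitz) dependence of $\tilde{F}$ on its second argument, and it is why the conclusion has to be stated in terms of $\|X_0-Y_0\|_\gamma$ instead of $\|X_0-Y_0\|$.
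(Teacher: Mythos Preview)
Your proof is correct and follows essentially the same route as the paper: apply It\^o to $|X_t-Y_t|^2$ (the Brownian terms cancel), use the semi-monotonicity/H\"older regularity of $\tilde F$ from Hypothesis~\ref{hyp: regularity: weak strong monotonicity in G}, insert the bound on $\|W^x_s-W^y_s\|$ from Lemma~\ref{estimate W strong in G}, and close with Gronwall. Your explicit verification that $\|W^x_s-W^y_s\|\le C\|X_0-Y_0\|$ implies $\|W^x_s-W^y_s\|_\gamma\le C'\|X_0-Y_0\|_\gamma$ is exactly the step the paper summarises as ``the concavity of $x\mapsto x^\gamma$''.
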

\begin{proof}
Applying Ito's Lemma to $t\mapsto |X_t-Y_t|^2$ we get 
\begin{align*}
\forall t\leq T \quad \esp{|X_t-Y_t|^2}&=\esp{|X_0-Y_0|^2-2\int_0^t (F^x_s-F^y_s)\cdot(X_s-Y_s)ds}\\
&\leq \|X_0-Y_0\|^2+C\int_0^t \|X_s-Y_s\|^2 ds+\int_0^t \|W^x_s-W^y_s\|_\gamma^2 ds,
\end{align*}
using the semi-monotonicity of $F$ and its holder regularity in $W$. Finally from Lemma \ref{estimate W strong in G}, the concavity of $x\mapsto x^\gamma$ and gronwall's lemma we deduce that 
\[\|X_t-Y_t\|^2\leq Ce^{Ct} \left(\|X_0-Y_0\|^2+\|X_0-Y_0\|_\gamma^2\right).\]
\end{proof}
We now have the tools at hand to prove the following 
\begin{lemma}
\label{existence in the Lipschitz framework}
Under Hypothesis \ref{hyp: Lipschitz Wq} and hypothesis \ref{hyp: weak strong monotonicity in G}, there exists a Lipschitz solution to \eqref{eq: ME without noise} in the sense of definition \ref{def: lipschitz sol W2} on $[0,+\infty)$.
\end{lemma}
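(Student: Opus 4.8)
The plan is to run the continuation scheme of Theorem~\ref{lip sol b(p)} and to rule out finite-time blow-up of the Lipschitz norm of the lift by invoking the a priori estimate of Lemma~\ref{estimate W strong in G}, whose constant depends only on the monotonicity and growth data and \emph{not} on the Lipschitz seminorm of the solution. Before doing so, I would check that the standing hypotheses of that lemma are available here: applied with $\gamma=1$ and in the degenerate setting of \eqref{eq: ME without noise} (so $b\equiv0$, $\Sigma\equiv0$, no $\theta$), Hypothesis~\ref{hyp: Lipschitz Wq} implies Hypotheses~\ref{hyp: linear growth in F} and~\ref{hyp: regularity: weak strong monotonicity in G}. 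Indeed, Lipschitz continuity of the lifts $\tilde F,\tilde G,\tilde W_0$ on $\mathcal{H}$ yields linear growth (evaluate at a Dirac mass and use $\mathcal{W}_2(\mu,\delta_0)=\sqrt{E_2(\mu)}$), and, via Cauchy--Schwarz and Young's inequality, the one-sided bounds in Hypothesis~\ref{hyp: regularity: weak strong monotonicity in G}; the $\mathcal{W}_2$-modulus is postulated directly. Together with Hypothesis~\ref{hyp: weak strong monotonicity in G}, Lemmas~\ref{lemma: lip sol FBSDE}, \ref{montone fbsde have bounded growth} and~\ref{estimate W strong in G} then apply.

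Next I would argue as follows. By Theorem~\ref{lip sol b(p)} there is a maximal Lipschitz solution $W$ of \eqref{eq: ME without noise} on $[0,T_c)$, with the blow-up alternative: $T_c<\infty$ forces $\|\tilde W(t,\cdot)\|_{Lip(\mathcal{H})}\to+\infty$ as $t\to T_c$. Suppose for contradiction $T_c<\infty$, fix $t<T_c$ and two initial data $X,Y\in\mathcal{H}$. By Lemma~\ref{lemma: lip sol FBSDE} (with $\sigma_x=0$, no $\theta$), the pair $(X_s,W_s)_{s\in[0,t]}$ with $X_s$ the characteristic flow started at $X$ and $W_s=W(t-s,X_s,\mathcal{L}(X_s))$ is a strong solution of \eqref{eq: FBSDE} on $[0,t]$, and likewise for $Y$; note $W_0^x=W(t,X,\mathcal{L}(X))$ and $W_0^y=W(t,Y,\mathcal{L}(Y))$. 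Lemma~\ref{estimate W strong in G} on $[0,t]$ then gives a constant $C$ depending only on $a_0,\alpha,L$, on $t\le T_c$ and on the growth/regularity constants — but not on $\|\tilde W\|_{Lip}$ — with
\[
\|\tilde W(t,X)-\tilde W(t,Y)\|\le C\,\|X-Y\|\qquad\text{for all }X,Y\in\mathcal{H}.
\]
Since the Gronwall constant is non-decreasing in the time horizon, it is bounded by some $C(T_c)$ uniform in $t<T_c$; hence $\sup_{t<T_c}\|\tilde W(t,\cdot)\|_{Lip(\mathcal{H})}\le C(T_c)<\infty$, contradicting the blow-up alternative. Therefore $T_c=\infty$, which is the assertion. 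The continuity of $(x,\mu)\mapsto W(t,x,\mu)$ demanded by Definition~\ref{def: lipschitz sol W2} comes for free from the maximal solution of Theorem~\ref{lip sol b(p)}, and Lemma~\ref{montone fbsde have bounded growth} ensures the linear growth of $W$ is preserved on every finite interval.

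The conceptual heart of the argument — and the only delicate point — is that the Lipschitz estimate of Lemma~\ref{estimate W strong in G} is derived purely from $L^2$-monotonicity and linear growth of the coefficients, so it does not feed back on the very quantity $\|\tilde W\|_{Lip}$ whose finiteness we are propagating; this is what upgrades the local-in-time existence of Theorem~\ref{lip sol b(p)} to a global one. No genuinely new computation is needed beyond the lemmas already established; the work consists only in assembling them and verifying the hypothesis chain described above.
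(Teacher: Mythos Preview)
Your proposal is correct and follows essentially the same route as the paper: invoke Theorem~\ref{lip sol b(p)} for local existence and the blow-up alternative, use Lemma~\ref{lemma: lip sol FBSDE} to recast the Lipschitz solution as a strong solution of the FBSDE \eqref{eq: FBSDE}, and then apply the a priori estimate of Lemma~\ref{estimate W strong in G} to obtain a Lipschitz bound on $\tilde W(t,\cdot)$ that is uniform in $t<T_c$ and independent of $\|\tilde W\|_{Lip}$, contradicting blow-up. Your explicit verification that Hypothesis~\ref{hyp: Lipschitz Wq} implies Hypotheses~\ref{hyp: linear growth in F} and~\ref{hyp: regularity: weak strong monotonicity in G} with $\gamma=1$ is a welcome addition that the paper leaves implicit (it simply notes that the constants are global ``since $(\tilde F,\tilde G)$ is Lipschitz'').
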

\begin{proof}
Local existence of a solution $W$ until a time $T_c>0$ follows from Theorem \ref{existence in the Lipschitz framework}. By Lemma \ref{lemma: lip sol FBSDE}, for any $t<T_c$, the estimates we just proved are valid for this solution, with global constants since $(\tilde{F},\tilde{G})$ is Lipschitz. In particular following Lemma \ref{estimate W strong in G}, for any $T>0$ there exists a finite constant depending only on the data and $T$, $C_T$ such that  
\[\forall t<T_c\wedge T\quad \|\tilde{W}(t,\cdot)\|_{Lip}\leq C_T.\]
If $T_c<T$ for some $T>0$, this would lead to a contradiction as 
\[\lim_{t\to T_c} \|\tilde{W}(t,\cdot)\|_{Lip}=+\infty.\]
\end{proof}
Before using this result to show the existence of monotone solution, we prove this last lemma
\begin{lemma}
\label{lemma: estim regul measure}
    For $i=\{1,2\}$, Let $(\mu^i_t)_{t\in[0,T]}\subset \mathcal{P}_2(\reels^d)$, be such that 
    \[\forall t\in [0,T] \quad \mathcal{W}_2(\mu^1_t,\mu^2_t)\leq \delta_{\mu},\]
    for some $\delta_{\mu}\geq 0$. Suppose that for $i=\{1,2\}$, there exists a strong solution $(X^i_t,W^i_t,Z^i_t)_{t\in [0,T]}$ to the system 
    \[
    \left\{
    \begin{array}{l}
         \displaystyle X^i_t=x-\int_0^t F(X^i_s,W^i_s,\mu^i_s)ds+\sigma_x B_t,  \\
         \displaystyle W^i_t=W_0(X^i_T,\mu^i_T)+\int_t^T G(X^i_s,W^i_s,\mu^i_s)ds-\int_t^T Z^i_sdB_s.
    \end{array}
    \right.
    \]
    Under Hypotheses \ref{hyp: weak strong monotonicity in G} and \ref{hyp: regularity: weak strong monotonicity in G} the following holds 
    \[\forall t\in[0,T], \quad \|W^1_t-W^2_t\|,\|X^1_t-X^2_t\|\leq C\max\left(\omega(\delta_\mu),\omega(\delta_\mu)^\frac{\gamma}{2-\gamma}\right).\]
\end{lemma}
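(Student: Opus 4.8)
The plan is to follow the template of Lemmas \ref{estimate W strong in G} and \ref{estimate x strong in G}: apply Itô's formula to the monotonicity functional $U_t=(X^1_t-X^2_t)\cdot(W^1_t-W^2_t)$ and then to $|W^1_t-W^2_t|^2$ and $|X^1_t-X^2_t|^2$, but now carrying along, at each step, the extra error coming from the fact that the two systems are driven by the different (though $\delta_\mu$-close) flows $(\mu^i_s)$ and terminated at $W_0(\cdot,\mu^i_T)$. Since the $\mu^i_s$ are exogenous and need not be the laws of $X^i_s$, I first record the pointwise-in-measure versions of Hypotheses \ref{hyp: weak strong monotonicity in G} and \ref{hyp: regularity: weak strong monotonicity in G}: for each fixed $\nu\in\mathcal{P}_2(\reels^d)$ the map $(x,w)\mapsto(F,G)(x,w,\nu)$ is weak--strong monotone in $(x,w)$, $(x,w)\mapsto W_0(x,\nu)$ is $a_0$-strongly monotone, the regularity bounds of Hypothesis \ref{hyp: regularity: weak strong monotonicity in G} hold in $(x,w)$, etc.; all of this follows from Proposition \ref{prop: expectation to pointwise} exactly as in Corollary \ref{from lip hilbert to lip x} (choose $X,Y$ with common law $\nu$). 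The arguments of Lemma \ref{montone fbsde have bounded growth} apply verbatim to a frozen measure flow, so $\|X^i_t\|,\|W^i_t\|\le C$ uniformly; in particular $\xi:=\sup_{t\le T}\|X^1_t-X^2_t\|$ and $w:=\sup_{t\le T}\|W^1_t-W^2_t\|$ are finite. Write $\omega:=\omega(\delta_\mu)$; if $\omega\ge 1$ the claim is immediate from the a priori bounds, so assume $\omega\le 1$.

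The first two estimates go as follows. Itô's formula for $U_t$, after splitting each drift difference as $F^1_s-F^2_s=[F(X^1_s,W^1_s,\mu^1_s)-F(X^2_s,W^2_s,\mu^1_s)]+\Delta F_s$ with $|\Delta F_s|\le\omega$ (and likewise for $G$), lets the ``same-measure'' part be controlled by the pointwise weak--strong monotonicity, while the perturbation contributes at most $\omega(\|X^1_s-X^2_s\|+\|W^1_s-W^2_s\|)$; since $X^1_0=X^2_0=x$ we get $\esp{U_0}=0$, and as $s\mapsto\esp{U_s}$ can only increase through this perturbation term, $\sup_{t}\esp{U_t}\le C\omega(\xi+w)$. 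The terminal value is treated by writing $W^1_T-W^2_T=A+B$ with $A=W_0(X^1_T,\mu^1_T)-W_0(X^2_T,\mu^1_T)$, $|B|\le\omega$, so $\esp{U_T}\ge a_0\|A\|^2-\omega\xi$ by the strong monotonicity of $W_0(\cdot,\mu^1_T)$; integrating the Itô identity between $t$ and $T$ and using the weak--strong inequality gives, after a Young absorption, the analogue of \eqref{estimate X by W G strong weak monotone}: $\alpha\int_t^T\|X^1_s-X^2_s\|^2ds\le L\int_t^T\|W^1_s-W^2_s\|^2ds+\esp{U_t}+C\omega^2$. Next, applying Itô to $|W^1_t-W^2_t|^2$, discarding the nonnegative $\int\|Z^1-Z^2\|^2$ term, bounding the same-measure drift part by the first bullet of Hypothesis \ref{hyp: regularity: weak strong monotonicity in G} and the perturbation by $C\omega\|W^1_s-W^2_s\|$, then inserting the previous inequality together with $\|W^1_T-W^2_T\|^2\le 2\|A\|^2+2\omega^2\le C\omega(\xi+w)+C\omega^2$, I reach $\|W^1_t-W^2_t\|^2\le C\omega(\xi+w)+C\omega^2+C\int_t^T\|W^1_s-W^2_s\|^2ds$; the first terms being constant in $t$, Gronwall gives $w^2\le C\omega(\xi+w)+C\omega^2$, hence after another Young absorption $w^2\le C\omega\,\xi+C\omega^2$.

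For the forward process, Itô applied to $|X^1_t-X^2_t|^2$ has no initial term (again $X^1_0=X^2_0=x$); using the second bullet of Hypothesis \ref{hyp: regularity: weak strong monotonicity in G} (semi-monotonicity of $F$ in $x$, $\gamma$-Hölder in $w$) on the same-measure part, $C\omega\|X^1_s-X^2_s\|$ on the perturbation, and Gronwall, one gets $\xi^2\le C\sup_s\|W^1_s-W^2_s\|_\gamma^2+C\omega^2\le C\max(w^2,w^{2\gamma})+C\omega^2$, i.e. (for $w\le1$) $\xi^2\le Cw^{2\gamma}+C\omega^2$. It then remains to solve the two scalar inequalities $w^2\le C\omega\xi+C\omega^2$ and $\xi^2\le Cw^{2\gamma}+C\omega^2$: substituting the first into the second and using $(a+b)^\gamma\le a^\gamma+b^\gamma$ yields $\xi^2\le C\omega^{\gamma}\xi^{\gamma}+C\omega^{2\gamma}$, and a dichotomy on which of the two right-hand terms dominates gives $\xi\le C\omega^{\gamma/(2-\gamma)}$; back-substitution then gives $w^2\le C\omega^{2/(2-\gamma)}$, so $w\le C\omega^{1/(2-\gamma)}\le C\omega^{\gamma/(2-\gamma)}$. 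Together with the trivial case $\omega\ge1$, this yields $\|W^1_t-W^2_t\|,\|X^1_t-X^2_t\|\le C\max\!\big(\omega(\delta_\mu),\omega(\delta_\mu)^{\gamma/(2-\gamma)}\big)$ for all $t\le T$.

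The main obstacle is the $W$-estimate in the second paragraph: making the Gronwall for $\|W^1_t-W^2_t\|^2$ close cleanly, which hinges on the observations that $\esp{U_0}=0$ and that every perturbation error can be converted, via the a priori $L^2$-bounds, into a $t$-independent quantity of size $O(\omega(\xi+w))$ rather than into a term that reabsorbs part of $\int_t^T\|W^1_s-W^2_s\|^2$. The only other delicate point is the closed-form resolution of the coupled pair $(w^2\lesssim\omega\xi,\ \xi^2\lesssim w^{2\gamma})$, which is precisely where the exponent $\gamma/(2-\gamma)$ is produced.
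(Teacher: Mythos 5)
Your proposal is correct and follows essentially the same route as the paper: pointwise transfer of the monotonicity and regularity hypotheses via Proposition \ref{prop: expectation to pointwise}, Itô applied to $(X^1_t-X^2_t)\cdot(W^1_t-W^2_t)$, $|W^1_t-W^2_t|^2$ and $|X^1_t-X^2_t|^2$ with the measure mismatch treated as an $O(\omega(\delta_\mu))$ perturbation, and finally the substitution of $w^2\lesssim\omega\xi$ into $\xi^2\lesssim w^{2\gamma}$ to produce the exponent $\gamma/(2-\gamma)$. The only difference is presentational (you close the argument by a dichotomy on two sup quantities, the paper by sequential Gronwall and back-substitution), and your exponent bookkeeping matches the paper's conclusion.
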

\begin{proof}
Since $(F,G,W_0)$ are continuous, by Proposition \ref{prop: expectation to pointwise} we deduce that the following holds $\forall (x,y,u,v,\mu,\nu)\in (\reels^d)^4\times \left(\mathcal{P}_2(\reels^d)\right)^2$
\begin{align*}
(G(x,u,\mu)-G(y,v,\nu))\cdot(x-u)+(F(x,u,\mu)-F(y,v,\nu))\cdot(u-v)\geq -(|u-v|+|x-y|)\omega(\mathcal{W}_2(\mu,\nu)),\\
(G(x,u,\mu)-G(y,v,\nu))\cdot(x-u)+(F(x,u,\mu)-F(y,v,\nu))\cdot(u-v)\geq \frac{\alpha}{2}|x-y|^2-2L|u-v|^2-C\left(\omega(\mathcal{W}_2(\mu,\nu)\right)^2,\\
(W_0(x,\mu)-W_0(y,\nu))\cdot(x-y)+|x-y|\omega(\mathcal{W}_2(\mu,\nu))+(\omega(\mathcal{W}_2(\mu,\nu)))^2\geq |W_0(x,\mu)-W_0(y,\nu)|^2. 
\end{align*}
We start as in Lemma \ref{estimate W strong in G}: by applying Ito's lemma to 
\[U_T=(X^1_T-X^2_T)\cdot(W^1_T-W^2_T),\]
to deduce the following 
\begin{align*}
   \exists C>0,\quad  \forall t\in [0,T], a_0\|W_T^1-W_T^2\|^2&\leq \esp{U_T}+\|X^1_T-X^2_T\|\omega(\delta_\mu)+\left(\omega(\delta_\mu)\right)^2\\
    &\leq \underbrace{\esp{U_0}}_{=0}+\underbrace{\|X^1_T-X^2_T\|\omega(\delta_\mu)+\left(\omega(\delta_\mu)\right)^2+\omega(\delta_\mu)\int_0^T\left(\|X^1_s-X^2_s\|+\|W^1_s-W^2_s\|\right)ds}_{C_0^T}.
\end{align*}
Then we remark that 
\[\forall t\leq T, \quad \frac{\alpha}{2}\int_t^T\|X^1_s-X^2_s\|^2ds\leq 2L\int_t^T \|W^1_s-W^2_s\|^2ds+C\left(\omega(\delta_\mu)\right)^2.\]
This follows from the weak-strong monotonicity of $(F,G)$ from Hypothesis \ref{hyp: weak strong monotonicity in G}, the regularity of those coefficients given by Hypothesis \ref{hyp: regularity: weak strong monotonicity in G} and an application of Young's inequality.
Applying Ito's lemma to $t\mapsto |W^1_t-W^2_t|^2$ yields
\[\forall t\leq T \quad  \|W^1_t-W^2_t\|^2\leq C\int_t^T \|W^1_s-W^2_s\|^2ds+C(\omega(\delta_\mu))^2+CC_0^T,\]
by the regularity of $G$ implied by Hypothesis \ref{hyp: regularity: weak strong monotonicity in G}.
Applying Gronwall's lemma we get that there exists a constant $C$ such that
\[\forall t\leq T \quad \|W^1_t-W^2_t\|\leq C\left((\omega(\delta_\mu))^2+C_0^T\right),\]
Integrating from $0$ to $T$ and using once again Young's inequality to make $\int_0^T\|W^1_s-W^2_s\|^2ds$ on both side, we deduce that 
\[\exists C, \forall t\leq T \quad \|W^1_t-W^2_t\|^2\leq C\left((\omega(\delta_\mu))^2+\|X^1_T-X^2_T\|\omega(\delta_\mu)+\omega(\delta_\mu)\int_0^T \|X^1_s-X^2_s\|ds\right).\]
We now turn to an estimate on $t\mapsto \|X^1_t-X^2_t\|$, which follows from the argument we used in Lemma \eqref{estimate X by W G strong weak monotone}. Dealing with terms depending on $(X^1_t-X^2_t)_{t>0}$ as we did for $W$, we finally conclude to 
\[\forall t\leq T \quad \|X^1_t-X^2_t\|^2\leq C\left((\omega(\delta_\mu))^2+\left(\omega(\delta_\mu)\right)^\frac{2\gamma}{2-\gamma}\right).\]
\end{proof}

\begin{thm}
\label{existence: weak strong G}
    Let $(F,G,W_0)$ be such that Hypotheses \ref{hyp: linear growth in F},\ref{hyp: weak strong monotonicity in G} and \ref{hyp: regularity: weak strong monotonicity in G} are satisfied, then there exists a unique continuous monotone solution to \eqref{eq: ME without noise}
\end{thm}
\begin{proof}
\textit{Step 1: regularization and existence of Lipschitz solutions}

Let us fix $\eta>0$ and consider a regularization $(F^\eta,G^\eta)$ of $(F,G)$,defined by 
\[\forall \mu\in \mathcal{P}_2(\reels^d), F^\eta(x,w,\mu)=\int_{\reels^{2d}} F(x-y,w-v,\mu\star \psi_\eta)\psi_\eta(y)\psi_\eta(v)dydv,\]
with a analogous definition for $G^\eta$, and where $\psi_\eta:x\mapsto \frac{1}{\eta}\psi(\frac{x}{\eta})$, for $\psi$ a non-negative, 1-Lipschitz kernel of mass one. Let us remark that $(G^\eta,F^\eta)$ satisfies the same monotonicity assumptions as $(G,F)$, converges locally uniformly to $(G,F)$ and is $\frac{1}{\eta}-$Lipschitz in the space argument. 
We now introduce a regularization $(F^\varepsilon,G^\varepsilon,W_0^\varepsilon)$ obtained by the regularization procedure of $(F^\eta,G^\eta)$ presented in Lemma \ref{strong monotonicity yoshida}, as for $W_0^\varepsilon$, we can take it equal to $W_0$ since it is already Lipschitz. We insist that since the regularization is operated directly for the couple $(F^\eta,G^\eta)$. It follows that for fixed $\varepsilon$, the couple $(G^\varepsilon,F^\varepsilon)$ is usually a function from $(\reels^{d})\times \mathcal{P}_2(\reels^{2d})$ to $\reels^{2d}$, indeed
\[(\tilde{F}^\varepsilon,\tilde{G}^\varepsilon):(X,U)\mapsto (F^\varepsilon,G^\varepsilon)(X,U,\mathcal{L}(X,U)).\]
For MFG, this means that the regularized problem is a mean field game of control even when the original problem is not.

We now show that there exist constants $(\alpha',L',a_0')$ such that  Hypotheses \ref{hyp: weak strong monotonicity in G} and \ref{hyp: regularity: weak strong monotonicity in G} are satisfied for given $\eta,\varepsilon$. For $W_0^\varepsilon$ this is very easy to see by coming back to its definition in function of $W_0$, in fact for any $\varepsilon>0$ the following holds 
\[\forall (X,Y)\in \mathcal{H}^2 \quad \langle \tilde{W}_0^\varepsilon(X)-\tilde{W}_0^\varepsilon(Y),X-Y\rangle \geq (a_0+\varepsilon)\|\tilde{W}_0^\varepsilon(X)-\tilde{W}_0^\varepsilon(Y)\|^2.\]
Since $W_0$ is Lipschitz in $x$, it is also easy to see that $W_0^\varepsilon$ is $\gamma-$Holder in the measure argument with a constant independent of both $\varepsilon$ and $\eta$.
Letting $E=(G,F)$, We now define
\[H_\varepsilon: (X,W)\mapsto (X,W)+\varepsilon \tilde{E}(X,W),\]
and recall that the regularization $E^\varepsilon=(G^\varepsilon,F^\varepsilon)$ is given by 
\[\tilde{E}^\varepsilon(X,W)=\tilde{E}(\tilde{H}_\varepsilon^{-1}(X,W)).\]
In what follow we use the notation $\tilde{h}_1^{-1}(X,W)=\left((\tilde{H}_\varepsilon^{-1}(X,W))_i\right)_{1\leq i\leq d}$ for the first $d$ coordinate of $\tilde{H}_\varepsilon^{-1}$ and $\tilde{h}^{-1}_2$ for the last $d$ coordinates. 
Fixing $(X,Y,W,V)\in \left(\mathcal{H}\right)^4$, we observe that 
\[\langle \tilde{E}^\varepsilon(X,W)-\tilde{E}^\varepsilon(Y,V),(X,W)-(Y,V)\rangle\geq\varepsilon \|\tilde{E}^\varepsilon(X,W)-\tilde{E}^\varepsilon(Y,V)\|^2+\alpha\|\tilde{h}_1^{-1}(X,W)-\tilde{h}_1^{-1}(Y,V)\|^2-L \|\tilde{h}^{-1}_2(X,W)-\tilde{h}^{-1}_2(Y,V)\|^2.\]
we now use the fact that for any $a>0$ the following holds 
\[ \|\tilde{h}_1^{-1}(X,W)-\tilde{h}_1^{-1}(Y,V)\|^2\geq -\frac{a+4}{a+a^2}\|\tilde{h}_1^{-1}(X,W)-\tilde{h}_1^{-1}(Y,V)-(X-Y)\|^2+\frac{1}{1+a}\|X-Y\|^2.\]
By definition of $\tilde{h}_1^{-1}$
\[X=\tilde{h}_1^{-1}(X,W)+\varepsilon \tilde{G}^\varepsilon(X,W),\]
since an analogous calculation can be performed to estimate $\|\tilde{h}_2^{-1}(X,W)-\tilde{h}_2^{-1}(Y,V)\|^2$ in function of $\|\tilde{F}^\varepsilon(X,W)-\tilde{F}^\varepsilon(Y,V)\|$ and $\|W-V\|^2$, we deduce by choosing $a=\sqrt{\varepsilon}$, that for $\varepsilon$ sufficiently small 
\begin{equation}
\label{conservation of weak strong monotonicity}
\langle \tilde{E}^\varepsilon(X,W)-\tilde{E}^\varepsilon(Y,V),(X,W)-(Y,V)\rangle\geq \frac{\alpha}{1+\sqrt{\varepsilon}}\|X-Y\|^2-(1+\sqrt{\varepsilon})L \|W-V\|^2.\end{equation}
Since $(\tilde{G}^\varepsilon,\tilde{F}^\varepsilon)$ are $\frac{1}{\varepsilon}-$Lipschitz in $\mathcal{H}^2$, for a given $\varepsilon$, $(F,G,W_0)^\varepsilon$ satisfy the assumption of Lemma \ref{existence in the Lipschitz framework}, as a consequence there exists a corresponding sequence of Lipschitz solution $W^\varepsilon$ well-defined on $[0,T]$.

\textit{Step 2: getting estimates on the sequence of solution}
Let us first remark that by Corollary \ref{corol: linear growth regul}, and Hypothesis \ref{hyp: linear growth in F}, for $\varepsilon$ sufficiently small $(F^\varepsilon,G^\varepsilon)$ have linear growth with constants independent of $\varepsilon$. We now turn to the main technical difficulty of this proof, namely the fact that the Hille-Yosida regularization procedure of $(\tilde{G},\tilde{F})$ in $\mathcal{H}^2$ does not preserve semi-monotonicity. However we can show that on bounded set of $\mathcal{H}$, there exists a constant $C$ independent of $\eta,\varepsilon$ such that
\begin{gather}
\nonumber\langle \tilde{F}^\varepsilon(X,W)-\tilde{F}^\varepsilon(Y,V),X-Y\rangle \geq -C\left(\|X-Y\|^2+\|W-V\|^{2\gamma}+\varepsilon(1+\|X\|^2+\|Y\|^2+\|W\|^2+\|V\|^2)\right),\\
\label{almost semi concavity estimates}\langle \tilde{G}^\varepsilon(X,W)-\tilde{G}^\varepsilon(Y,V),X-Y\rangle \leq C\left(\|X-Y\|^2+\|W-V\|^2+\varepsilon(1+\|X\|^2+\|Y\|^2+\|W\|^2+\|V\|^2)\right).
\end{gather}
Let us show this form $\tilde{F}^\varepsilon$ only, $\tilde{G}^\varepsilon$ being treated in a similar fashion. 
\begin{align*}
    \langle \tilde{F}^\varepsilon(X,W)-\tilde{F}^\varepsilon(Y,V),X-Y\rangle&=\langle \tilde{F}(H^{-1}_\varepsilon (X,W))-\tilde{F}(H^{-1}_\varepsilon (Y,V)),\tilde{h}^{-1}_1 (X,W)-\tilde{h}^{-1}_1 (Y,V)\rangle\\
    &+\varepsilon\langle \tilde{F}^\varepsilon(X,W)-\tilde{F}^\varepsilon(Y,V),\tilde{G}^\varepsilon(X,W)-\tilde{G}^\varepsilon(Y,V)\rangle
\end{align*}
 Using the semi-monotonicity of $\tilde{F}$ and its holder continuity in the second argument under Hypothesis \ref{hyp: regularity: weak strong monotonicity in G} yields 
\begin{align*}
    \langle \tilde{F}^\varepsilon(X,W)-\tilde{F}^\varepsilon(Y,V),X-Y\rangle&\geq -C\left(\|\tilde{h}^{-1}_1 (X,W)-\tilde{h}^{-1}_1 (Y,V)\|^2+\|\tilde{h}^{-1}_2 (X,W)-\tilde{h}^{-1}_2 (Y,V)\|^2\right)\\
    &+\varepsilon\langle \tilde{F}^\varepsilon(X,W)-\tilde{F}^\varepsilon(Y,V),\tilde{G}^\varepsilon(X,W)-\tilde{G}^\varepsilon(Y,V)\rangle
\end{align*}
for a constant $C$ depending on $F$ only. Since $H^{-1}_\varepsilon$ is $1-$Lipschitz in $\mathcal{H}^2$ using the linear growth of $(F^\varepsilon,G^\varepsilon)$ to take care of the term in $\varepsilon$, we obtain the announced estimate. At this point, let us insist on the fact that there exists a  $\varepsilon_0$ such that Hypothesis \ref{hyp: linear growth in F} and Hypothesis \ref{hyp: weak strong monotonicity in G} are satisfied uniformly with constants independent of $\varepsilon$ for the family $(F^\varepsilon,G^\varepsilon)_{\varepsilon\leq \varepsilon_0}$. In particular by Lemma \ref{montone fbsde have bounded growth}, for any such $\varepsilon$, there exists a constant $C$ such that for any $(m^\mu_t)_{s\in[0,t]}$ solution to 
\[\partial_t m^\mu_s=-\text{div}\left(F^\varepsilon(x,m^\mu_s,W^\varepsilon(t-s,x,m^\mu_s))m^\mu_s\right) \quad m^\mu|_{s=0}=\mu,\]
we have
\[\forall s\leq t \quad \int_{\reels^d}|y|^2m^\mu_s(dy)\leq C\left(1+\int_{\reels^d}|y|^2\mu(dy)\right).\]
As a consequence, following the proof of Lemma \ref{estimate W strong in G}, \ref{estimate x strong in G}, by treating the terms depending on $\varepsilon$ as a perturbation in \eqref{almost semi concavity estimates}, we can find a constant independent of $\varepsilon,\eta$ such that for$\mu,\nu\in\mathcal{P}_2(\reels^d)$, the following holds for $\varepsilon\leq \varepsilon_0 $ and $s\leq t$
\begin{equation}
\label{almost continuity estimate measure}
\mathcal{W}_2\bigg(W^\varepsilon(t,\cdot,m^\mu_s)_\#m^\mu_s,W^\varepsilon(t,\cdot,m^\nu_s)_\#m^\nu_s\bigg) ,\mathcal{W}_2(m^\mu_s,m^\nu_t)\leq C\left(\mathcal{W}^\gamma_2(\mu,\nu)+\varepsilon^\gamma\!\left(1+\!\int_{\reels^d} |x|^2\mu(dx)+\!\int_{\reels^d} \!|x|^2\nu(dx)\right)\right), 
\end{equation}
for a constant independent of $\varepsilon$ and where 
\[\mathcal{W}^\gamma_2(\mu,\nu)=\max\left(\mathcal{W}_2(\mu,\nu),\left(\mathcal{W}_2(\mu,\nu)\right)^\gamma\right).\]
Following the regularization operated in the space variable, for a given $\pi\in\mathcal{P}_2(\reels^{2d})$ $(x,p)\mapsto F^\varepsilon(x,\pi,p)$ is at least $\frac{1}{\eta}-$Lipschitz uniformly in $\pi\in\mathcal{P}_2(\reels^{2d})$. The key idea is now to remark either by a proof similar to Lemma \ref{lemma: lip sol FBSDE} or directly by taking the conditional expectation that there exists a unique solution to the forward backward system 
\begin{equation}
\label{eq fbsde in x}
\left\{
\begin{array}{l}
     \displaystyle X_s=x-\int_0^s f(s,X_s,U_s)ds+\sigma_xB_s,  \\
     \displaystyle U_s=W_0(X_t,m^\mu_t)+\int^t_s g(s,X_s,U_s)ds-\int_s^t Z_sdB_s,
\end{array}
\right.
\end{equation}
with $U_s=W(t-s,X_s,m^\mu_s)$. Where $f(u,x,p)=F^\varepsilon(x,p,\pi_u^\mu),g(u,x,p)=G(x,p,\pi^\mu_u)$ with $\pi^\mu_u=\mathcal{L}(X,W(t,X,\mathcal{L}(X))$ for any $X\in\mathcal{H}$ with law $m^\mu_u$. First, using the assumptions on $(F,G)$ we deduce that for a given $(\pi,x,y)$ 
\begin{gather}
\nonumber (F^\varepsilon(x,w,\pi)-F^\varepsilon(y,v,\pi))\cdot(x-y)\geq -C\left(1+\frac{\varepsilon}{\eta^2}\right)\left(|x-y|^2+|w-v|^{2\gamma}\right),\\
\label{semi monotonicity in space only}(G^\varepsilon(x,w,\pi)-G^\varepsilon(y,v,\pi))\cdot(w-v)\leq C\left(1+\frac{\varepsilon}{\eta^2}\right)\left(|x-y|^2+|w-v|^2\right).
\end{gather}
Using the estimates \eqref{semi monotonicity in space only} and the inequality \eqref{conservation of weak strong monotonicity}, we deduce using Lemma \ref{estimate W strong in G} that for $\varepsilon<\eta^2$ there exists a constant $C$ independent of both $\varepsilon$ and $\eta$ such that 
\begin{equation}
\label{continuity in x of W eps}
\forall (t,x,y,\mu)\in [0,T]\times (\reels^d)^2\times \mathcal{P}_2(\reels^d), \quad |W^\varepsilon(t,x,\mu)-W^\varepsilon(t,y,\mu)|\leq C|x-y|.
\end{equation}
We now turn to an estimate with respect to the measure argument. Since $(F^\varepsilon,G^\varepsilon)$ is continuous we can apply Lemma \ref{prop: expectation to pointwise} to deduce that for a given $(\pi_1,\pi_2)\in \left(\mathcal{P}_2(\reels^{2d})\right)^2$, the following holds 
\[
\begin{array}{l}
 (F^\varepsilon(x,p,\pi_1)-F^\varepsilon(y,q,\pi_2))\cdot (p-q)+(G^\varepsilon(x,p,\pi_1)-G^\varepsilon(y,q,\pi_2))\cdot (x-y)\geq -(|x-y|+|p-q|)\omega(\mathcal{W}_2(\pi_1,\pi_2)),\\
     (F^\varepsilon(x,p,\pi_1)-F^\varepsilon(y,q,\pi_2))\cdot (p-q)+(G^\varepsilon(x,p,\pi_1)-G^\varepsilon(y,q,\pi_2))\cdot (x-y)\geq \frac{\alpha}{2} |x-y|^2-2L |p-q|^2-C\left(\omega(\mathcal{W}_2(\pi_1,\pi_2))\right)^2,  \\
\end{array}\]
for all $(x,y,p,q)$ in $\reels^{4d}$. By Lemma \ref{lemma: estim regul measure}
, and using the estimate \eqref{almost continuity estimate measure} we deduce that
\[|W^\varepsilon(t,x,\mu)-W^\varepsilon(t,x,\nu)|\leq \omega_\gamma\left(\left(\mathcal{W}^\gamma_2(\mu,\nu)+\varepsilon^\gamma (\mathcal{W}_2(\mu,\delta_{0_{\reels^d}})+\varepsilon^\gamma(\mathcal{W}_2(\nu,\delta_{0_{\reels^d}})\right)\right),\]
for a modulus of continuity depending on $\gamma,T,\alpha,L,a_0$.
Finally, an estimate on $t\mapsto W(t,x,\mu)$ is obtained thanks to Lemma \ref{montone fbsde have bounded growth} and the previous estimates on $(x,\mu)\mapsto W^\varepsilon(t,x,\mu)$. Let $(x,\mu)$ belongs to a bounded set of $\reels^d\times \mathcal{P}_2(\reels^d)$,
\[|W^\varepsilon(t,x,\mu)-W^\varepsilon(s,x,\mu)|\leq |W^\varepsilon(s,X^\mu_{t-s},m^\mu_{t-s})-W^\varepsilon(s,x,\mu)|+C(t-s)(1+|x|+\mathcal{W}_2(\mu,\delta_{0_{\reels^d}})),\]
for a constant depending on the growth of the coefficients $(F,G,W_0)$ only. Owning to previous continuity estimates on $W^\varepsilon$ in $\reels^d\times \mathcal{P}_2(\reels^d)$, as well as an estimate on the continuity with respect to time of $(X^\mu_u,m^\mu_u)_{u\in[0,t-s]}$ stemming from Lemma \eqref{montone fbsde have bounded growth}, we deduce that on bounded sets $O$ of $\reels^d\times \mathcal{P}_2(\reels^d)$ there exists a constant $C_O$ such that $\forall (x,\mu)\in O$, 
\begin{equation}
\label{almost time continuity}
\forall t\leq T,\quad |W^\varepsilon(t,x,\mu)-W^\varepsilon(s,x,\mu)|\leq C_O(|t-s|+\omega_\gamma(C_0|t-s|^\gamma+\varepsilon^\gamma \mathcal{W}_2(\mu,\delta_{0_{\reels^d}})).
\end{equation}

\textit{Step 3: local uniform convergence along a subsequence}
Let us now fix some $(t,\mu)\in \mathcal{P}_2(\reels^d)$,Defining  $W^{\varepsilon,t,\mu}:x\mapsto W^\varepsilon(t,x,\mu)$, we know thanks to the estimate \eqref{continuity in x of W eps} that along a subsequence $(W^{\varepsilon_n,\mu})_{n\geq 0}$,  $(W^{\varepsilon,\mu})_{\varepsilon>0}$ converges locally uniformly to a function $W^{\eta,t,\mu}$. Now by the separability of $[0,T]\times\mathcal{P}_2(\reels^d)$ and a classic diagonalization argument, we can extract a subsequence of $(W^\varepsilon)_{\varepsilon>0}$ such that $(t,x,\mu)\mapsto W^\varepsilon(t,x,\mu)$ converges locally uniformly to some function $W^{\eta}$ on a dense subset of $[0,T]\times \reels^d\times \mathcal{P}_2(\reels^d)$ denoted $D$.   We now show that $(t,x,\mu)\mapsto W^{\eta}(t,x,\mu)$ is continuous. This follows naturally from \eqref{almost continuity estimate measure},\eqref{almost time continuity}, by letting $\varepsilon$ tends to 0 along this subsequence we get that on bounded sets $\mathcal{O}$ of $\reels^d\times \mathcal{P}_2(\reels^d)$
\[|W^{\eta}(s,x,\mu)-W^{\eta}(t,x,\nu)|\leq C_\mathcal{O}(|t-s|+\omega_\gamma(\mathcal{W}^\gamma_2(\mu,\nu)+|t-s|^\gamma)).\]
Since $(t,x,\mu)\mapsto W^{\eta,\mu}(t,x)$ is continuous on a dense subset of $[0,T]\times \reels^d\times \mathcal{P}_2(\reels^d)$, we can extend it into a continuous function on this whole space we call $W^\eta$ toward which the sequence. Since $(F^\varepsilon,G^\varepsilon,W^\varepsilon)$ converges locally uniformly to $(G^\eta,F^\eta,W^\eta)$ along a subsequence, by Theorem \ref{stability without noise}, $W^\eta$ is a monotone solution to \eqref{eq: ME without noise} with the data $(W_0,G^\eta,F^\eta)$. Since we already know that $(F^\eta,G^\eta)$ converges locally uniformly to $(F,G)$, and that the sequence $(W^\eta)_{\eta>0}$ is locally uniformly bounded and equicontinuous, we finally conclude to the existence of a monotone solution by the stability of monotone solutions and convergence along a subsequence thanks to Arzela-Ascoli. 
\end{proof}
\begin{remarque}
Let us remark that since we have estimates on the growth of solutions by Lemma \ref{montone fbsde have bounded growth}, we only need the regularity assumption Hypothesis \ref{hyp: regularity: strong monotonicity in G} to hold locally. Which is to say we could require instead that for any $R>0$, there exists a constant $C_R$ and a modulus of continuity $\omega_R(\cdot)$ such that 
\begin{enumerate}
    \item[-] $\forall X,Y,U,V\in B_R=\left\{Z\in \mathcal{H},\quad \|Z\|\leq R\right\}$
    \begin{enumerate}
        \item[-] $\langle \tilde{G}(X,U)-\tilde{G}(Y,V),U-V\rangle \leq C_R\left(\|X-Y\|^2+\|U-V\|^2\right)$,
        \item[-] $\langle \tilde{F}(X,U)-\tilde{F}(Y,V),X-Y\rangle \geq -C_R\left(\|X-Y\|^2+\|U-V\|^{2}_\gamma\right)$,
         \end{enumerate}
    \item[-]  $\displaystyle \forall X,U\in B_R, \quad \mu,\nu\in B_R^\mathcal{P}=\left\{m\in \mathcal{P}_2(\reels^d),\quad \int_{\reels^d} |y|^2m(dy)\leq R\right\}$
     \[|F(X,U,\mu)-F(X,U,\nu)|+|G(X,U,\mu)-G(X,U,\nu)|+|W_0(X,\mu)-W_0(X,\nu)|\leq \omega_R(\mathcal{W}_2(\mu,\nu)).\]
    \end{enumerate}
\end{remarque}

\subsubsection{Presence of common noise}
We now explain how this existence result extend to the presence of common noise without much difficulty. We start by giving estimates on the following system
\begin{equation}
\label{fbsde with common noise process}
\left\{
\begin{array}{l}
     \displaystyle X_s=X-\int_0^sF(X_u,\theta_u,\mathcal{L}(X_u|\mathcal{F}^\theta_u),W_u)du+\sqrt{2\sigma_x}B_s,\\
     \displaystyle \theta_s=\theta-\int_0^s b(\theta_u)du+\sigma_\theta B^\theta_s,\\
     \displaystyle W_s=W_0(X_T,\theta_T,\mathcal{L}(X_T|\mathcal{F}^\theta_T))+\int_s^T G(X_u,\theta_u,\mathcal{L}(X_u|\mathcal{F}^\theta_u),W_u)du-\int_s^T Z_s\cdot d(B_s,B^\theta_s)\\
     \mathcal{F}^\theta_s=\sigma((B^\theta_u)_{u\leq s}).
\end{array}
\right.
\end{equation}
To that end we make the following assumptions
\begin{hyp}
\label{linear growth with theta}
There exists a constant $C$ such that $\forall (x,\theta,\mu,u)\in \reels^d\times \reels^n\times \mathcal{P}_2(\reels^d)\times \reels^d$,
\[|(F,G)(x,\theta,\mu,u)|+|W_0(x,\theta,\mu)|\leq C\left(1+|x|+|u|+|\theta|+\sqrt{\int_{\reels^d} |y|^2\mu(dy)}\right).\]
\end{hyp}
\begin{hyp}
\label{weak strong common noise}
    The functions $(x,u,\mu)\mapsto (F,G,W_0)(x,\theta,\mu,u)$ satisfy Hypotheses \ref{hyp: weak strong monotonicity in G} and \ref{hyp: regularity: weak strong monotonicity in G} uniformly in $\theta\in \reels^n$. Letting $v=(F,G,W_0)$, there exists a modulus of continuity $\omega(\cdot)$ such that $\forall (x,\theta_1,\theta_2,\mu,u)\in \reels^d\times \left(\reels^n\right)^2\times \mathcal{P}_2(\reels^d)\times \reels^d$,
    \[|v(x,\theta_1,\mu,u)-v(x,\theta_2,\mu,u)|\leq \omega(|\theta_1-\theta_2|).\]
\end{hyp}

\begin{corol}
\label{estimate with common noise}
Under Hypotheses \ref{linear growth with theta} and \ref{weak strong common noise}, there exists a constant $C$ such that if there exists a  strong solution in $\mathcal{H}^4$ $(X,\theta^x,W^x,Z^x)_{t\in[0,T]}$ to \eqref{fbsde with common noise process}(resp. $(Y,\theta^y,W^y,Z^y)_{t\in[0,T]}$) with initial datum $(X_0,\theta^1)\in \mathcal{O}$ (resp. $(Y_0,\theta^2)$), then $\forall t\leq T$
\begin{align*}
\|W_t^x-W_t^y\|\leq C\left(\|X_0-Y_0\|+\left(\omega(|\theta^1-\theta^2|)\right)^\frac{\gamma}{2-\gamma}\right),\\
\|X_t-Y_t\|\leq C\left(\|X_0-Y_0\|_\gamma+\left(\omega(|\theta^1-\theta^2|)\right)^\frac{\gamma}{2-\gamma}\right)
\end{align*}
\end{corol}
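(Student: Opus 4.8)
The plan is to run the arguments of Lemmas \ref{estimate W strong in G}, \ref{estimate x strong in G} and \ref{lemma: estim regul measure} simultaneously, treating the common-noise variable $\theta$ as a perturbation of size $\omega(|\theta^1-\theta^2|)$ in exactly the way the measure flow $\delta_\mu$ was treated as a perturbation in Lemma \ref{lemma: estim regul measure}. Throughout, one works conditionally on the common-noise filtration $(\mathcal{F}^\theta_s)$, so that the conditional laws $\mathcal{L}(X_s\mid\mathcal{F}^\theta_s)$ and $\mathcal{L}(Y_s\mid\mathcal{F}^\theta_s)$ play the role of the laws $\mathcal{L}(X_s)$, $\mathcal{L}(Y_s)$ of the driftless setting, and the $L^2$-monotonicity and regularity Hypotheses \ref{hyp: weak strong monotonicity in G} and \ref{hyp: regularity: weak strong monotonicity in G} are applied path-by-path in the common noise and then integrated.

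\textbf{Step 1: comparing the common-noise trajectories.} The processes $(\theta^x_s)_{s\le T}$ and $(\theta^y_s)_{s\le T}$ solve the same equation $d\theta_s=-b(\theta_s)ds+\sigma_\theta dB^\theta_s$ driven by the same Brownian motion, so the diffusion term cancels in the difference and $\frac{d}{ds}|\theta^x_s-\theta^y_s|\le \|b\|_{Lip}|\theta^x_s-\theta^y_s|$ pathwise; Gronwall's lemma then gives
\[\forall s\le T,\qquad |\theta^x_s-\theta^y_s|\le e^{\|b\|_{Lip}T}|\theta^1-\theta^2|\quad\text{a.s.}\]
By Hypothesis \ref{weak strong common noise}, every coefficient difference coming purely from the $\theta$-argument, such as $F(x,\theta^x_s,\mu,u)-F(x,\theta^y_s,\mu,u)$, is then bounded deterministically by $\omega(C|\theta^1-\theta^2|)$, so it can be handled as a source term just like $\omega(\delta_\mu)$ in Lemma \ref{lemma: estim regul measure}.

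\textbf{Step 2: the backward estimate.} Applying Itô's formula to $U_t=(X_t-Y_t)\cdot(W^x_t-W^y_t)$ and to $t\mapsto|W^x_t-W^y_t|^2$ as in Lemma \ref{estimate W strong in G}, one splits each coefficient difference via the triangle inequality into a part with $\theta$ frozen, controlled by the joint $L^2$-monotonicity, the weak-strong monotonicity and the regularity of Hypotheses \ref{hyp: weak strong monotonicity in G} and \ref{hyp: regularity: weak strong monotonicity in G} (used conditionally, and pointwise via Proposition \ref{prop: expectation to pointwise} where needed, exactly as in the proof of Lemma \ref{lemma: estim regul measure}), and a part with only $\theta$ varying, controlled by Step 1. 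The coupled backward-Gronwall and Young's-inequality manipulations of Lemma \ref{lemma: estim regul measure} then yield
\[\forall t\le T,\qquad \|W^x_t-W^y_t\|^2\le C\Big(\|X_0-Y_0\|^2+\big(\omega(|\theta^1-\theta^2|)\big)^{\frac{2\gamma}{2-\gamma}}\Big),\]
which is the first claim; the exponent $\gamma/(2-\gamma)$ appears, just as in Lemma \ref{lemma: estim regul measure}, from the interplay of the $\gamma$-Hölder regularity of $\tilde F$ in its control argument with the semi-monotonicity of $\tilde F$ in $X$, propagated through the coupling with the forward equation.

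\textbf{Step 3: the forward estimate and the main obstacle.} Applying Itô's formula to $|X_t-Y_t|^2$, using the semi-monotonicity of $F$ in $x$ and its $\gamma$-Hölder continuity in $w$, substituting the bound of Step 2, and invoking Gronwall's lemma together with the concavity of $x\mapsto x^\gamma$ exactly as in Lemma \ref{estimate x strong in G}, one obtains
\[\forall t\le T,\qquad \|X_t-Y_t\|\le C\Big(\|X_0-Y_0\|_\gamma+\big(\omega(|\theta^1-\theta^2|)\big)^{\frac{\gamma}{2-\gamma}}\Big),\]
which is the second claim. The difficulty here is organizational rather than conceptual: one must verify that the conditional McKean--Vlasov structure does not interfere with the monotonicity inequalities (it does not, by the pathwise-then-integrate argument above) and, more delicately, keep precise track of how the $\theta$-perturbation propagates through the mutually coupled forward and backward Gronwall estimates, so as to recover the sharp exponent $\gamma/(2-\gamma)$ and not a worse power of $\omega(|\theta^1-\theta^2|)$.
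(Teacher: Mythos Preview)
Your proposal is correct and follows essentially the same approach as the paper: first a pathwise Gronwall estimate on $|\theta^x_s-\theta^y_s|$, then a direct adaptation of Lemma \ref{lemma: estim regul measure} in which the $\theta$-dependence is absorbed as a perturbation of size $\omega(|\theta^1-\theta^2|)$. The paper's proof is in fact much terser than yours, saying only that the result is ``a direct adaptation of Lemma \ref{lemma: estim regul measure}, the dependency on $\theta$ being treated as a perturbation,'' so your elaboration of the conditional structure and the coupled Gronwall bookkeeping is a useful fleshing-out rather than a departure.
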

\begin{proof}
A classic Gronwall estimates yields the existence of a constant $C_\theta$ depending only on $b,\Sigma,T$ such that 
\[\forall t\leq T \quad |\theta^x_t-\theta^y_t|\leq C_\theta|\theta^1-\theta^2| \quad a.s. \]
From there on, the proof is a direct adaptation of Lemma \ref{lemma: estim regul measure},
the dependency on $\theta$ being treated as a perturbation.
\end{proof}
\begin{thm}
 Let $(F,G,W_0)$ be such that Hypotheses \ref{hyp: weak strong monotonicity in G},\ref{hyp: regularity: weak strong monotonicity in G} and \ref{hyp: linear growth in F} are satisfied, then there exists a unique continuous monotone solution to \eqref{eq: ME common noise change variable}.
\end{thm}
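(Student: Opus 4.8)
The plan is to reduce the statement to the already-treated master equation with a common noise process. By definition, $W$ is a monotone solution to \eqref{eq: ME common noise change variable} precisely when $\mathbf{W}(t,x,\theta,m)=W(t,x+\theta,(Id_{\reels^d}+\theta)_\#m)$ is a monotone solution to \eqref{eq: ME common noise changed variable}, which is \eqref{eq: ME with noise process} with $b\equiv 0$ and $\frac{1}{2}\Sigma\Sigma^T\equiv\beta I_d$; so it suffices to produce such an $\mathbf{W}$ of this pushforward form and to argue uniqueness. Writing $\Phi(x,\theta,m)=(x+\theta,(Id_{\reels^d}+\theta)_\#m)$, the transformed data is $(\mathbf{F},\mathbf{G},\mathbf{W}_0)=(F,G,W_0)\circ\Phi$ acting on the space and measure slots, and its Hilbertian lift satisfies $\tilde{\mathbf{F}}(X,\theta,W)=\tilde{F}(X+\theta,W)$ and likewise for $\mathbf{G},\mathbf{W}_0$. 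First I would record that, together with $\mathcal{W}_2((Id_{\reels^d}+\theta_1)_\#m,(Id_{\reels^d}+\theta_2)_\#\nu)\le\mathcal{W}_2(m,\nu)+|\theta_1-\theta_2|$ and $E_2((Id_{\reels^d}+\theta)_\#m)\le 2E_2(m)+2|\theta|^2$, this identity forces $(\mathbf{F},\mathbf{G},\mathbf{W}_0)$ to inherit: the joint $L^2$-monotonicity in $(x,w)$ and the $L^2$-monotonicity of $\mathbf{W}_0$ (here one uses $(X+\theta)-(Y+\theta)=X-Y$ for a common $\theta$); the linear growth in $(x,\theta,w,m)$ of Hypothesis \ref{linear growth with theta}; and the $\theta$-uniform weak--strong monotonicity and regularity of Hypothesis \ref{weak strong common noise}, the modulus in $\theta$ being local and obtained from the local uniform continuity of the continuous coefficients $(F,G,W_0)$ together with the pushforward contraction. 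Since any monotone solution $W$ to \eqref{eq: ME common noise change variable} yields such an $\mathbf{W}$, uniqueness (and the $L^2$-monotonicity of the solution) follows from the uniqueness theorem for monotone solutions of \eqref{eq: ME with noise process} applied with these transformed data.

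For existence I would mirror the three steps of the proof of Theorem \ref{existence: weak strong G}. In Step 1, I regularize $(F,G,W_0)$: mollify $(F,G)$ in the $x$ and $w$ variables into $(F^\eta,G^\eta)$, which is $\frac{1}{\eta}$-Lipschitz in the space variable, continuous, convergent locally uniformly to $(F,G)$ and keeps the same monotonicity; then apply the Hille--Yosida-type regularization of Lemma \ref{lemma: regul monotone function} jointly to $(\tilde{F}^\eta,\tilde{G}^\eta)$ in $\mathcal{H}^2$, producing $(F^\varepsilon,G^\varepsilon)$ with $\frac{1}{\varepsilon}$-Lipschitz lift, weak--strong monotonicity preserved up to the $O(\sqrt{\varepsilon})$ defect of \eqref{conservation of weak strong monotonicity}, and uniform linear growth by Corollary \ref{corol: linear growth regul}; one keeps $W_0^\varepsilon=W_0$. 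Composing with $\Phi$ gives $(\mathbf{F}^\varepsilon,\mathbf{G}^\varepsilon,\mathbf{W}_0)=(F^\varepsilon,G^\varepsilon,W_0)\circ\Phi$, for which Hypothesis \ref{hyp: Lipschitz Wq} holds with $b\equiv 0$, $\Sigma\equiv\sqrt{2\beta}\,I_d$, $\sigma_x=0$ (Lipschitzness of the lift in $(X,\theta,W)$ follows from that of $\tilde{F}^\varepsilon$ in $\mathcal{H}$ and $\|(X+\theta)-(Y+\theta')\|\le\|X-Y\|+|\theta-\theta'|$), while Hypotheses \ref{linear growth with theta} and \ref{weak strong common noise} hold with constants independent of $(\varepsilon,\eta)$.

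In Step 2, Theorem \ref{lip sol b(p)} applied with $\sigma_x=0$ gives a maximal Lipschitz solution $\mathbf{W}^\varepsilon$ of the regularized \eqref{eq: ME common noise changed variable}; by Lemma \ref{lip sol chang variable} it has the form $\mathbf{W}^\varepsilon(t,x,\theta,m)=W^\varepsilon(t,x+\theta,(Id_{\reels^d}+\theta)_\#m)$, so $W^\varepsilon$ is a Lipschitz solution of the regularized \eqref{eq: ME common noise change variable}. Applying Lemma \ref{lemma: lip sol FBSDE} and Corollary \ref{estimate with common noise} with $\theta^1=\theta^2$ to the associated system \eqref{fbsde with common noise process} yields $\|\tilde{W}^\varepsilon(t,\cdot)\|_{Lip(\mathcal{H})}\le C$ uniformly in $\varepsilon$, hence $\|\tilde{\mathbf{W}}^\varepsilon(t,\cdot)\|_{Lip(\mathcal{H}\times\reels^n)}\le C$ for free via $\tilde{\mathbf{W}}^\varepsilon(t,X,\theta)=\tilde{W}^\varepsilon(t,X+\theta)$, so the blow-up criterion of Theorem \ref{lip sol b(p)} forces $\mathbf{W}^\varepsilon$ to be global on $[0,T]$. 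In Step 3, using Lemma \ref{montone fbsde have bounded growth} to control the forward flows (with $\theta$ a bounded parameter), the $\varepsilon$-perturbed a priori bounds obtained as in \eqref{almost semi concavity estimates}, \eqref{almost continuity estimate measure} and \eqref{almost time continuity}, and Corollary \ref{estimate with common noise} for the continuity in $\theta$, one gets that $(W^\varepsilon)$ is locally uniformly bounded and equicontinuous with defects vanishing as $\varepsilon\to 0$. By Arzel\`a--Ascoli and a diagonal extraction (first $\varepsilon\to 0$, then $\eta\to 0$) a subsequence converges locally uniformly to a continuous $W$ with linear growth; then $(\mathbf{F}^\varepsilon,\mathbf{G}^\varepsilon,\mathbf{W}^\varepsilon)\to(\mathbf{F},\mathbf{G},\mathbf{W})$ locally uniformly by continuity of $\Phi$, so Theorem \ref{stability with common noise} shows $\mathbf{W}$ is a monotone solution of \eqref{eq: ME common noise changed variable}; since $\mathbf{W}^\varepsilon=W^\varepsilon\circ\Phi$ for all $\varepsilon$ and this structure passes to local uniform limits, $W$ is a monotone solution of \eqref{eq: ME common noise change variable}.

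The main obstacle I expect lies in Steps 1 and 3: checking that the Hille--Yosida regularization---which, as noted after Lemma \ref{lemma: regul monotone function}, destroys semi-monotonicity and turns the problem into a mean field game of controls---only perturbs the weak--strong monotonicity and the one-sided regularity bounds by an $O(\varepsilon)$ term controlled on bounded sets (the estimates \eqref{conservation of weak strong monotonicity} and \eqref{almost semi concavity estimates}), and that composition with $\Phi$, which is not Lipschitz on $\mathcal{P}_2(\reels^d)$ in general, still keeps the $\theta$-modulus and the measure-modulus under control locally and uniformly in $(\varepsilon,\eta)$; once these uniform estimates are secured, the passage to the limit---including the non-Fr\'echet-differentiable $E_4$ term in the test functions---is exactly as in Theorems \ref{stability without noise}--\ref{stability with common noise} and in the proof of Theorem \ref{existence: weak strong G}.
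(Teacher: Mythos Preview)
Your proposal is correct and follows essentially the same approach as the paper: reduce to the noise-process equation via the pushforward change of variable $\Phi$, regularize $(F,G)$ by mollification then Hille--Yosida, build global Lipschitz solutions of the transformed equation, recover the pushforward form via Lemma~\ref{lip sol chang variable}, and pass to the limit through the stability Theorem~\ref{stability with common noise}. The paper organizes the argument almost identically; the only cosmetic difference is that it first uses the $\varepsilon$-dependent Lipschitz constants (so $\gamma=1$) in Corollary~\ref{estimate with common noise} purely to defeat the blow-up criterion, and only afterwards invokes the $\varepsilon$-uniform estimates from the proof of Theorem~\ref{existence: weak strong G} for compactness, whereas you blend these two uses---but the content is the same.
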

\begin{proof}
We begin with the same regularization $(F^\varepsilon,G^\varepsilon)$ of $(F,G)$ as in Theorem \ref{existence: weak strong G}. Let us now observe that defining 
\[F^{\theta,\varepsilon}:(x,\mu,\theta)\mapsto F^\varepsilon(x+\theta,(Id_{\reels^d}+\theta)_\#\mu),\]
then $X\mapsto \tilde{F}^{\theta,\varepsilon}(X,\theta)$ has same monotonicity and regularity as $\tilde{F}^\varepsilon$. Furthermore, $\theta\mapsto F^{\theta,\varepsilon}(X,\theta)$ inherits the regularity of $X\mapsto \tilde{F}^\varepsilon(X)$. In particular, since $\tilde{F}^\varepsilon$ is Lipschitz on $\mathcal{H}$, $\tilde{F}^{\theta,\varepsilon}$ is Lipschitz on $\mathcal{H}\times \reels^n$ with the same constant. This means that there exists a Lipschitz solution to \eqref{eq: ME common noise changed variable}, with the data $(F^{\theta,\varepsilon},G^{\theta,\varepsilon},W_0^\theta)$ for each $\theta>0$ by Lemma \ref{existence in the Lipschitz framework}, for $\varepsilon>0$, there exists a Lipschitz solution $V^\varepsilon$ to the problem on some time interval $[0,T_\varepsilon)$. We now show that we can take $T_\varepsilon=T$, this is natural consequence of Lemma \ref{estimate with common noise}, since the coefficients are globally Lipschitz, we can take the constants in this Lemma to be global and $\gamma=1$, yielding an estimate on $\|\tilde{V}^\varepsilon(t,\cdot)\|_{Lip(\mathcal{H}\times \reels^n)}$. By Lemma \ref{existence in the Lipschitz framework}, if $T_\varepsilon<0$, then we would get a contradiction. By Lemma \ref{lip sol chang variable}, there exists a sequence of function $(W^\varepsilon)_{\varepsilon>0}$, such that 
\[\forall \varepsilon>0, V^\varepsilon:(t,x,\theta,\mu)\mapsto W^\varepsilon(t,x+\theta,(Id_{\reels^d}+\theta)_\#\mu).\]
To show that there exists a monotone solution to \eqref{eq: ME common noise change variable}, it is sufficient to show that the sequence $(V^\varepsilon)_{\varepsilon>0}$ converges to a monotone solution of \eqref{eq: ME common noise changed variable}. Indeed in this case $(W^\varepsilon)_{\varepsilon>0}$ also converges to a function $W$ and by definition $W$ will be a monotone solution to \eqref{eq: ME common noise change variable}. This is done by getting estimates on the sequence $(V^\varepsilon)_{\varepsilon>0}$. First let us observe that previously presented estimates in the proof of Theorem \ref{existence: weak strong G} are still valid for this equation to estimate $(x,\mu)\mapsto V^\varepsilon(t,x,\theta,\mu)$. The continuity of the sequence with respect to the new variable $\theta$, is controlled by Lemma \ref{estimate with common noise}, an estimate on the time continuity of solutions following form estimates with respect to the others argument and classic estimates on SDE. The rest of the proof is carried out as in Theorem \ref{existence: weak strong G}, using the stability of monotone solutions Theorem \ref{stability with common noise} to conclude. 
\end{proof}  
 \subsection{Existence under different monotonicity assumptions}
It is well known that formally, regularity can be traded for monotonicity for master equations. That is to say under stronger monotonicity assumptions, we can ask for lower regularity of the coefficients. We now demonstrate such property is also true for monotone solutions.
\subsubsection{Strong monotonicity in $X$}
\begin{hyp}
\label{hyp: strong monotonicity in G}
    \begin{gather}\nonumber\exists \alpha>0,a_0>0, \quad \forall (t,X,Y,U,V)\in [0,T]\times \left(\mathcal{H}\right)^4\\
       \nonumber \langle \tilde{W}_0(X)-\tilde{W}_0(Y),X-Y\rangle \geq a_0 \|\tilde{W}_0(X)-\tilde{W}_0(Y)\|^2,\\
    \langle \tilde{G}(X,U)-\tilde{G}(Y,V),X-Y\rangle+\langle \tilde{F}(X,U)-\tilde{F}(Y,V),U-V\rangle \geq \alpha \|X-Y\|^2
    \end{gather}
\end{hyp}
\begin{hyp}
\label{hyp: regularity: strong monotonicity in G}
there exists  $\gamma \in (0,1]$ such that
    \begin{enumerate}
    \item[-] There exists a constant C such that $\forall (X,Y,U,V)\in \mathcal{H}^4$
    \begin{enumerate}
        \item[-] $\langle \tilde{G}(X,U)-\tilde{G}(Y,V),U-V\rangle \leq C\left(\|X-Y\|^{2}_\gamma+\|U-V\|^2\right)$,
        \item[-]  $\langle \tilde{F}(X,U)-\tilde{F}(Y,V),U-V\rangle \geq- C\left(\|X-Y\|^{2}_\gamma+\|U-V\|^2_\gamma\right)$
         \end{enumerate}
    \item[-] There exists a modulus $\omega(\cdot)$ such that $\forall (x,y,\mu,\nu)\in (\reels^d)^2\times \left(\mathcal{P}_2(\reels^d)\right)^2$
     \[|F(x,u,\mu)-F(x,u,\nu)|+|G(x,u,\mu)-G(x,u,\nu)|+|W_0(x,\mu)-W_0(x,\nu)|\leq \omega(\mathcal{W}_2(\mu,\nu))\]
    \end{enumerate}
\end{hyp}

\begin{lemma}
\label{estimate very strong in G}
Under Hypotheses \ref{hyp: strong monotonicity in G},\ref{hyp: regularity: strong monotonicity in G} and \ref{hyp: linear growth in F}, there exists a constant $C$ such that if there exists a  strong solution $(X,W^x,Z^x)_{t\in[0,T]}$ in $\mathcal{H}^3$ to \eqref{eq: FBSDE} (resp. $(Y,W^y,Z^y)_{t\in[0,T]}$) with initial datum $X_0\in \mathcal{O}$ (resp. $Y_0$),
\begin{gather*}
    \sup_{t\in[0,T]}\|W_t^x-W_t^y\|\leq C\|X_0-Y_0\|_\frac{\gamma}{2-\gamma},\\
    \sup_{t\in[0,T]}\|X_t-Y_t\|\leq C \|X_0-Y_0\|_\frac{\gamma^2}{2-\gamma}.
\end{gather*}
\end{lemma}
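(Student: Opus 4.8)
The plan is to follow the same monotonicity-driven scheme as in Lemma \ref{estimate W strong in G} and Lemma \ref{estimate x strong in G}, but now exploiting the \emph{strong} monotonicity in $X$ of the pair $(F,G)$ directly rather than the weak-strong form, and paying the price in the Hölder exponent. Write $F^x_s=\tilde F(X_s,W^x_s)$, $G^x_s=\tilde G(X_s,W^x_s)$ and analogously with $y$, and set $U_t=(X_t-Y_t)\cdot(W^x_t-W^y_t)$. First I would apply It\^o's lemma to $U_t$: the martingale part vanishes in expectation, and
\[
\esp{U_T}=\esp{U_0}-\esp{\int_0^T\Big((F^x_s-F^y_s)\cdot(X_s-Y_s)+(G^x_s-G^y_s)\cdot(W^x_s-W^y_s)\Big)ds}.
\]
Wait — the relevant cross-term from It\^o on a product is $(F^x_s-F^y_s)\cdot(W^x_s-W^y_s)+(G^x_s-G^y_s)\cdot(X_s-Y_s)$, and by the joint $L^2$-monotonicity of $(F,G)$ from the second line of Hypothesis \ref{hyp: strong monotonicity in G} this integrand is $\geq0$; combined with the strong monotonicity of $W_0$ (first line of Hypothesis \ref{hyp: strong monotonicity in G}) this gives $\esp{U_t}\geq\esp{U_T}\geq a_0\|W^x_T-W^y_T\|^2$ for all $t$, exactly as in \eqref{ineq: ut geq uT geq terminal X}. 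Then, using the \emph{strong} monotonicity inequality $\langle\tilde G(X,U)-\tilde G(Y,V),X-Y\rangle+\langle\tilde F(X,U)-\tilde F(Y,V),U-V\rangle\geq\alpha\|X-Y\|^2$ together with the monotonicity of $W_0$, I obtain the key bound
\[
\alpha\int_t^T\|X_s-Y_s\|^2\,ds\leq\esp{U_t},
\]
which replaces \eqref{estimate X by W G strong weak monotone} but now \emph{without} the $L\int\|W^x-W^y\|^2$ term on the right.

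Next I would apply It\^o to $t\mapsto|W^x_t-W^y_t|^2$, drop the nonnegative $\int|Z^x-Z^y|^2$ term, and use the regularity bound on $\tilde G$ from Hypothesis \ref{hyp: regularity: strong monotonicity in G}, namely $\langle\tilde G(X,U)-\tilde G(Y,V),U-V\rangle\leq C(\|X-Y\|^2_\gamma+\|U-V\|^2)$. The novelty compared with Lemma \ref{estimate W strong in G} is the appearance of $\|X-Y\|^2_\gamma=\max(\|X-Y\|^2,\|X-Y\|^{2\gamma})$ rather than $\|X-Y\|^2$, which forces a Jensen/concavity argument: since $x\mapsto x^\gamma$ is concave, $\int_t^T\|X_s-Y_s\|^{2\gamma}ds\leq C\big(\int_t^T\|X_s-Y_s\|^2ds\big)^\gamma\leq C\,\esp{U_t}^\gamma$ by the bound just obtained. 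Feeding this into the $|W^x_t-W^y_t|^2$ estimate and applying (backward) Gronwall gives, for some $C$,
\[
\|W^x_t-W^y_t\|^2\leq C\big(\esp{U_0}+\esp{U_0}^\gamma\big).
\]
Since $U_0=(W^x_0-W^y_0)\cdot(X_0-Y_0)$ and, by Lemma \ref{montone fbsde have bounded growth}, the lifts are Lipschitz-controlled (or at least have controlled growth), $\esp{U_0}\leq C\|X_0-Y_0\|^2$, hence $\|W^x_t-W^y_t\|^2\leq C\|X_0-Y_0\|_\gamma^2$ and so $\|W^x_t-W^y_t\|\leq C\|X_0-Y_0\|_{\gamma/(2-\gamma)}$ once one optimizes — the exponent $\gamma/(2-\gamma)$ arising from balancing the two scales $\|X_0-Y_0\|^2$ and $\|X_0-Y_0\|^{2\gamma}$ through the nested concavity. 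For the forward process I would then apply It\^o to $|X_t-Y_t|^2$, use the lower bound on $\langle\tilde F(X,U)-\tilde F(Y,V),U-V\rangle\geq -C(\|X-Y\|^2_\gamma+\|U-V\|^2_\gamma)$ from Hypothesis \ref{hyp: regularity: strong monotonicity in G} — note it is now controlled by $\|W^x-W^y\|^2_\gamma$, which by the estimate already proved is $\leq C\|X_0-Y_0\|_{\gamma^2/(2-\gamma)}^2$ after another application of concavity — and conclude by Gronwall that $\|X_t-Y_t\|\leq C\|X_0-Y_0\|_{\gamma^2/(2-\gamma)}$.

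The main obstacle, and the only place genuine care is needed, is the bookkeeping of the Hölder exponents through the two successive concavity (Jensen) arguments: one loses a factor in the exponent when passing from $\int\|X-Y\|^2$ to $\int\|X-Y\|^{2\gamma}$ in the $W$-estimate (giving $\gamma/(2-\gamma)$), and a further factor when the now-Hölder $W$-difference feeds back through $\|W^x-W^y\|_\gamma^2$ into the $X$-estimate (giving $\gamma^2/(2-\gamma)$). One must also be careful that Lemma \ref{montone fbsde have bounded growth} applies — it requires Hypothesis \ref{hyp: linear growth in F} and the weak-strong monotonicity Hypothesis \ref{hyp: weak strong monotonicity in G}, which is implied here since Hypothesis \ref{hyp: strong monotonicity in G} is strictly stronger (take $L=0$) — so that all the Gronwall constants depend only on the data and $T$, and the $\esp{U_0}$ terms are genuinely controlled by $\|X_0-Y_0\|^2$ uniformly; the rest is routine It\^o calculus and Gronwall, essentially verbatim from Lemmas \ref{estimate W strong in G} and \ref{estimate x strong in G}.
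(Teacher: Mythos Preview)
Your overall scheme matches the paper's, but there is a genuine gap at the bootstrap step for $\esp{U_0}$.

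You write ``by Lemma \ref{montone fbsde have bounded growth}, the lifts are Lipschitz-controlled (or at least have controlled growth), $\esp{U_0}\leq C\|X_0-Y_0\|^2$''. Lemma \ref{montone fbsde have bounded growth} gives only \emph{growth}, i.e.\ $\|W^x_0\|\leq C(1+\|X_0\|)$; it says nothing about $\|W^x_0-W^y_0\|$ in terms of $\|X_0-Y_0\|$, which is precisely what you are trying to prove. The hand-wave ``once one optimizes'' does not rescue this: if $\esp{U_0}\leq C\|X_0-Y_0\|^2$ were actually available, you would obtain $\|W^x_t-W^y_t\|\leq C\|X_0-Y_0\|_\gamma$, a \emph{stronger} bound than the one claimed, and there would be nothing to ``balance''.

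The correct closure (and the one the paper uses, exactly as in Lemma \ref{estimate W strong in G}) is to evaluate the Gronwall estimate
\[
\|W^x_t-W^y_t\|^2\leq C\big(\esp{U_0}+\esp{U_0}^\gamma\big)
\]
at $t=0$, and then use Cauchy--Schwarz $\esp{U_0}\leq\|W^x_0-W^y_0\|\,\|X_0-Y_0\|$. Setting $a=\|W^x_0-W^y_0\|$, $b=\|X_0-Y_0\|$ this gives the implicit inequality $a^2\leq C(ab+(ab)^\gamma)$, which by Young's inequality yields $a\leq C'\|X_0-Y_0\|_{\gamma/(2-\gamma)}$; the exponent $\gamma/(2-\gamma)$ comes from solving $a^{2-\gamma}\lesssim b^\gamma$ in the small-$ab$ regime. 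Feeding this back into $\esp{U_0}$ then gives the bound for all $t$.

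A minor point: for the $X$-estimate you apply It\^o to $|X_t-Y_t|^2$, so the drift term is $\langle\tilde F(X,U)-\tilde F(Y,V),X-Y\rangle$, not $\langle\cdot,U-V\rangle$ as you write; make sure you invoke the semi-monotonicity of $\tilde F$ in its first argument (as in Lemma \ref{estimate x strong in G}), not the inequality on $\langle\tilde F,U-V\rangle$.
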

\begin{proof}
We begin as in Lemma \ref{estimate W strong in G} by studying 
\[U_t=(W^x_t-W^y_t)\cdot (X_t-Y_t).\]
Following the same computations, this yields 
\begin{equation}
\label{estimate from monotonicity strong in G}
\forall t\leq T \quad a_0\|W_T^x-W_T^y\|^2+\alpha \int_t^T \|X_t-Y_t\|^2\leq \esp{U_t}\leq \esp{U_0}.
\end{equation}
Applying ito's lemma to $t\mapsto |W_t^x-W_t^y|^2$ and using Hypothesis \ref{hyp: regularity: strong monotonicity in G}, we get that there exists a constant $C>0$ such that 
\[ \forall t\leq T \quad  \|W_t^x-W_t^y\|^2\leq C \left(\esp{U_0}+\esp{U_0}^{\gamma}+\int_t^T \|W_s^x-W_s^y\|^2ds\right). \]
An application of Gronwall's lemma combined with an evaluation at $t=0$ to estimate $\|W_0^x-W_0^y\|$ in function of $\|X_0-Y_0\|$ finally gives 
\[\exists C>0 \quad \forall t\leq T \quad \|W_t^x-W_t^y\|^2 \leq Ce^{C(T-t)}\left(\|X_0-Y_0\|^2+\|X_0-Y_0\|^2_{\frac{\gamma}{2-\gamma}}\right). \]
To estimate $\|X_t-Y_t\|$, we proceed in a similar fashion, using the above result to obtain the existence of a constant $C>0$ such that
\[\|X_t-Y_t\|^2\leq Ce^{Ct}\left(\|X_0-Y_0\|^2+\|X_0-Y_0\|^{2}_\gamma+ +\|X_0-Y_0\|^2_{\frac{\gamma^2}{2-\gamma}}\right)\]
\end{proof}
\begin{remarque}
    In fact, to get an estimate on $\|X_t-Y_t\|$, it is sufficient to assume that $\tilde{F}$ is uniformly (or locally uniformly on bounded sets of $\mathcal{H}$) continuous and that there exists a concave modulus of continuity $\omega(\cdot)$ such that 
    \begin{gather*}
    \forall (X,Y,U)\in \mathcal{H}^3,\quad  \|\tilde{F}(X,U)-\tilde{F}(Y,U)\|^2\leq \omega\left(\|X-Y\|^2\right) .
    \end{gather*}
    Indeed the concavity of the modulus allows the following computation 
    \[\int_0^t \omega\left(\|X_s-Y_s\|^2\right) ds\leq \omega \left(\int_0^t \|X_s-Y_s\|^2 ds\right).\]
    Reusing \eqref{estimate from monotonicity strong in G}, then yields a control of $t\mapsto \|X_t-Y_t\|$ uniformly in $\|X_0-Y_0\|$ for $t\in[0,T]$.
\end{remarque}
A direct consequence of this result, is that whenever $(X,W)\mapsto (\tilde{G},\tilde{F})(X,W)$ is strongly monotone in $X$, we can show the existence of a monotone solution in weaker class of regularity 
\begin{thm}
\label{thm: existence strong G}
 Let $(F,G,W_0)$ be such that Hypotheses \ref{hyp: strong monotonicity in G}, \ref{hyp: regularity: strong monotonicity in G} and \ref{hyp: linear growth in F} are satisfied, then there exists a unique continuous monotone solution to \eqref{eq: ME without noise}
\end{thm}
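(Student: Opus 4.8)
The plan is to follow verbatim the scheme of the proof of Theorem \ref{existence: weak strong G}, replacing the weak--strong a priori estimates of Lemmas \ref{estimate W strong in G} and \ref{estimate x strong in G} by the stronger ones of Lemma \ref{estimate very strong in G}, and adapting the auxiliary Lemma \ref{lemma: estim regul measure} accordingly.

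\textbf{Step 1 (regularization).} First I would mollify $(F,G)$ in the $(x,w)$ variables to obtain $(F^\eta,G^\eta)$, which is $\frac{1}{\eta}$-Lipschitz in space, still satisfies Hypotheses \ref{hyp: strong monotonicity in G} and \ref{hyp: regularity: strong monotonicity in G} with the same constants and moduli (mollification preserves $L^2$-monotonicity of the lift as well as the one-sided and H\"older bounds), and converges locally uniformly to $(F,G)$. Then I would apply the Hille-Yosida-type regularization of Lemma \ref{lemma: regul monotone function} to the pair $(F^\eta,G^\eta)$ jointly, viewed as an $L^2$-monotone map on $\mathcal{H}^2$, to get $(F^{\eta,\varepsilon},G^{\eta,\varepsilon})$ whose lift is $\frac{1}{\varepsilon}$-Lipschitz on $\mathcal{H}^2$; $W_0$ can be left unchanged since it is already Lipschitz by Hypothesis \ref{hyp: strong monotonicity in G}. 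As in the proof of Theorem \ref{existence: weak strong G}, I would check that for $\varepsilon$ small the strong monotonicity in $X$ survives the resolvent with a slightly degraded constant $\frac{\alpha}{1+\sqrt{\varepsilon}}$, that linear growth is preserved uniformly thanks to Corollary \ref{corol: linear growth regul}, and that $W_0^\varepsilon$ stays $\gamma$-H\"older in the measure argument uniformly in $(\eta,\varepsilon)$. Hypothesis \ref{hyp: Lipschitz Wq} then holds for each fixed $(\eta,\varepsilon)$, so Lemma \ref{existence in the Lipschitz framework} yields a global-in-time Lipschitz solution $W^{\eta,\varepsilon}$ to \eqref{eq: ME without noise} with the regularized data.

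\textbf{Step 2 (uniform estimates).} Via Lemma \ref{lemma: lip sol FBSDE}, $W^{\eta,\varepsilon}$ is the decoupling field of the FBSDE \eqref{eq: FBSDE} with the regularized coefficients, so the a priori bounds apply along it; Lemma \ref{montone fbsde have bounded growth} gives uniform-in-$(\eta,\varepsilon)$ growth and time-continuity of the forward flow. The difficulty, exactly as flagged in the proof of Theorem \ref{existence: weak strong G}, is that the resolvent regularization does not preserve the one-sided bounds of Hypothesis \ref{hyp: regularity: strong monotonicity in G}: one only recovers them on bounded sets of $\mathcal{H}$ up to an $O\!\left(\varepsilon(1+\|\cdot\|^2)\right)$ error, and this error must be carried as a perturbation when running the estimates of Lemma \ref{estimate very strong in G}. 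Combining this with a version of Lemma \ref{lemma: estim regul measure} adapted to the strongly monotone regime (proved the same way but invoking Lemma \ref{estimate very strong in G} in place of Lemmas \ref{estimate W strong in G} and \ref{estimate x strong in G}, hence with moduli of the form $\omega_\gamma(\cdot)^{\gamma/(2-\gamma)}$, $\omega_\gamma(\cdot)^{\gamma^2/(2-\gamma)}$), I would obtain: Lipschitz continuity of $x\mapsto W^{\eta,\varepsilon}(t,x,\mu)$ with a constant independent of $(\eta,\varepsilon)$ for $\varepsilon\le\eta^2$; a uniform modulus of continuity in the measure argument, degraded by an $\varepsilon^{\gamma}\,\mathcal{W}_2(\mu,\delta_{0_{\reels^d}})$-type remainder; and a uniform modulus in time. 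As in Theorem \ref{existence: weak strong G}, the regularized problem is a mean field game of controls (coefficients depending on $\mathcal{L}(X,W)$), which the monotonicity-based estimates accommodate without change.

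\textbf{Step 3 (passage to the limit).} With these uniform bounds in hand, I would first let $\varepsilon\to 0$ along a subsequence: by Arzela-Ascoli and a diagonal argument over a countable dense subset of $[0,T]\times\mathcal{P}_2(\reels^d)$, $(W^{\eta,\varepsilon})_\varepsilon$ converges locally uniformly to a continuous function $W^\eta$ with linear growth, which by the stability Theorem \ref{stability without noise} is a monotone solution with data $(W_0,F^\eta,G^\eta)$. The family $(W^\eta)_\eta$ is itself locally uniformly bounded and equicontinuous, so a second extraction together with $(F^\eta,G^\eta)\to(F,G)$ locally uniformly and another application of Theorem \ref{stability without noise} produces a monotone solution $W$ to \eqref{eq: ME without noise} with the original data. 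Uniqueness is immediate from Theorem \ref{thm: uniqueness hilbert}, whose assumptions follow from Hypothesis \ref{hyp: strong monotonicity in G} (which in particular gives joint $L^2$-monotonicity of $(F,G)$ and $L^2$-monotonicity of $W_0$) and Hypothesis \ref{hyp: linear growth in F}. The main obstacle throughout is the one in Step 2: controlling the loss of the one-sided regularity estimates under the Hille-Yosida regularization, which forces all estimates to be run with explicit $\varepsilon$-dependent perturbative terms that must be shown to vanish in the limit uniformly on bounded sets.
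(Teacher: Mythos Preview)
Your proposal is correct and follows precisely the approach the paper intends: the paper presents Theorem \ref{thm: existence strong G} as ``a direct consequence'' of Lemma \ref{estimate very strong in G} without writing out a proof, and what you have written is exactly the adaptation of the proof of Theorem \ref{existence: weak strong G} with the strong-monotonicity estimates substituted in. The only point worth noting is that Lemma \ref{existence in the Lipschitz framework} is stated under Hypothesis \ref{hyp: weak strong monotonicity in G}, but since Hypothesis \ref{hyp: strong monotonicity in G} is the special case $L=0$ this is immediate.
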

\subsubsection{Weak-strong monotonicity in $W$}
We now show that similar estimates are possible whenever the "strong" part of the monotonicity of $(X,U)\mapsto (\tilde{G},\tilde{F})(X,U)$ is on $\tilde{F}$ in $U$. In fact in this case we need less regularity on the initial condition, this was already observed in \cite{Lions-college}.
\begin{hyp}
\label{hyp: weak strong monotonicity in F}
    \begin{gather}\nonumber\exists \alpha>0,L\geq 0, \quad \forall (X,Y,U,V)\in \mathcal{H}^4\\
    \nonumber \langle \tilde{W}_0(X)-\tilde{W}_0(Y),X-Y\rangle \geq 0\\
    \langle \tilde{G}(X,U)-\tilde{G}(Y,V),X-Y\rangle+\langle \tilde{F}(X,U)-\tilde{F}(Y,V),U-V\rangle \geq \alpha \|U-V\|^2-L\|X-Y\|^2
    \end{gather}
\end{hyp}
we now state our regularity assumptions on the coefficients 
\begin{hyp}
\label{hyp: regularity: weak strong monotonicity in F}
There exists  $\gamma \in (0,1]$ such that
    \begin{enumerate}
    \item[-] There exists a constant $C$ such that $\forall (X,Y,U,V)\in \mathcal{H}^4$
    \begin{enumerate}
        \item[-] $\langle \tilde{G}(X,U)-\tilde{G}(Y,V),U-V\rangle \leq C\left(\|X-Y\|^{2}_\gamma+\|U-V\|^2\right)$,
        \item[-]  $\langle \tilde{F}(X,U)-\tilde{F}(Y,V),X-Y\rangle \geq- C\left(\|X-Y\|^{2}+\|U-V\|^2\right)$,
        \item[-] $\|\tilde{W}_0(X)-\tilde{W}_0(Y)\|\leq C\|X-Y\|_\gamma$.
         \end{enumerate}
    \item[-] There exists a modulus $\omega(\cdot)$ such that $\forall (x,y,\mu,\nu)\in (\reels^d)^2\times \left(\mathcal{P}_2(\reels^d)\right)^2$
     \[|F(x,u,\mu)-F(x,u,\nu)|+|G(x,u,\mu)-G(x,u,\nu)|+|W_0(x,\mu)-W_0(x,\nu)|\leq \omega(\mathcal{W}_2(\mu,\nu))\]
    \end{enumerate}
\end{hyp}

\begin{lemma}
\label{weak strong estimate in F}
Under Hypotheses \ref{hyp: weak strong monotonicity in F},\ref{hyp: regularity: weak strong monotonicity in F} and \ref{hyp: linear growth in F}, there exists a constant $C$ such that if there exists a  strong solution $(X,W^x,Z^x)_{t\in[0,T]}$ in $\mathcal{H}^3$ to \eqref{eq: FBSDE} (resp. $(Y,W^y,Z^y)_{t\in[0,T]}$) with initial datum $X_0\in \mathcal{O}$ (resp. $Y_0$),
\begin{gather*}
   \forall t\leq T,\\
    \|W_t^x-W_t^y\|\leq C\|X_0-Y_0\|_\frac{\gamma}{2-\gamma},\\
    \|X_t-Y_t\|\leq C\|X_0-Y_0\|_{\frac{1}{2-\gamma}}.
\end{gather*}
\end{lemma}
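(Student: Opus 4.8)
The plan is to follow the scheme of Lemmas~\ref{estimate W strong in G} and~\ref{estimate very strong in G}, adapted to the situation where the ``strong'' half of the monotonicity is carried by $\tilde F$ in the control variable rather than by $(\tilde F,\tilde G)$ in $X$. As before the central object is
\[
U_t=(X_t-Y_t)\cdot(W^x_t-W^y_t),
\]
to which I would apply It\^o's formula: the stochastic integrals vanish in expectation, the monotonicity $\langle \tilde W_0(X)-\tilde W_0(Y),X-Y\rangle\ge 0$ from Hypothesis~\ref{hyp: weak strong monotonicity in F} gives $\esp{U_T}\ge 0$, and the weak--strong monotonicity of $(F,G)$ applied to the drift of $U_t$ yields, after integrating between $t$ and $T$ (resp.\ between $0$ and $t$),
\[
\alpha\int_t^T\|W^x_s-W^y_s\|^2\,ds\le \esp{U_t}+L\int_t^T\|X_s-Y_s\|^2\,ds .
\]
In contrast with the two preceding lemmas the energy $\esp{U_t}$ now controls the running norm of $W^x-W^y$, up to the correction $L\int\|X-Y\|^2$, rather than that of $X-Y$.

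The two remaining ingredients are the usual It\^o computations on $t\mapsto|X_t-Y_t|^2$ and $t\mapsto|W^x_t-W^y_t|^2$. For the forward process, using the semi--monotonicity of $\tilde F$ (Hypothesis~\ref{hyp: regularity: weak strong monotonicity in F}, item $(b)$) and Gronwall's lemma, I would get $\sup_{t\le T}\|X_t-Y_t\|^2\le C\big(\|X_0-Y_0\|^2+\int_0^T\|W^x_s-W^y_s\|^2\,ds\big)$. For the backward process, the one--sided bound on $\tilde G$ (item $(a)$), the H\"older bound $\|\tilde W_0(X)-\tilde W_0(Y)\|\le C\|X-Y\|_\gamma$ (item $(c)$) and a backward Gronwall argument give $\sup_{t\le T}\|W^x_t-W^y_t\|^2\le C\sup_{s\le T}\|X_s-Y_s\|_\gamma^2$, after bounding $\int_t^T\|X_s-Y_s\|_\gamma^2\,ds$ by $\int_t^T\|X_s-Y_s\|^2\,ds+T^{1-\gamma}\big(\int_t^T\|X_s-Y_s\|^2\,ds\big)^\gamma$ via concavity, exactly as in Lemma~\ref{estimate very strong in G}. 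Lemma~\ref{montone fbsde have bounded growth} guarantees that all the quantities involved are finite, so these Gronwall manipulations are licit.

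It then remains to close the loop. Writing $a=\sup_{t\le T}\|X_t-Y_t\|$ and $b=\sup_{t\le T}\|W^x_t-W^y_t\|$, the backward estimate reads $b\le C\max(a,a^{\gamma})$, while trivially $\esp{U_0}\le b\,\|X_0-Y_0\|$. Substituting the first displayed inequality into the forward estimate and absorbing the $L$--term into a Gronwall argument in $t$ should give $a^2\le C\big(\|X_0-Y_0\|^2+b\,\|X_0-Y_0\|\big)$; combining with $b\le C\max(a,a^{\gamma})$ and distinguishing the cases $a\lessgtr 1$ (equivalently $\|X_0-Y_0\|\lessgtr 1$), one gets in every regime $a^{2-\gamma}\le C\|X_0-Y_0\|$ on $\{\|X_0-Y_0\|\le1\}$ and $a\le C\|X_0-Y_0\|$ on $\{\|X_0-Y_0\|\ge1\}$, that is $\|X_t-Y_t\|\le C\|X_0-Y_0\|_{\frac{1}{2-\gamma}}$, and then $\|W^x_t-W^y_t\|\le C\max(a,a^{\gamma})\le C\|X_0-Y_0\|_{\frac{\gamma}{2-\gamma}}$. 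The rest is bookkeeping of H\"older exponents, as in the previous lemmas.

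The delicate point, which I expect to cost the most work, is precisely the ``absorption of the $L$--term'' above: because the strong monotonicity lies in the control direction, $\esp{U_t}$ does not dominate $\int_t^T\|X_s-Y_s\|^2\,ds$, so the forward and backward inequalities form a genuinely coupled system in which a naive global Gronwall estimate closes only for $T$ small (the Gronwall constant must beat $L$). To run the argument on all of $[0,T]$ one needs either a lower bound $\esp{U_t}\ge 0$ along the flow --- i.e.\ $L^2$--monotonicity of the solution, obtained by propagating monotonicity as in Lemma~\ref{lemma: smooth uniqueness Hilbert approach} --- so that the $[0,t]$ version of the first inequality, $\alpha\int_0^t\|W^x_s-W^y_s\|^2\,ds\le\esp{U_0}+L\int_0^t\|X_s-Y_s\|^2\,ds$, feeds a forward Gronwall in $t$ with a finite, $T$--dependent constant; or else a short--time iteration (method of continuation): establish the estimate first on $[T-h,T]$ with $h$ depending only on $\alpha$, $L$ and the regularity constants, and concatenate $\lceil T/h\rceil$ such blocks, the constant degrading exponentially in $T$ but remaining finite. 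Either route is routine once set up, but it is the one subtlety that has no counterpart in the strongly monotone cases of Lemmas~\ref{estimate W strong in G} and~\ref{estimate very strong in G}.
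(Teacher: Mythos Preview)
Your outline is correct and tracks the paper's proof closely: It\^o on $U_t$, the weak--strong inequality to control $\int\|W^x-W^y\|^2$, forward It\^o/Gronwall on $|X_t-Y_t|^2$, backward It\^o/Gronwall on $|W^x_t-W^y_t|^2$ with the H\"older bound on $\tilde W_0$, and finally closing via $\esp{U_0}\le\|W^x_0-W^y_0\|\,\|X_0-Y_0\|$ and Young's inequality. The paper organizes the bootstrap slightly differently (it first gets $\|X_t-Y_t\|^2\le C(\|X_0-Y_0\|^2+t\,\esp{U_0})e^{Ct}$, feeds this into the backward estimate to obtain $\|W^x_t-W^y_t\|^2\le C(\|X_0-Y_0\|_\gamma^2+\esp{U_0}^\gamma)$, and only then evaluates at $t=0$), but this is the same computation in a different order.

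The one point where you make heavier weather than necessary is the ``delicate point'' about $\esp{U_t}\ge 0$. You propose either propagating $L^2$--monotonicity of a decoupling field or a short--time continuation. The paper does neither: it simply invokes the \emph{joint} monotonicity of $(F,G)$ --- Hypothesis~\ref{hyp: weak monotonicity}, which is declared to be in force throughout the paper --- to get that $t\mapsto\esp{U_t}$ is nonincreasing, and then $\esp{U_T}\ge 0$ from the monotonicity of $W_0$ gives $\esp{U_t}\ge 0$ for all $t$ in one line. This is exactly the mechanism already used in Lemma~\ref{estimate W strong in G} (equation~\eqref{ineq: ut geq uT geq terminal X}). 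With that in hand the $[0,t]$ version $\alpha\int_0^t\|W^x_s-W^y_s\|^2\,ds\le\esp{U_0}+L\int_0^t\|X_s-Y_s\|^2\,ds$ is immediate and the forward Gronwall closes on all of $[0,T]$ with no smallness condition. Your alternatives would work, but are unnecessary once you remember that Hypothesis~\ref{hyp: weak monotonicity} is standing.
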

\begin{proof}
We begin as in Lemma \ref{estimate W strong in G} by applying Ito's lemma to \[U_t=(W^x_t-W^y_t)\cdot (X_t-Y_t).\]
Making use of the joint monotonicity of $(F,G)$ and the monotonicity of $W_0$ gives
\[\forall t\leq T, \quad \esp{U_0}\geq \esp{U_t}\geq 0.\]
Combining this result with Hypothesis \ref{hyp: regularity: weak strong monotonicity in F} this yields 
\begin{equation}
\label{estimating W in function of X}
\alpha \int_0^t \|W^x_s-W^y_s\|^2ds \leq \esp{U_0}+L \int_0^t \|X_s-Y_s\| ds.
\end{equation}
We now turn to an estimate on $t\mapsto \|X_t-Y_t\|$. By Ito's Lemma 

\begin{align*}
\|X_t-Y_t\|^2&=\|X_0-Y_0\|^2-2\int_0^t \langle F^x_s-F^y_s,X_s-Y-s\rangle ds\\
&\leq \|X_0-Y_0\|^2+C\int_0^t \left(\|X_s-Y_s\|^2+ \|W^x_s-W^y_s\|^2\right)ds,
\end{align*}
for some constant $C$ depending on $F$ and where the second line comes from the assumed regularity of $F$. From inequality \eqref{estimating W in function of X} in combination with an application of Gronwall's lemma we deduce that there exists a constant $C>0$ such that 
\[\forall t\leq T \quad \|X_t-Y_t\|^2\leq C(\|X_0-Y_0\|^2+t  \esp{U_0})e^{Ct}.\]
Since $U_0$ depends on $W^x_0-W^y_0$, it remains to estimate this term in function of $\|X_0-Y_0\|$. For that we proceed as in Lemma \ref{estimate W strong in G}, namely by using Ito's lemma on $t\mapsto \|W^x_t-W^y_t\|^2$ between $[T,t]$ and applying a backward version of Gronwall lemma, in this case we obtain that there exists a constant $C>0$ such that 
\[\forall t\leq T \quad \|W^x_t-W^y_t\|^2\leq C\left(\|W^x_T-W^y_T\|^2+\int_0^T \|X_s-Y_s\|^{2}_\gamma ds\right)e^{C(T-t)}.\]
We now make use of the regularity of $W_0$ and the previous estimate on $t\mapsto \|X_t-Y_t\|$ to obtain that there exists another constant $C$ depending on $F,G,T,W_0$ such that 
\[\forall t\leq T \quad \|W^x_t-W^y_t\|^2\leq C\left(\|X_0-Y_0\|^{2}_\gamma+\esp{U_0}^{\gamma}\right).\]
Evaluating this expression at $t=0$ and applying young's inequality to estimate $\|W_0^x-W_0^y\|$ finally yields
\[\|W^x_0-W^y_0\|^2\leq C\left(\|X_0-Y_0\|^2_{\gamma}+\|X_0-Y_0\|^2_{\frac{\gamma}{2-\gamma}}\right),\]
for another constant $C$ now also depending on $\gamma$. The result then follow easily by plugging in this bound in previous estimates.
\end{proof}
\begin{thm}
 Let $(F,G,W_0)$ be such that Hypotheses \ref{hyp: weak strong monotonicity in F}, \ref{hyp: regularity: weak strong monotonicity in F} and \ref{hyp: linear growth in F} are satisfied, then there exists a unique continuous monotone solution to \eqref{eq: ME without noise}
\end{thm}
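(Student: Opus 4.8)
The plan is to follow the scheme of the proof of Theorem~\ref{existence: weak strong G}, replacing Lemmas~\ref{estimate W strong in G} and~\ref{estimate x strong in G} by the weak-strong-in-$W$ a priori estimate of Lemma~\ref{weak strong estimate in F}, with one extra twist: here $W_0$ is only assumed $L^2$-monotone with a merely $\gamma$-Hölder lift (rather than strongly monotone, hence Lipschitz), so it will have to be regularized too. Uniqueness needs no new argument: joint $L^2$-monotonicity of $(F,G)$ together with $L^2$-monotonicity of $W_0$ are in force (Hypothesis~\ref{hyp: weak monotonicity}), so Theorem~\ref{thm: uniqueness hilbert} applies and gives at most one continuous monotone solution with linear growth, which is then $L^2$-monotone. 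Thus the whole work is in constructing a solution.

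First I would regularize twice. Mollify $(F,G,W_0)$ in the $(x,w)$ variables as in Theorem~\ref{existence: weak strong G} to obtain $(F^\eta,G^\eta,W_0^\eta)$ that are $\frac1\eta$-Lipschitz in $(x,w)$, still satisfy Hypotheses~\ref{hyp: weak strong monotonicity in F}, \ref{hyp: regularity: weak strong monotonicity in F} and~\ref{hyp: linear growth in F} with constants and modulus independent of $\eta$, and converge locally uniformly to $(F,G,W_0)$. Then apply the Hille--Yosida regularization of Lemma~\ref{lemma: regul monotone function} to the jointly $L^2$-monotone couple $(F^\eta,G^\eta):\mathcal{H}^2\to\mathcal{H}^2$ and, separately, to the $L^2$-monotone map $W_0^\eta:\mathcal{H}\to\mathcal{H}$, producing $(F^{\eta,\varepsilon},G^{\eta,\varepsilon},W_0^{\eta,\varepsilon})$ whose lifts are Lipschitz on $\mathcal{H}^2$, resp.\ $\mathcal{H}$. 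As in \eqref{conservation of weak strong monotonicity} and \eqref{almost semi concavity estimates} one has to check that the weak-strong monotonicity in $W$ and the one-sided bounds of Hypotheses~\ref{hyp: weak strong monotonicity in F} and~\ref{hyp: regularity: weak strong monotonicity in F} survive on bounded sets of $\mathcal{H}$ up to an $O(\varepsilon)$ error, that the $\gamma$-Hölder continuity of $W_0^{\eta,\varepsilon}$ in the measure argument survives uniformly in $\varepsilon$ (last part of Lemma~\ref{lemma: regul monotone function}), and, via Corollary~\ref{corol: linear growth regul}, that the linear growth is preserved with constants independent of $\eta,\varepsilon$ for $\varepsilon$ small.

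Next, for each fixed $(\eta,\varepsilon)$ the regularized coefficients have Lipschitz lifts, so Hypothesis~\ref{hyp: Lipschitz Wq} holds and Theorem~\ref{lip sol b(p)} yields a maximal Lipschitz solution $W^{\eta,\varepsilon}$ on some $[0,T_c)$; by Lemma~\ref{lemma: lip sol FBSDE} it is the decoupling field of the FBSDE~\eqref{eq: FBSDE}, and since $W_0^{\eta,\varepsilon}$ now has a Lipschitz lift, rerunning the proof of Lemma~\ref{weak strong estimate in F} gives a genuine Lipschitz a priori bound $\|W^x_t-W^y_t\|\le C_{\eta,\varepsilon,T}\|X_0-Y_0\|$, which keeps $\|\tilde W^{\eta,\varepsilon}(t,\cdot)\|_{Lip}$ finite, so the blow-up criterion of Theorem~\ref{lip sol b(p)} forces $T_c=+\infty$ (the analogue of Lemma~\ref{existence in the Lipschitz framework}). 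Then I would pass to the limit: second-moment bounds on the forward flow and the measure curves $m^\mu_s$ obtained as in Lemma~\ref{montone fbsde have bounded growth}, uniform in $(\eta,\varepsilon)$ small, together with the a priori estimates of Lemma~\ref{weak strong estimate in F} and an analogue of Lemma~\ref{lemma: estim regul measure} --- now carrying along the $O(\varepsilon)$ perturbations exactly as in Step~2 of the proof of Theorem~\ref{existence: weak strong G} --- furnish moduli of continuity for $W^{\eta,\varepsilon}$ in $x$, in $\mathcal{W}_2$ (of the form $\omega_\gamma(\mathcal{W}_2^\gamma(\mu,\nu)+\varepsilon^\gamma(\cdots))$) and in time, all uniform in $(\eta,\varepsilon)$ on bounded sets. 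Arzelà--Ascoli and a diagonal extraction give $W^{\eta,\varepsilon}\to W^\eta$ locally uniformly along a subsequence as $\varepsilon\to0$, and since $(F^{\eta,\varepsilon},G^{\eta,\varepsilon})\to(F^\eta,G^\eta)$ locally uniformly, Theorem~\ref{stability without noise} shows $W^\eta$ is a monotone solution of \eqref{eq: ME without noise} for $(F^\eta,G^\eta,W_0^\eta)$; letting $\eta\to0$ and invoking Theorem~\ref{stability without noise} once more produces a monotone solution $W$ for the original data.

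I expect the main obstacle to be the same as in Theorem~\ref{existence: weak strong G}: the Hille--Yosida regularization does not preserve the sharp one-sided inequalities of Hypotheses~\ref{hyp: weak strong monotonicity in F} and~\ref{hyp: regularity: weak strong monotonicity in F} --- only up to $O(\varepsilon)$ errors on bounded sets of $\mathcal{H}$ --- so the a priori estimates must be re-derived with these perturbations and shown to be uniform in $(\eta,\varepsilon)$. The extra difficulty compared with Theorem~\ref{existence: weak strong G} is that $W_0$ itself must be regularized, which forces one to track separately that both its $L^2$-monotonicity and its $\gamma$-Hölder continuity in the measure argument are inherited by $W_0^{\eta,\varepsilon}$ and pass to the limit.
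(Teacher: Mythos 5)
Your proposal is correct and follows essentially the same route as the paper: the paper's own (very terse) proof says precisely that one argues as in Theorem~\ref{existence: weak strong G} but must additionally regularize $W_0$ via Lemma~\ref{lemma: regul monotone function}, obtains Lipschitz solutions on $[0,T]$ since the regularized data satisfy Hypothesis~\ref{hyp: regularity: weak strong monotonicity in F} with $\gamma=1$, re-derives the estimate of Lemma~\ref{weak strong estimate in F} up to a factor vanishing with $\varepsilon$, and notes that an analogue of the growth estimate of Lemma~\ref{montone fbsde have bounded growth} under Hypothesis~\ref{hyp: weak strong monotonicity in F} is needed. You have simply spelled out in more detail the steps the paper leaves implicit, including the correct identification of the main obstacle (tracking the $O(\varepsilon)$ loss of the one-sided bounds and the inherited monotonicity and Hölder continuity of $W_0^{\eta,\varepsilon}$).
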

\begin{proof}
While the argument used are very closed to Theorem \ref{existence: weak strong G}, we now need to also regularize $W_0$. This is done by Lemma \ref{lemma: regul monotone function} as for $(F,G)$. Let $(F^\varepsilon,G^\varepsilon,W_0^\varepsilon)$ be this regularization, we then deduce that with constant depending on $\varepsilon$, \ref{hyp: regularity: weak strong monotonicity in F} is verified with $\gamma=1$, hence the existence of Lipschitz solutions $W^\varepsilon$ on $[0,T]$. It then a matter of showing that up to a factor going to $0$ as $\varepsilon$ tends to 0, the estimate of Lemma \ref{weak strong estimate in F} is satisfied as in Theorem \ref{existence: weak strong G}. For that an equivalent to the growth estimate of Lemma \ref{montone fbsde have bounded growth} is needed, which follows from similar arguments under the monotonicity condition \ref{hyp: weak strong monotonicity in F}.
\end{proof}

\subsubsection{Strong monotonicity in $W$}
There exists a corresponding result to Theorem \ref{thm: existence strong G} whenever the couple $(X,W)\mapsto (\tilde{G},\tilde{F})(X,W)$ is strongly monotone in $W$
\begin{hyp}
\label{hyp: strong monotonicity in F}
    \begin{gather}\nonumber\exists \alpha>0, \quad \forall (t,X,Y,U,V)\in [0,T]\times \left(\mathcal{H}\right)^4\\
    \nonumber \langle \tilde{W}_0(X)-\tilde{W}_0(Y),X-Y\rangle \geq 0\\
    \langle \tilde{G}(X,U)-\tilde{G}(Y,V),X-Y\rangle+\langle \tilde{F}(X,U)-\tilde{F}(Y,V),U-V\rangle \geq \alpha \|U-V\|^2
    \end{gather}
\end{hyp}

\begin{hyp}
\label{hyp: regularity: strong monotonicity in F}
There exists  $\gamma \in (0,1]$ such that
    \begin{enumerate}
    \item[-] There exists a constant $C$ such that $\forall (X,Y,U,V)\in \mathcal{H}^4$
    \begin{enumerate}
        \item[-] $\langle \tilde{G}(X,U)-\tilde{G}(Y,V),U-V\rangle \leq C\left(\|X-Y\|_\gamma^{2}+\|U-V\|^{2}_\gamma\right)$,
        \item[-]  $\langle \tilde{F}(X,U)-\tilde{F}(Y,V),X-Y\rangle \geq- C\left(\|X-Y\|^{2}+\|U-V\|^{2}_\gamma\right)$,
        \item[-] $\|\tilde{W}_0(X)-\tilde{W}_0(Y)\|\leq C\|X-Y\|_\gamma$.
         \end{enumerate}
    \item[-] There exists a modulus $\omega(\cdot)$ such that $\forall (x,y,\mu,\nu)\in (\reels^d)^2\times \left(\mathcal{P}_2(\reels^d)\right)^2$
     \[|F(x,u,\mu)-F(x,u,\nu)|+|G(x,u,\mu)-G(x,u,\nu)|+|W_0(x,\mu)-W_0(x,\nu)|\leq \omega(\mathcal{W}_2(\mu,\nu))\]
    \end{enumerate}
\end{hyp}
\begin{lemma}
Under Hypotheses \ref{hyp: strong monotonicity in F},\ref{hyp: regularity: strong monotonicity in F} and \ref{hyp: linear growth in F}, there exists a constant $C$ such that if there exists a  strong solution $(X,W^x,Z^x)_{t\in[0,T]}$ in $\mathcal{H}^3$ to \eqref{eq: FBSDE} (resp. $(Y,W^y,Z^y)_{t\in[0,T]}$) with initial datum $X_0\in \mathcal{O}$ (resp. $Y_0$),
\begin{gather*}
    \exists C>0, \quad \forall t\leq T,\\
    \|W_t^x-W_t^y\|\leq C\|X_0-Y_0\|_{\frac{\gamma^2}{2-\gamma^2}},\\
    \|X_t-Y_t\|\leq C\|X_0-Y_0\|_\frac{\gamma}{2-\gamma^2}.
\end{gather*}
\end{lemma}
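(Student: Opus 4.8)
The plan is to follow the scheme of Lemmas \ref{estimate W strong in G}, \ref{estimate very strong in G} and \ref{weak strong estimate in F}: extract an $L^2-$in$-$time bound from the monotonicity by tracking $U_t=(W^x_t-W^y_t)\cdot(X_t-Y_t)$, propagate it through It\^o expansions of $|X_t-Y_t|^2$ and $|W^x_t-W^y_t|^2$, and close the resulting system of estimates by Young's inequality. First I would apply It\^o's formula to $U_t$ along \eqref{eq: FBSDE}: the Brownian cross-variation vanishes since $X_t-Y_t$ carries no martingale part, and the drift equals $-\big(\langle \tilde G^x_s-\tilde G^y_s,X_s-Y_s\rangle+\langle \tilde F^x_s-\tilde F^y_s,W^x_s-W^y_s\rangle\big)$ with the notation $\tilde F^x_s=\tilde F(X_s,W^x_s)$, and so on. Using the strong monotonicity in $W$ of Hypothesis \ref{hyp: strong monotonicity in F} together with $\langle \tilde W_0(X_T)-\tilde W_0(Y_T),X_T-Y_T\rangle\ge 0$, taking expectations yields for every $t\le T$
\[0\le \alpha\int_t^T\|W^x_s-W^y_s\|^2\,ds\le \esp{U_t}\le\esp{U_0},\]
which is the analogue here of \eqref{estimate from monotonicity strong in G}.

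Next I would estimate the forward process. It\^o's formula for $t\mapsto|X_t-Y_t|^2$ and the lower bound on $\langle \tilde F^x_s-\tilde F^y_s,X_s-Y_s\rangle$ from Hypothesis \ref{hyp: regularity: strong monotonicity in F}, combined with $\|\cdot\|_\gamma^2\le\|\cdot\|^2+\|\cdot\|^{2\gamma}$, the concavity of $x\mapsto x^\gamma$ (so that $\int_0^T\|W^x_s-W^y_s\|^{2\gamma}ds\le C\esp{U_0}^\gamma$ by the first step) and Gronwall's lemma, give $\|X_t-Y_t\|^2\le C\big(\|X_0-Y_0\|^2+\esp{U_0}+\esp{U_0}^\gamma\big)$ for all $t\le T$. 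For the backward process the situation differs from Lemma \ref{estimate very strong in G}: here $\tilde W_0$ is only assumed monotone, so $\|W^x_T-W^y_T\|$ is not controlled by $\esp{U_0}$ directly; instead I would use the H\"older bound $\|\tilde W_0(X_T)-\tilde W_0(Y_T)\|\le C\|X_T-Y_T\|_\gamma$ of Hypothesis \ref{hyp: regularity: strong monotonicity in F} together with the forward estimate just obtained. Then running It\^o's formula for $t\mapsto|W^x_t-W^y_t|^2$ backward from $T$, discarding the nonnegative $\int_t^T|Z^x_s-Z^y_s|^2ds$ term, bounding the drift $\langle \tilde G^x_s-\tilde G^y_s,W^x_s-W^y_s\rangle$ by the regularity of $\tilde G$ in Hypothesis \ref{hyp: regularity: strong monotonicity in F} (and the forward estimate, plus, if needed, the a priori $\mathcal{H}-$boundedness of the processes as in Lemma \ref{montone fbsde have bounded growth}), and applying a backward Gronwall argument, one reaches an inequality of the shape
\[\|W^x_t-W^y_t\|^2\le C\big(\|X_0-Y_0\|_\gamma^2+\esp{U_0}+\esp{U_0}^\gamma+\esp{U_0}^{\gamma^2}\big)\qquad(t\le T).\]

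To close the system I would evaluate the last two bounds at $t=0$ and use $\esp{U_0}=(W^x_0-W^y_0)\cdot(X_0-Y_0)\le\|W^x_0-W^y_0\|\,\|X_0-Y_0\|$. The worst term for $\|W^x_0-W^y_0\|^2$ is $\|W^x_0-W^y_0\|^{\gamma^2}\|X_0-Y_0\|^{\gamma^2}$, which Young's inequality with exponents $\big(\tfrac{2}{\gamma^2},\tfrac{2}{2-\gamma^2}\big)$ bounds by $\tfrac{\gamma^2}{2}\|W^x_0-W^y_0\|^2+C\|X_0-Y_0\|^{\frac{2\gamma^2}{2-\gamma^2}}$; the remaining cross terms are absorbed similarly and produce only strictly larger powers of $\|X_0-Y_0\|$. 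Absorbing $\tfrac{\gamma^2}{2}\|W^x_0-W^y_0\|^2$ into the left-hand side gives $\|W^x_0-W^y_0\|\le C\|X_0-Y_0\|_{\frac{\gamma^2}{2-\gamma^2}}$, hence $\esp{U_0}\le C\|X_0-Y_0\|_{\frac{\gamma^2}{2-\gamma^2}}\|X_0-Y_0\|$; substituting back into the forward and backward estimates and keeping only the dominant power yields $\|W^x_t-W^y_t\|\le C\|X_0-Y_0\|_{\frac{\gamma^2}{2-\gamma^2}}$ and $\|X_t-Y_t\|\le C\|X_0-Y_0\|_{\frac{\gamma}{2-\gamma^2}}$, uniformly in $t\le T$.

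The hard part is not any single It\^o computation, which is routine, but the bookkeeping of the exponents in this last step: one must check that, through the nested Gronwall and Young inequalities, every competing H\"older exponent is dominated by $\tfrac{\gamma^2}{2-\gamma^2}$ (respectively $\tfrac{\gamma}{2-\gamma^2}$ for the forward component), which comes down to the elementary comparisons $\gamma\le 2-\gamma^2$ and $\gamma^2\le\gamma$ valid for $\gamma\in(0,1]$, exactly as in the proofs of Lemmas \ref{estimate very strong in G} and \ref{weak strong estimate in F}. A secondary difficulty specific to this case is that $\tilde W_0$ being merely monotone forces one to use its H\"older regularity to transfer the terminal-time bound on $W^x_T-W^y_T$ to the forward estimate before the loop can be closed.
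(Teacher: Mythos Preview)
Your proposal is correct and follows essentially the same route as the paper: apply It\^o to $U_t$ to extract $\alpha\int\|W^x_s-W^y_s\|^2\,ds\le\esp{U_0}$ from the strong monotonicity in $W$, bound $\|X_t-Y_t\|^2$ by Gronwall using the semi-monotonicity of $\tilde F$, bound $\|W^x_t-W^y_t\|^2$ backward from $T$ using the H\"older regularity of $\tilde W_0$ for the terminal term, and close at $t=0$ via Young's inequality to isolate the worst exponent $\gamma^2/(2-\gamma^2)$. Your exposition is in fact more explicit than the paper's on the Young-inequality bookkeeping, and you correctly flag the point that distinguishes this case from Lemma~\ref{estimate very strong in G}: the terminal bound on $W^x_T-W^y_T$ must come from the H\"older regularity of $\tilde W_0$ composed with the forward estimate rather than directly from $\esp{U_T}$.
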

\begin{proof}
The estimate follows from the exact same argument as Lemma \ref{weak strong estimate in F}, except the strong monotonicity of $(F,G)$ now yields 
\[\forall t\leq T \quad \alpha \int_0^t\|W^x_s-W^y_s\|^2ds\leq \esp{U_0}.\]
Hence we get the existence of a constant $C$ depending on $F,T$ and $\alpha$ such that
\[\forall t\leq T \quad \|X_t-Y_t\|^2\leq C\left(\|X_0-Y_0\|^2+\esp{U_0}^{\gamma}\right),\]
from Gronwall's lemma. Turning to an estimate on $t\mapsto\|W^x_t-W^y_t\|$, we get the existence of another constant $C$ depending on $G,F,W_0,\alpha$ such that 
\[\forall t\leq T \quad \|W_t^x-W_t^y\|^2 \leq C\left(\|X_0-Y_0\|^{2}_\gamma+\esp{U_0}^{\gamma}+\esp{U_0}^{\gamma^2}\right).\]
Evaluating this expression for $t=0$ yields
\[\|W_0^x-W_0^y\|^2\leq C\left(\|X_0-Y_0\|^{2}_\gamma+ \|X_0-Y_0\|^2_{\frac{\gamma}{2-\gamma}}+\|X_0-Y_0\|^2_{\frac{\gamma^2}{2-\gamma^2}}\right),\]
for a constant depending on $T,F,G,W_0,\alpha$ and $\gamma$, ending the proof. 
\end{proof}
Once again a direct consequence is the following existence result
\begin{thm}
     Let $(F,G,W_0)$ be such that Hypotheses \ref{hyp: strong monotonicity in F}, \ref{hyp: regularity: strong monotonicity in F} and \ref{hyp: linear growth in F} are satisfied, then there exists a unique continuous monotone solution to \eqref{eq: ME without noise}
\end{thm}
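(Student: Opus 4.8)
The plan is to repeat, almost verbatim, the three-step scheme used in the proof of Theorem~\ref{existence: weak strong G} and of the preceding weak--strong-in-$W$ theorem, replacing the a priori FBSDE bound of Lemma~\ref{estimate W strong in G} by the strong-monotonicity-in-$W$ estimate just established. Uniqueness is not at stake: it follows directly from Theorem~\ref{thm: uniqueness hilbert}, since Hypothesis~\ref{hyp: strong monotonicity in F} implies Hypothesis~\ref{hyp: weak monotonicity}. So the whole content is the construction of a monotone solution by approximation.

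First I would regularize the data. Because under Hypothesis~\ref{hyp: strong monotonicity in F} the initial condition $W_0$ is only $L^2$-monotone (not Lipschitz, in contrast with the weak--strong-in-$G$ setting), I regularize it too: after a preliminary mollification in the space variable making $(F,G)$ Lipschitz in $x$ with constant $1/\eta$ while preserving all monotonicity, I apply Lemma~\ref{lemma: regul monotone function} to $W_0$ and to the jointly $L^2$-monotone couple $(F,G)$ seen as a map into $\reels^{2d}$. This produces $(F^\varepsilon,G^\varepsilon,W_0^\varepsilon)$ that are Lipschitz on $\mathcal{H}$ (with constant $\sim 1/\varepsilon$), converge locally uniformly to $(F,G,W_0)$, keep linear growth with $\varepsilon$-independent constants for $\varepsilon$ small (Corollary~\ref{corol: linear growth regul}), and retain the strong monotonicity in $W$ up to a factor $(1+\sqrt{\varepsilon})$, exactly as in inequality~\eqref{conservation of weak strong monotonicity}. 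Lemma~\ref{existence in the Lipschitz framework} then gives a Lipschitz solution $W^\varepsilon$ defined on all of $[0,T]$.

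Next I would extract $\varepsilon$-uniform estimates on $(W^\varepsilon)$. A growth bound follows from an analogue of Lemma~\ref{montone fbsde have bounded growth} (valid under Hypothesis~\ref{hyp: strong monotonicity in F} by the same backward-Gronwall argument), which in particular controls the second moments of the characteristic flow $m^\mu_s$. Continuity in $x$ comes from the FBSDE estimate just proven applied to the system with frozen marginals, as in \eqref{continuity in x of W eps}; continuity in $\mu$ comes from the measure-stability Lemma~\ref{lemma: estim regul measure} together with the analogue of \eqref{almost continuity estimate measure}, giving a modulus of continuity of the form $\omega_\gamma(\mathcal{W}_2(\mu,\nu)^{\gamma}+\varepsilon^\gamma(\dots))$; continuity in $t$ follows from the dynamic programming identity and SDE estimates as in \eqref{almost time continuity}. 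The delicate point, already flagged in the proof of Theorem~\ref{existence: weak strong G}, is that the Hille--Yosida step does not preserve the one-sided regularity bounds of Hypothesis~\ref{hyp: regularity: strong monotonicity in F}; I would absorb the defect into perturbation terms of size $\varepsilon(1+\|X\|^2+\cdots)$ as in \eqref{almost semi concavity estimates}, harmless thanks to the uniform growth bound and to taking $\varepsilon<\eta^2$ for $\eta$ fixed.

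Finally, the family $(W^\varepsilon)$ being locally uniformly bounded and equicontinuous, a diagonal extraction over a dense subset of $[0,T]\times\reels^d\times\mathcal{P}_2(\reels^d)$ followed by Arzela-Ascoli yields, along a subsequence, a locally uniform limit $W$ which is continuous with linear growth; since $(F^\varepsilon,G^\varepsilon)$ converge locally uniformly to $(F,G)$, Theorem~\ref{stability without noise} shows $W$ is a monotone solution of \eqref{eq: ME without noise}. I expect the main obstacle to be precisely the loss of the one-sided regularity estimates under the Hille--Yosida regularization of the couple $(F,G)$: reconciling the $\varepsilon$-dependent Lipschitz bound needed to invoke Lemma~\ref{existence in the Lipschitz framework} with $\varepsilon$-uniform moduli of continuity for the limit forces the two-scale ($\eta$ then $\varepsilon$) argument; everything else is a routine rerun of the estimates behind Theorem~\ref{existence: weak strong G} and the weak--strong-in-$W$ theorem, with the exponents in the final moduli of continuity being those recorded in the preceding lemma.
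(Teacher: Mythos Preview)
Your proposal is correct and follows essentially the same approach as the paper, which treats this theorem as a direct consequence of the preceding lemma and the template already established in Theorem~\ref{existence: weak strong G} and the weak--strong-in-$W$ existence theorem. In particular, you correctly identify the one additional ingredient beyond Theorem~\ref{existence: weak strong G}: since $W_0$ is no longer automatically Lipschitz under Hypothesis~\ref{hyp: strong monotonicity in F}, it must be regularized via Lemma~\ref{lemma: regul monotone function} alongside $(F,G)$, and an analogue of the growth bound Lemma~\ref{montone fbsde have bounded growth} must be rederived under the new monotonicity condition---both points the paper also flags in its (very terse) proof of the weak--strong-in-$W$ theorem.
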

\begin{remarque}
    Let us remark at this point that those existence results also appear new for corresponding finite state space master equation of the form
    \[\partial_t W+f(x,W)\cdot \nabla_x W=g(x,W),\]
    which is a particular case of \eqref{eq: ME without noise} when coefficients do not depend on the measure argument. In this situation, the definition of monotone solution we presented is exactly the one introduced in \cite{bertucci-monotone-finite}.
\end{remarque}

\section{Master equations with idiosyncratic noise}
\label{section idiosyncratic noise}
\subsection{Treatment of idiosyncratic noise and definition of monotone solutions}
We now turn to the master equation with idiosyncratic noise

\begin{equation}
\label{eq: ME with idiosyncratic}
\left\{
\begin{array}{c}
\displaystyle \partial_t W+F(x,m,W)\cdot \nabla_x W-\sigma_x \Delta_x W\\
\displaystyle +\int_{\reels^d} F(y,m,W)\cdot D_m W(t,x,m)(y)m(dy)-\sigma_x\int_{\reels^d} \text{div}_y (D_mW(t,x,m)(y))m(dy)\\
\displaystyle =G(x,m,W) \text{ in } (0,T)\times\reels^d\times\mathcal{P}_2(\reels^d),\\
\displaystyle W(0,x,m)=W_0(x,m) \text{ for } (x,m)\in\reels^d\times\mathcal{P}_2(\reels^d).
 \end{array}
 \right.
\end{equation}
The main difference is that, due to the presence of idiosyncratic noise among the players, it is now impossible to lift the equation on the Hilbert space of square integrable random variable. Instead one has to work directly on the space of probability measures. While this makes some arguments more technical, there is no particular difficulty in showing uniqueness in the class of smooth solutions. To lighten technical arguments, the assumption $\sigma_x>0$ will be in force throughout the section.
 \begin{prop}
 \label{prop: uniqueness W2 b(p)}
Under Hypotheses \ref{hyp: weak monotonicity} and \ref{hyp: linear growth in F}, there exists at most one smooth solution to \eqref{eq: ME with idiosyncratic} with linear growth, and if there is indeed such solution, it is $L^2-$monotone. 
 \end{prop}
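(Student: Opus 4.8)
The plan is to run the propagation-of-monotonicity argument of Lemma~\ref{lemma: smooth uniqueness Hilbert approach} directly on $\mathcal{P}_2(\reels^d)$, since the idiosyncratic terms prevent a lift to $\mathcal{H}$. The trick that makes this possible is to couple two characteristic systems through the \emph{same} Brownian motion, so that the difference of the two forward processes has bounded variation and the idiosyncratic diffusion drops out of the key identity. Let $W^1,W^2$ be two smooth solutions with linear growth, fix a horizon $t\in(0,T)$ and an arbitrary pair $(X_0,Y_0)\in\mathcal{H}^2$, and consider on $[0,t]$
\[
\left\{
\begin{array}{l}
dX_s=-F\big(X_s,\mathcal{L}(X_s),W^1(t-s,X_s,\mathcal{L}(X_s))\big)\,ds+\sqrt{2\sigma_x}\,dB_s,\\[1mm]
dY_s=-F\big(Y_s,\mathcal{L}(Y_s),W^2(t-s,Y_s,\mathcal{L}(Y_s))\big)\,ds+\sqrt{2\sigma_x}\,dB_s,
\end{array}
\right.
\]
driven by the same $B$, with $(X_0,Y_0)$ as initial data. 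Writing $\mu_s=\mathcal{L}(X_s)$, $\nu_s=\mathcal{L}(Y_s)$ — deterministic curves solving the corresponding Fokker--Planck equations, there being no common noise — well-posedness and uniformly bounded moments of $(X_s,Y_s)$ on $[0,t]$ follow from the smoothness and linear growth of $W^1,W^2$ and Hypothesis~\ref{hyp: linear growth in F}.

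The central computation is an Itô formula along the measure flow. Combining the finite-dimensional Itô formula for $X_s$ with the $s$-differentiation of $\mu_s\mapsto W^1(t-s,x,\mu_s)$ through the Fokker--Planck equation — which, because $\mu_s$ is deterministic, produces exactly the terms $-\int D_mW^1(y)\cdot F(y,\mu_s,W^1)\,\mu_s(dy)$ and $\sigma_x\int\mathrm{div}_y D_mW^1(y)\,\mu_s(dy)$ and \emph{no} $D^2_{mm}$ contribution — the master equation \eqref{eq: ME with idiosyncratic} collapses the drift and yields
\[
d\big[W^1(t-s,X_s,\mu_s)\big]=-G\big(X_s,\mu_s,W^1(t-s,X_s,\mu_s)\big)\,ds+\sqrt{2\sigma_x}\,\nabla_xW^1(t-s,X_s,\mu_s)\cdot dB_s,
\]
and symmetrically for $W^2$. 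Since $d(X_s-Y_s)$ has no martingale part, the cross-variation of $W^1_s-W^2_s$ and $X_s-Y_s$ vanishes, so with $\phi(s):=\mathbb{E}\big[(W^1(t-s,X_s,\mu_s)-W^2(t-s,Y_s,\nu_s))\cdot(X_s-Y_s)\big]$ one gets
\[
\phi'(s)=-\,\mathbb{E}\Big[\big(G(X_s,\mu_s,W^1_s)-G(Y_s,\nu_s,W^2_s)\big)\cdot(X_s-Y_s)+\big(F(X_s,\mu_s,W^1_s)-F(Y_s,\nu_s,W^2_s)\big)\cdot(W^1_s-W^2_s)\Big]\le 0,
\]
by the joint $L^2$-monotonicity of $(F,G)$ in Hypothesis~\ref{hyp: weak monotonicity}, applied to the random variables $(X_s,Y_s,W^1_s,W^2_s)$. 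Hence $\phi(t)\le\phi(0)$; since $W^1(0,\cdot)=W^2(0,\cdot)=W_0$ is $L^2$-monotone, $\phi(t)=\mathbb{E}[(W_0(X_t,\mathcal{L}(X_t))-W_0(Y_t,\mathcal{L}(Y_t)))\cdot(X_t-Y_t)]\ge0$, and therefore
\[
\mathbb{E}\big[(W^1(t,X_0,\mathcal{L}(X_0))-W^2(t,Y_0,\mathcal{L}(Y_0)))\cdot(X_0-Y_0)\big]\ge0\qquad\text{for all }(X_0,Y_0)\in\mathcal{H}^2 .
\]

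From this inequality I conclude exactly as in Lemma~\ref{lemma: smooth uniqueness Hilbert approach}: choosing $Y_0$ with a positive density on $\reels^d$ and $X_0=Y_0+\varepsilon U$ with $U\in\mathcal{H}$, dividing by $\varepsilon$ and letting $\varepsilon\to0$ gives $\mathbb{E}[(W^1(t,Y_0,\mathcal{L}(Y_0))-W^2(t,Y_0,\mathcal{L}(Y_0)))\cdot U]\ge0$ for every $U$, hence $W^1(t,Y_0,\mathcal{L}(Y_0))=W^2(t,Y_0,\mathcal{L}(Y_0))$ a.s.; continuity in $x$ and the full support of $\mathcal{L}(Y_0)$ promote this to $W^1(t,\cdot,\mathcal{L}(Y_0))\equiv W^2(t,\cdot,\mathcal{L}(Y_0))$, and density of full-support measures together with continuity in $\mathcal{P}_2(\reels^d)$ removes that restriction, so $W^1\equiv W^2$. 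Taking $W^1=W^2=W$ in the displayed inequality is precisely the assertion that $W(t,\cdot)$ is $L^2$-monotone for each $t$. The main obstacle I anticipate is the rigorous justification of the Itô formula along the flow of measures and of the fact that the $dB_s$-stochastic integrals are genuine martingales (so that they drop out when taking expectations): this needs the smooth solutions to have locally bounded gradients and measure-derivatives along the trajectories, which is either part of the notion of ``smooth solution'' or handled by a localisation argument using the uniform moment bounds on $(X_s,Y_s)$; one must also make sure that $\mu_s,\nu_s$ are classical solutions of their Fokker--Planck equations, so that the integrations by parts producing the $D_m$ and $\mathrm{div}_y D_m$ terms are legitimate.
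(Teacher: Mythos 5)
Your proof is correct, but it follows a genuinely different (Lagrangian/probabilistic) route from the paper's (Eulerian/analytic) one. The paper doubles variables at the level of measures: it defines $Z(t,\gamma)=\int_{\reels^{2d}}(W^1(t,u,\pi_d\gamma)-W^2(t,v,\pi_{-d}\gamma))\cdot(u-v)\,\gamma(du,dv)$ on $\mathcal{P}_2(\reels^{2d})$, computes $D_yD_\gamma Z$ as a block matrix, checks that the off-diagonal blocks cancel in the trace so that the idiosyncratic terms of the two master equations reassemble into a single second-order operator acting on $Z$, and then invokes the maximum principle of \cite{common-noise-in-MFG} (Lemma 4.3) to get $Z\geq0$; the conclusion is then identical to yours. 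Your version integrates the same quantity along characteristics coupled through a common Brownian motion, and the observation that makes it work — $\mu_s$ deterministic so no $D^2_{mm}$ term, and $X_s-Y_s$ of finite variation so no cross-variation — is exactly the probabilistic counterpart of the paper's block-matrix cancellation. What each approach buys: yours avoids computing second-order Wasserstein derivatives of $Z$ and does not rely on the external maximum principle, at the price of having to solve the coupled McKean--Vlasov system for arbitrary initial data $(X_0,Y_0)\in\mathcal{H}^2$ and to justify Itô's formula along the measure flow (for smooth solutions with linear growth this is the standard existence-plus-localisation argument you sketch, and note that $\mu_s$ need only be a weak solution of its Fokker--Planck equation tested against $y\mapsto\nabla_mW^i(\mu_s)(y)$, which is an admissible test function by smoothness); the paper's approach never solves an SDE but must verify the cancellation identity for $\mathrm{Tr}(B_dD_yD_\gamma Z)$ and borrow the comparison result on $\mathcal{P}_2(\reels^{2d})$. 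Both reduce to the same final inequality $\esp{(\tilde W^1(t,X)-\tilde W^2(t,Y))\cdot(X-Y)}\geq0$ and conclude via Lemma \ref{lemma: smooth uniqueness Hilbert approach}, so no gap remains.
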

\begin{proof}
Consider two smooth solutions $W^1$ and $W^2$. We define 
    \[\forall \gamma\in\mathcal{P}_2(\reels^{2d})\quad Z(t, \gamma)=\int_{\reels^{2d}}( W^1(t,u,\pi_d  \gamma)-W^2(t,v,\pi_{-d} \gamma))\cdot(x-y)\rangle  \gamma(du,dv).\]
Let us first take a look at the derivative of $y\mapsto D_\gamma Z(t, \gamma,y)$, $D_yD_\gamma Z$, for a fixed $(t, \gamma)$. Letting $y=(y_1,y_2)\in \left(\reels^d\right)^2$, taken at this point, it is a block matrix of the form 
\begin{align*}
    \begin{array}{c}
         D_yD_\gamma Z(t,\gamma,y)=\left(\begin{array}{cc}
           M_{11}  & M_{12} \\
            (M_{12})^T  & M_{22}
         \end{array}\right),  \\
         \displaystyle M_{11}=D_x^2W^{y_1}\cdot(y_1-y_2)+(D_x W^{y_1}+\left(D_x W^{y_1}\right)^T+\int D_{y_1}D_m W^1(t,u,\mu,y_1)\cdot(u-v)\gamma(du,dv),\\
         M_{12}=-D_x W^{y_2}-\left(D_x W^{y_1}\right)^T,\\
         \displaystyle M_{22}=D_x^2W^{y_2}\cdot(y_2-y_1)+(D_x W^{y_2}+\left( D_x W^{y_2}\right)^T-\int D_{y_2}D_m W^2(t,u,\nu,\theta,y_2)\cdot(u-v)\gamma(du,dv),\\
         W^{y_1}=W^1(t,y_1,\mu),\\
         W^{y_2}=W^2(t,y_2,\nu),\\
         \pi_d  \gamma=\mu \quad \pi_{-d} \gamma=\nu.\\
    \end{array}
\end{align*}
Since the terms composing $M_{12}$ and its transpose are the opposite of terms appearing in $M_{11},M_{22}$, they obviously cancel out in 
\begin{align*}
\text{Tr}\left(B_d D_yD_mZ(t,\gamma,y)\right)&=(\Delta_xW^{y_1}-\Delta_x W^{y_2})\cdot(y_1-y_2)\\
&+\int (\text{div}_{y_1}(D_m W^1)(t,u,\mu,y_1)-\text{div}_{y_2}(D_mW^2)(t,v,\nu,m,y_2))\cdot(u-v)\gamma(du,dv).
\end{align*}
Expressing $\partial_t Z$ using the fact that $W^1,W^2$ are solutions of \eqref{eq: ME with idiosyncratic} and using the above equality, we get that $Z$ is a solution of
\begin{equation}
\label{Z b(p) supersol}
\left\{
\begin{array}{c}
\partial_t Z+\displaystyle\int_{\reels^{2d}}\left(\begin{array}{c}
     F^{u}  \\
     F^{v}
\end{array}
\right)(u,v,\gamma)
\cdot D_\gamma Z(t,\gamma,u,v) \gamma(du,dv)\\
-\displaystyle\sigma_x\int_{\reels^{2d}}\text{Tr}\left(B_d D_{(u,v)} \left[D_mZ\right](t,\gamma,u,v)\right)\gamma(du,dv)\\
=\displaystyle\int_{\reels^{2d}} (F^{u}-F^{v})\cdot(W^{u} -W^{v})\gamma(du,dv)+\int_{\reels^{2d}}(G^{u}-G^{v})\cdot(u-v)\gamma(du,dv),\\
\text{ for } (t,\gamma)\in (0,T)\times\mathcal{P}_2(\reels^{2d}),
\end{array}
\right.
\end{equation}
with $W^{u}=W^1(t,u,\pi_d \gamma), W^{v}=W^2(t,v,\pi_{-d}\gamma), F^{u}=F(u,\pi_d \gamma,W^{u})$ and so on. 
By assumptions on $W_0,F,G$ this leads to $Z_{t=0}\geq 0$ and 
\begin{equation*}
\begin{array}{c}
\partial_t Z
+\displaystyle\int_{\reels^{2d}}\left(\begin{array}{c}
     F^{u}  \\
     F^{v}
\end{array}
\right)
\cdot D_\gamma Z(t,\gamma,\noise,u,v) \gamma(du,dv)-\sigma_x\int_{\reels^{2d}}\text{Tr}\left(B_dD_{(u,v)} \left[D_\gamma Z\right](t,\gamma,\noise,u,v)\right)\gamma(du,dv)\geq 0,\\
\text{ for } (t,\gamma,\noise)\in (0,T)\times\mathcal{P}_2(\reels^{2d})\times \reels^n.
\end{array}
\end{equation*}
By the maximum principle, see \cite{common-noise-in-MFG} Lemma 4.3 for exemple, $Z$ is non-negative, which is equivalent to saying that 
\[\forall (t,X,Y)\in[0,T]\times \left(L^2(\Omega,\reels^d)\right)^2 \quad \esp{\left(\tilde{W}^1(t,X)-\tilde{W}^2(t,Y)\right)\cdot\left(X-Y\right)}\geq 0.\]
From the continuity of either $W^1$ or $W^2$, we deduce as in the proof of Lemma \ref{lemma: smooth uniqueness Hilbert approach}, that this implies $W^1=W^2$, and as a byproduct the $L^2-$monotonicity of the unique solution. 
\end{proof}
Following previous sections and in light of the above proof, we may try to define a notion of monotone solutions by considering the auxiliary function 
`\[Z^V:(t,\gamma)\mapsto \int_{\reels^{2d}} (W(t,x,\pi_d \gamma)-V(y,\pi_{-d}\gamma))\cdot (x-y)\gamma(dx,dy),\]
for a solution $W$ to the master equation and sufficiently smooth functions $V$. The main difficulty with this approach lies within the treatment of cross derivatives terms appearing within equation \eqref{Z b(p) supersol}. Indeed in this case, for a given function $V$, $Z_V$ satisfies 
\begin{align}
\nonumber &\partial_t Z^V(t,m)+\int_{\reels^{2d}} F(x,m,W(t,x,\pi_d m))\cdot \nabla_x \nabla_m Z^V(t,m)(x,y)m(dx,dy)\\
\label{eq z v demo }
&-\sigma_x \int_{\reels^d}\Delta_x \nabla_m Z^V(t,m,x,y)m(dx,dy)+2\sigma_x\int_{\reels^{2d}}\text{div}W(t,x,\pi_d m)m(dx,dy)\\
\nonumber&= \int_{\reels^{2d}} F(x,W(t,x,\pi_d m),\pi_dm)\cdot(W(t,x,\pi_dm)-V(y,\pi_{-d}m))m(dx,dy)\\
\nonumber &+\int_{\reels^{2d}} G(x,W(t,x,\pi_d m),\pi_dm^*)\cdot (x-y)m(dx,dy),
\end{align}
where the term involving the divergence of $W$ is related to the cross derivative of $\nabla_m Z^V$ by 
\begin{equation}
\label{eq of diagonal terms sigma}
\int_{\reels^{2d}}\text{Tr}\left(D_xD_y \nabla_m Z\right)(t,m,x,y)m(dx,dy)=-\int_{\reels^{2d}}\left(\text{div}W (t,x,\pi_dm )+\text{div}V(y,\pi_{-d}m)\right)m(dx,dy).\end{equation}
However at a point of minimum $(t^*,m^*)$ of $Z^V-\varphi$ for a smooth function $\varphi$ the information available yields
\[\forall \sigma \in C^{0,1}(\reels^{2d},\mathcal{M}_{2d}(\reels)),\quad \int \text{Tr}\left[\sigma \sigma^T(x,y)D^2_{(x,y)}\nabla_m\left(Z^V-\varphi\right)(t^*,m^*,x,y)\right]m^*(dx,dy)\geq 0,\]
which does not allow to relate directly the cross derivatives of $\nabla_m Z^V$ and $\nabla_m \varphi$. Clearly, if we naively define a notion of monotone solution by just replacing the derivatives of $Z^V$ with that of $\varphi$ in \eqref{eq z v demo } at points of minimum, this lack of information on the cross derivatives terms is going to break the maximum principle and hence the proof of uniqueness we presented in Lemma \ref{prop: uniqueness W2 b(p)}. Similar problems were encountered in the comparison of second order viscosity solutions in $\reels^d$ and solved by the development of Theorem 3.2 \cite{crandall1992users} and its variants. Unfortunately to the extent of our knowledge, we lack the equivalent of such result for second order equations on the space of probability measures. Instead we now remark that if the point of minimum is reached at a sufficiently smooth measure of probability, further information can be obtained. To that end we now introduce the fisher information $\mathcal{I}$, for a measure $\mu\in\mathcal{P}_2(\reels^d)$,
\[\mathcal{I}(\mu)=\left\{\begin{array}{c}
     \displaystyle+\infty \text{ if $\mu$ does not have a density with respects to Lebesgue measure on }\reels^d,  \\
    \displaystyle \int_{\reels^d} |\nabla_x \log(\mu(x))|^2\mu(dx) \text{ otherwise},
\end{array}
\right.\]
\begin{lemma}
Let $U:\mathcal{P}_2(\reels^d)\to \reels$ be a smooth function of measure such that there exists $C>0$
\[\forall (x,m)\in \reels^d\times \mathcal{P}_2(\reels^d), \quad |D_mU(m,x)|,\sqrt{|D_xD_mU(m,x)|}\leq C\left(1+|x|\right).\]
If $U$ reaches a minimum at $\mu\in \mathcal{P}_2(\reels^d)$, a measure with finite fisher information 
\[\mathcal{I}(\mu)=\int_{\reels^d} \left|\nabla_x \log(\mu(x))\right|^2\mu(dx)<+\infty,\]
then, for any continuous function $\Sigma\in C^0(\reels^d,\mathcal{M}_d(\reels))$,
\[ \int \text{Tr}\left[\Sigma(x) D_{x}D_mU(\mu,x)\right]\mu(dx)=0.\]
\end{lemma}
\begin{proof}
We start by proving this result for $\Sigma \in C^{1,1}(\reels^d,\mathcal{M}_d(\reels^d))\cap C_b(\reels^d,\mathcal{M}_d(\reels^d))$. By the regularity and growth assumptions on $U$,it follows (see \cite{probabilistic-mfg}) that its lift on $L^2(\Omega,\reels^d)$ $\tilde{U}$ is everywhere differentiable in the Frechet sense and 
\begin{equation}
\label{gradient eq gradient}
\forall X\in L^2(\Omega,\reels^d), \quad \nabla \tilde{U}(X)=D_mU(\mathcal{L}(X),X).\end{equation}
Since $\mathcal{I}(\mu)<+\infty$, let us also remark that for any $X\sim \mu$, the random variable
\[Y=\text{div}\Sigma(X)+\Sigma(X)\nabla_x \log(\mu(X)),\]
belongs to $L^2(\Omega,\reels^d)$. Let us fix such $X$, since $U$ reaches its minimum at $\mu$, $\tilde{U}$ also reaches a minimum at $X$ it follows that 
\[\forall Z\in L^2(\Omega,\reels^d) \quad \langle \nabla \tilde{U}(X),Z\rangle=0.\]
Taking this expression for $Z=Y$ and using \eqref{gradient eq gradient} yields 
\[\int_{\reels^d} D_mU(m,x)\cdot\left(\text{div}\Sigma(x)+\Sigma(x)\nabla_x\log (\mu(x))\right)\mu(dx)=0.\]
This implies 
\[\int_{\reels^d} D_mU(m,x)\cdot \text{div}(\Sigma\mu)(x)dx=0,\]
and the conclusion follows by integration by part. For less regular functions $\Sigma$, we conclude by a density argument and Fatou's lemma.
\end{proof}
\begin{remarque}
Taking $\sigma:\reels^d\to \reels$ for simplicity, this is a natural consequence of the fact that for sufficiently smooth measures, the linear Fokker-Planck equation 
\[\partial_t \mu_t-\Delta_x (\sigma(x) \mu_t)=0,\]
can be re-casted as a non linear transport equation 
\[\partial_t \mu_t-\text{div}\left(\left(\nabla_x \sigma(x)+\nabla_x \log(\mu_t(x))\right)\mu_t\right)=0.\]
This explains why second order terms on the space of probability measures of the form $\int D_x D_m U(m,x)dm$ are not truly second order. 
\end{remarque}
Following previous article \cite{bertucci-lions-wasserstein-distance,Daudin} dealing with viscosity solutions on the space of probability measures, we introduce a penalization based on the entropy to ensure the fisher information is finite at points of minimum. Let us remind that for a measure $\mu\in \mathcal{P}_2(\reels^d)$, its entropy $\mathcal{E}(\mu)$ is defined by
\[\mathcal{E}(\mu)=\left\{\begin{array}{c}
     +\infty \text{ if $\mu$ does not have a density with respects to Lebesgue measure on }\reels^d  \\
     \displaystyle\int_{\reels^d} \log(\mu(x))\mu(x)dx \text{ otherwise}.
\end{array}\right.\]
The entropy of a probability measure is related to the Fisher information in that formally (in that the computation does not usually hold in a proper sense)
\[D_\mu \mathcal{E}(\mu,x)=\nabla_x \log(\mu(x)),\]
and so 
\[\mathcal{I}(\mu)=\int_{\reels^d} |D_\mu \mathcal{E}(\mu,x)|^2d\mu.\]
\begin{lemma}
\label{lemma: integ part fisher}
    Let $V\in C^1(\reels^d,\reels^d)$, be a function with linear growth and  $\mu\in\mathcal{P}_2(\reels^d)$ such that 
    \[\mathcal{I}(\mu)<+\infty.\]
    If $\left(\text{div}(V)\right)^-\in L^1(\mu)$,then the following holds
    \[\int_{\reels^d} \text{div}(V)(x)\mu(dx)=-\int_{\reels^d} V(t,\mu,x) \cdot \nabla_x \log (\mu(x))\mu(dx),\]
\end{lemma}
\begin{proof}
Let $(\eta_k)_{k\geq 0}\subset C^\infty(\reels^d,[0,1])$ be a sequence of smooth functions such that for all $x\in \reels^d$
\begin{enumerate}
    \item[-]$|x|\leq k \implies  \eta_k(x)=1,$
    \item[-]$ |x|\geq k+1\implies  \eta_k(x)=0,$
    \item[-] $|\nabla_x \eta_k(x)|\leq C_\eta$ for a constant $C_\eta$ independent of $k$.
\end{enumerate}
Let us fix $t>0$, for each $k\geq 0$, the computation 
\[\int_{\reels^d} \text{div}(V\eta_k)(t,\mu,x)\mu(dx)=-\int_{\reels^d} \eta_kV(t,\mu,x) \cdot \nabla_x \log (\mu(x))\mu(dx),\]
is justified as $V\eta_k$ is $C^1$ and bounded. On the right hand side, using the growth of $V$, the convergence as $k\to \infty$ follows from the dominated convergence theorem. On the left hand side 
\[\int_{\reels^d} \text{div}(V\eta_k)(t,\mu,x)\mu(dx)=\underbrace{\int_{\reels^d} \text{div}(V)^+\eta_kd\mu}_{I_k^+}+\underbrace{\int_{\reels^d} \text{div}(V)^-\eta_k d\mu}_{I_k^-}+\underbrace{\int_{\reels^d} V\cdot \nabla_x \eta_k d\mu}_{I_k^\eta}.\]
The convergence of $I^+_k$ follows from the monotone convergence theorem, and the convergence of $I_k^-$ from the dominated convergence theorem. Finally using the growth of $V$ we deduce that there exists a constant $C_V$ independent of $k$ such that
\[I_k^\eta \leq C_VC_\eta \int_{|x|\geq k}(1+|x|)\mu(dx).\]
Since $\mu\in\mathcal{P}_2(\reels^d)$
\[\lim_{k\to \infty} I^\eta_k=0,\]
ending the proof. 
\end{proof}
Before we give the notion of solution we use for \eqref{eq: ME with idiosyncratic}, Let us present some notations: a function $\psi:[0,T)\times \mathcal{P}_2(\reels^{2d})\to \reels$ is said to belong to the space of test functions $H_{test}$ if it is of the form 
\[\varphi(t,m)=\psi(t)+\int_{\reels^{2d}}f_\varphi(\bar{x})m(d\bar{x})-\alpha e^{\lambda t}\left(E_3(\pi_dm)+E_3(\pi_{-d}m)\right),\]
where
\begin{enumerate}
    \item[-] $\psi$ is uniformly bounded in $C^1([0,\tau],\reels)$ for any $\tau<T$,
    \item[-] $\alpha,\lambda  >0$,
    \item[-] $\|f_\varphi\|_{C^2}=\|f_\varphi\|_\infty+\|Df_\varphi\|_\infty+\|D^2f_\varphi\|_\infty<+\infty$,
    \item[-] $\forall \mu\in \mathcal{P}_2(\reels^d), \quad E_3(\mu)=\int_{\reels^d} |x|^3\mu(dx).$
\end{enumerate}
We define a monotone solution as follow
\begin{definition}
\label{def: monotone sol sig}
A function $W \in C([0,T]\times \reels^d\times\mathcal{P}_2(\reels^d), \reels^d)$ is said to be a monotone solution to \eqref{eq: ME with idiosyncratic} iff for every function $V \in C([0,T]\times \reels^d\times\mathcal{P}_2(\reels^d), \reels^d)$, with at most linear growth, the function $Z:[0,T]\times \mathcal{P}_2(\reels^{2d})$ defined by
\[Z:(t,m)\mapsto \int_{\reels^{2d}} (W(t,x,\pi_dm)-V(y,\pi_{-d}m))\cdot(x-y)m(dx,dy), \]
is such that at every point of minimum $(t^*,m^*)\in (0,T)\times \mathcal{P}_2(\reels^{2d})$ of 
\[(t,m)\mapsto Z(t,m)-\varphi(t,m)+\kappa\mathcal{E}(m),\]
on $[0,T]\times \mathcal{P}_2(\reels^{2d})$ for a $\varphi\in H_{test}$ and $\kappa>0$
the following holds 
\[\int_{\reels^{2d}} |\nabla_x\log(m^*(x,y))|^2m^*(dx,dy)<+\infty,\]
and
\begin{align*}&\partial_t \varphi(t^*,m^*)+\int_{\reels^{2d}} F(x,m^*,W(t^*,x,\pi_d m^*))\cdot \nabla_x \nabla_m \varphi(t^*,m^*)(x,y)m^*(dx,dy)-\sigma_x \int_{\reels^{2d}}\Delta_x \nabla_m \varphi(t^*,m^*,x,y)m^*(dx,dy)\\
&-2\sigma_x\int_{\reels^{d}}W(t^*,x,\pi_d m^*)\cdot \nabla_x\log(m^*(x,y))m^*(dx,dy)-\sigma_x \kappa\int_{\reels^{2d}} |\nabla_x \log(m^*(x,y))|^2m^*(dx,dy)\\
&-\kappa\int_{\reels^{d}} F(x,m^*,W(t^*,x,\pi_d m^*))\cdot \nabla_x \log( m^*(x,y))\pi_dm^*(dx,dy)\\
&\geq \int_{\reels^{2d}} F(x,W(t,x,\pi_d m^*),\pi_dm^*)\cdot(W(t,x,\pi_dm)-V(y,\pi_{-d}m^*))m^*(dx,dy)\\
&+\int_{\reels^{2d}} G(x,W(t,x,\pi_d m^*),\pi_dm^*)\cdot (x-y)m^*(dx,dy).
\end{align*}
\end{definition}
\begin{remarque}
The term 
\[\int_{\reels^{2d}} W(t^*,x,\pi_dm^*)\cdot \nabla_x\log(m^*(x,y)))m^*(dx,dy)=-\int_{\reels^{2d}}\text{div}_x(W)(t^*,x,\pi_dm^*)m^*(dx,dy),\]
comes from an integration by part and not from the entropic penalization. Indeed this allows the definition of solutions in $C([0,T)\times \reels^d\times \mathcal{P}_2(\reels^d),\reels^d)$ instead of $C([0,T)\times \mathcal{P}_2(\reels^d),C^1(\reels^d,\reels^d))$. 
\end{remarque}
Let us insist that besides the addition of an entropic penalization, this definition appears as a natural extension to the notion of monotone solution we presented for equations without idiosyncratic noise. 
\begin{thm}
\label{stability with idio}
   Let $(F_n,G_n)_{n\in\mathbb{N}}$ be a sequence of function satisfying Hypothesis \ref{hyp: linear growth in F} uniformly in $n\in\mathbb{N}$, $(\sigma_n)_{n\in\mathbb{N}}\subset \reels^+$, and $(W_n)_{n\in\mathbb{N}}$ be an associated sequence of monotone solutions. Suppose $(F_n,G_n,\sigma_n)_{n\in\mathbb{N}}$ converges locally uniformly to $(F,G,\sigma_x)$ in  $C(\reels^d\times\mathcal{P}_2(\reels^d)\times\reels^d,\reels^d)$, $(W_n)_{n\in\mathbb{N}}$ converges locally uniformly in $C([0,T]\times \reels^d\times \mathcal{P}_2(\reels^d),\reels^d)$ to a function $W$ and that 
   \[\exists C>0, \quad \forall n\in\mathbb{N}, \quad  \forall (t,\mu,x)\in [0,T]\times \mathcal{P}_2(\reels^d)\times \reels^d, \quad |W_n(t,x,\mu)|\leq C\left(1+|x|+\sqrt{E_2(\mu)}\right).\]
    Then $W$ is a monotone solution associated with the data $(F,G,\sigma_x)$.
\end{thm}
\begin{proof}
    Assume that for a function $V\in C([0,T]\times \reels^d\times \mathcal{P}_2(\reels^d),\reels^d)$ with linear growth and a $\varphi \in H_{test}$
    \[Z:(t,m)\mapsto \int_{\reels^{2d}} (W(t,x,\pi_d m)-V(y,\pi_{-d}m))\cdot (x-y)m(dx,dy)-\varphi(t,m)+\kappa \left(\mathcal{E}(\pi_dm)+\mathcal{E}(m)\right),\]
    reaches a strict minimum at $(t^*,m^*)$ with $t^*\neq 0$. We now consider 
    \[Z_n:(t,m)\mapsto \int_{\reels^{2d}} (W_n(t,x,\pi_d m)-V(y,\pi_{-d}m))\cdot (x-y)m(dx,dy)-\varphi(t,m)+\kappa \mathcal{E}(m),\]
   By our assumptions on test functions, and the growth of the family $(W_n)_{n\in\mathbb{N}}$, there exists a constant $C$ such that 
   \[\forall (n,t,m)\in\mathbb{N}\times [0,T]\times \mathcal{P}_2(\reels^{2d}) \quad Z_n(t,m)\geq -C +C\int_{\reels^{2d}} |x|^3m(dx).\]
   Let us also remind that by Lemma 2.8 in \cite{Daudin}, the function $\mu\mapsto \mathcal{E}(\mu)$ is lower semi-continuous for the topology of weak convergence and 
\begin{equation}
\label{ineq entropy} \forall \mu\in \mathcal{P}_2(\reels^d), \quad \mathcal{E}(\mu)\geq -\pi \int_{\reels^d}|x|^2\mu(dx).
\end{equation}
By the continuity of $(t,s,m)\mapsto Z(t,s,m)$ and the lower semi-continuity in $\mathcal{W}_2$  of 
\[m\mapsto \int_{\reels^{2d}} |\bar{x}|^3m(d\bar{x}),\]
$Z_n$ is bounded from below and lower semi-continuous on $\left(\reels^+\right)^2\times\mathcal{P}_2(\reels^{2d})$. By a variation of stegall's lemma (as in \cite{bertucci-monotone} Lemma 2.1), for any $n\in \mathbb{N}$, there always exists a function  $f_n\in C^2(\reels^{2d},\reels)$, such that $\|f\|_{C^2}\leq \frac{1}{n}$ and 
\[Z(t,m)\mapsto Z_{n}(t,m)+\int_{\reels^{2d}} f_n(x,y)m(dx,dy),\]
reaches a strict minimum at some point $(t^*_n,m^*_n)$. Furthermore since 
    \[\forall n\in\mathbb{N}\quad Z_n(t^*_n,m^*_n)\leq Z_n(0,\mu_g)-\varphi(0,\mu_g)+\kappa\mathcal{E}(\mu_g),\]
    for $\mu_g$ the law of a centered gaussian random variable with variance $I_{2d}$, the sequence $(m^*_n)_{n\in\mathbb{N}}$ has a uniformly bounded third moment and so the sequence $(t^*_n,m^*_n)_{n\in\mathbb{N}}$ is compact in $\reels^+\times\mathcal{P}_2(\reels^{2d})$ and converges to some $(\bar{t},\bar{m})$ along a subsequence. 
    Let us also observe that 
    \[\forall n\in\mathbb{N} \quad Z_n(t_n^*,m_n^*)\leq Z_n(t^*,m^*),\]
    which yields 
    \[\underset{n\to\infty}{\liminf}Z_n(t^*_n,m^*_n)\leq Z(t^*,m^*).\]
    By the local uniform convergence of $(W_n,V_n)$ to $(W,V)$ in $C([0,T]\times\reels^d\times \mathcal{P}_2(\reels^d),\reels^d)$ and the lower semi-continuity of $-\varphi, \mathcal{E}$ we conclude as in Theorem \ref{stability without noise} that 
    \[Z(\bar{t},\bar{m})\leq\underset{n\to\infty}{\liminf}Z_n(t^*_n,m^*_n)\leq Z(t^*,m^*), \]
    which implies $(\bar{t},\bar{m})=(t^*,m^*)$.
 Since $t^*\neq0$ the following holds 
    \[\exists N_0 \quad \forall n\geq N_0 \quad t^*_n\neq 0.\]
    Applying the definition of monotone solution to $W_n$, for $n\geq N_0$ we get the following inequality:
\begin{align}
\nonumber&\partial_t \varphi_n+\int_{\reels^{2d}} F_n(x,m^*_n,W_n(t^*_n,x,\pi_d m^*_n))\cdot \nabla_x \nabla_m \varphi_n(x,y)dm^*_n-\sigma_n \int_{\reels^{2d}}\Delta_x \nabla_m \varphi_ndm^*_n\\
\nonumber&-2\sigma_n\int_{\reels^{d}}W_n(t^*_n,x,\pi_d m^*_n)\cdot \nabla_n\log(\pi_dm^*_n(x))\pi_dm_n^*(dx)-\sigma_n \kappa \int_{\reels^{2d}} |\nabla_x \log(m^*_n(x,y))|^2m^*_n(dx,dy)\\
\label{eq: min with idio stab}
&-\kappa\int_{\reels^{d}} F_n(x,\pi_dm^*_n,W_n(t^*_n,x,\pi_d m^*_n))\cdot \nabla_x \log(\pi_dm^*_n(x))\pi_dm^*_n\\
\nonumber &\geq \int_{\reels^{2d}} F_n(x,W_n(t^*_n,x,\pi_d m^*_n),\pi_dm^*_n)\cdot(W_n(t^*_n,x,\pi_dm^*_n)-V(y,\pi_{-d}m^*_n))m^*_n(dx,dy)\\
\nonumber &+\int_{\reels^{2d}} G_n(x,W_n(t^*_n,x,\pi_d m^*_n),\pi_dm^*_n)\cdot (x-y)m^*_n(dx,dy),
\end{align}
with the notation
\[\varphi_n=\varphi(t^*_n,m^*_n)-\int_{\reels^{2d}}f_ndm^*_n.\]
Let us first show that 
\[I_x(m^*_n)= \int_{\reels^{2d}} |\nabla_x \log(m^*_n(x,y))|^2m^*_n(dx,dy),\]
 can be bounded independently of $n\in \mathbb{N}$. This follows from an argument presented in  \cite{bertucci-lions-wasserstein-distance}, namely by using the growth of $(W_n,V_n,F_n,G_n,\varphi_n)_{n\in\mathbb{N}}$, we find a constant $C$ independent of $n\in\mathbb{N}$ such that 
\[\forall n\in\mathbb{N},\quad \sigma_n\kappa  \mathcal{I}_x(m^*_n)\leq C\left(1+E_3(m_n^*)+(\kappa+\sigma_n) \sqrt{E_2(m^*)}\sqrt{\mathcal{I}(\pi_dm^*_n)}\right).\]
Using the convexity of $(u,v)\mapsto \frac{u^2}{v}$, Jensen's inequality yields 
\[I(\pi_d m^*_n)\leq I_x(m^*_n),\]
and since $\sigma_x>0$, for $n$ sufficiently large we have 
\[ \mathcal{I}_x(m^*_n)\leq C\left(1+E_3(m^*_n)+\frac{2(\sigma_n+\kappa)}{\sigma_n\kappa}E_2(m^*)\right).\]
In particular, using Fatou's lemma and the boundedness of $(m^*_n)_{n\in\mathbb{N}}$ in $\mathcal{P}_3(\reels^d)$
\[\mathcal{I}_x( m^*)\leq C\left(1+\limsup_nE_3(m^*_n)+\frac{2(\sigma_x+\kappa)}{\sigma_x\kappa}E_2(m^*)\right)<+\infty. \]
Let $(X_n,Y_n)_{n\in\mathbb{N}}$  denote a sequence of random variable $(X_n,Y_n) \sim m^*_n$ converging strongly to $X\sim \pi_dm^*$, and  $Z_n=\nabla_x\log(m^*_n(X_n,Y_n))$. Since the sequence $(\mathcal{I}_x(m^*_n))_{n\in\mathbb{N}}$ is bounded, the sequence $(Z_n)_{n\in\mathbb{N}}$ is bounded in $\mathcal{H}$ and hence converges weakly to some limit $Z$ and by Lemma 6.3 of \cite{bertucci-lions-wasserstein-distance} $Z=\nabla_x \log(m^*(X,Y))$. By the strong convergence in $\mathcal{H}$ of $(W_n(t_n,X_n,\mathcal{L}(X_n)))_{n\in\mathbb{N}}$, 
\begin{gather*}\lim_{n\to\infty}\int_{\reels^d} W_n(t^*_n,x,\pi_dm^*_n)\cdot \nabla_x \log(\pi_dm^*_n(x))\pi_dm^*_n(dx)=\lim_{n\to\infty} \langle W_n(t^*_n,X_n,\mathcal{L}(x_n)), Z_n\rangle\\
=\int_{\reels^d} W(t^*,x,\pi_dm^*)\cdot \nabla_x \log(\pi_dm^*(x))\pi_dm^*(dx),\end{gather*}
the convergence of $(\langle F_n(X_n,\mathcal{L}(X_n),W_n(t^*_n,X_n,\mathcal
{L}(X_n))),Z_n\rangle)_{n\in\mathbb{N}}$ also follows from this argument. Finally for terms depending on $\varphi$, we remind that 
\[\varphi(t,m)=\psi(t)+\int f(\bar{x})m(d\bar{x})-\alpha e^{\lambda t}E_3(m),\]
for some $\alpha,\lambda>0$ and $\|f\|_{C^2}<+\infty$, and that using a variation of Fatou's lemma, we have for $\lambda>\lambda^*$ 
\begin{gather*}\partial_t \varphi(t^*,m^*)+\int_{\reels^d} F(x,m^*,W(t^*,x,m^*))\cdot \nabla_x \nabla_m \varphi(t^*,m^*)m^*(dx,dy)-\sigma_n \int_{\reels^{2d}}\Delta_x \nabla_m \varphi(t^*,m^*) dm^*\\
\geq \limsup_n\left(\partial_t \varphi_n+\int_{\reels^{2d}} F_n(x,m^*_n,W_n(t^*_n,x,\pi_d m^*_n))\cdot \nabla_x \nabla_m \varphi_n(x,y)dm^*_n-\sigma_n \int_{\reels^{2d}}\Delta_x \nabla_m \varphi_ndm^*_n\right),
\end{gather*}
where $\lambda^*$ is a constant depending only on the growth of $(F_n,W_n)_{n\in\mathbb{N}}$.
Taking the $\limsup$ in \eqref{eq: min with idio stab} we then deduce that, up to a non consequential restriction on the space of test functions $W$ satisfy the property of monotone solutions at the minimum $(t^*,m^*)$. Since this is true for any continuous function $V$ with at most linear growth, $W$ is indeed a monotone solution associated to the coefficients $(F,G,\sigma_x)$.  
\end{proof}
Now that stability of the notion has been established, we turn to the existence of monotone solutions. First, we show that for sufficiently smooth functions, points of minimum are indeed reached for measures with finite fisher information. The following intermediary lemma which is an adaptation of \cite{Daudin} Proposition 3.1 to our setting 
\begin{lemma}
\label{lemma: fisher information is finite when V lipschitz}
    Let $V:\reels\times \mathcal{P}_2(\reels^d)\to \reels^d$ be a continuous function with linear growth, $\mathcal{F}:\mathcal{P}_2(\reels^d)\to \reels$ a lower semi-continuous, bounded from below function, $f\in C^2_b(\reels^{2d},\reels)$ and $W:\reels\times \mathcal{P}_2(\reels^d)\to \reels^d$ a Lipschitz function and define 
    \[Z:m\mapsto \int_{\reels^{2d}} (W(x,\pi_dm)-V(y,\pi_{-d}m))\cdot (x-y)m(dx,dy)+\mathcal{F}(\pi_{-d}m)+\int_{\reels^{2d}} f(x,y)m(dx,dy).\]
    Then at points of minimum $m^*$ of 
    \[m\mapsto Z(m)+\mathcal{E}(m)+E_3(\pi_{d}m),\]
 $m^*$ has a density with respect to Lebesgue measure and 
 \[\int_{\reels^{2d}} |\nabla_x\log(m^*(x,y))|^2m^*(dx,dy)<+\infty \]
\end{lemma}
\begin{proof}
    We start by fixing $v\in C_b^\infty(\reels^{2d},\reels^d)$ and consider the flow of measure 
    \[\partial_t m_t+\text{div}\left(\left(\begin{array}{c}
    v\\
         0_{\reels^d}  
    \end{array}\right)m_t\right)=0, \quad m_0=m^*.\]
    It has already been established in \cite{Daudin} that 
    \[\mathcal{E}(m_t)-\mathcal{E}(\mu^*)=-\int_0^t\int_{\reels^d}\text{div}_x(v(x,y))m_s(dx,dy).\]
    On the other hand, since $m\mapsto Z(m)+E_3(m)+\mathcal{E}(m)$ reaches a minimum at $m^*$
    \begin{equation}
    \label{ineq entropy}
    \mathcal{E}(m_t)+E_3(\pi_{d}m_t)-\left(\mathcal{E}(m^*)+E_3(\pi_dm^*)\right)\geq Z(m^*)-Z(m_t).\end{equation}
    By the definition of $Z$, there exists a constant $C$ depending on the Lipschitz constant of $W$ and the growth of $V$ such that 
    \[|Z(m^*)-Z(m_t)|\leq C\left(1+E_3(\pi_d m^*)+E_2(m^*)\right)\int_0^t\sqrt{\int_{\reels^d} |v(x,y)|^2m_s(dx,dy)}ds+o(t).\]
    Dividing by $t$ and letting it tends to 0 in \eqref{ineq entropy} let us conclude to
    \begin{equation}
    \label{test func entropy}
    \forall v\in C^\infty_b(\reels^{2d},\reels^d), \quad \int_{\reels^d} \left(|x|x\cdot v(x,y)-\text{div}_x(v(x,y))\right)m^*(dx,dy)\geq -C(1+E_2(m^*)+E_3(\pi_{-d}m^*))\|v\|_{L^2(m^*)}\end{equation}
Let us denote by $\nabla_x \log(m^*)$ the distribution defined by
\[\forall \varphi \in C^\infty_c(\reels^{2d},\reels^d), \quad \langle \nabla_x\log(m^*),\varphi\rangle =-\int_{\reels^d} \text{div}_x(\varphi(x,y))m^*\]
it follows from \eqref{test func entropy} that $|x|x+\nabla_x \log(m^*)$ is bounded in operator norm in $L^2(m^*)$ and by duality admits a representative in this space also denoted $|x|x+\nabla_x \log(m^*)$ . Defining a sequence of function $(\eta_k)_{k\in\mathbb{N}}$ as in Lemma \ref{lemma: integ part fisher}, since $x|x|\eta_k\in L^2(\mu^*)$ we deduce that $\nabla_x \log(m^*)\eta_k\in L^2(m^*)$ and consequently that 
\[\int_{\reels^d}\nabla_x \log(m^*(x,y))\cdot x|x|\eta_k(x)m^*(dx,dy)\]
is well defined. By using argument presented in Lemma \ref{lemma: integ part fisher},
\[\left|\int_{\reels^d}\nabla_x \log(m^*(x,y))\cdot x|x|\eta_k(x)m^*(dx)\right|\leq 2(1+C_\eta)(d+1)E_2(\pi_{d}m^*).\]
Using the fact that $x\mapsto (|x|x+\nabla_x \log(m^*(x,y)))\eta_k(x)$ is bounded in $L^2(m^*)$ independently of $k$ we find a constant $C$ such that 
\[\forall k\in \mathbb{N},\quad \int_{\reels^d} |\nabla_x \log(m^*(x,y))|^2\eta_k(x)m^*(dx,dy)+\int_{\reels^d} |x|^4\eta_k(x)\pi_{d}m^*(dx)\leq C(1+E_3(\pi_{d}m^*)).\]
By the monotone convergence theorem it follows that $\nabla_x \log(m^*)\in L^2(m^*)$. Since 
\[\forall v\in C^\infty_b(\reels^{2d},\reels^d) \quad \|v\|_{L^2(m^*)}\leq \|v\|_\infty,\]
\eqref{test func entropy} implies that $m^*$ has a weak derivative $\nabla_x m^*\in L^1(\reels^{2d})$ and from the definition of $\nabla_x(\log m^*)$ we conclude to 
\[\nabla_x \log(m^*)=\frac{\nabla_xm^*}{m^*} \text{ a.e in Supp($m^*$)}\]
\end{proof}

\begin{lemma}
    under Hypothesis \ref{hyp: Lipschitz Wq}, let $W$ be the Lipschitz solution to \eqref{eq: ME with idiosyncratic} defined on $[0,T_c)$ for some $T_C>0$ then $W$ is also a monotone solution in the sense of Definition \ref{def: monotone sol sig}.
\end{lemma}
\begin{proof}
We first start by treating the case for which $(F,G,W_0)$ are smooth in $x$, in which case $W$ is also smooth in this variable for as long as it exists. 
Let $V:\reels^d\times \mathcal{P}_2(\reels^d)\to \reels^d$ be a continuous function with bounded derivatives and define 
\[Z:(t,m)\mapsto \int_{\reels^{2d}} (W(t,x,\pi_dm)-V(y,\pi_{-d}m))\cdot(x-y)m(dx,dy).\]
Let $\varphi\in H_{test}$ and suppose that
\[(t,m)\mapsto (Z-\varphi)(t,m)+\kappa \left( \mathcal{E}(\pi_dm)+\mathcal{E}(\pi_{-d}m)\right),\]
reaches its strict minimum at $(t^*,m^*)\in(0,T)\times \mathcal{P}_2(\reels^d)$.
Since $W$ is Lipschitz, Lemma \ref{lemma: fisher information is finite when V lipschitz} implies that 
\[\mathcal{I}_x(m^*)<+\infty.\]
Using the dynamic programming principle satisfied by Lipschitz solutions as in the proof of Lemma \ref{lemma: lip sol are monotone without idio} as well as a variant of Proposition 5.6 of \cite{Daudin} to deals with terms involving the entropy, we get that 
\begin{align}
\nonumber &\partial_t \varphi(t^*,m^*)+\int_{\reels^{2d}} F(x,\pi_dm^*,W(t^*,x,\pi_d m^*))\cdot \nabla_x \nabla_m \varphi(t^*,m^*)(x,y)m^*(dx,dy)\\
&\nonumber-\sigma_x \int_{\reels^d}\Delta_x \varphi(t^*,m^*,x,y)m^*(dx,dy)+2\sigma_x\int_{\reels^{2d}}\text{div}(W)(t^*,x,\pi_d m^*)m^*(dx,dy)\\
\label{eq minimum lip sol sigma}
&-\kappa \int_{\reels^d}F(x,\pi_dm^*,W(t^*,x,\pi_dm^*))\cdot\nabla_x\log(m^*(x,y))m^*(dx,dy)-\kappa \sigma_x \mathcal{I}_x(m^*)\\
\nonumber&\geq \int_{\reels^{2d}} F(x,W(t^*,x,\pi_d m^*),\pi_dm^*)\cdot(W(t^*,x,\pi_dm^*)-V(y,\pi_{-d}m^*))m^*(dx,dy)\\
\nonumber &+\int_{\reels^{2d}} G(x,W(t^*,x,\pi_d m^*),\pi_dm^*)\cdot (x-y)m^*(dx,dy),
\end{align}
Since $W$ is Lipschitz, and $\mathcal{I}_x(m^*)<+\infty$, a variant of Lemma \ref{lemma: integ part fisher} yields
\[\int_{\reels^{d}}\text{div}(W)(t^*,x,\pi_d m^*)m^*(dx,dy)=-\int_{\reels^d}W(t^*,x,\pi_dm^*)\cdot \nabla_x \log(m^*(x,y))m^*(dx,dy),\]
ending the proof for smooth $(F,G,W_0)$. When coefficients are only Lipschitz, we consider $(F_n,G_n,W_0^n)_{n\in\mathbb{N}}$ a sequence of regularization smooth in the $x$ variable, with the same Lipschitz constant as $(F,G,W_0)$. The conclusion then follows from the stability of monotone solutions Theorem \ref{stability with idio} and a bootstrapping argument. 
\end{proof}
Since the estimates we proved in the previous section still hold in the presence of idiosyncratic noise this essentially answer the question of existence for coefficients with low regularity. It remains to treat the question of uniqueness. To that end we first need to justify the equality \eqref{eq of diagonal terms sigma}. 
\begin{lemma}
\label{gateaux differentiability of Z}
Let $U,V\in C(\reels^d\times \mathcal{P}_2(\reels^d),\reels^d)$ be functions with at most linear growth and define  
\[Z:m\mapsto \int_{\reels^{2d}} (U(x,\pi_dm)-V(y,\pi_{-d}m)\cdot(x-y)m(dx,dy).\]
If $\mathcal{I}(m)<+\infty$, letting 
\[m_t=\mathcal{L}(X,Y-t\nabla_x\log(m(X,Y))),\]
for any $(X,Y)\in \left(L^2(\Omega,\reels^d)\right)^2$ such that $(X,Y)\sim m$,
\begin{equation*}
Z(m_t)-Z(m)=\int_0^t \int_{\reels^{2d}} \left(\begin{array}{c}
     U(x,\pi_dm_s) \\
     V(y,\pi_{-d}m_s)
\end{array}\right)\cdot \nabla_{(x,y)}\log (m(x,y))m(dx,dy).
\end{equation*}
\end{lemma}
\begin{proof}
    Let us first remark that if $V$ is smooth and has bounded derivatives, then this result follows naturally from the Frechet differentiability of 
    \[Y\mapsto \tilde{Z}(X,Y),\]
    and integration by part. If $V$ belongs to $C(\reels^d\times \mathcal{P}_2(\reels^d), \reels^d)$ only, then by \cite{regul_measure}, there exists a sequence of smooth function $(V_n)_{n\in\mathbb{N}}$, with linear growth uniformly in $n\in\mathbb{N}$, and converging locally uniformly to $V$. Defining $Z_n$ as we did $Z$ but for $(U,V_n)$, it follows that 
    \[Z_n(m_t)\underset{n\to \infty}{\longrightarrow}Z(m_t).\]
    Using the uniform linear growth of the sequence $(V_n)_{n\in\mathbb{N}}$,
    \[\int_{\reels^{2d}} V_n(y,\pi_{-d}m_s)\cdot \nabla_y\log(m(x,y))m(dx,dy)\underset{n\to \infty}{\longrightarrow}\int_{\reels^{2d}} V(y,\pi_{-d}m_s)\cdot \nabla_y\log(m(x,y))m(dx,dy),\]
    by the dominated convergence theorem. 
\end{proof}
\begin{definition}
	We say that a monotone solution $W:[0,T]\times\reels^d\times\mathcal{P}_2(\reels^d),\reels^d)$ associated with the coefficients $(F,G)$ lies in the closure of Lipschitz solutions if there exists a sequence of coefficients $(F_n,G_n)_{n\in\mathbb{N}}$ satisfying Hypothesis \ref{hyp: Lipschitz Wq} and an associated sequence of Lipschitz solution $(W_n)_{n\in\mathbb{N}}\subset C([0,T]\times \reels^d\times \mathcal{P}_2(\reels^d),\reels^d)$ such that $(F_n,G_n,W_n)_{n\in\mathbb{N}}$ converges locally uniformly in $C([0,T]\times \reels^d\times \mathcal{P}_2(\reels^d),\reels^d)$ to $(F,G,W)$ and 
	\begin{gather*}\exists C>0, \quad \forall n\in \mathbb{N}, (t,x,\mu)\in [0,T]\times \reels^d\times \mathcal{P}_2(\reels^d),\\	
 |F_n(x,\mu)|,|G_n(x,\mu)|,|W_n(t,x,\mu)|\leq C\left(1+|x|+\sqrt{E_2(\mu)}\right)
\end{gather*}
\end{definition}
\begin{lemma}
\label{lemma: sol in closure of lip sol}
Suppose $W,V$ are two monotone solutions to \eqref{eq: ME with idiosyncratic} and that $W$ lies in the closure of Lipschitz solutions. Let 
\[\varphi: (t,s,m)\mapsto \alpha\left((e^{\lambda s}+e^{\lambda t})(1+E_3(\pi_dm)+E_3(\pi_{-d}m)+\frac{1}{T-s}+\frac{1}{T-t}\right)+\frac{1}{2\gamma}|t-s|^2+\int_{\reels^{2d}} f(\bar{x})m(d\bar{x}),\]
for some $\alpha,\lambda,\gamma>0$ and $f$ with $\|f\|_{C^2}<+\infty$.
There exists a $\lambda^*>0$ such that for any $\lambda>\lambda^*$ if 
\[Z^\varphi:(t,s,m)\mapsto \int_{\reels^{2d}} (W(t,x,\pi_d m)-V(s,y,\pi_{-d}m))\cdot (x-y)m(dx,dy)-\varphi(t,s,m)+\kappa\mathcal{E}(m),\]
reaches a strict minimum $(t^*,s^*,m^*)$ in $[0,T]^2\times \mathcal{P}_2(\reels^d)$ and $t^*\neq 0, s^*\neq 0$, then 
\[\mathcal{I}(m^*)<+\infty,\]
and 
\begin{align*}&\partial_t \varphi(t^*,m^*)+\partial_s \varphi(t^*,s^*,m^*)+\int_{\reels^{2d}} 
\left(\begin{array}{c}
F(x,W(t^*,x,\pi_d m^*),\pi_d m^*)\\
F(y,V(s^*,y,\pi_{-d}m^*),\pi_{_d}m^*)	
\end{array}
\right)
\cdot \left(D_m \varphi(t^*,s^*,m^*)(x,y)-\kappa \nabla \log(m^*)\right)m^*(dx,dy)\\
&-\sigma_x \int_{\reels^{2d}}\text{Tr}\left(B D_{(x,y)}D_m \varphi(t^*,m^*,x,y)\right)m^*(dx,dy)-\sigma_x \kappa\int_{\reels^{2d}} |\nabla_x \log(m^*(x,y))-\nabla_y \log(m^*(x,y))|^2m^*(dx,dy)\\
&\geq \int_{\reels^{2d}} \left(F(x,W(t,x,\pi_d m^*),\pi_dm^*)-F(y,V(s^*,y,\pi_{-d} m^*),\pi_{-d}m^*)\right)\cdot(W(t,x,\pi_dm)-V(y,\pi_{-d}m^*))m^*(dx,dy)\\
&+\int_{\reels^{2d}} \left(G(x,W(t,x,\pi_d m^*),\pi_dm^*)-G(y,V(s^*,y,\pi_{-d} m^*),\pi_{-d}m^*)\right)\cdot (x-y)m^*(dx,dy).
\end{align*}
\end{lemma}
\begin{proof}
Let us fix some $\alpha,\gamma,\lambda,\kappa>0$ and assume $Z^\varphi$ reaches a point of strict minimum at $(t^*,s^*,m^*)\in (0,T)^2\times \mathcal{P}_2(\reels^d)$. By assumption, there exists a sequence of Lipschitz solutions $(W_n,F_n,G_n)_{n\in \mathbb{N}}$ converging locally uniformly to $(W,F,G)$. For a fixed $n\in \mathbb{N}$, we define $Z^\varphi_n$ by replacing $W$ with $W_n$. Arguing as in Theorem \ref{stability with idio}, we can find a sequence of perturbation $(f_n)_{n\in \mathbb{N}}$, such that $\|f_n\|_{C^2}\leq \frac{1}{n}$ and 
\[(t,s,m)\mapsto Z_n(t,s,m)+\int_{\reels^{2d}} f_n(\bar{x})m(d\bar{x}),\]
reaches a point of strict minimum $(t^*_n,s^*_n,m^*_n)$. Moreover 
\[\lim_{n\to \infty} (t^*_n,s^*_n,m^*_n)=(t^*,s^*,m^*),\]
in particular this implies that there exists a $n_0\in \mathbb{N}$ such that 
\[\forall n\geq n_0, \quad t^*_n,s^*_n>0.\]
For fixed $n\in {\mathbb{N}}$, we know that 
\[\mathcal{I}(m^*_n)<+\infty,\]
in particular we can find a sequence $(f_k)_{k\in \mathbb{N}}$ of smooth compactly supported function such that 
\[f_k\overset{L^2(m^*_n)}{\underset{k\to \infty}{\longrightarrow}} \nabla_x \log(m^*_n).\]
Letting 
\[\varphi_n:(t,s,m)\mapsto \varphi(t,s,m)+\int f_ndm,\]
for fixed $k\in \mathbb{N}$ we introduce $m^*_{n,k,t}=(id-t \left(\begin{array}{c}0_{\reels^d}\\f_k\end{array}\right))\#m^*_n$ and it follows from the definition of $m^*_n$ that 
\[\forall t>0, \quad (Z-\varphi_n)(t^*_n,s^*_n,m^*_{n,k,t})-(Z-\varphi_n)(t^*,s^*,m^*_n)+\kappa \left(\mathcal{E}(m^*_{n,k,t})-\mathcal{E}(m^*)\right)\geq 0. \]
Dividing by $t$ and using Lemma \ref{gateaux differentiability of Z} we deduce that there exists a constant $C$ depending only on the growth of $V$ and the Lipschitz constant of $W_n$ such that
\begin{gather*}C(1+E_2(m^*_n)+\|f_k\|_{L^2(m^*_n)})\|f_k-\nabla_x\log(m^*_n)\|_{L^2(m^*_n)}+\int_{\reels^{2d}}\nabla_y\nabla_m\varphi_n (t^*_n,s^*_n,m^*_n,x,y)\cdot f_k(x,y)m^*_n(dx,dy)\\
-\int_{\reels^{2d}} \nabla_y\log(m^*_n)\cdot f_k dm^*_n+\int_{\reels^{2d}} \left(\begin{array}{c}
W_n(t^*_n,x,\pi_dm^*_n)\\
V(s^*_n,y,\pi_{-d}m^*_n)
\end{array}
\right)
\cdot \nabla \log(m^*_n)dm^*_n\geq 0.
\end{gather*}	
Finally using the strong convergence of $(f_k)_{k\in \mathbb{N}}$ in $L^2(m^*_n)$, we get that $\forall n\in \mathbb{N},$
\begin{gather*}
-\int_{\reels^{2d}}\text{Tr}\left(\nabla_x\nabla_y\nabla_m\varphi_n (t^*_n,s^*_n,m^*_n,x,y)\right)m^*_n(dx,dy)
+\int_{\reels^{2d}} \nabla_y\log(m^*_n)\cdot \nabla_x\log(m^*_n) dm^*_n\\
\geq -\int_{\reels^{2d}} \left(\begin{array}{c}
W_n(t^*_n,x,\pi_dm^*_n)\\
V(s^*_n,y,\pi_{-d}m^*_n)
\end{array}
\right)
\cdot \nabla \log(m^*_n)dm^*_n.
\end{gather*}
Using this inequality as well as the definition of monotone solution for both $W_n$ and $V$, for $n\geq n_0$ the following holds 
\begin{align*}&\partial_t \varphi_n(t^*_n,,s^*_n,m^*_n)+\partial_s \varphi_n(t^*_n,s^*_n,m^*_n)\\
&+\int_{\reels^{2d}} 
\left(\begin{array}{c}
F(x,W_n(t^*_n,x,\pi_d m^*_n),\pi_d m^*_n)\\
F(y,V(s^*_n,y,\pi_{-d}m^*_n),\pi_{_d}m^*_n)	
\end{array}
\right)
\cdot \left(D_m \varphi_n(t^*,s^*,m^*)(x,y)-\kappa \nabla \log(m^*_n)\right)m^*_n(dx,dy)\\
&-\sigma_x \int_{\reels^{2d}}\text{Tr}\left(B D_{(x,y)}D_m \varphi_n(t^*_n,s^*_n,m^*_n,x,y)\right)m^*_n(dx,dy)-\sigma_x \kappa\int_{\reels^{2d}} |\nabla_x \log(m^*_n(x,y))-\nabla_y \log(m^*_n(x,y))|^2m^*_n(dx,dy)\\
&\geq \int_{\reels^{2d}} \left(F(x,W_n(t^*_n,x,\pi_d m^*_n),\pi_dm^*)-F(y,V(s^*,y,\pi_{-d} m^*_n),\pi_{-d}m^*_n)\right)\cdot(W(t,x,\pi_dm^*_n)-V(y,\pi_{-d}m^*_n))m^*_n(dx,dy)\\
&+\int_{\reels^{2d}} \left(G(x,W_n(t^*_n,x,\pi_d m^*_n),\pi_dm^*_n)-G(y,V(s^*_n,y,\pi_{-d} m^*_n),\pi_{-d}m^*_n)\right)\cdot (x-y)m^*_n(dx,dy).
\end{align*}
Using arguments already presented in the proof of Theorem \ref{stability with idio}, for any sequence $(X_n,Y_n)\sim m^*_n$ converging strongly to $(X,Y)\sim m^*$, $(\nabla \log(m^*_n(X_n,Y_n)))_{n\in \mathbb{N}}$ converges weakly in $\mathcal{H}^2$ to $\nabla \log(m^*(X,Y))$. By the weak lower semi continuity of norm in $\mathcal{H}^2$, 
\[-\int_{\reels^{2d}} |\nabla_x \log(m^*(x,y))-\nabla_y \log(m^*(x,y))|^2m^*(dx,dy)\geq \limsup_n -\int_{\reels^{2d}} |\nabla_x \log(m^*_n(x,y))-\nabla_y \log(m^*_n(x,y))|^2m^*_n(dx,dy),\]
consequently the result follows by taking the limit superior, other terms  being treated as in Theorem \ref{stability with idio}.
\end{proof}

\begin{thm}
\label{uniqueness monotone idio}
Under Hypothesis \ref{hyp: weak monotonicity} and \ref{hyp: linear growth in F}, if there exists a monotone solution $W\in C([0,T]\times \reels^d\times\mathcal{P}_2(\reels^d),\reels^d)$ to \eqref{eq: ME with idiosyncratic} in the closure of Lipschitz solutions, then it is the unique monotone solution with linear growth. Furthermore, if there exists such a solution, then it is $L^2-$monotone. 
\end{thm}
\begin{proof}
    Consider two solutions $W^1,W^2$, $W_1$ lying in the closure of Lipschitz solutions. We define $Z:[0,T]^2\times \mathcal{P}_2(\reels^{2d})\to \reels$ by 
    \[Z:(t,s,m)\mapsto \int_{\reels^{2d}} (W^1(t,x,\pi_d m)-W^2(s,y,\pi_{-d}m))\cdot(x-y)m(dx,dy).\]
    We now assume by contradiction that there exists $\delta>0$ and a couple $(t,m)$ such that 
    \[Z(t,t,m)<-\delta.\]
    Taking $\alpha,\lambda,\gamma,\kappa>0$, we define
    \[Z_{\alpha,\lambda,\gamma,\kappa}(t,s,m)\mapsto Z(t,s,m)+\alpha\left((e^{\lambda s}+e^{\lambda t})(1+E_3(\pi_dm)+E_3(\pi_{-d}m)+\frac{1}{T-s}+\frac{1}{T-t}\right)+\frac{1}{2\gamma}|t-s|^2+\kappa \mathcal{E}(m),\]
    with $E_3(m)=\int_{\reels^{2d}} |\bar{x}|^3m(d\bar{x})$. By Stegall's Lemma, for any $\varepsilon>0$, there exists $f_\varepsilon\in C^2(/reels^d,\reels)$ such that $\|f_\varepsilon\|_{C^2}\leq \varepsilon$ and 
    \[Z_{\alpha,\lambda,\gamma,\kappa,\varepsilon}:(t,s,m)\mapsto Z_{\alpha,\lambda,\gamma,\kappa}(t,s,m)+\int_{\reels^{2d}} f_\varepsilon(\bar{x})m(d\bar{x}),\]
    reaches a strict point of minimum $(t^*,s^*,m^*)$ in $[0,T]^2\times\mathcal{P}_2(\reels^{2d})$.
Taking $(\alpha,\kappa,\varepsilon)$ sufficiently small, we may assume that 
\begin{equation}
\label{eq: Z at minimum smaller than 0 idio}
Z_{\alpha,\lambda,\gamma,\kappa,\varepsilon}(t^*,s^*,m^*)<-\frac{\delta}{2}.
\end{equation}
By assumption on $W_0$ we know that 
\[\forall m\in \mathcal{P}_2(\reels^{2d}), \quad Z(0,0,m)\geq 0.\]
Using \eqref{ineq entropy}, we may also assume for $\kappa,\varepsilon$ sufficiently small before $\alpha$ that 
\[\forall m\in \mathcal{P}_2(\reels^{2d}), \quad Z_{\alpha,\lambda,\gamma,\kappa,\varepsilon}(0,0,m)\geq 0.\]
which means either $s^*\neq 0$ or $t^*\neq 0$. Suppose $s^*=0$ the case $t^*=0$ being treated in the same way. Since 
\[Z(t^*,s^*,m^*)\leq 0,\]
by the growth of $W^1,W^2$, and hence of $Z$, there exists a constant depending on the growth of $W^1,W^2$ only such that  
\[\alpha E_3(m^*),\frac{1}{\gamma}|s^*|^2\leq C(1+(1+\kappa)E_3(m)^\frac{2}{3}).\]
For $\gamma\leq \alpha^3,\kappa\leq 1$, this implies 
\[|s^*|\leq C\sqrt{\gamma},\]
for another constant with the same dependencies. By the non negativity of $m\mapsto Z_{\alpha,\lambda,\gamma,\kappa,\varepsilon}(0,0,m)$ and the continuity of $Z$, for $\gamma$ sufficiently small and $\kappa,\alpha$ sufficiently small before $\delta$ with $\kappa \leq \alpha$, this leads to a contradiction with \eqref{eq: Z at minimum smaller than 0 idio}. 
Taking $\gamma,\alpha$ sufficiently small and $\kappa\leq \alpha$, we may assume that both $t^*$ and $s^*$ are different from 0.
Applying the definition of a monotone solution to both $W^1$ and $W^2$, we first deduce that 
\[\mathcal{I}(m^*)<+\infty,\]
and that it is bounded independently of $\varepsilon,\lambda$. Using the notation
\[\mathcal{I}_{x-y}(m^*)=\int_{\reels^{2d}} \!|\nabla_x\! \log(m^*(x,y))-\nabla_y\! \log(m^*(x,y))|^2m^*(dx,dy),\]
Lemma \ref{lemma: sol in closure of lip sol} yields
\begin{align*}&-\!\frac{\alpha}{(T-t^*)^2}\!-\!\frac{\alpha}{(T-s^*)^2}\!-\!\lambda \alpha (e^{\lambda t^*}+e^{\lambda s^*})(1\!+\!E_3(\pi_dm^*)+\!E_3(\pi_{-d}m^*))\!+\sigma_x\!\int_{\reels^{2d}} \!\text{Tr}\left(B D^2_{(x,y)}\phi(t^*,s^*,x,y)\right)m^*(dx,dy)\!\\
&-\!\int_{\reels^{2d}}\! \left(
\begin{array}{c}
F(x,\pi_d m^*,W^1(t^*,x,\pi_dm^*)\\
F(y,\pi_{-d}m^*,W^2(t^*,y,\pi_{-d}m^*))
\end{array}
\right)
\cdot \left(\nabla_{(x,y)} \phi(t^*,s^*,x,y)\!+\!\kappa\nabla_{(x,y)} \log(m^*(x,y))\right)m^*(dx,dy)-\!\sigma_x\kappa  \mathcal{I}_{x-y}(m^*)\geq 0
\end{align*}
with 
\[\phi(t,s,x,y)=\alpha (e^{\lambda t}+e^{\lambda s})(|x|^3+|y|^3)+f(x,y).\]
Let us now remark that for any function $g\in C(\reels^d,\reels^d)$, 
\[\int_{\reels^{2d}} g(x)\cdot \nabla_x\log(m^*(x,y))m^*(dx,dy)=\int_{\reels^{2d}} g(x)\cdot \left(\nabla_x\log(m^*(x,y))-\nabla_y\log(m^*(x,y))\right)m^*(dx,dy).\]
By Cauchy-schwartz inequality in $L^2(m^*)$, there exists a constant $C_{1,2}$, depending on the linear growth of $F,W^1,W^2$ only such that
\begin{gather*}\!\int_{\reels^{2d}}\! \left(\begin{array}{c}
F(x,\pi_dm^*,W^1(t^*,x,\pi_dm^*))\\
     F(y,\pi_{-d}m^*,W^2(t^*,y,\pi_{-d}m^*))
\end{array}\right)\cdot \left(\nabla_{(x,y)} \phi(t^*,s^*,x,y)\!+\!\kappa\left(\begin{array}{c}
     \nabla_x \log(\pi_dm^*(x))  \\
     \nabla_y \log(\pi_{-d}m^*(y))
\end{array}\right)\right)m^*(dx,dy)\\
\leq C_{1,2} \left(2(\alpha+\varepsilon) (e^{\lambda t^*}+e^{\lambda s^*})(1+E_3(m^*))+\kappa (E_3(m^*))^\frac{1}{3}\sqrt{\mathcal{I}_{x-y}(m^*)}\right)
\end{gather*}
Choosing $\varepsilon \leq \min(\alpha,\frac{1}{2}\kappa \sqrt{\mathcal{I}(m^*)})$ and $\kappa \leq \sigma_x \alpha$, for 
\[\lambda \geq 24 (C_{1,2}+\sigma_x d)+1,\]
we have
\[-\frac{\alpha}{(T-t^*)^2}-\frac{\alpha}{(T-s^*)^2}\geq 0\]
which is absurd and hence contradicts \eqref{eq: Z at minimum smaller than 0 idio}. As a consequence
\[\forall t\in [0,T) ,m\in \mathcal{P}_2(\reels^{2d}) \quad Z(t,t,m)\geq 0.\]
By the continuity of $Z$, this also holds for $t=T$, uniqueness and $L^2-$monotonicity of a solution then follow from the argument presented in the proof of Lemma \ref{lemma: smooth uniqueness Hilbert approach}
\end{proof}

\begin{thm}
\label{existence: weak strong G with sig}
    Let $(F,G,W_0)$ be such that Hypotheses \ref{hyp: linear growth in F},\ref{hyp: weak strong monotonicity in G} and \ref{hyp: regularity: weak strong monotonicity in G} are satisfied, then there exists a unique monotone solution to \eqref{eq: ME with idiosyncratic}
\end{thm}
\begin{proof}
To show existence of a solution, the scheme of proof is exactly the same as in Theorem \ref{existence: weak strong G}. It is sufficient to notice that all estimates we used are still valid in the presence of idiosyncratic noise. Because of the way we construct our monotone solution it naturally lies in the closure of Lipschitz solutions, hence the uniqueness. 
\end{proof}

\subsection{On the regularity of solutions in the presence of idiosyncratic noise}
Let us begin this section with some reminders on parabolic systems. Introducing the notation 
\[[f]_{\alpha,\mathcal{O}}=\sup_{\begin{array}{c}
(x,y)\in \mathcal{O}^2\\
x\neq y
\end{array}}
\frac{|f(x)-f(y)|}{|x-y|^\alpha},\]
for the Hölder semi-norm and 
\[\|f\|_{k+\alpha,\mathcal{O}}=\sum_{i\leq k} \|D^i f\|_{\infty,\mathcal{O}}+[D^kf]_{\alpha,\mathcal{O}},\]
for the Hölder norm of order $k+\alpha$, we say that $f\in C^{k+\alpha}_{loc}(\reels^d)$ if and only if, for any compact set $\mathcal{O}\subset \reels^d$, 
\[\|f\|_{k+\alpha,\mathcal{O}}<+\infty.\]
We now remind the following interior estimates of which a proof can be found in \cite{krylov}
\begin{lemma}
\label{classic Hölder interior estimate}
	Let $\alpha>0$, and $u\in C([0,T],C^{2+\alpha}_{loc}(\reels^d))$ be a solution to 
	\[\partial_t u+f(t,x)\cdot \nabla_x u-\sigma_x\Delta_x u=g(t,x), (t,x)\in (0,T)\times \reels^d \]
	with $f,g\in C^\alpha_{loc}(\reels^d)$ and $\sigma_x>0$. Letting $B_r=\{ x\in \reels^d, |x|\leq r\},$for any $r>0$, there exists a constant $C_r$ depending only on $\sup_{[0,T]}\|f(t,\cdot)\|_{\alpha,B_{2r}},\sigma_x,\alpha,d,r$ such that 
	\[\forall t\in [0,T], \quad \|u(t,\cdot)\|_{2+\alpha,B_r}\leq C_r\left(\sup_{[0,T]}\|u(t,\cdot)\|_{\infty,B_{2r}}+\sup_{[0,T]} \|g(t,\cdot)\|_{\alpha,B_{2r}}+\|u(0,\cdot)\|_{2+\alpha,B_{2r}}\right).\]
\end{lemma}
We now show how the presence of idiosyncratic noise $\sigma_x>0$ regularize solutions of the master equations in the space variable $x$, even for coefficients with low regularity. The main idea is to exploit the strong link between the solution of the master equation \eqref{eq: ME with idiosyncratic} and parabolic systems. Indeed, for smooth solutions,  the characteristics of \eqref{eq: ME with idiosyncratic} are given by a PDE system composed of a forward Fokker-Planck equation and a backward non-linear parabolic system. We show this is still true at the level of Lipschitz solutions and use this property to get additional estimates on solutions.
\begin{lemma}
\label{lemma: lip sol sol}
    Let $(F,G,W_0)$ satisfy Hypothesis \ref{hyp: Lipschitz Wq}, and let $W$ be a Lipschitz solution to \eqref{eq: ME with idiosyncratic} defined on $[0,T_c)$ for some $T_c>0$. For fixed $(t,\mu)\in[0,T_c)\times \mathcal{P}_2(\reels^d)$, consider the following Mackean-Vlasov equation
    \begin{equation}
    \label{mk vlasov lip}
    \begin{array}{c}
         \partial_s\mu_s=-\text{div}\left(F(t-s,x,\mu_s,W(t-s,x,\mu_s))\mu_s\right)+\sigma_x\Delta_x\mu_s,  \\
    \end{array}
    \end{equation}
    and let $f:(s,x,u)\mapsto F(s,x,\mu_{t-s},u), g:(s,x,u)\mapsto G(s,x,\mu_{t-s},u)$. Then letting $w:(s,x)\mapsto W(s,x,\mu_{t-s})$,  $w\in C^{1,2}((0,t)\times \reels^d)$ and satisfies 
    \[\left\{\begin{array}{c}
          \partial_s w+f(s,x,w(s,x))\cdot\nabla_xw-\sigma_x \Delta_x w=g(s,x,w(s,x)) \text{ for } (s,x)\in(0,t)\times \reels^d  \\
         w_0(x)=W_0(x,\mu_t) \text{ for } x\in\reels^d
    \end{array}
    \right.
    \]
    
\end{lemma}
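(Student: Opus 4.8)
The plan is to exploit the fixed point structure of Lipschitz solutions: once the measure flow is frozen, the master equation \eqref{eq: ME with idiosyncratic} decouples into a semilinear parabolic system for $w$, whose regularity is then obtained from the parabolic estimates of the previous subsection. First I would check that \eqref{mk vlasov lip} is well posed: since $\tilde W$ is Lipschitz on $\mathcal H$, the drift $X\mapsto F(X,\mathcal L(X),\tilde W(t-s,X))$ is Lipschitz on $\mathcal H$ uniformly in $s\in[0,t]$, so the McKean--Vlasov equation admits a unique solution $(\mu_s)_{s\in[0,t]}\subset\mathcal P_2(\reels^d)$, continuous for $\mathcal W_2$ (Gronwall estimates plus the time regularity of the underlying SDE). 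Set $a(s,x)=F(x,\mu_{t-s},w(s,x))$, $e(s,x)=G(x,\mu_{t-s},w(s,x))$ and $w_0(x)=W_0(x,\mu_t)$. By Corollary \ref{from lip hilbert to lip x} the map $x\mapsto W(s,x,\nu)$ is Lipschitz with a constant independent of $(s,\nu)$, and $F,G$ are Lipschitz in their first and third arguments uniformly in the measure; together with the modulus of continuity of Hypothesis \ref{hyp: Lipschitz Wq} in the measure argument and the continuity of $s\mapsto\mu_s$, this gives that $w$ is continuous and Lipschitz in $x$ locally uniformly in $s$, while $a$ and $e$ are continuous, of at most linear growth, and Lipschitz (hence $C^\alpha_{loc}$) in $x$ locally uniformly in $s$.

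The core of the argument is a dynamic programming principle for $w$. Evaluating the representation formula \eqref{eq: Feynman-kac measure} that defines the Lipschitz solution $W$ at the point $(s,x,\mu_{t-s})$ and using uniqueness of the Fokker--Planck flow to identify the characteristic measure flow issued from $\mu_{t-s}$ with $\tau\mapsto\mu_{t-s+\tau}$, one derives, exactly as in the proof that Lipschitz solutions are monotone solutions (see \eqref{eq: lip sol hilbert DPP}), that for $0\le s_0\le s<t$
\[w(s,x)=\mathbb E\Big[w(s_0,Y^{s,x}_{s-s_0})+\int_0^{s-s_0}e(s-r,Y^{s,x}_r)\,dr\Big],\qquad dY^{s,x}_r=-a(s-r,Y^{s,x}_r)\,dr+\sqrt{2\sigma_x}\,dB_r,\quad Y^{s,x}_0=x.\]
Taking $s_0=0$ and recalling $w(0,\cdot)=W(0,\cdot,\mu_t)=W_0(\cdot,\mu_t)$, this exhibits $w$ as a mild (Feynman--Kac) solution of the linear Cauchy problem $\partial_s w+a(s,x)\cdot\nabla_x w-\sigma_x\Delta_x w=e(s,x)$ with $w|_{s=0}=w_0$.

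It then remains to bootstrap regularity. Because $a$ is bounded and Lipschitz in $x$ on bounded sets, $\nabla_x w$ is bounded on bounded sets, and $e\in C^0([0,t],C^\alpha_{loc}(\reels^d))$, the Duhamel identity attached to the above representation together with Lemma \ref{lemma: estimate on whole space for parabolic pde} and the interior estimate of Lemma \ref{estimate on balls}, applied after a spatial cutoff to absorb the linear growth of the coefficients, first puts $w$ in $C^0((0,t),C^{1+\alpha}_{loc}(\reels^d))$; then $\nabla_x w$ is itself $C^\alpha_{loc}$, so both the coefficient $a(s,\cdot)\cdot\nabla_x w(s,\cdot)$ and the source $e(s,\cdot)$ are $C^\alpha_{loc}$, and classical interior Schauder theory upgrades $w$ to $C^{1,2}_{loc}((0,t)\times\reels^d)$, now a genuine classical solution. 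Writing $a(s,x)=f(s,x,w(s,x))$, $e(s,x)=g(s,x,w(s,x))$ with $f(s,x,p)=F(x,\mu_{t-s},p)$ and $g(s,x,p)=G(x,\mu_{t-s},p)$ yields the announced semilinear system, with initial datum $w_0(x)=W_0(x,\mu_t)$.

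The step I expect to be the main obstacle is making the dynamic programming principle rigorous: one must justify that plugging the McKean--Vlasov flow $(\mu_s)$ into the fixed point representation of $W$ reproduces $w$, which hinges on the uniqueness of \eqref{mk vlasov lip} (granted by the Lipschitz character of the lifted coefficients) and on the flow/semigroup consistency of the characteristic measure flows. The parabolic bootstrap of the last step is, by contrast, routine once the estimates of the previous subsection are available.
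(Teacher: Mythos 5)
Your overall strategy coincides with the paper's: freeze the measure flow, use uniqueness of the Fokker--Planck/McKean--Vlasov flow to identify the characteristic measures started from $\mu_{t-s}$ with $\tau\mapsto\mu_{t-s+\tau}$, and thereby turn the fixed-point representation of the Lipschitz solution into a Feynman--Kac representation of $w$ as the solution of a linear parabolic Cauchy problem with frozen coefficients $f(s,x,w(s,x))$, $g(s,x,w(s,x))$. Up to that point your argument matches the paper's proof essentially line by line.

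The one genuine weak point is the final regularity step. Lemmas \ref{lemma: estimate on whole space for parabolic pde} and \ref{estimate on balls} are \emph{a priori} estimates: the first assumes the existence of a smooth solution, the second assumes $u\in C^0([0,T],C^{1+\alpha}(B_r))$. You cannot use them to \emph{produce} $C^{1+\alpha}$ or $C^{1,2}$ regularity of $w$ starting only from its probabilistic (mild) representation; as written, your bootstrap is circular. The paper resolves this differently and more cleanly: since $W$ is a Lipschitz solution, the frozen coefficients $f(\cdot,\cdot,w)$ and $g(\cdot,\cdot,w)$ are Lipschitz in $x$ and $\tfrac12$-H\"older in time, so the linear Cauchy problem admits a unique classical solution $v\in C^{1,2}((0,t)\times\reels^d)$ by an existence theorem for unbounded linear parabolic equations (\cite{unbounded-linear-parabolic}, Theorem 2.1), and that classical solution is given by exactly the Feynman--Kac formula you derived for $w$; hence $v=w$ and $w\in C^{1,2}$. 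To repair your version you should likewise first \emph{construct} the classical solution of the linear problem (via that theorem or classical Schauder theory for the equation with frozen coefficients) and then identify it with $w$ through the shared probabilistic representation, rather than trying to upgrade $w$ directly through the interior estimates.
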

\begin{proof}
By the definition of Lipschitz solution, for $(s,x)\in [0,t]\times \reels^d$,
\begin{gather*}
    W(s,x,\mu_{t-s})=\esp{W_0(X_s,m_s)+\int_0^s G(X_u,m_u,W(s-u,X_u,m_u))du},\\
    \left\{\begin{array}{c}
         dX_u=-F(X_u,m_u,W(s-u,X_u,m_u))du+\sqrt{2\sigma_x}dB_u, \quad X_0=x,  \\
         \partial_u m_u=-\text{div}\left(F(x,m_u,W(s-u,x,m_u))m_u\right)+\sigma_x \Delta_x m_u, \quad m_0=\mu_{t-s}
    \end{array}
    \right.
\end{gather*}
Up to a change of origin by considering the measure $(\nu_u)_{u\in[t-s,t]}$ defined by $\nu_{t-s+u}=m_u$, it is easy to see by the uniqueness of weak solutions to the Fokker-Planck equation that, 
\[\forall u\in[0,s], \quad m_u=\mu_{t-s+u}.\]
As a consequence, we deduce that $\forall (s,x)\in [0,t]\times \reels^d$, 
\begin{gather}
\label{eq: prob repre of w}
    w(s,x)=\esp{w_0(X_s)+\int_0^s g(s-u,X_u,w(s-u,X_u)))du},\\
         dX_u=-f(s-u,X_u,w(s-u,X_u))du+\sqrt{2\sigma_x}dB_u, \quad X_0=x, 
\end{gather}
Since $W$ is a Lipschitz solution,  $f\circ w$, $g\circ w$, are Lipschitz in $x$ and $\frac{1}{2}-$Hölder in time on $[0,t]\times \reels^d$. Following \cite{unbounded-linear-parabolic} Theorem 2.1, there exists a unique solution $v\in C^{1,2}((0,t)\times \reels^d)$ to the problem
\[
\left\{\begin{array}{c}
     \partial_sv+f(s,x,w(s,x))\cdot \nabla_x v-\sigma_x\Delta_x v=g(s,x,w(s,x)) \text{ in }(0,t)\times \reels^d,  \\
     v|_{s=0}=w_0,
\end{array}\right.
\]
which is exactly given by the representation formula \eqref{eq: prob repre of w}. By uniqueness of Lipschitz solution $v=w$ which ends the proof. 
\end{proof}
By coming back to the characteristics of the mean field game system in this fashion, we can use parabolic regularity arguments to obtain further regularity of solutions to the master equation in the space variable $x$. To that end we now show that the regularization introduced in Lemma \ref{lemma: regul monotone function} conserves Hölder regularity. 
\begin{lemma}
\label{holder regul}
    Let $F:\reels^d\times \mathcal{P}_2(\reels^d)\to \reels^d$ be a $L^2-$monotone continuous function with linear growth, i.e
    \[\exists C_F, \forall (x,\mu)\in \reels^d\times \mathcal{P}_2(\reels^d) \quad |F(x,\mu)|\leq C_F \left(1+|x|+\sqrt{\int_{\reels^d} |y|^2 \mu(dy)}\right). \]
    
    Suppose that on bounded sets $\mathcal{O}_x\times \mathcal{O}_m \subset \reels^d\times \mathcal{P}_2(\reels^d)$, there exists a constant $C_\mathcal{O}$    
    \[\sup_{\mu\in\mathcal{O}_m}\|F(\cdot,\mu)\|_{\alpha,\mathcal{O}_x}\leq C_\mathcal{O}.\]
    Letting $(F^\varepsilon)_{\varepsilon>0}$, be the regularization introduced in Lemma \ref{lemma: regul monotone function},on each bounded sets $\mathcal{O}_x\times \mathcal{O}_m \subset \reels^d\times \mathcal{P}_2(\reels^d)$, there exists a constant $C'_\mathcal{O}$ and a modulus $\omega'_\mathcal{O}$ such that $\forall \varepsilon<\frac{1}{2C_F}$
    \[\sup_{\mu\in\mathcal{O}_m}\|F^\varepsilon(\cdot,\mu)\|_{\alpha,\mathcal{O}_x}\leq C'_\mathcal{O}.\]
    If furthermore there exists a modulus of continuity $\omega_\mathcal{O}$ such that 
    \[\quad \forall \mu,\nu\in\mathcal{O}_m,\quad \|F(\cdot,\mu)-F(\cdot,\nu)\|_{\alpha,\mathcal{O}_x}\leq \omega_\mathcal{O}(\mathcal{W}_2(\mu,\nu)),\]
    then there also exists a modulus $\omega'_\mathcal{O}$ such that $\forall \varepsilon<\frac{1}{2C_F}$
    \[\forall \mu,\nu\in\mathcal{O}_m,\quad \|F^\varepsilon(\cdot,\mu)-F^\varepsilon(\cdot,\nu)\|_{\alpha,\mathcal{O}_x}\leq \omega'_\mathcal{O}(\mathcal{W}_2(\mu,\nu)).\]

\end{lemma}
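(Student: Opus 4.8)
The plan is to use the explicit structure of the Yosida-type regularization exhibited in the proof of Lemma \ref{lemma: regul monotone function}: one has $F^\varepsilon(\cdot,\mu)=F(\cdot,h_\varepsilon(\mu))\circ g_\varepsilon(\cdot,\mu)$, where $x\mapsto g_\varepsilon(x,\mu)$ is $1$-Lipschitz uniformly in $\mu$, $h_\varepsilon$ is $1$-Lipschitz for $\mathcal{W}_2$, $\varepsilon F^\varepsilon(x,\mu)=x-g_\varepsilon(x,\mu)$, and $|g_\varepsilon(x,\mu)-g_\varepsilon(x,\nu)|\le\varepsilon\,|F(g_\varepsilon(x,\mu),h_\varepsilon(\mu))-F(g_\varepsilon(x,\mu),h_\varepsilon(\nu))|$. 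The first step is a localization. Fix a bounded $\mathcal{O}_x\subset\reels^d$ and a bounded $\mathcal{O}_m\subset\mathcal{P}_2(\reels^d)$. By Corollary \ref{corol: linear growth regul} and the growth bound recalled in its proof, for every $\varepsilon<\frac{1}{2C_F}$ one gets $\sqrt{E_2(h_\varepsilon(\mu))}\le 2\big(C_F+(1+C_F)\sup_{\mu\in\mathcal{O}_m}\sqrt{E_2(\mu)}\big)$ and $|x-g_\varepsilon(x,\mu)|\le\varepsilon|F^\varepsilon(x,\mu)|\le C$; hence there are a bounded set $\mathcal{O}_x'\subset\reels^d$ and a bounded set $\mathcal{O}_m'\subset\mathcal{P}_2(\reels^d)$, both independent of $\varepsilon<\frac{1}{2C_F}$, with $g_\varepsilon(\mathcal{O}_x,\mathcal{O}_m)\subset\mathcal{O}_x'$ and $h_\varepsilon(\mathcal{O}_m)\subset\mathcal{O}_m'$. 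Let $C_\mathcal{K}$ and $\omega_\mathcal{K}$ denote the constant and the modulus furnished by the hypothesis for the bounded set $\mathcal{O}_x'\times\mathcal{O}_m'$.

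For the first claim I would simply note that composition with a $1$-Lipschitz map cannot increase the $C^\alpha$ seminorm: writing $\Psi^\varepsilon_\mu:=F(\cdot,h_\varepsilon(\mu))$, one has $[F^\varepsilon(\cdot,\mu)]_{\alpha,\mathcal{O}_x}=[\Psi^\varepsilon_\mu\circ g_\varepsilon(\cdot,\mu)]_{\alpha,\mathcal{O}_x}\le[\Psi^\varepsilon_\mu]_{\alpha,\mathcal{O}_x'}\le C_\mathcal{K}$, while the sup norm on $\mathcal{O}_x$ is bounded through the $\varepsilon$-uniform linear growth of $F^\varepsilon$ from Corollary \ref{corol: linear growth regul}. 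This gives $\sup_{\mu\in\mathcal{O}_m}\|F^\varepsilon(\cdot,\mu)\|_{\alpha,\mathcal{O}_x}\le C'_\mathcal{O}$, uniformly in $\varepsilon<\frac{1}{2C_F}$.

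For the continuity in the measure argument I would telescope
\[
F^\varepsilon(\cdot,\mu)-F^\varepsilon(\cdot,\nu)=(\Psi^\varepsilon_\mu-\Psi^\varepsilon_\nu)\circ g_\varepsilon(\cdot,\mu)+\big(\Psi^\varepsilon_\nu\circ g_\varepsilon(\cdot,\mu)-\Psi^\varepsilon_\nu\circ g_\varepsilon(\cdot,\nu)\big).
\]
The first summand is controlled in $C^\alpha(\mathcal{O}_x)$ by $\|\Psi^\varepsilon_\mu-\Psi^\varepsilon_\nu\|_{\alpha,\mathcal{O}_x'}=\|F(\cdot,h_\varepsilon(\mu))-F(\cdot,h_\varepsilon(\nu))\|_{\alpha,\mathcal{O}_x'}\le\omega_\mathcal{K}\big(\mathcal{W}_2(h_\varepsilon(\mu),h_\varepsilon(\nu))\big)\le\omega_\mathcal{K}(\mathcal{W}_2(\mu,\nu))$, using $1$-Lipschitz composition and that $h_\varepsilon$ is $1$-Lipschitz. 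For the second summand $R_\varepsilon:=\Psi^\varepsilon_\nu\circ g_\varepsilon(\cdot,\mu)-\Psi^\varepsilon_\nu\circ g_\varepsilon(\cdot,\nu)$, the estimate on $g_\varepsilon$ recalled above together with boundedness gives $\|g_\varepsilon(\cdot,\mu)-g_\varepsilon(\cdot,\nu)\|_{\infty,\mathcal{O}_x}\le\varepsilon\,\omega_\mathcal{K}(\mathcal{W}_2(\mu,\nu))$, whence $\|R_\varepsilon\|_{\infty,\mathcal{O}_x}\le C_\mathcal{K}\,\omega_\mathcal{K}(\mathcal{W}_2(\mu,\nu))^\alpha$, while only the trivial bound $[R_\varepsilon]_{\alpha,\mathcal{O}_x}\le 2C_\mathcal{K}$ is available uniformly in $\varepsilon$ and in $\mu,\nu$.

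The main obstacle is exactly upgrading these two bounds on $R_\varepsilon$ into a genuine $C^\alpha$-modulus: an $\alpha$-Hölder function composed with two uniformly close Lipschitz maps need not have its $C^\alpha$ \emph{seminorm} difference tend to $0$ (already $z\mapsto|z|^\alpha$ composed with a small constant translation shows this), so the naive composition estimate is insufficient for the seminorm part. I would resolve it by interpolating the small $C^0$ bound on $R_\varepsilon$ against the uniform bound $[R_\varepsilon]_{\alpha}\le 2C_\mathcal{K}$, which yields for the second telescoping term a true modulus $\mathcal{W}_2(\mu,\nu)\mapsto C\,\omega_\mathcal{K}(\mathcal{W}_2(\mu,\nu))^{\theta}$ with $\theta\in(0,\alpha)$ arbitrary — i.e.\ the stated estimate in $C^\beta$ for every $\beta<\alpha$, which is all that is needed since the lemma is used only to feed a regularization-plus-Arzelà–Ascoli argument (for the existence of monotone solutions in the idiosyncratic-noise setting), where an arbitrarily small loss in the Hölder exponent is immaterial; keeping the exponent literally equal to $\alpha$ would instead require exploiting the resolvent identity $g_\varepsilon(\cdot,\mu)-g_\varepsilon(\cdot,\nu)=-\varepsilon\big(F^\varepsilon(\cdot,\mu)-F^\varepsilon(\cdot,\nu)\big)$ more finely. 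In all cases one concludes that $\|F^\varepsilon(\cdot,\mu)-F^\varepsilon(\cdot,\nu)\|_{\alpha,\mathcal{O}_x}$ is dominated, uniformly in $\varepsilon<\frac{1}{2C_F}$, by a modulus $\omega'_\mathcal{O}$ of $\mathcal{W}_2(\mu,\nu)$.
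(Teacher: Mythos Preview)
Your approach is exactly the paper's: write $F^\varepsilon(x,\mu)=F(g_\varepsilon(x,\mu),h_\varepsilon(\mu))$, use the $1$-Lipschitz property of $g_\varepsilon(\cdot,\mu)$ and $h_\varepsilon$, and invoke Corollary~\ref{corol: linear growth regul} to localize. The paper's proof is a one-paragraph ``the result follows naturally from the assumed regularity of $F$ and the regularity of $g_\varepsilon,h_\varepsilon$,'' and your first claim (the uniform bound on $\|F^\varepsilon(\cdot,\mu)\|_{\alpha,\mathcal{O}_x}$) is established precisely this way.

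Where you go further than the paper is in the second claim. You are right that the naive composition argument does not give a modulus for the $\alpha$-\emph{seminorm} of the second telescoping term $R_\varepsilon=\Psi^\varepsilon_\nu\circ g_\varepsilon(\cdot,\mu)-\Psi^\varepsilon_\nu\circ g_\varepsilon(\cdot,\nu)$: the example $z\mapsto|z|^\alpha$ composed with a small translation shows that $[R_\varepsilon]_\alpha$ need not tend to zero as $\mu\to\nu$, and the paper's terse proof does not address this. Your interpolation fix, trading the small $C^0$ bound $\|R_\varepsilon\|_\infty\le C_\mathcal{K}\,\omega_\mathcal{K}(\mathcal{W}_2(\mu,\nu))^\alpha$ against the uniform bound $[R_\varepsilon]_\alpha\le 2C_\mathcal{K}$ to get a genuine modulus in $C^\beta$ for any $\beta<\alpha$, is correct and is indeed all that is needed downstream: in the (now commented-out) application of this lemma the paper itself only passes to the limit in $H_{1,\beta}$ for $\beta<\alpha$. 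So your reading of the statement as ``$C^\alpha$ with an arbitrarily small loss of exponent'' is both honest about what the argument actually yields and consistent with how the lemma is used.
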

\begin{proof}
Reusing the notations introduced in Lemma \ref{lemma: regul monotone function}, there exists $g^\varepsilon,h_\varepsilon$, such that 
\[F^\varepsilon(x,\mu)=F(g^\varepsilon(x,\mu),h_\varepsilon(\mu)).\]
where $x\mapsto g^\varepsilon(x,\mu)$, $\mu\mapsto h_\varepsilon(\mu)$, are 1-Lipschitz maps and $g_\varepsilon$ is such that $\forall (x,\mu,\nu) \in \reels^d \times \left(\mathcal{P}_2(\reels^d)\right)^2$
\begin{gather*}
|g_\varepsilon(x,\mu)-g_\varepsilon(x,\nu)|\leq \varepsilon|F(g_\varepsilon(x,\mu),h_\varepsilon(\mu))-F(g_\varepsilon(x,\mu),h_\varepsilon(\nu))|.
\end{gather*}
Since we have already established in Corollary \ref{corol: linear growth regul} that those functions are bounded on bounded sets of $\reels^d\times \mathcal{P}_2(\reels^d)$ uniformly in $\varepsilon\leq \frac{1}{2C_F}$, the result follows naturally from the assumed regularity of $F$ and the regularity of $g_\varepsilon,h_\varepsilon$.
\end{proof}
\begin{hyp}
\label{hyp: local holder}
	There exists $\alpha>0$, such that on any bounded set $\mathcal{O}_m\times \mathcal{O}_x$ of $\mathcal{P}_2(\reels^d)\times \reels^d$ there exists a constant $C_\mathcal{O}$ such that 
	\[\sup_{\mu \in \mathcal{O}_m}\left(\|W_0(\cdot,\mu)\|_{2+\alpha,\mathcal{O}_x}+\|F(\cdot,\mu)\|_{\alpha,\mathcal{O}_x\times \mathcal{O}_x}+\|G(\cdot,\mu)\|_{\alpha,\mathcal{O}_x\times \mathcal{O}_x}\right)\leq C_\mathcal{O}\]
\end{hyp}
\begin{thm}
\label{thm: local regularity in x}
 Suppose $(F,G,W_0)$ are such that Hypotheses \ref{hyp: linear growth in F},\ref{hyp: weak strong monotonicity in G} and \ref{hyp: regularity: weak strong monotonicity in G} are satisfied, and let $W$ be the monotone solution of \eqref{eq: ME with idiosyncratic}.
 If furthermore Hypothesis \ref{hyp: local holder} holds, then for any bounded set $\mathcal{O}_m\times \mathcal{O}_x$ of $\mathcal{P}_2(\reels^d)\times \reels^d$, 
 \[\sup_{(t,\mu)\in [0,T]\times \mathcal{O}_m} \|W(t,\cdot,\mu)\|_{2+\alpha,\mathcal{O}_x}<+\infty\]
 \end{thm}
\begin{proof}
Defining a sequence of regularized problem associated with the coefficients $(W_0^\varepsilon,F^\varepsilon,G^\varepsilon)$ as in Theorem \ref{existence: weak strong G}, Lemma \ref{holder regul} guarantees that for $\varepsilon$ sufficiently small, $F^\varepsilon,G^\varepsilon$ are locally Hölder independently of $\varepsilon$. Letting 
$(\mu^\varepsilon_s)_{s\in[0,t]}$ be the weak solution of 
\[ \partial_s \mu_s=-\text{div}\left(F^\varepsilon(x,\mu_s,W(t-s,x,\mu_s))\mu_s\right)+\sigma_x\Delta_x \mu_S,\]
Lemma \ref{lemma: lip sol sol} implies that $w^\mu:(s,x)\mapsto W^\varepsilon(t-s,x,\mu_s)$ is a strong solution to 
\[\partial_s w^\mu +f^\mu(s,x)\cdot \nabla_x w^\mu-\sigma_x w^\mu= g^\mu(s,x),\]
with the notation $f^\mu(s,x)=F^\varepsilon(x,w^\mu(s,x),\mu_s),g^\mu(s,x)=G^\varepsilon(x,w^\mu(s,x),\mu_s)$. We already have an estimate indicating that $x\mapsto w^\mu(s,x)$ is Lipschitz uniformly in $s\in[0,t]$, for any $r>0$, and Gronwall's Lemma gives a bound on the second moment of $(\mu_s)_{s\in [0,t]}$ depending only on the growth of $F,G$ on $t$ and on $E_2(\mu)$. Consequently, for $\mu\in \mathcal{O}_m$ a bounded set of $\mathcal{P}_2(\reels^d)$, we can bound $\sup_{s\in[0,t]}\| f^\mu(s,\cdot)\|_{\alpha,B_r}$, $\sup_{s\in[0,t]}\| g^\mu(s,\cdot)\|_{\alpha,B_r}$ by a constant depending only on $r$ and $\mathcal{O}_m$. Applying Lemma \ref{classic Hölder interior estimate} to $w^\mu$ we deduce that for any $r>0$ and $\mathcal{O}_m$ bounded set of $\mathcal{P}_2(\reels^d)$ there exists a constant $C$ depending on $r,\mathcal{O}_m,F,G,W_0$ such that 
\[ \sup_{\mu\in \mathcal{O}_m} \|W^\varepsilon(t,\cdot,\mu)\|_{2+\alpha,B_r}\leq C.\]
Since said constant is independent of $\varepsilon$, the conclusion follows by letting it tends to 0. 
\end{proof}
\begin{remarque}
	Even if the initial condition does not belong to $C^{2+\alpha}_{loc}(\reels^d)$ for fixed $\mu \in \mathcal{P}_2(\reels^d)$, solutions belongs to this space for strictly positive times by the regularizing effect of the heat kernel. The above proof can be adapted to obtain local in time estimates blowing up as time tends to 0. It is also possible to obtain Hölder estimates on the regularity of $s\mapsto W(s,x,\mu_{t-s})$ in this fashion. However, the bottleneck in the regularity of $W$ with respect to time comes in general from the measure argument, which is why we did not insist on this estimate. 
\end{remarque}

\subsection{The case of master equations with common noise}
We now explain how those results can be extended to the master equation with common noise \eqref{eq: general ME}. First there is no problem in extending the definition of monotone solution. The change of variable argument we presented in Section \ref{section without idio} is still valid for master equation with idiosyncratic noise, and our uniqueness and existence results are consequently still valid in this setting. The main difficulty from this addition is the fact the characteristics of the master equations become stochastic and arguments to show $C^{k+\alpha}_{loc}$ regularity as we did with Theorem \ref{thm: local regularity in x} becomes much more technical. In \cite{convergence-problem}, the author showed how it is still possible to show parabolic regularity estimates in this setting by mean of Duhamel formula.  Based on the notions of solution introduced in this article and arguments of regularity presented in \cite{convergence-problem}, we believe the extension of Theorem \ref{thm: local regularity in x} to the situation of the master equation \eqref{eq: general ME} to be straightforward.

\section{Concluding remark and future perspectives}
We have introduced a notion of monotone solutions for the master equation \eqref{eq: general ME}. Such solutions only need to be continuous with respect to the measure argument to be well-defined, and uniqueness holds under $L^2-$monotonicity. Whenever there is no idiosyncratic noise, we use the stability of those solutions to show the existence of a monotone solution under various monotonicity assumptions. Those results appear new in their generality and regularity, both for MFG and FBSDE. 

For master equations with idiosyncratic noise, we show that similar results holds although the analysis is more delicate. In particular, we do not treat the question of uniqueness in full generality. We believe it can be addressed by a more careful treatment of second order terms introduced by the presence of idiosyncratic noise, the main difficulty lying in treating terms of the form 
\[ \int_{\reels^{2d}} \nabla_x \log(m(x,y))\cdot \nabla_y\log(m(x,y))m(dx,dy),\]
coming from the cross derivatives of the entropy. 
  \section*{acknowledgements}
\begin{center}
We would like to thank Charles Bertucci for his helpful remarks
\end{center}
\printbibliography
\end{document}